\documentclass{article}

\pdfoutput=1
     \PassOptionsToPackage{numbers, compress}{natbib}



    \usepackage[final]{neurips_2022}


\usepackage[utf8]{inputenc} 
\usepackage[T1]{fontenc}    
\usepackage{hyperref}\hypersetup{
    colorlinks=true,
    linkcolor=black,
    filecolor=blue,      
    urlcolor=black,
    citecolor=black,
}   
\usepackage{url}            
\usepackage{booktabs}       
\usepackage{amsfonts}       
\usepackage{nicefrac}       
\usepackage{microtype}      
\usepackage{xcolor}         

\usepackage{galois}
\usepackage{pifont}
\usepackage[flushleft]{threeparttable}

\usepackage{enumerate}
\usepackage{graphicx}
\usepackage{colortbl}
\usepackage{amsmath}\allowdisplaybreaks
\usepackage{epstopdf}
\usepackage{mathrsfs}
\usepackage{amsthm,amssymb,todonotes}
\usepackage{appendix}
\usepackage{cases}
\usepackage{bm}

\usepackage{amsmath}\allowdisplaybreaks      
\usepackage{float}
\usepackage{subfigure,enumitem}

\usepackage{makecell}

\usepackage{wrapfig}

\providecommand{\argmin}{\mathop{\rm argmin}}

\usepackage{algorithm}
\usepackage[noend]{algpseudocode} 

\def\<{\left\langle} 
\def\>{\right\rangle}

\newtheorem{theorem}{Theorem}
\newtheorem{lemma}{Lemma}
\newtheorem{definition}{Definition}
\newtheorem{proposition}{Proposition}
\newtheorem{assumption}{Assumption}

\newtheorem{remark}{Remark}

\newcommand{\E}{\mathbb{E}}
\newcommand{\R}{\mathbb{R}}
\newcommand{\I}{\mathbf{I}}
\newcommand{\A}{\mathbf{A}}
\newcommand{\B}{\mathbf{B}}
\newcommand{\T}{\mathsf{T}}
\newcommand{\M}{\mathbf{M}}

\DeclareMathOperator{\poly}{poly}

\title{Zeroth-Order Negative Curvature Finding: Escaping Saddle Points  without Gradients}

%


\author{%
  Hualin Zhang$^{1}$~~~ Huan Xiong$^{2,3}$~~~  Bin Gu$^{1,3}$\\
  $^1$ Nanjing University of Information Science \& Technology\\
  $^2$ Harbin Institute of Technology\\
  $^3$  Mohamed bin Zayed University of Artificial Intelligence \\
  \texttt{\{zhanghualin98,huan.xiong.math,jsgubin\}@gmail.com}
 }

\begin{document}

\maketitle

\begin{abstract}
We consider escaping saddle points of nonconvex problems where only the function evaluations can be accessed. Although a variety of works have been proposed, the majority of them require either second or first-order information, and only a few of them have exploited zeroth-order methods, particularly the technique of negative curvature finding with zeroth-order methods which has been proven to be the most efficient method for escaping saddle points. To fill this gap,  in this paper, we propose two zeroth-order negative curvature finding frameworks that can replace Hessian-vector product computations without increasing the iteration complexity. We apply the proposed frameworks to ZO-GD, ZO-SGD, ZO-SCSG, ZO-SPIDER and prove that these ZO algorithms can converge to $(\epsilon,\delta)$-approximate second-order stationary points with less query complexity compared with prior zeroth-order works for finding local minima.

\end{abstract}

\section{Introduction}


Nonconvex optimization has received wide attention in recent years due to its popularity in modern machine learning (ML) and deep learning (DL) tasks. Specifically, in this paper, we study the following unconstrained optimization problem:
\begin{equation}
\label{eq: Problem}
    \min_{x\in \mathbb{R}^d} f(x) := \frac{1}{n}\sum_{i=1}^n f_i(x),
\end{equation}
where both $f_i(\cdot)$ and $f(\cdot)$ can be nonconvex. 
In general, finding the global optima of nonconvex functions is NP-hard. Fortunately, finding local optima is an alternative because it has been shown in theory and practice that local optima have comparable performance capabilities to global optima in many machine learning problems \cite{ge2015escaping,ge2016matrix,kawaguchi2016deep,ge2017optimization,ge2018learning,JMLR:v19:16-465,kawaguchi2019every}. Gradient-based methods have been shown to be able to find an $\epsilon$-approximate first-order stationary point ($\|\nabla f(x)\|\le \epsilon$) efficiently, both in the deterministic setting (\emph{e.g.}, gradient descent \cite{nesterov2003introductory}; accelerated gradient descent \cite{carmon2017convex,li2022restarted}) and stochastic setting (\emph{e.g.}, stochastic gradient descent \cite{nesterov2003introductory,reddi2016stochastic}; SCSG \cite{lei2017non}; SPIDER \cite{fang2018spider}). However, in nonconvex settings, first-order stationary points can be local minima, global minima, or even saddle points. Converging to saddle points will lead to highly suboptimal solutions \cite{jain2017global,sun2018geometric} and destroy the model's performance. Thus, escaping saddle points has recently become an important research topic in nonconvex optimization.

Several classical results have shown that, for $\rho$-Hessian Lipschitz functions (see Definition~\ref{def: Lipschitz}), using the second-order information like computing the Hessian \cite{nesterov2006cubic} or Hessian-vector products \cite{agarwal2017finding,carmon2018accelerated,allen2018natasha}, one can find an $\epsilon$-approximate second-order stationary point (SOSP, $\|\nabla f(x)\|\le \epsilon$ and $\nabla^2 f(x) \succeq -\sqrt{\rho \epsilon} \I$). However, when the dimension of $x$ is large, even once access to the Hessian is computationally infeasible. A recent line of work shows that, by adding uniform random perturbations,  first-order (FO) methods can efficiently escape saddle points and converge to SOSP. In the deterministic setting, \cite{jin2017escape} proposed the perturbed gradient descent (PGD) algorithm with gradient query complexity $\tilde{\mathcal{O}}(\log^4 d/\epsilon^2)$ by adding uniform random perturbation into the standard gradient descent algorithm. This complexity is later improved to $\tilde{\mathcal{O}}(\log^6 d / \epsilon^{1.75})$ by the perturbed accelerated gradient descent \cite{jin2018accelerated} which replaces the gradient descent step in PGD by Nesterov's accelerated gradient descent. 

\vspace{-10pt}
\begin{table*}[htb]
\label{tab: zo algorithms for finding SOSP} 
\caption{A summary of the results of finding $(\epsilon, \delta)$-approximate SOSPs (see Definition~\ref{def: stationary point}) by the zeroth-order algorithms. (CoordGE, GaussGE, and RandGE are abbreviations of ``coordinate-wise gradient estimator'', ``Gaussian random gradient estimator'' and ``uniform random gradient estimator'', respectively. RP, RS, and CR are abbreviations of ``random perturbation'', ``random search'' and ``cubic regularization'', respectively.)} 
	\begin{center}
    \setlength{\tabcolsep}{1mm}
    \begin{threeparttable}
	\begin{tabular}{lcccl}
		\toprule
		Algorithm   & Setting & ZO Oracle &  Main Techniques  &   Function Queries \\
		\midrule
		    ZPSGD \cite{jin2018local} & Deterministic & GaussGE + Noise & RP  &  $\tilde{\mathcal{O}}\left(\frac{d^2}{\epsilon^5} \right)$ \tnote{$\dag$}
			\\
			PAGD  \cite{vlatakis2019efficiently} & Deterministic & CoordGE  & RP & $\mathcal{O}\left( \frac{d \log^4 d}{\epsilon^2} \right)$ \tnote{$\dag$}
			\\
			RSPI \cite{lucchi2021second} & Deterministic & CoordGE & RS + NCF & $\mathcal{O}(\frac{d \log d}{\epsilon^{8/3}})$ \tnote{$\ddag$}
			\\
			\textbf{Theorem.~\ref{thm: ZO-GD}} & Deterministic & CoordGE & NCF &  $\mathcal{O}\left( \frac{d}{\epsilon^2} + \frac{d \log d}{\delta^{3.5}} \right)$ \\
		\midrule
			ZO-SCRN  \cite{balasubramanian2022zeroth} & Stochastic  & GaussGE & CR &  $\tilde{\mathcal{O}}\left( \frac{d}{\epsilon^{3.5}} +\frac{d^4}{\epsilon^{2.5}}  \right)$ \tnote{$\dag$}
			\\
			\textbf{Theorem.~\ref{thm: ZO-SGD}} & Stochastic & CoordGE & NCF  & $\tilde{\mathcal{O}}\left( \frac{d}{\epsilon^4} + \frac{d}{\epsilon^2 \delta^3} + \frac{d}{\delta^5}  \right)$
			\\
			 \textbf{Theorem.~\ref{thm: ZO-SCSG-NCF}} & Stochastic & CoordGE + (RandGE) & NCF & $\tilde{\mathcal{O}}\left( \frac{d}{\epsilon^{10/3}} + \frac{d}{\epsilon^2 \delta^3} + \frac{d}{\delta^5}  \right)$
			\\
			 \textbf{Theorem.~\ref{thm: ZO-SPIDER-NCF}} & Stochastic & CoordGE & NCF &  $\tilde{\mathcal{O}}\left( \frac{d}{\epsilon^{3}} + \frac{d}{\epsilon^2 \delta^2} + \frac{d}{\delta^5} \right)$
			\\
		\bottomrule
	\end{tabular}
	\begin{tablenotes}
        \item $\dag$ guarantees $(\epsilon, \mathcal{O}(\sqrt{\epsilon}))$-approximate SOSP, and
        $\ddag$ guarantees  $(\epsilon, \epsilon^{2/3})$-approximate SOSP.
    \end{tablenotes}
	\end{threeparttable}
	\end{center}
	\label{table_c}
\end{table*}
\vspace{-10pt}

Another line of work for escaping saddle points is to utilize the negative curvature finding (NCF), which can be combined with $\epsilon$-approximate first-order stationary point (FOSP) finding algorithms to find an ($\epsilon, \delta$)-approximate SOSP. The main task of NCF is to calculate the approximate smallest eigenvector of the Hessian for a given point. Classical methods for solving NCF like the power method and Oja's method need the computation of Hessian-vector products. Based on the fact the Hessian-vector product can be approximated by the finite difference between two gradients, \cite{xu2017neon+,allen2018neon2} proposed the FO NCF frameworks Neon+ and Neon2, respectively. In general, adding perturbations in the  negative curvature direction can escape saddle points more efficiently than adding random perturbations by a factor of $\tilde{\mathcal{O}}(\poly(\log d) )$ in theory. Specifically, in the deterministic setting, CDHS \cite{carmon2018accelerated} combined with Neon2 can find an ($\epsilon, \delta$)-approximate SOSP in gradient query complexity $\tilde{\mathcal{O}}(\log d / \epsilon^{1.75})$. Recently, the same result was achieved by a  simple single-loop algorithm \cite{zhang2021escape}, which combined the techniques of perturbed accelerated gradient descent and accelerated negative curvature finding. In the online stochastic setting, the best gradient query complexity result $\tilde{\mathcal{O}}(1/\epsilon^3)$ is achieved by SPIDER-SFO$^{+}$ \cite{fang2018spider}, which combined the near-optimal $\epsilon$-approximate FOSP finding algorithm SPIDER and the NCF framework Neon2 to find an $(\epsilon, \delta)$-approximate SOSP.

However, the gradient information is not always accessible. Many machine learning and deep learning  applications often encounter situations where the calculation of  explicit gradients is expensive or even infeasible, such as black-box adversarial attack on deep neural networks \cite{papernot2017practical,madry2018towards,chen2017zoo,bhagoji2018practical,tu2019autozoom} and policy search in reinforcement learning \cite{salimans2017evolution,choromanski2018structured,jing2021asynchronous}. Thus, zeroth-order (ZO) optimization, which uses function values to estimate the explicit  gradients as an important gradient-based black-box method, is one of the best options for solving this type of ML/DL problem. A considerable body of work has shown that ZO algorithms based on gradient estimation have comparable convergence rates to their gradient-based counterparts. Although many gradient estimation-based ZO algorithms have been proposed in recent years, most of them focus on the performance of converging to FOSPs \cite{nesterov2017random,ghadimi2013stochastic,ji2019improved,fang2018spider}, and only a few of them on SOSPs \cite{jin2018local,vlatakis2019efficiently,lucchi2021second,balasubramanian2022zeroth}. 

As mentioned above, although there have been several  works of finding local minima via ZO methods, they utilized the techniques of random perturbations \cite{jin2018local,vlatakis2019efficiently}, random search \cite{lucchi2021second}, and cubic regularization  \cite{balasubramanian2022zeroth}, as shown in Table~\ref{tab: zo algorithms for finding SOSP}, which are not the most efficient ones of escaping saddle points as discussed before. 
Specifically, in the deterministic setting, \cite{jin2018local} proposed the ZO perturbed stochastic gradient (ZPSGD) method, which uses a batch of Gaussian smoothing based stochastic ZO gradient estimators and adds a random perturbation in each iteration. As a result, ZPSGD can find an $\epsilon$-approximate SOSP using $\tilde{\mathcal
 O} \left(d^2 / \epsilon^5 \right)$ function queries. \cite{vlatakis2019efficiently} proposed the perturbed approximate gradient descent (PAGD) method which iteratively conducts the gradient descent steps by utilizing the forward difference version of the coordinate-wise gradient estimators until it reaches a point with a small gradient. Then, PAGD adds a uniform perturbation and continues the gradient descent steps. The total function queries of PAGD to find an $\epsilon$-approximate SOSP is $\tilde{\mathcal{O}} \left(d \log^4 d / \epsilon^2\right)$. Recently, \cite{lucchi2021second} proposed the random search power iteration (RSPI) method, which  alternately performs random search steps and power iteration steps. The power iteration step contains an inexact power iteration subroutine using only the ZO oracle to conduct the NCF, and the core idea is to use a finite difference approach to approximate the Hessian-vector product.
In the stochastic setting, \cite{balasubramanian2022zeroth} proposed a zeroth-order stochastic cubic regularization newton (ZO-SCRN) method with function query complexity $\tilde{\mathcal{O}}\left(d/\epsilon^{7/2}\right)$ using Gaussian sampling-based gradient estimator and Hessian estimator. Unfortunately, each iteration of ZO-SCRN needs to solve a cubic minimization subproblem, which does not have a closed-form solution. Typically, inexact solvers for solving the cubic minimization subproblem need additional computations of the Hessian-vector product \cite{agarwal2017finding} or the gradient \cite{carmon2016gradient}.

Thus, it is then natural to explore faster ZO negative curvature finding based algorithms to make escaping saddle points more efficient. To the best of our knowledge, negative curvature finding algorithms with access only to ZO oracle is still a vacancy in the stochastic setting. Inspired by the fact that the gradient can be approximated by the finite difference of function queries with high accuracy, a natural question is: \textit{Can we turn FO NCF methods (especially the state-of-the-art Neon2) into ZO methods without increasing the iteration complexity and turn ZO algorithms of finding FOSPs into the ones of finding SOSPs?}

\textbf{Contributions.} We summarize our main contributions as follows:
\begin{itemize}[leftmargin=0.2in]
    \item We give an affirmative answer to the above question. We propose two ZO negative curvature finding frameworks, which use only function queries and can detect whether there is a negative curvature direction at a given point $x$ on a smooth, Hessian-Lipschitz function $f: \R^d \to \R$ in offline deterministic and online stochastic settings, respectively.
    \item We apply the proposed frameworks to four ZO algorithms and prove that these ZO algorithms can converge to ($\epsilon, \delta$)-approximate SOSPs, which are ZO-GD, ZO-SGD, ZO-SCSG, and ZO-SPIDER. 
    \item In the deterministic setting, compared with the classical setting where $\delta = \mathcal{O}(\sqrt{\epsilon})$ \cite{jin2017escape,jin2018accelerated,jin2018local,vlatakis2019efficiently}, or the special case $\delta = \epsilon^{2/3}$ \cite{lucchi2021second}, our Theorem~\ref{thm: ZO-GD} is always not worse than other algorithms in Table~\ref{tab: zo algorithms for finding SOSP}. In the online stochastic setting,
    all of our algorithms don't need to solve the cubic subproblem as in ZO-SCRN and our Theorem~\ref{thm: ZO-SPIDER-NCF} improves the best function query complexity by a factor of $\tilde{\mathcal{O}}(1/\sqrt{\epsilon})$.
\end{itemize}

\section{Preliminaries}
Throughout this paper, we use $\|\cdot\|$ to denote the Euclidean norm of a vector and the spectral norm of a matrix. We use $\tilde{\mathcal{O}}(\cdot)$ to hide the poly-logarithmic terms. For a given set $\mathcal{S}$ drawn from $[n]:= \{1,2, \dots, n\}$, define $f_{\mathcal{S}}(\cdot):= \frac{1}{|\mathcal{S}|} \sum_{i \in \mathcal{S}} f_i(\cdot)$. 

\begin{definition}
\label{def: Lipschitz}
    For a twice differentiable nonconvex function $f$: $\mathbb{R}^d \to \mathbb{R}$,
    \begin{itemize}[leftmargin=0.2in]
        \item $f$ is $\ell$-Lipschitz smooth if 
        $\forall x,y \in \mathbb{R}^d, \|\nabla f(x) - \nabla f(y)\| \le \ell \|x-y\| $.
        \item $f$ is $\rho$-Hessian Lipschitz if
        $ \forall x,y \in \mathbb{R}^d, \|\nabla^2 f(x) - \nabla^2 f(y)\| \le \rho \|x-y\| $.
    \end{itemize}
\end{definition}

\begin{definition}
\label{def: stationary point}
For a twice differentiable nonconvex function $f$: $\mathbb{R}^d \to \mathbb{R}$, we say
\begin{itemize}[leftmargin=0.2in]
    \item $x\in \mathbb{R}^d$ is an $\epsilon$-approximate first-order stationary point if $\|\nabla f(x)\| \le \epsilon$.
    \item $x\in \mathbb{R}^d$ is an $(\epsilon,\delta)$-approximate second-order stationary point if $\|\nabla f(x)\| \le \epsilon, \nabla^2 f(x) \succeq -\delta \I$.
\end{itemize}
    
\end{definition}
We need the following assumptions which are standard in the literature of finding SOSPs \cite{allen2018neon2,fang2018spider,zhang2021escape}.
\begin{assumption}
\label{assum: basic}
    We assume that $f(\cdot)$ in \eqref{eq: Problem} satisfies:
    \begin{itemize}[leftmargin=0.2in]
        \item $\Delta_f := f(x_0) - f(x^*) < \infty$ where $x^* := \argmin_{x} f(x)$.
        \item Each component function $f_i(x)$ is $\ell$-Lipschitz smooth and $\rho$-Hessian Lipschitz.
        \item (For online case only) The variance of the stochastic gradient is bounded: $\forall x \in \mathbb{R}^d$, $\mathbb{E} \|\nabla f_i(x) -\nabla f(x)\|^2 \le \sigma^2$.
    \end{itemize}
\end{assumption}

We'll also need the following more stringent assumption to get high-probability convergence results of ZO-SPIDER.

\begin{assumption}
\label{assum: basic-high-probability}
We assume that Assumption~\ref{assum: basic} holds, and in addition, the gradient of each component function $f_i (x)$ satisfies $\forall i, x \in \mathbb{R}^d$, $ \|\nabla f_i(x) -\nabla f(x)\|^2 \le \sigma^2$.

\end{assumption}

\subsection{ZO Gradient Estimators}
Given a smooth, Hessian Lipschitz function $f$, a central difference version of the deterministic coordinate-wise gradient estimator is defined by
\begin{equation}
\label{eq: CoordGradEst}
\tag{CoordGradEst}
    \hat{\nabla}_{coord} f(x) = \sum_{i=1}^{d} \frac{f(x + \mu e_i) -f(x-\mu e_i)}{2 \mu} e_i,
\end{equation}
where $e_i$ denotes a standard basis vector with $1$ at its $i$-th coordinate and 0 otherwise; $\mu$ is the smoothing parameter, which is a sufficient small positive constant. A central difference version of the random gradient estimator is defined by
\begin{equation}
\label{eq: RandGradEst}
\tag{RandGradEst}
    \hat{\nabla}_{rand} f(x) = d \frac{f(x+\mu u) - f(x-\mu u)}{2\mu} u,
\end{equation}
where $u \in \R^d$ is a random direction drawn from a uniform distribution over the unit sphere; $\mu$ is the smoothing parameter, which is a sufficient small positive constant.

\begin{remark}
\textbf{Deterministic vs. Random}: \ref{eq: CoordGradEst} needs $d$ times more function queries than \ref{eq: RandGradEst}. However, as will be discussed in section~\ref{section: Applications of Zeroth-Order Negative Curvature Finding}, it has a lower approximation error and thus can reduce the iteration complexity. \textbf{Central Difference vs. Forward Difference} (please refer to Appendix A.1):
Under the assumption of Hessian Lipschitz, a smaller approximation error bound can be obtained by the central difference version of both \ref{eq: CoordGradEst} and \ref{eq: RandGradEst}. 
\end{remark}

\subsection{ZO Hessian-Vector Product Estimator}
By the definition of derivative: $\nabla^2 f(x)\cdot v = \lim_{\mu \to 0} \frac{\nabla f(x + \mu v) - \nabla f(x)}{\mu}$, we have $\nabla^2 f(x)\cdot v$ can be approximated by the difference of two gradients $\nabla f(x+v) - \nabla f(x)$ for some $v$ with small magnitude. On the other hand, $\nabla f(x+v), \nabla f(x)$ can be approximated by $\hat{\nabla }_{coord} f(x+v), \hat{\nabla}_{coord}f(x)$ with high accuracy, respectively. Then the coordinate-wise Hessian-vector product estimator is defined by:
\begin{equation}
    \label{eq: ZO-Hessian-Vector-Estimator}
    \mathcal{H}_f(x) v \triangleq \sum_{i=1}^d \frac{f(x+ v + \mu e_i) - f(x + v-\mu e_i) + f(x -\mu e_i) -f(x+ \mu e_i)}{2\mu}e_i.
\end{equation}
Note that we do not need to know the explicit representation of $\mathcal{H}_f(x)$. It is merely used as a notation for a virtual matrix and can be viewed as the Hessian $\nabla^2 f(x_0)$ with minor perturbations.  As stated in the following lemma, the approximation error is efficiently upper bounded.

\begin{lemma}
\label{lemma: error-bound-of-zo-hessian-vetor-product}
Assume that $f$ is $\rho$-Hessian Lipschitz, then for any smoothing parameter $\mu$ and $x\in \R^d$, we have
\begin{equation}
    \| \mathcal{H}_f(x) v - \nabla^2 f(x) v \|  \le \rho \left( \|v\|^2/2 + \sqrt{d}\mu^2/3 \right).
\end{equation}
    
\end{lemma}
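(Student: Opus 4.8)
The plan is to reduce the bound to two standard estimates after one algebraic observation. Inspecting \eqref{eq: ZO-Hessian-Vector-Estimator}, the coefficient of $e_i$ splits as
\[
\frac{f(x+v+\mu e_i)-f(x+v-\mu e_i)}{2\mu}-\frac{f(x+\mu e_i)-f(x-\mu e_i)}{2\mu},
\]
so $\mathcal{H}_f(x) v=\hat{\nabla}_{coord} f(x+v)-\hat{\nabla}_{coord} f(x)$, i.e.\ the Hessian-vector estimator is simply the difference of two coordinate-wise gradient estimators. Adding and subtracting the true gradients and applying the triangle inequality gives
\[
\|\mathcal{H}_f(x) v-\nabla^2 f(x) v\|\le \|\hat{\nabla}_{coord} f(x+v)-\nabla f(x+v)\|+\|\hat{\nabla}_{coord} f(x)-\nabla f(x)\|+\|\nabla f(x+v)-\nabla f(x)-\nabla^2 f(x) v\|.
\]

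For the last term I would use the usual consequence of $\rho$-Hessian Lipschitzness: writing $\nabla f(x+v)-\nabla f(x)=\int_0^1 \nabla^2 f(x+tv)\,v\,dt$ and subtracting $\nabla^2 f(x) v=\int_0^1 \nabla^2 f(x)\,v\,dt$, the integrand has norm at most $\|\nabla^2 f(x+tv)-\nabla^2 f(x)\|\,\|v\|\le \rho t\|v\|^2$, so this term is at most $\tfrac{\rho}{2}\|v\|^2$.

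For the first two terms I would bound the central-difference error coordinatewise. Taylor's theorem with integral remainder gives, for any $y$,
\[
f(y+\mu e_i)-f(y-\mu e_i)-2\mu\,\partial_i f(y)=\int_0^\mu(\mu-t)\big(\partial_{ii} f(y+t e_i)-\partial_{ii} f(y-t e_i)\big)\,dt,
\]
where the first-order terms add up to $2\mu\,\partial_i f(y)$ and the pure second-order terms cancel. Bounding the integrand by $\|\nabla^2 f(y+t e_i)-\nabla^2 f(y-t e_i)\|\le 2\rho t$ and evaluating $\int_0^\mu(\mu-t)\,2\rho t\,dt=\rho\mu^3/3$ yields $\big|[\hat{\nabla}_{coord} f(y)]_i-\partial_i f(y)\big|\le \rho\mu^2/6$ for each $i$, hence $\|\hat{\nabla}_{coord} f(y)-\nabla f(y)\|\le \sqrt{d}\,\rho\mu^2/6$. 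Using this for $y=x$ and $y=x+v$ and combining with the Taylor term gives $\tfrac{\rho}{2}\|v\|^2+2\cdot\tfrac{\sqrt{d}\rho\mu^2}{6}=\rho\big(\|v\|^2/2+\sqrt{d}\mu^2/3\big)$, as claimed.

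The one point requiring care is that the estimate must exploit the Hessian-Lipschitz hypothesis rather than mere smoothness: it is precisely the cancellation of \emph{both} the first- and second-order Taylor terms in the central difference that produces the $\mu^2$ (rather than $\mu$) dependence, and the Lipschitz bound on $\nabla^2 f$ is what controls the leftover third-order remainder. Beyond that, the argument is just Taylor expansion and the triangle inequality.
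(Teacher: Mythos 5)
Your proof is correct and follows essentially the same route as the paper: observe that $\mathcal{H}_f(x)v = \hat{\nabla}_{coord}f(x+v)-\hat{\nabla}_{coord}f(x)$, split by the triangle inequality into the Hessian-Taylor term $\|\nabla f(x+v)-\nabla f(x)-\nabla^2 f(x)v\|\le\tfrac{\rho}{2}\|v\|^2$ plus two coordinate-wise estimator errors each bounded by $\sqrt{d}\rho\mu^2/6$. The only cosmetic difference is that you establish the $\sqrt{d}\rho\mu^2/6$ bound via the integral form of the Taylor remainder, while the paper invokes its separate Lemma~\ref{lemma: coord-square-bound}, which derives the same bound from the cubic Taylor remainder estimate of Lemma~\ref{lemma: Lipschitz}.
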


The ZO Hessian-vector product estimator was previously studied in \cite{ye2018hessian,lucchi2021second}, but we provide a tighter bound than that in Lemma 6 in \cite{lucchi2021second}. This is because we utilize properties of the central difference version of the coordinate-wise gradient estimator under the Hessian Lipschitz assumption. It is then directly concluded that, if $f(\cdot)$ is quadratic, we have $\rho = 0$ and $\|\mathcal{H}_f(x) v - \nabla^2 f(x) v\| = 0$.

\section{Zeroth-Order Negative Curvature Finding}
In this section, we introduce how to find the negative curvature direction near the saddle point using zeroth-order methods. Recently, based on the fact that the Hessian-vector product $\nabla^2 f(x)\cdot v$ can be approximated by $\nabla f(x+v) - \nabla f(x)$ with approximation error up to $\mathcal{O}(\|v\|^2)$, \cite{allen2018neon2} proposed a FO framework named Neon2 that can replace the Hessian-vector product computations in NCF subroutine with gradient computations and thus can turn a FO algorithm for finding FOSPs into a FO algorithm for finding SOSPs. Enlightened by Neon2, we propose two zeroth-order NCF frameworks (\emph{i.e.}, \textit{ZO-NCF-Online} and \textit{ZO-NCF-Deterministic}) using only function queries to solve nonconvex problems in the online stochastic setting and offline deterministic setting, respectively. 

\subsection{Stochastic Setting}

In this subsection, we focus on solving the NCF problem with zeroth-order methods under the online stochastic setting and propose \textit{ZO-NCF-Online}. Before introducing \textit{ZO-NCF-Online}, we first introduce \textit{ZO-NCF-Online-Weak} with weak confidence of $2/3$ for  solving the NCF problem. 

We summarize \textit{ZO-NCF-Online-Weak} in Algorithm~\ref{alg: ZO-NCF-Online-weak}. Specifically, \textit{ZO-NCF-Online-Weak} consists of at most $T=\mathcal{O}(\frac{\log^2 d}{\delta^2})$ iterations and works as follows: Given a detection point $x_0$, add a random perturbation with small magnitude $\sigma$ as the starting point. At the $t$-th iteration where $t=1,\dots,T$, set $\mu_t = \|x_t - x_0\|$ to be the smoothing parameter $\mu$ in \eqref{eq: ZO-Hessian-Vector-Estimator}. Then we  keep updating $x_{t+1} = x_t - \eta \mathcal{H}_{f_i}(x_0) (x_t - x_0)$ where $\mathcal{H}_{f_i}(x_0) (x_t - x_0)$ is the ZO Hessian-vector product estimator and stops whenever $\|x_{t+1} - x_0\| \ge r$ or the maximum iteration number $T$ is reached. Thus as long as Algorithm~\ref{alg: ZO-NCF-Online-weak} does not terminate, we have that the approximation error $\|\mathcal{H}_{f_i}(x_0) (x_t - x_0) - \nabla^2 f_i (x_0) (x_t - x_0) \|$ can be bounded by $\mathcal{O} (\sqrt{d} r^2)$ according to Lemma~\ref{lemma: error-bound-of-zo-hessian-vetor-product}. Note that, although the error bound is poorer by a factor of $\mathcal{O}(\sqrt{d})$ as compared to $Neon^{online}_{weak}$ in \cite{allen2018neon2} which used the difference of two gradients to approximate the Hessian-vector product and achieve an approximation error up to $\mathcal{O}(r^2)$, with our choice of $r$ in Algorithm~\ref{alg: ZO-NCF-Online-weak}, the error term is still efficiently upper bounded.

\begin{algorithm}[htb]
	\caption{ZO-NCF-Online-Weak ($f$, $x_0$, $\delta$)}
	\label{alg: ZO-NCF-Online-weak}
	\renewcommand{\algorithmicrequire}{\textbf{Initialization:}}
	\renewcommand{\algorithmicensure}{\textbf{Return}}
	\begin{algorithmic}[1]
		\State $\eta \gets \frac{\delta}{C_0^2 \ell^2 \log(100d)}$, $T \gets \frac{C_0^2 \log(100d)}{\eta \delta}$, $\sigma \gets \frac{\eta^2 \delta^3}{(100d)^{3C_0}\rho}$, $r \gets (100d)^{C_0} \sigma$
		\State $\xi \gets \sigma \frac{\xi'}{\|\xi'\|}$, with $ \xi' \sim \mathcal{N}(0,\mathbf{I})$
		\State $x_1 \gets x_0 + \xi$
		\For{$t=1,\dots,T$}
		\State $\mu_t \gets \|x_t - x_0\|$
		\State	
		    $x_{t+1} = x_t - \eta \mathcal{H}_{f_i}(x_0) (x_t - x_0)$ with $\mu = \mu_t$ and $i \in [n]$
		\If{$\|x_{t+1} - x_0\| \ge r$} 
		\Return $v=\frac{x_s-x_0}{\|x_s-x_0\|}$ for a uniformly random $s\in[t]$ 
		\EndIf
		\EndFor
		\Ensure $v=\bot$
	\end{algorithmic}
\end{algorithm}

Other than the additional error term caused by ZO approximation, the motivation  of \textit{ZO-NCF-Online-Weak} is almost the same as $Neon^{online}_{weak}$. That is, under reasonable control of the approximation error of the Hessian-vector product, using the update rule of Oja's method \cite{oja1982simplified} to approximately calculate the eigenvector corresponding to the minimum eigenvalue of $\nabla^2 f(x_0)=\frac{1}{n}\sum_{i=1}^n \nabla^2 f_i(x_0)$. Under similar analysis, we conclude that as long as the minimum eigenvalue of $\nabla^2 f(x_0)$ satisfies $\lambda_{min} (\nabla^2 f(x_0)) \le -\delta$, \textit{ZO-NCF-Online-Weak} will stop before $T$ and find a negative curvature direction that aligns well with the eigenvector corresponding to the minimum eigenvalue of $\nabla f^2(x_0)$.
Then we have the following lemma:

\begin{lemma}[ZO-NCF-Online-Weak]
\label{thm: online-weak}
    The output $v$ of Algorithm~\ref{alg: ZO-NCF-Online-weak} satisfies: If $\lambda_{min }(\nabla^2 f(x_0)) \le -\delta$, then with probability at least $2/3$, $v \neq \bot$ and $v^\T \nabla^2 f(x_0)v \le -\frac{3}{4}\delta$.
\end{lemma}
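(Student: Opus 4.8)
The plan is to recognize Algorithm~\ref{alg: ZO-NCF-Online-weak} as Oja's iteration \cite{oja1982simplified} for the smallest eigenvector of $\M := \nabla^2 f(x_0)$, perturbed in two ways, and then to run the streaming-PCA style analysis of $Neon^{online}_{weak}$ from \cite{allen2018neon2} on top of it. Writing $y_t := x_t - x_0$, the update of line~6 becomes $y_{t+1} = y_t - \eta\, \mathcal{H}_{f_{i_t}}(x_0) y_t = (\I - \eta \nabla^2 f_{i_t}(x_0)) y_t - \eta\, \zeta_t$, where $i_t$ is the fresh index sampled at iteration $t$ and $\zeta_t := \mathcal{H}_{f_{i_t}}(x_0) y_t - \nabla^2 f_{i_t}(x_0) y_t$ is the zeroth-order estimation error. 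Decomposing $\nabla^2 f_{i_t}(x_0) = \M + \Xi_t$ with $\E[\Xi_t\mid\mathcal{F}_{t-1}] = 0$ and $\|\Xi_t\|\le 2\ell$ (each $f_i$ is $\ell$-smooth), this is exactly Oja's method driven by a bounded martingale noise $\Xi_t$ together with the extra deterministic bias $-\eta\zeta_t$.

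The first, and genuinely new, step is to control $\zeta_t$. At every iteration $t$ reached before the algorithm returns we have $\|y_t\| < r$, and line~5 sets $\mu_t = \|y_t\|$; hence Lemma~\ref{lemma: error-bound-of-zo-hessian-vetor-product} applied with $v = y_t$ and $\mu = \|y_t\|$ gives $\|\zeta_t\| \le \rho(\|y_t\|^2/2 + \sqrt d\, \|y_t\|^2/3) \le \rho\sqrt d\,\|y_t\|^2 \le (\rho\sqrt d\, r)\,\|y_t\|$. Substituting the parameters of line~1 — $\eta = \Theta(\delta/(\ell^2\log d))$, $\sigma = \eta^2\delta^3/((100d)^{3C_0}\rho)$, $r = (100d)^{C_0}\sigma$ — one gets $\rho\sqrt d\, r = \sqrt d\,\eta^2\delta^3/(100d)^{2C_0} \ll \delta$ for $C_0$ a large enough absolute constant. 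Thus $-\eta\zeta_t$ acts on $y_t$ like a perturbation of operator size $\eta\cdot o(\delta)$: it shifts every relevant eigenvalue of the effective iteration matrix by $o(\delta)$, so a $\le -\delta$ eigendirection of $\M$ still expands at rate $\ge 1 + \Omega(\eta\delta)$ per step, and a $\ge -\tfrac34\delta$ eigendirection still expands at rate $\le 1 + (\tfrac34 + o(1))\eta\delta$. This is precisely why $\sigma$ carries the extra $(100d)^{3C_0}$ and $r$ the $(100d)^{C_0}$: the price of coordinate-wise estimation is only a $\sqrt d$ factor in the error of $\mathcal{H}_f$, and it is absorbed into these constants without changing $T$.

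The remaining argument is the $Neon^{online}_{weak}$ analysis, which I would only sketch. Split the eigenspaces of $\M$ into a ``good'' block $G$ (eigenvalues $\le -\tfrac78\delta$, which is nonempty since $\lambda_{min}\le -\delta$ by hypothesis) and its complement, with $P_G$ the orthogonal projector onto $G$. (i) \emph{Initialization}: since $\xi = \sigma\,\xi'/\|\xi'\|$ with $\xi'\sim\mathcal N(0,\I)$, Gaussian anti-concentration gives, with probability at least a fixed constant, $\|P_G y_1\| \ge \sigma/(c\sqrt d)$ for an absolute $c$. (ii) \emph{Escape}: tracking $\E\|P_G y_t\|^2$ along the recursion, the multiplicative drift $\ge (1+\Omega(\eta\delta))^2$ per step dominates the noise variance $O(\eta^2\ell^2\|y_t\|^2)$ and the $o(\eta\delta)$ bias, so with constant probability $\|P_G y_t\|$, hence $\|y_t\|$, exceeds $r = (100d)^{C_0}\sigma$ within $T = \Theta(\log(100d)/(\eta\delta))$ iterations, i.e. $v\neq\bot$. (iii) \emph{Quality}: because directions outside $G$ grow by at most $1 + (\tfrac34+o(1))\eta\delta$ per step while the leading direction in $G$ grows by $\ge 1+\Omega(\eta\delta)$, and $T$ is chosen so that $(1+\Omega(\eta\delta))^{T} \ge (100d)^{\Omega(C_0)}$, at the stopping time $\|(\I-P_G) y\|^2/\|y\|^2 \le \delta/(8\ell)$ with constant probability — and, by the uniformly random choice of $s\in[t]$, for the returned iterate in particular; then $v^\T\M v = \sum_i \lambda_i \langle v, u_i\rangle^2 \le -\tfrac78\delta\,(1 - \tfrac{\delta}{8\ell}) + \ell\cdot\tfrac{\delta}{8\ell} \le -\tfrac34\delta$. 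Choosing the constants so that all these ``constant probability'' events hold simultaneously with probability $\ge 2/3$ finishes the proof.

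\textbf{Main obstacle.} The substantive difficulty is the coupled stochastic analysis in steps (ii)--(iii): controlling the Oja iteration under the bounded martingale noise $\Xi_t$ so that $y_t$ not merely grows but grows \emph{predominantly inside} $G$, and doing so uniformly over the unknown spectral gap — this is the technical core inherited from \cite{allen2018neon2,oja1982simplified}. By contrast, the zeroth-order-specific content is light: once Lemma~\ref{lemma: error-bound-of-zo-hessian-vetor-product} is available, the choice $\mu_t = \|y_t\|$ makes the estimator error scale like $\rho\sqrt d\,\|y_t\|^2$, and the $(100d)^{3C_0}$ factor built into $\sigma$ is exactly what renders this extra term negligible relative to $\delta$.
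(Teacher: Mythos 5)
Your zeroth-order error control is exactly the paper's: set $\mu_t = \|y_t\|$, apply Lemma~\ref{lemma: error-bound-of-zo-hessian-vetor-product} to get $\|\zeta_t\| \le \rho\sqrt d\,\|y_t\|^2 \le (\rho\sqrt d\,r)\|y_t\|$, and note that the $(100d)^{3C_0}$ baked into $\sigma$ makes $\rho\sqrt d\,r = o(\delta)$. Where you diverge is the Oja analysis itself, and the diverging route as sketched does not close.

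The paper does not project onto an eigenspace. It uses the \emph{additive} decomposition $z_t = u_t + v_t$, where $v_t := (\I-\eta\B_t)\cdots(\I-\eta\B_1)\xi$ is the exact stochastic Oja iterate (no ZO error) and $u_t := z_t - v_t$ collects the accumulated ZO bias. It then invokes the Neon2 growth-and-curvature estimate (Lemma~\ref{lemma: growth-and-curvature}) \emph{for $v_t$ alone}: with probability $\ge 99/100$ the Rayleigh quotient of $v_t$ is $\le -(1-C\eta\ell)\lambda + C(\tfrac{\log d}{\eta t} + \sqrt{\ell^2/t} + \lambda\eta^2\ell^2 t)$ on the whole window $[T_0,T_1]$, and separately $\|u_t\|/\|v_t\| \lesssim \delta/\ell$; the Rayleigh quotient of $z_t$ then follows by a perturbation argument. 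Your projection-based sketch would need to reprove something of that strength, and as stated it has a concrete gap in step (iii): with $G$ defined by eigenvalues $\le -\tfrac78\delta$, directions just \emph{outside} $G$ (eigenvalue $\lambda \in (-\tfrac78\delta, -\tfrac34\delta)$, say) grow at rate $\approx 1 + \tfrac78\eta\delta$, not $1 + (\tfrac34 + o(1))\eta\delta$ — there is no spectral gap at the threshold, so the power iteration does not deplete the complement of $G$ relative to $G$. And even granting your bound $\|(\I-P_G)y\|^2/\|y\|^2 \le \delta/(8\ell)$, the arithmetic yields $v^\T\M v/\|v\|^2 \le -\tfrac78\delta\bigl(1-\tfrac{\delta}{8\ell}\bigr) + \ell\cdot\tfrac{\delta}{8\ell} \le -\tfrac{41}{64}\delta$, which is weaker than $-\tfrac34\delta$. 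The fixed-threshold projection argument works when a spectral gap is assumed; without one, the refined Rayleigh-quotient-over-a-window estimate (which Neon2 supplies and the paper imports) is exactly the missing piece, and it is not obtainable by the naive drift comparison you sketch.
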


\begin{algorithm}[htb]
	\caption{ZO-NCF-Online}
	\label{alg: ZO-NCF-Online}
	\renewcommand{\algorithmicrequire}{\textbf{Input:}}
	\renewcommand{\algorithmicensure}{\textbf{Return}}
	\begin{algorithmic}[1]
		\Require $f(\cdot)=\frac{1}{n}\sum_{i=1}^nf_i(\cdot)$, $x_0$, $\delta>0$, $p\in(0,1]$.
		\For{$j=1,2,\cdots,\Theta(\log(1/p))$}
		\State $v_j \gets$ \hyperref[alg: ZO-NCF-Online-weak]{ZO-NCF-Online-Weak} ($f, x_0, \delta$)
		\If{$v_j \neq \bot$}
		\State $m \gets \Theta(\frac{\ell^2 \log(1/p)}{\delta^2}) , v' \gets \Theta(\frac{\delta}{d \rho})v_j$
		\State Draw $i_1,\dots,i_m$ uniformly randomly from $[n]$
		\State $z_j = \frac{1}{m} \sum_{k=1}^m \frac{(v')^T \mathcal{H}_{f_{i_k}}(x_0) v'}{\|v'\|^2} $
		\If{$z_j \le -\frac{3\delta}{4}$} 
		\Return $v= v_j$
        \EndIf
        \EndIf
        \EndFor
		\Ensure $v=\bot$
	\end{algorithmic}
\end{algorithm}

We summarize \textit{ZO-NCF-Online} in Algorithm~\ref{alg: ZO-NCF-Online}. Specifically, \textit{ZO-NCF-Online} repeatedly calls \textit{ZO-NCF-Online-Weak} for $\Theta(\log(1/p))$ times to boost the confidence of solving the NCF problem from $2/3$ to $1-p$. We have the following results:

\begin{lemma}
\label{thm: online}
    In the same setting as in Algorithm~\ref{alg: ZO-NCF-Online}, define $z = \frac{1}{m}\sum_{j=1}^{m} v^\T (\mathcal{H}_{f_{i_j}} (x_0))v$.
    Then, if $\|v\|\le \frac{\delta}{16d \rho}$ and $m=\Theta(\frac{\ell^2 }{\delta^2})$, with probability at least $1-p$, we have $\left| \frac{z}{\|v\|^2} - \frac{v^\T \nabla^2 f(x) v}{\|v\|^2} \right|\le \frac{\delta}{4}$.
\end{lemma}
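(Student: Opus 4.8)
The plan is to bound the deviation $\left|\frac{z}{\|v\|^2} - \frac{v^\T \nabla^2 f(x_0) v}{\|v\|^2}\right|$ by splitting it into two sources of error: a \emph{bias} term coming from the zeroth-order Hessian-vector product approximation, and a \emph{concentration} term coming from sampling only $m$ component functions. For the bias, I would write $\frac{v^\T(\mathcal{H}_{f_{i_j}}(x_0))v - v^\T \nabla^2 f_{i_j}(x_0) v}{\|v\|^2}$ and invoke Lemma~\ref{lemma: error-bound-of-zo-hessian-vetor-product} applied to each $f_{i_j}$ (which is $\rho$-Hessian Lipschitz by Assumption~\ref{assum: basic}), together with Cauchy--Schwarz, to get that each such term is at most $\rho(\|v\| + \sqrt{d}\mu^2/\|v\|)$ up to constants; choosing $\mu$ sufficiently small (polynomially small, as the algorithm does) makes the $\sqrt{d}\mu^2/\|v\|$ part negligible, and the hypothesis $\|v\| \le \frac{\delta}{16 d\rho}$ is exactly what forces the leading $\rho\|v\|$ part below, say, $\delta/16$. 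Averaging over $j$ preserves this bound, so $\left|\frac{z}{\|v\|^2} - \frac{1}{m}\sum_j \frac{v^\T \nabla^2 f_{i_j}(x_0)v}{\|v\|^2}\right| \le \delta/8$ (absorbing the $\mu$ term).

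Next I would handle the concentration term $\frac{1}{m}\sum_{j=1}^m \frac{v^\T \nabla^2 f_{i_j}(x_0) v}{\|v\|^2} - \frac{v^\T \nabla^2 f(x_0) v}{\|v\|^2}$. Here the summands are i.i.d.\ (since $i_1,\dots,i_m$ are drawn uniformly from $[n]$) with mean exactly $\frac{v^\T \nabla^2 f(x_0) v}{\|v\|^2}$ because $\nabla^2 f = \frac{1}{n}\sum_i \nabla^2 f_i$, and each is bounded in absolute value by $\ell$ since each $f_i$ is $\ell$-Lipschitz smooth (so $\|\nabla^2 f_i\| \le \ell$ and thus $|v^\T \nabla^2 f_i(x_0) v|/\|v\|^2 \le \ell$). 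A Hoeffding (or Bernstein) bound then gives that with probability at least $1-p$ this average deviates from its mean by at most $O\!\left(\ell\sqrt{\log(1/p)/m}\right)$, and choosing $m = \Theta(\ell^2 \log(1/p)/\delta^2)$ with a large enough constant makes this at most $\delta/8$. Combining the two pieces by the triangle inequality yields the claimed bound $\delta/4$.

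The main obstacle is bookkeeping the constants and the smoothing parameter $\mu$: one must check that the choice of $\mu$ used inside the call to $\mathcal{H}_{f_{i_k}}(x_0) v'$ in Algorithm~\ref{alg: ZO-NCF-Online} (and the implicit $\mu$ in the lemma statement) is small enough that $\sqrt{d}\mu^2/\|v\|$ is dominated, which requires $\mu$ polynomially small in $d, \delta, \rho$ — consistent with the settings elsewhere in the paper. A secondary subtlety is that the lemma as stated writes $\nabla^2 f(x)$ rather than $\nabla^2 f(x_0)$; I would read this as a typo for $x_0$ (the detection point), since that is the point at which all the Hessian-vector estimators are evaluated, and the proof goes through verbatim with $x_0$. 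Everything else is a routine application of Lemma~\ref{lemma: error-bound-of-zo-hessian-vetor-product}, the spectral-norm bound from $\ell$-smoothness, and a standard concentration inequality.
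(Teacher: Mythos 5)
Your proposal is correct and mirrors the paper's proof: both split the deviation via the triangle inequality into a deterministic zeroth-order bias term (handled by Lemma~\ref{lemma: error-bound-of-zo-hessian-vetor-product} together with the hypothesis $\|v\|\le \delta/(16d\rho)$) and a sampling concentration term (Chernoff/Hoeffding with per-sample spectral bound $\ell$ and $m=\Theta(\ell^2\log(1/p)/\delta^2)$). The only cosmetic variations are that the paper fixes the smoothing parameter at $\mu=\|v\|$ — so the per-sample bias is $\rho\sqrt{d}\|v\|\le\delta/(16\sqrt{d})$ rather than a negligible-$\mu$ version of $\rho\|v\|$ — and it applies the concentration inequality directly to the noisy $z_j = v^{\T}\mathcal{H}_{f_{i_j}}(x_0)v$ before peeling off the bias of $\E[z]$, whereas you subtract the bias first; either ordering and either choice of $\mu$ closes the argument.
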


\begin{theorem}
\label{thm: ZO-NCF-Online}
    Let $f(x) = \frac{1}{n} \sum_{i=1}^{n} f_i(x)$ where each $f_i$ is $\ell$-smooth and $\rho$-Hessian Lipschitz. For every point $x_0 \in \R^d$, every $\delta \in (0,\ell]$, the output of Algorithm~\ref{alg: ZO-NCF-Online} $v$ satisfies that, with probability at least $1-p$: If $v = \bot$, then $\nabla^2 f(x_0) \succeq -\delta \I$; If $v \neq \bot$, then $\|v\|=1$ and $v^\T \nabla^2 f(x_0) v \le -\frac{\delta}{2}$.
    The total function query complexity is 
    \begin{align*}
        \mathcal{O}\left( \frac{d \log^2 (d/p) \ell^2}{\delta^2}\right).
    \end{align*}
\end{theorem}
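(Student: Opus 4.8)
The plan is to combine the two lemmas already established—Lemma~\ref{thm: online-weak} (the weak-confidence subroutine) and Lemma~\ref{thm: online} (the concentration of the averaged Rayleigh quotient)—via a standard confidence-boosting argument, and then bookkeep the function queries. First I would address correctness. Consider the two cases for the returned value $v$. If the algorithm returns $v = v_j \neq \bot$, it must have passed the test $z_j \le -\tfrac{3\delta}{4}$ for some $j$. Applying Lemma~\ref{thm: online} with the choice $v' = \Theta(\tfrac{\delta}{d\rho}) v_j$ (so that $\|v'\| \le \tfrac{\delta}{16 d \rho}$, as required) and $m = \Theta(\tfrac{\ell^2 \log(1/p)}{\delta^2})$, we get that with probability at least $1 - p/\mathrm{poly}$, $\left| \tfrac{z_j}{\|v'\|^2} - \tfrac{v_j^\T \nabla^2 f(x_0) v_j}{\|v_j\|^2}\right| \le \tfrac{\delta}{4}$, hence $\tfrac{v_j^\T \nabla^2 f(x_0) v_j}{\|v_j\|^2} \le -\tfrac{3\delta}{4} + \tfrac{\delta}{4} = -\tfrac{\delta}{2}$; normalizing $v_j$ gives $\|v\| = 1$ and $v^\T \nabla^2 f(x_0) v \le -\tfrac{\delta}{2}$. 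If instead the algorithm returns $v = \bot$, then in every one of the $\Theta(\log(1/p))$ rounds either the weak subroutine returned $\bot$ or the test $z_j \le -\tfrac{3\delta}{4}$ failed. I would argue the contrapositive: if $\lambda_{\min}(\nabla^2 f(x_0)) < -\delta$, then by Lemma~\ref{thm: online-weak} each independent call to \textit{ZO-NCF-Online-Weak} returns $v_j \neq \bot$ with $v_j^\T \nabla^2 f(x_0) v_j \le -\tfrac34 \delta$ independently with probability $\ge 2/3$; and conditioned on such a good $v_j$, Lemma~\ref{thm: online} ensures the empirical test $z_j$ is within $\tfrac{\delta}{4}$ of the true Rayleigh quotient, so $z_j \le -\tfrac34\delta + \tfrac{\delta}{4} < -\tfrac{\delta}{2}$—wait, the test threshold is $-\tfrac{3\delta}{4}$, so I actually need the Rayleigh quotient to be $\le -\delta$ to pass with margin, which is exactly what $\lambda_{\min} \le -\delta$ combined with a good alignment from the weak subroutine delivers (the weak lemma's $-\tfrac34\delta$ guarantee is for the "just barely $-\delta$" case; more careful tracking shows that when $\lambda_{\min} \le -\delta$ the returned direction's quotient concentrates below $-\tfrac{3\delta}{4}$). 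Across $\Theta(\log(1/p))$ independent rounds the probability that none succeeds is at most $(1/3 + o(1))^{\Theta(\log(1/p))} \le p/2$, and a union bound over the at most $\Theta(\log(1/p))$ concentration events (each with failure probability $p/\mathrm{poly}(\log(1/p))$) handles the rest, giving total failure probability $\le p$. Contrapositively, if the algorithm outputs $\bot$ then with probability $\ge 1-p$ we have $\lambda_{\min}(\nabla^2 f(x_0)) \ge -\delta$, i.e. $\nabla^2 f(x_0) \succeq -\delta\I$.

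Next I would count function queries. Each iteration of the inner loop of \textit{ZO-NCF-Online-Weak} evaluates one ZO Hessian-vector product $\mathcal{H}_{f_i}(x_0)(x_t - x_0)$, which by \eqref{eq: ZO-Hessian-Vector-Estimator} costs $\mathcal{O}(d)$ function queries; the loop runs at most $T = \mathcal{O}(\tfrac{C_0^2 \log(100d)}{\eta\delta}) = \mathcal{O}(\tfrac{\ell^2 \log^2 d}{\delta^2})$ times after substituting $\eta = \tfrac{\delta}{C_0^2 \ell^2 \log(100d)}$. Hence one call to the weak subroutine costs $\mathcal{O}(\tfrac{d \ell^2 \log^2 d}{\delta^2})$ queries. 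In \textit{ZO-NCF-Online} this is invoked $\Theta(\log(1/p))$ times, and whenever it succeeds we additionally compute $m = \Theta(\tfrac{\ell^2 \log(1/p)}{\delta^2})$ Hessian-vector products $\mathcal{H}_{f_{i_k}}(x_0) v'$, each again $\mathcal{O}(d)$ queries, for $\mathcal{O}(\tfrac{d \ell^2 \log(1/p)}{\delta^2})$ queries per round. Summing, the dominant term is $\Theta(\log(1/p)) \cdot \mathcal{O}(\tfrac{d\ell^2 \log^2 d}{\delta^2}) = \mathcal{O}(\tfrac{d \ell^2 \log^2(d/p)}{\delta^2})$ after absorbing $\log(1/p)$ and $\log^2 d$ into $\log^2(d/p)$, matching the claimed bound.

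The main obstacle I anticipate is the correctness argument in the $v \neq \bot$ direction combined with getting the constants to line up: I must ensure the weak subroutine's output quotient guarantee ($-\tfrac34\delta$), the test threshold ($-\tfrac34\delta$), and the concentration radius ($\tfrac{\delta}{4}$) fit together so that (a) a genuinely negative-curvature point reliably passes the test and (b) the test never certifies a bad direction. This requires that the Rayleigh quotient of the weak subroutine's output be bounded away from the threshold by at least the concentration radius, which forces me to re-examine Lemma~\ref{thm: online-weak}: its stated conclusion $v^\T \nabla^2 f(x_0) v \le -\tfrac34\delta$ has no slack against a threshold of $-\tfrac34\delta$. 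The resolution is that when $\lambda_{\min}(\nabla^2 f(x_0)) \le -\delta$ strictly, the Oja-type analysis actually yields a sharper alignment so the quotient is close to $\lambda_{\min} \le -\delta$, clearing $-\tfrac34\delta$ by a margin exceeding $\tfrac\delta4$; I would either cite the finer bound from the proof of Lemma~\ref{thm: online-weak} or re-derive it. A secondary, routine obstacle is verifying that the scaling $v' = \Theta(\tfrac{\delta}{d\rho}) v_j$ satisfies the hypothesis $\|v'\| \le \tfrac{\delta}{16 d\rho}$ of Lemma~\ref{thm: online}—this is immediate from choosing the hidden constant small enough—and that the ZO smoothing parameter $\mu$ used inside $\mathcal{H}_{f_{i_k}}$ is set small enough (say $\mu = \mathcal{O}(\delta/(d\rho\sqrt d))$) that the $\sqrt d\, \mu^2$ term in Lemma~\ref{lemma: error-bound-of-zo-hessian-vetor-product} is negligible.
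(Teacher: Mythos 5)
Your strategy—pass the weak subroutine's candidate through the concentration test of Lemma~\ref{thm: online}, repeat $\Theta(\log(1/p))$ times, and take a union bound—is exactly the route the paper implicitly takes (no separate proof of this theorem appears in the appendix), and your treatment of the $v\neq\bot$ direction and the query budget is essentially correct. The margin mismatch you flag is genuine: Lemma~\ref{thm: online-weak} certifies only $v_j^\T\nabla^2 f(x_0)v_j \le -\frac{3}{4}\delta$ for the unit vector $v_j$, while Lemma~\ref{thm: online} gives a concentration window of radius $\frac{\delta}{4}$, so a ``good'' $v_j$ is only guaranteed to produce $z_j \le -\frac{3}{4}\delta + \frac{\delta}{4} = -\frac{\delta}{2}$, which does not pass the acceptance test $z_j \le -\frac{3\delta}{4}$ in Algorithm~\ref{alg: ZO-NCF-Online}. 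As stated, the two lemmas cannot close the direction ``$\lambda_{\min}(\nabla^2 f(x_0)) < -\delta$ implies $v\neq\bot$''.

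Where your proposed resolution falls short: tracing the proof of Lemma~\ref{thm: online-weak}, the bound actually established in the boundary case $\lambda=\delta$ is $\frac{z_t^\T\nabla^2 f(x_0)z_t}{\|z_t\|^2}\le -\frac{17}{20}\cdot\frac{49}{50}\delta\approx -0.83\delta$, \emph{not} something near $\lambda_{\min}\le -\delta$, and $-0.83\delta+\frac{\delta}{4}\approx -0.58\delta$ still fails the $-\frac{3\delta}{4}$ test. Sharpening the weak lemma alone (which one can do by increasing $C_0$) does not suffice with a $\frac{\delta}{4}$ radius; the missing complementary step is to tighten Lemma~\ref{thm: online}. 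Its proof bounds the radius by $2(\rho\sqrt d\|v\|+\ell)\sqrt{\log(1/p)/m}+\rho\sqrt d\|v\|$, which with $\|v\|\le\frac{\delta}{16d\rho}$ and a larger absolute constant inside $m=\Theta(\frac{\ell^2\log(1/p)}{\delta^2})$ can be driven to roughly $\frac{\delta}{16}$. With weak-lemma guarantee $\approx -0.83\delta$ and radius $\le\frac{\delta}{16}$ one has $-0.83\delta+\frac{\delta}{16}\le -\frac{3}{4}\delta$ (a good $v_j$ reliably passes) and $-\frac{3}{4}\delta+\frac{\delta}{16}\le -\frac{\delta}{2}$ (passing certifies $v^\T\nabla^2 f(x_0)v\le -\frac{\delta}{2}$); both directions close. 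A complete proof must pin down both constant adjustments, not just the one you identified. (Minor: summing your counts yields $\mathcal{O}(\frac{d\ell^2\log^2 d\,\log(1/p)}{\delta^2})$, which is not dominated by $\mathcal{O}(\frac{d\ell^2\log^2(d/p)}{\delta^2})$ when $\log d\gg\log(1/p)$; that wrinkle is inherited from the theorem's stated bound rather than introduced by you.)
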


\subsection{Deterministic Setting}

In this subsection, we focus on solving the NCF problem with zeroth-order methods under the offline deterministic setting and propose \textit{ZO-NCF-Deterministic}. We summarize \textit{ZO-NCF-Deterministic} in Algorithm~\ref{alg: ZO-NCF-Deterministic}. Since we want to  compute the eigenvector corresponding to the most negative eigenvalue of $\nabla^2 f(x_0)$ approximately, one can convert it into an approximated top eigenvector computation problem of $\M:= -\frac{1}{\ell} \nabla^2 f(x_0) + (1-\frac{3 \delta}{4 \ell}) \I$. This is because all eigenvalues of $\nabla^2 f(x_0)$ in $[-\frac{3\delta}{4}, \ell]$ will be mapped to eigenvalues of $\M$ in $[-1,1]$, and all eigenvalues of $\nabla^2 f(x_0)$ smaller than $-\delta$ will be mapped to eigenvalues of $\M$ greater than $1 + \frac{\delta}{4\ell}$.

\begin{algorithm}[htb]
	\caption{ZO-NCF-Deterministic}
	\label{alg: ZO-NCF-Deterministic}
	\renewcommand{\algorithmicrequire}{\textbf{Input:}}
	\renewcommand{\algorithmicensure}{\textbf{Return}}
	\begin{algorithmic}[1]
		\Require Function $f(\cdot)$, point $x_0$, negative curvature $\delta>0$, confidence $p\in(0,1]$.
		\State $T \gets \frac{C_1^2 \log \frac{d}{p} \sqrt{\ell}}{\sqrt{\delta}}, \sigma \triangleq \left(d/p\right)^{-2 C_1} \frac{\delta}{T^4 \rho}, r \triangleq (d/p)^{C_1}\sigma$
		
		\State $\xi \gets \sigma \frac{\xi'}{\|\xi'\|}$, with $ \xi' \sim \mathcal{N}(0,\mathbf{I})$
		
		\State $x_1 \gets x_0 + \xi$, $y_0 \gets 0, y_1 \gets \xi$
		
		\For{$t=1,\dots,T$}
		\State $\mu_t = \|y_t\|$
		\State	
		    $y_{t+1} = 2\mathcal{M}(y_t)-y_{t-1}$ where $\mathcal{M}(y)= (-\frac{1}{\ell}\mathcal{H}_f(x_0)+(1-\frac{3\delta}{4\ell}))y$
		\State $x_{t+1} = x_0 + y_{t+1} - \mathcal{M}(y_t)$
		\If{$\|x_{t+1} - x_0\| \ge r$} 
		\Return $v=\frac{x_{t+1}-x_0}{\|x_{t+1}-x_0\|}$
		\EndIf
		\EndFor
		\Ensure $v= \bot$.
	\end{algorithmic}
\end{algorithm}

Similar to \textit{ZO-NCF-Online-Weak}, \textit{ZO-NCF-Deterministic} starts by adding a random perturbation $\xi$ to the detection point $x_0$. To find the negative curvature direction $v$ of $\nabla^2 f(x_0)$ such that $v^\T \nabla^2 f(x_0) v \le -\frac{\delta}{2}$, the classical power method which updates through $x_{T+1} = x_0 + \M^T \xi$ \cite{lucchi2021second} will take $T \ge \tilde{\Omega}(\frac{\ell}{\delta})$ number of iterations since eigenvalues of $\M$ greater than $1+\frac{\delta}{4\ell}$ grows in a speed $(1+\delta/\ell)^T$. To reduce the iteration complexity $T$, we can replace the  matrix polynomial $\M^T$ with the matrix Chebyshev polynomial $\mathcal{T}_T (\M)$ and virtually update $x_{T+1} = x_0 + \mathcal{T}_T(\M) \xi$.
\begin{definition}
Chebyshev polynomial $\{\mathcal{T}_n(x)\}_{n\ge 0}$ of the first kind is 
\begin{equation*}
    \mathcal{T}_0(x) = 1, \quad \mathcal{T}_1(x) = x, \quad \mathcal{T}_{n+1}(x) = 2x\cdot \mathcal{T}_n(x) - \mathcal{T}_{n-1}(x),
\end{equation*}
then it satisfies $\mathcal{T}_t(x) = \begin{cases}
    \cos (n\arccos (x)), & x \in[-1,1] \\
    \frac{1}{2} [(x-\sqrt{x^2-1})^n + (x+ \sqrt{x^2-1})^n], & x > 1 
    \end{cases}$.
\end{definition}
In the matrix case, we have the so-called matrix Chebyshev polynomial $\mathcal{T}_t(\M)$ \cite{allen2017faster}, which satisfies: $\mathcal{T}_{t+1}(\M)\xi = 2\M \mathcal{T}_t (\M) \xi - \mathcal{T}_{t-1}(\M)\xi$. Thus, eigenvalues of $\M$ greater than $1 + \frac{\delta}{4\ell}$ will grow to $ \left(1 + \delta/4\ell + \sqrt{(\delta/4\ell)^2 + \delta/2\ell} \right)^T \approx  \left( 1 + \sqrt{\delta/\ell} \right)^T$, so we only need to choose $T\ge \sqrt{\ell/\delta}$.

On the other hand, since we only have access to the zeroth-order information, we need to stably compute the matrix Chebyshev polynomial. In algorithm~\ref{alg: ZO-NCF-Deterministic}, we set $\mu_t = \|y_t\|$ and use $\mathcal{M}(y_t) = (-\frac{1}{\ell}\mathcal{H}_f(x_0)+(1-\frac{3\delta}{4\ell}))y_t$ to approximate $\M y_t$ with approximation error up to $\frac{2\rho\sqrt{d} r t}{\ell} \|y_t\|$. With proper choice of $r$, it allows us to use the inexact backward recurrence \cite{allen2017faster} to ensure a stable computation of matrix Chebyshev polynomial:
\begin{equation*}
    y_0 = 0, \quad y_1 =\xi, \quad y_{t+1} = 2 \mathcal{M}(y_t) - y_{t-1}.
\end{equation*}
Then the output $x_{T+1} = x_0 + y_{T+1} - \mathcal{M}(y_T)$ is close to $x_0 + \mathcal{T}_T(\M)\xi$ with a small approximation error. Finally, we have the following theorem:

\begin{theorem}
\label{thm: deterministic}
Let $f(x) = \frac{1}{n} \sum_{i=1}^{n} f_i(x)$ where each $f_i$ is $\ell$-smooth and $\rho$-Hessian Lipschitz. For every point $x_0 \in \R^d$, every $\delta \in (0,\ell]$, the output of Algorithm~\ref{alg: ZO-NCF-Deterministic} $v$ satisfies that, with probability at least $1-p$: If $v = \bot$, then $\nabla^2 f(x_0) \succeq -\delta \I$; If $v \neq \bot$, then $\|v\|=1$ and $v^\T \nabla^2 f(x_0) v \le -\frac{\delta}{2}$.
The function query complexity is 
\begin{align*}
    \mathcal{O}( \frac{d \log \frac{d}{p} \sqrt{\ell}}{\sqrt{\delta}}).
\end{align*}
\end{theorem}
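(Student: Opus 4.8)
The plan is to mirror the analysis of $Neon^{det}$ in \cite{allen2018neon2} and the accelerated power method of \cite{allen2017faster}, but carefully tracking the extra $\sqrt{d}$ factor introduced by the zeroth-order Hessian-vector estimator \eqref{eq: ZO-Hessian-Vector-Estimator}. Write $\H := \nabla^2 f(x_0)$ and $\M := -\frac{1}{\ell}\H + (1-\frac{3\delta}{4\ell})\I$, so that $\M$ has eigenvalues in $[-1,1]$ whenever the corresponding eigenvalue of $\H$ lies in $[-\frac{3\delta}{4},\ell]$, and eigenvalues exceeding $1+\frac{\delta}{4\ell}$ whenever the corresponding eigenvalue of $\H$ is below $-\delta$. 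First I would establish the per-iteration approximation guarantee: as long as the algorithm has not terminated, $\|y_t\| < $ (something comparable to $r$ up to the backward-recurrence blowup), so by Lemma~\ref{lemma: error-bound-of-zo-hessian-vetor-product} with $\mu_t = \|y_t\|$ and $v = y_t$ we get $\|\mathcal{H}_f(x_0)y_t - \H y_t\| \le \rho(\|y_t\|^2/2 + \sqrt{d}\,\mu_t^2/3) = \frac{5\rho}{6}\sqrt{d}\,\|y_t\|^2$ (using $\mu_t=\|y_t\|$ and $d\ge1$), hence $\|\mathcal{M}(y_t) - \M y_t\| \le \frac{5\rho\sqrt{d}}{6\ell}\|y_t\|\cdot\|y_t\|$. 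Combined with the bound $\|y_t\| \le r$ maintained before termination, this gives a coordinatewise error of order $\frac{\rho\sqrt{d}\,r}{\ell}\|y_t\|$ per step, which is exactly the regime needed to invoke the inexact-backward-recurrence stability result.

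Next I would cite (or re-derive) the stable-computation lemma: if $y_{t+1} = 2\mathcal{M}(y_t) - y_{t-1}$ with $\mathcal{M}$ an $\varepsilon$-accurate version of $\M$ (in the above multiplicative-in-$\|y_t\|$ sense), then $x_{T+1} - x_0 = y_{T+1} - \mathcal{M}(y_T)$ satisfies $\|(x_{T+1}-x_0) - \mathcal{T}_T(\M)\xi\| \le \poly(T)\cdot\frac{\rho\sqrt{d}\,r}{\ell}\cdot\max_t\|y_t\| \cdot \|\xi\|$, provided all eigenvalues of $\M$ in play are bounded by a constant; the relevant blowup is $\poly(T)$ times the error, and this is where the choice $\sigma = (d/p)^{-2C_1}\frac{\delta}{T^4\rho}$ and $r=(d/p)^{C_1}\sigma$ enters — it makes $\frac{\rho\sqrt{d}\,r}{\ell}\cdot T^4$ negligible against the ideal signal. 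Then the argument splits into the two cases. If $\lambda_{\min}(\H) \ge -\delta$: all eigenvalues of $\M$ are $\le 1+\frac{\delta}{4\ell}$, so $|\mathcal{T}_T(\lambda)| \le \mathcal{T}_T(1+\frac{\delta}{4\ell}) \le \exp(O(T\sqrt{\delta/\ell})) = \poly(d/p)$ by the choice $T = C_1^2\log(d/p)\sqrt{\ell/\delta}$ with $C_1$ large; hence $\|\mathcal{T}_T(\M)\xi\|$ stays well below $r$ and, after adding the (tiny) recurrence error, $\|x_{T+1}-x_0\| < r$, so the algorithm returns $\bot$ — consistent with the claim. If $\lambda_{\min}(\H) \le -\delta$: decompose $\xi$ in the eigenbasis of $\H$; the component along eigenvalues of $\H$ below $-\delta - $ (a small margin, say $-\frac{7\delta}{8}$) has magnitude $\ge \sigma/\poly(d/p)$ with probability $\ge 1-p/2$ over the Gaussian direction (standard anti-concentration), and $\mathcal{T}_T$ amplifies it by $\ge (1+\sqrt{\delta/\ell})^T = \poly(d/p)$ (choosing $C_1$ large), which eventually exceeds $r$; meanwhile the "bad" component along $\H$-eigenvalues in $[-\frac{3\delta}{4},\ell]$ is amplified by at most $\poly$ with a strictly smaller exponent. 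So at the first $t$ where $\|x_{t+1}-x_0\|\ge r$, the returned $v=(x_{t+1}-x_0)/\|x_{t+1}-x_0\|$ is dominated by the large-negative-eigenvalue subspace, and a Rayleigh-quotient computation gives $v^\T\H v \le -\frac{3\delta}{4} + (\text{error from the small bad component and the ZO error}) \le -\frac{\delta}{2}$.

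The main obstacle, and the step I would spend the most care on, is the propagation of the zeroth-order error through the Chebyshev recurrence in the "escape" case: unlike $Neon2$ where the per-step error is $O(r^2)$, here it is $O(\sqrt{d}\,r^2)$ in absolute terms (equivalently $O(\sqrt{d}\,r\,\|y_t\|/\ell)$ after scaling), and one must verify that the inexact-backward-recurrence error bound of \cite{allen2017faster} still leaves the signal-to-noise ratio polynomially large — this is precisely why $\sigma$ is taken as small as $(d/p)^{-2C_1}\delta/(T^4\rho)$ and $r$ only a $(d/p)^{C_1}$ multiple of it, so that even after the $\poly(T)$ recurrence blowup and the extra $\sqrt{d}$, the absolute error is $\ll \sigma \ll r$ while the amplified signal reaches $r$. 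A secondary subtlety is confirming the hypothesis $\|y_t\|\le r$ used to get the error bound is self-consistent (induction on $t$ up to termination, using that before termination $\|x_{t+1}-x_0\|<r$ and that $y_{t+1}$ and $x_{t+1}-x_0$ differ only by the controlled term $\mathcal{M}(y_T)$). Finally, the query complexity $\mathcal{O}(d\log(d/p)\sqrt{\ell}/\sqrt{\delta})$ is immediate: $T = \mathcal{O}(\log(d/p)\sqrt{\ell/\delta})$ iterations, each invoking \eqref{eq: ZO-Hessian-Vector-Estimator} once at cost $\mathcal{O}(d)$ function queries.
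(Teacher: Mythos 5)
Your proposal follows the same route as the paper: set up the Chebyshev matrix $\M$, use the inexact backward recurrence together with the stable-computation lemma (Lemma~\ref{lemma: stable-computation-of-Chebyshev-Polynomials}), split by whether $\lambda_{\min}(\nabla^2 f(x_0)) \lessgtr -\delta$, use Gaussian anti-concentration to lower-bound the component of $\xi$ on the large-$\M$-eigenvalue subspace, and finish with a Rayleigh-quotient argument. The parameter choices, the $T = \mathcal{O}(\log(d/p)\sqrt{\ell/\delta})$ iteration count, and the $\mathcal{O}(d)$-per-iteration query count are all identified correctly.

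There is, however, a concrete error in the intermediate bound on $\|y_t\|$. You claim $\|y_t\| \le r$ is maintained before termination and propose to verify it by induction ``using that before termination $\|x_{t+1}-x_0\|<r$ and that $y_{t+1}$ and $x_{t+1}-x_0$ differ only by $\mathcal{M}(y_t)$.'' That induction does not close: from $x_{t+1} = x_0 + y_{t+1} - \mathcal{M}(y_t)$ and $y_{t+1} = 2\mathcal{M}(y_t) - y_{t-1}$ one obtains $y_{t+1} - y_{t-1} = 2(x_{t+1} - x_0)$, so $\|y_{t+1}\| \le \|y_{t-1}\| + 2\|x_{t+1}-x_0\| < \|y_{t-1}\| + 2r$, which telescopes to the linearly growing bound $\|y_t\| \le 2tr$ — not $\|y_t\| \le r$. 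Consequently the per-step operator error is $\|\mathcal{M}(y_t) - \M y_t\| \le \frac{2\rho\sqrt{d}\,r\,t}{\ell}\|y_t\|$, carrying an extra factor of $t$. When this maximum (over $t \le T$) per-step error is fed into the stable-computation lemma, one picks up $T \cdot T \cdot (T+2) \cdot T \sim T^4$ rather than the $T^3$ your accounting would give, and this is precisely why $\sigma$ is taken as $(d/p)^{-2C_1}\delta/(T^4\rho)$ rather than with $T^3$ in the denominator. Your mention of the $T^4$ in $\sigma$ accidentally matches the correct analysis, but your stated $\|y_t\| \le r$ bound is inconsistent with it and the proposed induction would fail; replacing it with the telescoping bound $\|y_t\| \le 2tr$ fixes the gap and aligns the proof with the paper's.
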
 

\section{Applications of Zeroth-Order Negative Curvature Finding}
\label{section: Applications of Zeroth-Order Negative Curvature Finding}
In this section, we focus on applying the zeroth-order negative curvature frameworks to the following ZO algorithms: ZO-GD, ZO-SGD, ZO-SCSG, and ZO-SPIDER. The following result shows that one can verify if a point $x$ is an $\epsilon$-approximate FOSP using \ref{eq: CoordGradEst}.
\begin{proposition}
\label{proposition: verify-gradient-online}
In the online setting, using \ref{eq: CoordGradEst} with a batch size of $\mathcal{O}\left( \left(\frac{128\sigma^2}{\epsilon^2}+1 \right)\log \frac{1}{p} \right)$ and smoothing parameter $\mu \le \sqrt{\frac{3\epsilon}{4 \rho \sqrt{d}}}$, we can verify with probability at least $1-p$, either $\|\nabla f(x)\|\ge \epsilon/2$ or $\|\nabla f(x)\| \le \epsilon$. In the deterministic setting, using once computation of \ref{eq: CoordGradEst} with smoothing parameter $\mu \le \sqrt{\frac{3\epsilon}{2\rho\sqrt{d}}}$, we can verify with probability $1$, either $\|\nabla f(x)\|\ge \epsilon/2$ or $\|\nabla f(x)\| \le \epsilon$.
\end{proposition}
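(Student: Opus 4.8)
The plan is to reduce the statement to two estimates and then read off a threshold test. First I would bound the smoothing bias of the central-difference coordinate estimator: for any $\rho$-Hessian-Lipschitz $g:\R^d\to\R$, expanding $g(x\pm\mu e_i)$ to second order and using the cubic Taylor remainder bound $|g(y)-g(x)-\nabla g(x)^\T(y-x)-\tfrac12(y-x)^\T\nabla^2 g(x)(y-x)|\le\tfrac{\rho}{6}\|y-x\|^3$ implied by Hessian-Lipschitzness, the function-value and second-order terms cancel in $g(x+\mu e_i)-g(x-\mu e_i)$, so the $i$-th coordinate of $\hat\nabla_{coord}g(x)-\nabla g(x)$ has magnitude at most $\rho\mu^2/6$; summing squares over $i$ gives $\|\hat\nabla_{coord}g(x)-\nabla g(x)\|\le\sqrt d\,\rho\mu^2/6$. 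Substituting $\mu\le\sqrt{3\epsilon/(2\rho\sqrt d)}$ makes this bias at most $\epsilon/4$, and $\mu\le\sqrt{3\epsilon/(4\rho\sqrt d)}$ makes it at most $\epsilon/8$, which are exactly the two smoothing radii in the statement; I would use this with $g=f$ in the deterministic case and with $g=f_{\mathcal S}$ in the online case, since an average of $\rho$-Hessian-Lipschitz functions is again $\rho$-Hessian-Lipschitz.

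For the deterministic case this already closes the argument. Write $E:=\hat\nabla_{coord}f(x)-\nabla f(x)$, so $\|E\|\le\epsilon/4$; by the triangle inequality, $\|\hat\nabla_{coord}f(x)\|\ge 3\epsilon/4$ forces $\|\nabla f(x)\|\ge\epsilon/2$, while $\|\hat\nabla_{coord}f(x)\|<3\epsilon/4$ forces $\|\nabla f(x)\|<\epsilon$. Hence one evaluation of \ref{eq: CoordGradEst}, a comparison of $\|\hat\nabla_{coord}f(x)\|$ with $3\epsilon/4$, and reporting the corresponding alternative, is correct with probability $1$.

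In the online case I would split the error as $\hat\nabla_{coord}f_{\mathcal S}(x)-\nabla f(x)=E_{\mathrm{bias}}+E_{\mathrm{samp}}$, where $E_{\mathrm{bias}}=\hat\nabla_{coord}f_{\mathcal S}(x)-\nabla f_{\mathcal S}(x)$ satisfies $\|E_{\mathrm{bias}}\|\le\sqrt d\,\rho\mu^2/6\le\epsilon/8$ deterministically, and $E_{\mathrm{samp}}=\nabla f_{\mathcal S}(x)-\nabla f(x)=\tfrac1{|\mathcal S|}\sum_{j\in\mathcal S}\big(\nabla f_j(x)-\nabla f(x)\big)$ is a mean-zero average of independent terms with $\E\|E_{\mathrm{samp}}\|^2\le\sigma^2/|\mathcal S|$. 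On the event $\{\|E_{\mathrm{samp}}\|<\epsilon/8\}$ the total error is below $\epsilon/4$, so the same $3\epsilon/4$-threshold test applied to $\hat\nabla_{coord}f_{\mathcal S}(x)$ is correct, and the remaining task is to make this event hold with probability at least $1-p$ using a batch of size $\mathcal O\big((\sigma^2/\epsilon^2+1)\log(1/p)\big)$. I expect this to be the only delicate step, because a plain Markov or Chebyshev bound on $\|E_{\mathrm{samp}}\|$ only yields a batch size of order $\sigma^2/(\epsilon^2 p)$. To recover the logarithmic dependence I would split the batch into $K=\Theta(\log(1/p))$ equal sub-batches of size $\Theta(\sigma^2/\epsilon^2+1)$, have each sub-batch cast the $3\epsilon/4$-threshold vote, observe that by Chebyshev on a single sub-batch that vote is \emph{safe} (it never reports $\|\nabla f(x)\|\ge\epsilon/2$ when $\|\nabla f(x)\|<\epsilon/2$ and never reports $\|\nabla f(x)\|\le\epsilon$ when $\|\nabla f(x)\|>\epsilon$) with probability at least $2/3$, and then drive the failure probability of the majority vote below $p$ by a Chernoff bound; the total number of sampled components is $\Theta\big((\sigma^2/\epsilon^2+1)\log(1/p)\big)$, matching the claimed batch size up to an absolute constant and using only the bounded-variance part of Assumption~\ref{assum: basic}. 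Alternatively, under the almost-sure bound of Assumption~\ref{assum: basic-high-probability} one gets $\Pr[\|E_{\mathrm{samp}}\|\ge\epsilon/8]\le p$ in one shot from a bounded-differences inequality applied to the map $(i_1,\dots,i_{|\mathcal S|})\mapsto\|E_{\mathrm{samp}}\|$, each of whose coordinates changes by at most $2\sigma/|\mathcal S|$, giving a sub-Gaussian tail of the form $\exp(-\Omega(|\mathcal S|\epsilon^2/\sigma^2))$ and the same batch size.
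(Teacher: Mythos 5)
Your decomposition into smoothing bias plus sampling error, the $3\epsilon/4$-threshold test, and the split into $\Theta(\log(1/p))$ sub-batches each of size $\Theta(\sigma^2/\epsilon^2+1)$ amplified by a median-of-means/majority-vote argument is exactly the paper's approach; the only cosmetic difference is that the paper selects the sub-batch whose estimated norm is the median and thresholds it once, while you have each sub-batch cast the thresholded vote and take the majority. Your version is in fact the more careful one: by insisting each sub-batch be ``safe'' with probability at least $2/3$ (rather than the marginal $1/2$ the paper's constant $128\sigma^2/\epsilon^2$ yields via Chebyshev), you have the strict gap above $1/2$ that a Chernoff bound on the majority vote actually requires to drive the failure probability below $p$.
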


\subsection{Applying Zeroth-Order Negative Curvature Finding to ZO-GD and ZO-SGD}

We apply ZO-NCF-Online to ZO-SGD to turn it into a local minima finding algorithm, and propose ZO-SGD-NCF in Algorithm~\ref{alg: ZO-SGD}. At each iteration, we use a batch size of $\mathcal{O}\left(\frac{\sigma^2}{\epsilon^2}\log\left(\frac{2K}{p}\right) \right)$ \ref{eq: CoordGradEst} to verify if $x_t$ is an $\epsilon$-approximate stationary point. If not, ZO-SGD-NCF either estimates the gradient $\nabla f_S (x_t) = \frac{1}{|S|} \sum_{i\in S} \nabla f_i(x_t)$ by \ref{eq: CoordGradEst} (\textbf{Option \uppercase\expandafter{\romannumeral1}}) or \ref{eq: RandGradEst} (\textbf{Option \uppercase\expandafter{\romannumeral2}}) with both mini-batch size $\mathcal{O}(\frac{\sigma^2}{\epsilon^2})$; If so, we call the ZO-NCF-Online subroutine. Then, If we find an approximate negative curvature direction $v$ around $x_t$, then we update $x_{t+1}$ by moving from $x_t$ in the direction $v$ with step-size $\delta/\rho$. We have the following theorem:

\begin{algorithm}[htb]
	\caption{ZO-SGD-NCF}
	\label{alg: ZO-SGD}
	\renewcommand{\algorithmicrequire}{\textbf{Input:}} 
	\renewcommand{\algorithmicensure}{\textbf{Output:}}
	\begin{algorithmic}[1]
		\Require Function $f$, starting point $x_0$, confidence $p\in(0,1)$, $\epsilon>0$ and $\delta>0$.
		\For{$t = 0,\dots,K-1$}
		\State uniformly randomly choose a set $\mathcal{B}$ with batch size $\mathcal{O}(\frac{\sigma^2}{\epsilon^2} \log(2K/p))$
		\If{$\|\hat{\nabla}_{coord} f_{\mathcal{B}}(x_t)\|\ge \frac{3\epsilon}{4}$ }  
		\State uniformly randomly choose $S \subseteq[n]$  
		\State \textbf{Option \uppercase\expandafter{\romannumeral1} : } $x_{t+1} \gets x_t - \eta   \hat{\nabla}_{coord}f_S (x_t)$
		\State \textbf{Option \uppercase\expandafter{\romannumeral2} : } $x_{t+1} \gets x_t - \eta  \hat{\nabla}_{rand}f_S (x_t)$
	    \Else 
	    \State $v \gets$ ZO-NCF-Online ($f, x_t, \delta, \frac{p}{2K}$)
	    \If{$v=\bot$}
	    \Return $x_t$
	    \Else
	    \quad $x_{t+1} = x_t \pm \frac{\delta}{\rho} v$
	    \EndIf
	    \EndIf
	    \EndFor
	\end{algorithmic}
\end{algorithm}

\begin{theorem}
\label{thm: ZO-SGD}
Under Assumption~\ref{assum: basic}, we set $\mu_1 = \sqrt{\frac{3\epsilon}{2\rho\sqrt{d}}}$ and other parameters as follows, 
\begin{align*}
    \textbf{Option \uppercase\expandafter{\romannumeral1}:} |S| & = \max\{\frac{32\sigma^2 }{\epsilon^2}, 1\}, K=\mathcal{O}(\frac{\rho^2 \Delta_f}{\delta^3} + \frac{\ell \Delta_f}{\epsilon^2}), \eta = \frac{1}{4\ell}, \mu_2 = \sqrt{\frac{3\epsilon}{4\rho\sqrt{d}}}; \\
    \textbf{Option \uppercase\expandafter{\romannumeral2}:}|S| & = \max\{\frac{8\sigma^2 }{\epsilon^2}, 1\}, K = \mathcal{O}(\frac{\rho^2 \Delta_f}{\delta^3} + \frac{d \ell \Delta_f}{\epsilon^2}), \eta = \frac{1}{32d\ell}, \mu_2 = \min \left\{\sqrt{\frac{3\epsilon}{4\rho d}}, \frac{\epsilon}{32 \sqrt{d} \ell} \right\}, 
\end{align*}
where $\mu_1$ and $\mu_2$ are only used in Line 3 and Line 5 (or Line 6) of Algorithm~\ref{alg: ZO-SGD}, respectively. With probability at least $1-p$, Algorithm~\ref{alg: ZO-SGD} outputs an $(\epsilon,\delta)$-approximate local minimum in function query complexity 
\begin{equation*}
    \textbf{Option \uppercase\expandafter{\romannumeral1}:} \tilde{\mathcal{O}} (\frac{d \sigma^2 \ell \Delta_f}{\epsilon^4} + \frac{d \sigma^2 \rho^2 \Delta_f}{ \epsilon^2 \delta^3} + \frac{d \ell^2 \rho^2 \Delta_f}{\delta^5} );
    \textbf{Option \uppercase\expandafter{\romannumeral2}:} 
    \tilde{ \mathcal{O}} (\frac{d^2 \sigma^2 \ell \Delta_f}{\epsilon^4} + \frac{d \sigma^2 \rho^2 \Delta_f}{\epsilon^2 \delta^3} + \frac{d \ell^2 \rho^2 \Delta_f}{\delta^5} ).
\end{equation*}
\end{theorem}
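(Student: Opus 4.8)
The plan is to combine a descent-lemma argument on the ``gradient-step'' iterations with a guaranteed function decrease on the ``NCF-step'' iterations, then bound the total iteration count $K$ and multiply by the per-iteration query cost. First I would set up the two mutually exclusive cases that occur at each iteration $t$: either $\|\hat{\nabla}_{coord} f_{\mathcal B}(x_t)\| \ge 3\epsilon/4$ (a gradient step is taken), or it is $< 3\epsilon/4$ (the NCF subroutine is called). For the first case, I would invoke Proposition~\ref{proposition: verify-gradient-online}: with the stated batch size $\mathcal O(\tfrac{\sigma^2}{\epsilon^2}\log(2K/p))$ and smoothing parameter $\mu_1 = \sqrt{3\epsilon/(2\rho\sqrt d)}$, the test $\|\hat{\nabla}_{coord} f_{\mathcal B}(x_t)\| \ge 3\epsilon/4$ certifies $\|\nabla f(x_t)\| \ge \epsilon/2$ with probability $\ge 1 - p/(2K)$; and when the test fails we are guaranteed $\|\nabla f(x_t)\| \le \epsilon$. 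So on a gradient step, $\|\nabla f(x_t)\|$ is genuinely large. Then I would write the standard smoothness expansion $f(x_{t+1}) \le f(x_t) + \langle \nabla f(x_t), x_{t+1}-x_t\rangle + \tfrac{\ell}{2}\|x_{t+1}-x_t\|^2$, plug in $x_{t+1}-x_t = -\eta\,\hat\nabla f_S(x_t)$ (Option I with CoordGradEst, Option II with RandGradEst), and control the gradient-estimation error: for CoordGradEst the bias is $O(\rho\sqrt d\,\mu_2^2)$ and the minibatch variance is $O(\sigma^2/|S|)$; for RandGradEst one additionally pays the dimension factor $\mathbb E\|\hat\nabla_{rand} f_S\|^2 \lesssim d\|\nabla f\|^2 + d\,(\text{noise}+\text{bias})$, which forces the smaller step $\eta = 1/(32 d\ell)$ and the extra $d$ in $K$ for Option II. With $\mu_2$ chosen as stated so the bias is $\lesssim \epsilon$, a routine computation gives an expected per-gradient-step decrease of order $\eta\epsilon^2$ (i.e. $\Omega(\epsilon^2/\ell)$ for Option I, $\Omega(\epsilon^2/(d\ell))$ for Option II).

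Next I would handle the NCF-step case. When the gradient test fails, $\|\nabla f(x_t)\|\le\epsilon$, and we call \textit{ZO-NCF-Online}$(f,x_t,\delta,p/(2K))$. By Theorem~\ref{thm: ZO-NCF-Online}, with probability $\ge 1-p/(2K)$: either $v=\bot$, in which case $\nabla^2 f(x_t)\succeq-\delta\I$ and, combined with $\|\nabla f(x_t)\|\le\epsilon$, $x_t$ is an $(\epsilon,\delta)$-approximate SOSP and the algorithm correctly returns; or $v\neq\bot$ with $\|v\|=1$ and $v^\T\nabla^2 f(x_t)v\le-\delta/2$. In the latter case the update $x_{t+1}=x_t\pm\tfrac{\delta}{\rho}v$ with the sign chosen so that $\langle\nabla f(x_t),x_{t+1}-x_t\rangle\le0$ gives, via the $\rho$-Hessian-Lipschitz cubic expansion $f(x_{t+1})\le f(x_t)+\langle\nabla f(x_t),x_{t+1}-x_t\rangle+\tfrac12(x_{t+1}-x_t)^\T\nabla^2 f(x_t)(x_{t+1}-x_t)+\tfrac{\rho}{6}\|x_{t+1}-x_t\|^3$, a deterministic decrease $f(x_t)-f(x_{t+1})\ge\tfrac{\delta}{4}\cdot\tfrac{\delta^2}{\rho^2}-\tfrac{\rho}{6}\cdot\tfrac{\delta^3}{\rho^3}=\Omega(\delta^3/\rho^2)$.

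Then I would close the counting argument. Let $K_1$ be the number of gradient steps and $K_2$ the number of NCF steps (with $K=K_1+K_2$). Summing the decreases and telescoping against $\Delta_f = f(x_0)-f(x^*)$: in expectation $K_1\cdot\Omega(\eta\epsilon^2) + K_2\cdot\Omega(\delta^3/\rho^2)\le\Delta_f$ (a martingale / Azuma argument, or simply taking expectations since each increment is a supermartingale difference, handles the stochasticity of the gradient steps), which yields $K_1=\mathcal O(\ell\Delta_f/\epsilon^2)$ for Option I ($\mathcal O(d\ell\Delta_f/\epsilon^2)$ for Option II) and $K_2=\mathcal O(\rho^2\Delta_f/\delta^3)$, matching the stated $K$. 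A union bound over all $\le K$ iterations of the two failure events (gradient test, NCF subroutine), each of probability $\le p/(2K)$, gives overall success probability $\ge 1-p$. Finally the query complexity: each gradient step costs $\mathcal O(d)$ for the test's CoordGradEst times the batch size $\mathcal O(\tfrac{\sigma^2}{\epsilon^2}\log(2K/p))$, plus $\mathcal O(d\cdot\tfrac{\sigma^2}{\epsilon^2})$ (Option I) or $\mathcal O(\tfrac{\sigma^2}{\epsilon^2})$ (Option II) for the update estimator; each NCF step costs $\mathcal O(\tfrac{d\log^2(d/p)\ell^2}{\delta^2})$ by Theorem~\ref{thm: ZO-NCF-Online}. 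Multiplying $K_1$ and $K_2$ by the respective per-iteration costs and collecting terms produces the three summands $\tilde{\mathcal O}(\tfrac{d\sigma^2\ell\Delta_f}{\epsilon^4})$ (or $\tfrac{d^2\sigma^2\ell\Delta_f}{\epsilon^4}$), $\tilde{\mathcal O}(\tfrac{d\sigma^2\rho^2\Delta_f}{\epsilon^2\delta^3})$, and $\tilde{\mathcal O}(\tfrac{d\ell^2\rho^2\Delta_f}{\delta^5})$.

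The main obstacle I expect is the bookkeeping around the stochastic gradient steps: one must verify that, even though the ``large gradient'' certification is only probabilistic, the descent inequality still holds in expectation on the good event, and that conditioning on the $\le K$ good events does not corrupt the telescoping sum — this requires carefully defining the filtration, treating the NCF calls (whose internal randomness is independent of the minibatch draws) as producing a deterministic decrease on their own good event, and applying a union bound only at the end. A secondary subtlety is pinning down the exact constants in $\mu_2$ for Option II, where the RandGradEst both amplifies the true gradient norm by $d$ and amplifies the $O(\rho\sqrt d\mu^2)$ bias, so $\mu_2=\min\{\sqrt{3\epsilon/(4\rho d)},\,\epsilon/(32\sqrt d\ell)\}$ must simultaneously keep the bias term below a constant fraction of $\epsilon$ after the $d$-amplification and keep the squared-norm bound compatible with $\eta=1/(32d\ell)$; the rest is routine.
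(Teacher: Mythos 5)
Your proposal is correct and matches the paper's Appendix~D argument: per-iteration descent via smoothness and gradient-estimator bias/variance control on gradient steps, an $\Omega(\delta^3/\rho^2)$ decrease on NCF steps, telescoping against $\Delta_f$ to bound $K$, and multiplying by per-step query costs (with the NCF step cost taken from Theorem~\ref{thm: ZO-NCF-Online}). The one small departure is that you choose the sign in the NCF update deterministically so that $\langle\nabla f(x_t),x_{t+1}-x_t\rangle\le 0$, whereas the paper applies Lemma~\ref{lemma: negative-curvature-reduction} with a uniformly random sign and a decrease in expectation; both yield $f(x_t)-f(x_{t+1})\gtrsim\delta^3/\rho^2$, and you also correctly flag the probabilistic-telescoping subtlety that the paper passes over when it asserts termination once the summed expected decreases exceed $\Delta_f$.
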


\begin{algorithm}[htb]
	\caption{ZO-GD-NCF}
	\label{alg: ZO-GD}
	\renewcommand{\algorithmicrequire}{\textbf{Input:}} 
	\renewcommand{\algorithmicensure}{\textbf{Output:}}
	\begin{algorithmic}[1]
		\Require Function $f$, starting point $x_0$, confidence $p\in(0,1)$, $\epsilon>0$ and $\delta>0$.
		\For{$t = 0,\dots,K-1$}
		\If{$\|\hat{\nabla}_{coord} f(x_t)\|\ge \frac{3 \epsilon}{4}$ }
        \State \textbf{Option \uppercase\expandafter{\romannumeral1} : } $x_{t+1} \gets x_t - \eta \hat{\nabla}_{coord}f (x_t)$
		\State \textbf{Option \uppercase\expandafter{\romannumeral2} : } $x_{t+1} \gets x_t - \eta \hat{\nabla}_{rand}f (x_t)$
	    \Else 
	    \State $v \gets$ ZO-NCF-Deterministic ($f, x_t, \delta, \frac{p}{K}$)
	    \If{$v=\bot$}
	    \Return $x_t$
	    \Else
	    \quad $x_{t+1} = x_t \pm \frac{\delta}{\rho} v$
	    \EndIf
	    \EndIf
	    \EndFor
	\end{algorithmic}
\end{algorithm}

\begin{remark}
\label{remark: ZO-SGD}
Note that the dominant term of the function query complexity in \textbf{Option \uppercase\expandafter{\romannumeral1}} is $\tilde{\mathcal{O}}(\frac{d}{\epsilon^4})$, while in \textbf{Option \uppercase\expandafter{\romannumeral2}} is $\tilde{\mathcal{O}}(\frac{d^2}{\epsilon^4})$. This is because \ref{eq: CoordGradEst} has a lower approximation error and thus can reduce the iteration complexity by a factor of $d$. Then the function query complexity of \textbf{Option \uppercase\expandafter{\romannumeral2}} is dominated by evaluating the magnitude of the gradient (Line 3 in Algorithm~\ref{alg: ZO-SGD}). 

\end{remark}

In the Deterministic setting, we  apply ZO-NCF-Deterministic to ZO-GD to turn it into a local minima finding algorithm and propose ZO-GD-NCF in Algorithm~\ref{alg: ZO-GD}. The update rule of ZO-GD-NCF is similar to that in ZO-SGD-NCF, the only difference is that we don't need to use mini-batch sampling of the stochastic gradient. Similarly, we have the following theorem:

\begin{theorem}
\label{thm: ZO-GD}
Under Assumption~\ref{assum: basic}, we set $\mu_1 = \sqrt{\frac{3\epsilon}{2\rho\sqrt{d}}}$ and other parameters as follows,
\begin{align*}
    \textbf{Option \uppercase\expandafter{\romannumeral1}: } K & =\mathcal{O}(\frac{\rho^2 \Delta_f}{\delta^3} + \frac{\ell \Delta_f}{\epsilon^2}), \eta = \frac{1}{4\ell}, \mu_2 = \sqrt{\frac{3\epsilon}{4\rho\sqrt{d}}}; \\
    \textbf{Option \uppercase\expandafter{\romannumeral2}: } K & = \mathcal{O}(\frac{\rho^2 \Delta_f}{\delta^3} + \frac{d \ell \Delta_f}{\epsilon^2}), \eta = \frac{1}{8d\ell}, \mu_2 = \min \left\{\sqrt{\frac{3\epsilon}{4\rho d}}, \frac{\epsilon}{16 \sqrt{d} \ell} \right\},
\end{align*}
where $\mu_1$ and $\mu_2$ are only used in Line 2 and Line 3 (or Line 4) of Algorithm~\ref{alg: ZO-GD}, respectively. With probability at least $1-p$, Algorithm~\ref{alg: ZO-GD} outputs an $(\epsilon,\delta)$-approximate local minimum in function query complexity 
\begin{equation*}
    \textbf{Option \uppercase\expandafter{\romannumeral1}:}\quad \tilde{\mathcal{O}} ( \frac{d \ell \Delta_f}{\epsilon^2} + d \frac{\sqrt{\ell}}{\sqrt{\delta}} \frac{ \rho^2 \Delta_f}{\delta^3} ); \quad
    \textbf{Option \uppercase\expandafter{\romannumeral2}:}\quad \tilde{\mathcal{O}} ( \frac{d^2\ell \Delta_f}{\epsilon^2} + d \frac{\sqrt{\ell}}{\sqrt{\delta}} \frac{\rho^2 \Delta_f}{\delta^3} ).
\end{equation*}
\end{theorem}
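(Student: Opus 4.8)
The plan is to run the standard ``descent-or-escape'' potential argument on the iterates of Algorithm~\ref{alg: ZO-GD}, using Proposition~\ref{proposition: verify-gradient-online} to control the gradient-verification step and Theorem~\ref{thm: deterministic} to control the ZO-NCF-Deterministic calls, and then count function queries. First I would fix the per-iteration failure probability: since ZO-NCF-Deterministic is invoked with confidence $\frac{p}{K}$ and there are at most $K$ iterations, a union bound gives that with probability at least $1-p$ every NCF call behaves as in Theorem~\ref{thm: deterministic} (and the deterministic gradient estimator in Line 2 is exact-in-probability-$1$ by the second half of Proposition~\ref{proposition: verify-gradient-online}). Condition on this good event for the rest of the argument.

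Next I would establish the two decrease lemmas. \textbf{(i) Gradient step.} When $\|\hat\nabla_{coord}f(x_t)\|\ge \frac{3\epsilon}{4}$, Proposition~\ref{proposition: verify-gradient-online} with the stated $\mu_1$ forces $\|\nabla f(x_t)\|\ge \epsilon/2$. For Option~I, $\ell$-smoothness plus the approximation-error bound of \ref{eq: CoordGradEst} (with smoothing parameter $\mu_2$, chosen $O(\sqrt{\epsilon/(\rho\sqrt d)})$ so the bias is $O(\epsilon)$) and step-size $\eta=\tfrac1{4\ell}$ give the usual $f(x_{t+1})\le f(x_t)-\Omega(\epsilon^2/\ell)$; for Option~II one replaces $\ell$ by $d\ell$ because the random estimator inflates the second-moment by a factor $d$, whence $\eta=\Theta(1/(d\ell))$ and the decrease is $\Omega(\epsilon^2/(d\ell))$ per step — this is exactly the source of the extra $d$ in the first term. \textbf{(ii) NCF step.} When the verification fails, $x_t$ is already an $\epsilon$-approximate FOSP; if ZO-NCF-Deterministic returns $v=\bot$ then by Theorem~\ref{thm: deterministic} $\nabla^2 f(x_t)\succeq -\delta\I$, so $x_t$ is an $(\epsilon,\delta)$-SOSP and we output it; otherwise $v^\T\nabla^2 f(x_t)v\le -\delta/2$ with $\|v\|=1$, and moving $x_{t+1}=x_t\pm\frac\delta\rho v$ with the sign chosen to be a descent direction yields, via the $\rho$-Hessian-Lipschitz cubic expansion $f(x_{t+1})\le f(x_t)+\frac\delta{2\rho}v^\T\nabla f(x_t) + \frac{\delta^2}{2\rho^2}\cdot(-\tfrac\delta2) + \frac\rho6\frac{\delta^3}{\rho^3}$, a net decrease of $\Omega(\delta^3/\rho^2)$ (the linear term is killed by the sign choice and the $\epsilon$ vs. $\delta$ comparison, using $\epsilon\le\delta^{?}$ is not even needed since we can absorb it).

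Then I would do the iteration/query accounting. Let $K_1$ be the number of gradient steps and $K_2$ the number of NCF steps, $K=K_1+K_2$. Summing the decreases and using $\sum(f(x_t)-f(x_{t+1}))\le \Delta_f$ gives $K_1=O(\ell\Delta_f/\epsilon^2)$ (resp. $O(d\ell\Delta_f/\epsilon^2)$ for Option~II) and $K_2=O(\rho^2\Delta_f/\delta^3)$, matching the stated $K$. For queries: each gradient-verification/gradient-descent iteration costs $O(d)$ function evaluations of \ref{eq: CoordGradEst}, giving $\tilde O(d\ell\Delta_f/\epsilon^2)$ (resp. $\tilde O(d^2\ell\Delta_f/\epsilon^2)$, since in Option~II the $O(d)$ verification cost multiplies the larger $K_1$ — this matches Remark~\ref{remark: ZO-SGD}); and each of the $K_2$ NCF calls costs, by Theorem~\ref{thm: deterministic} with confidence $\frac pK$, $\tilde O\!\big(d\sqrt{\ell/\delta}\big)$ queries, for a total $\tilde O\!\big(d\sqrt{\ell/\delta}\cdot\rho^2\Delta_f/\delta^3\big)$. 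Adding the two contributions gives exactly the claimed bounds. Finally, correctness of the output: the algorithm halts only when it reaches an $x_t$ with $\|\hat\nabla_{coord}f(x_t)\|<\frac{3\epsilon}4$ (hence $\|\nabla f(x_t)\|\le\epsilon$) and ZO-NCF-Deterministic returns $\bot$ (hence $\nabla^2 f(x_t)\succeq-\delta\I$); and since every non-halting iteration strictly decreases $f$ by a fixed amount while $f\ge f(x^*)$, the loop must halt within $K$ iterations, so the output is an $(\epsilon,\delta)$-approximate local minimum on the good event.

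The main obstacle is the NCF descent lemma (ii): one must verify that the cubic upper bound from $\rho$-Hessian-Lipschitzness, combined with the curvature guarantee $v^\T\nabla^2 f(x_t)v\le-\delta/2$ from Theorem~\ref{thm: deterministic} and the $\pm$ sign choice that neutralizes the (small, $O(\epsilon)$) gradient term, actually produces a decrease of order $\delta^3/\rho^2$ rather than something smaller — getting the constants right so that the ``$\delta/2$'' curvature survives the $-\frac16$ cubic loss is the delicate point, and it is what dictates the step-size $\delta/\rho$ and ultimately the $\rho^2\Delta_f/\delta^3$ iteration count. Everything else is the routine smoothing-bias bookkeeping for \ref{eq: CoordGradEst} and \ref{eq: RandGradEst} already set up by the choices of $\mu_1,\mu_2$.
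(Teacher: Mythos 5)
Your proposal is correct and mirrors the paper's approach: the paper derives Theorem~\ref{thm: ZO-GD} by the same descent-or-escape potential argument used for Theorem~\ref{thm: ZO-SGD} in Appendix~D (with the mini-batch sampling removed), combining the coordinate-estimator bias bounds from Lemma~\ref{lemma: coord-square-bound} with $\ell$-smoothness for the gradient step, invoking the NCF guarantee of Theorem~\ref{thm: deterministic} with confidence $p/K$ and union-bounding, and then counting queries per step exactly as you do. The only minor difference is that you derive the $\Omega(\delta^3/\rho^2)$ decrease in the NCF step directly from the Hessian-Lipschitz cubic expansion with the better of the two signs, whereas the paper cites Lemma~\ref{lemma: negative-curvature-reduction} (the random-sign expectation bound) — your version is the same calculation and, if anything, slightly cleaner for a high-probability statement.
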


\subsection{Applying Zeroth-Order Negative Curvature Finding to ZO-SCSG and ZO-SPIDER}
In the stochastic setting, we can also apply the zeroth-order negative curvature finding to the variance reduction-based algorithms: SCSG \cite{lei2017non} and SPIDER \cite{fang2018spider}. Due to space limitation, We defer the detailed discussions of these applications to Appendix E and F.

To apply ZO-NCF-Online to SCSG, we first propose a zeroth-order variant of the SCSG \cite{lei2017non} method in Algorithm~\ref{alg: ZO-SCSG}. At the beginning of the $j$-th epoch, we estimate the gradient $\nabla f_{\mathcal{I}_j} (\tilde{x}_{j-1})$ by \ref{eq: CoordGradEst} over a batch sampling set $\mathcal{I}_j$ with size $B$. In the inner loop iterations, the stochastic gradient estimator $v_{k-1}^j$ is either constructed by \ref{eq: CoordGradEst} or by \ref{eq: RandGradEst} over a mini-batch sampling set $\mathcal{I}_{k-1}^j$ with size $b$. Then we apply ZO-NCF-Online to ZO-SCSG and propose the ZO-SCSG-NCF method (see Algorithm 7).

\begin{theorem}[informal, full version deferred to Appendix E]
\label{thm: ZO-SCSG-NCF}
With probability at least $\frac{2}{3}$, for both \textbf{Option \uppercase\expandafter{\romannumeral1}} and \textbf{Option \uppercase\expandafter{\romannumeral2}}, Algorithm~\ref{alg: ZO-SCSG-NCF} outputs an $(\epsilon, \delta)$-approximate local minimum in function query complexity 
\begin{equation*}
    \tilde{\mathcal{O}}( d ( \frac{\ell  \Delta_f }{\epsilon^\frac{4}{3} \sigma^{\frac{2}{3}} } + \frac{\rho^2 \Delta_f}{\delta^3} ) ( \frac{\sigma^2 }{\epsilon^2} + \frac{\ell^2}{\delta^2} ) + d \frac{\ell \Delta_f}{\epsilon^2} \frac{\ell^2}{\delta^2}  ).
\end{equation*}

\end{theorem}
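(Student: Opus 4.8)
The plan is to decouple the run of ZO-SCSG-NCF into two interleaved phases — a \emph{descent phase}, consisting of the inner-loop iterations of ZO-SCSG executed while the current iterate still has a large gradient, and an \emph{NCF phase}, triggered whenever the gradient test fails — and to bound the total number of iterations of each phase by an amortized potential argument on $f$, then multiply by the per-iteration query cost. Concretely, let $K$ be the (random) number of outer iterations before the algorithm returns. I would first fix all smoothing parameters $\mu$ small enough (using Lemma~\ref{lemma: error-bound-of-zo-hessian-vetor-product} and the central-difference error bounds for \ref{eq: CoordGradEst}/\ref{eq: RandGradEst}) that every ZO gradient estimate is within $\mathcal{O}(\epsilon)$ of the true (sub-sampled) gradient in expectation; this makes the ZO bias a negligible additive perturbation in both the SCSG recursion and the FOSP test of Proposition~\ref{proposition: verify-gradient-online}, so I can treat ZO-SCSG as an SCSG run with an $\mathcal{O}(\epsilon)$-accurate oracle.

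\textbf{Descent phase.} I would port the one-epoch descent lemma of SCSG \cite{lei2017non} to the ZO setting: with outer batch $B=\Theta(\sigma^2/\epsilon^2)$ and inner mini-batch $b$, geometrically distributed epoch length with mean $B/b$, and step size $\eta=\Theta(1/(\ell)\cdot b^{2/3}/B^{2/3})$ (Option~I) or its $d$-scaled analogue (Option~II), each epoch that does \emph{not} reach an $\epsilon$-FOSP decreases $\E f$ by $\Omega(\eta \epsilon^2 \cdot (\text{epoch length}))$, up to the controlled ZO bias. Summing over epochs and telescoping against $\Delta_f$ bounds the number of descent epochs by $\tilde{\mathcal{O}}\big(\ell\Delta_f/(\epsilon^{4/3}\sigma^{2/3})\big)$ and the total number of inner iterations accordingly. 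The query cost of an epoch is $B$ (outer anchor) plus (epoch length)$\times b$ (inner updates), each sample costing $d$ queries for \ref{eq: CoordGradEst}; for \ref{eq: RandGradEst} a sample costs $\mathcal{O}(1)$ queries but needs $\Theta(d)$ more samples for the same variance, so Option~I and Option~II land at the same leading query count (the difference being absorbed into the smoothing-parameter constraint, as in Remark~\ref{remark: ZO-SGD}). This produces the $d\,\frac{\ell\Delta_f}{\epsilon^{4/3}\sigma^{2/3}}\cdot\frac{\sigma^2}{\epsilon^2}$ contribution, and the gradient test run every outer iteration (batch $\tilde{\mathcal{O}}(\sigma^2/\epsilon^2)$ times $d$ queries) feeds both the $\cdot\frac{\sigma^2}{\epsilon^2}$ factor and, combined with the NCF-epoch count below, the stand-alone $d\,\frac{\ell\Delta_f}{\epsilon^2}\frac{\ell^2}{\delta^2}$ term.

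\textbf{NCF phase.} Each time the test $\|\hat{\nabla}_{coord}f_{\mathcal{B}}(x_t)\| < \tfrac34\epsilon$ fires we invoke ZO-NCF-Online with confidence $p/(2K)$. By Theorem~\ref{thm: ZO-NCF-Online}, either $v=\bot$ — then $\nabla^2 f(x_t)\succeq-\delta\I$ and, since the test already certifies $\|\nabla f(x_t)\|\le\epsilon$ by Proposition~\ref{proposition: verify-gradient-online}, $x_t$ is an $(\epsilon,\delta)$-approximate SOSP and we return — or $\|v\|=1$ with $v^\T\nabla^2 f(x_t)v\le-\delta/2$, and the step $x_{t+1}=x_t\pm\frac{\delta}{\rho}v$ gives, by a second-order Taylor expansion and $\rho$-Hessian Lipschitzness, $f(x_{t+1})\le f(x_t)-\Omega(\delta^3/\rho^2)$ (choosing the sign of $v$ so the first-order term is non-positive). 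Hence the number of negative-curvature steps is at most $\mathcal{O}(\rho^2\Delta_f/\delta^3)$, each costing $\tilde{\mathcal{O}}(d\ell^2/\delta^2)$ queries by Theorem~\ref{thm: ZO-NCF-Online}; this yields the $d\,\frac{\rho^2\Delta_f}{\delta^3}\cdot\frac{\ell^2}{\delta^2}$ contribution (and the $\cdot\frac{\sigma^2}{\epsilon^2}$ cross term through the per-iteration gradient test). Adding the descent- and NCF-phase costs, bounding $K$ by the sum of the two epoch counts, and taking a union bound over the $\le K$ calls to ZO-NCF-Online and the SCSG failure events gives the claimed $\tilde{\mathcal{O}}\!\big(d(\frac{\ell\Delta_f}{\epsilon^{4/3}\sigma^{2/3}}+\frac{\rho^2\Delta_f}{\delta^3})(\frac{\sigma^2}{\epsilon^2}+\frac{\ell^2}{\delta^2})+d\frac{\ell\Delta_f}{\epsilon^2}\frac{\ell^2}{\delta^2}\big)$ with probability $\ge 2/3$ (the constant confidence reflecting that SCSG's epoch analysis is in expectation and is promoted to a constant-probability statement by Markov, while the NCF confidences are set so their union bound costs only a log factor).

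\textbf{Main obstacle.} The delicate part is reconciling SCSG's \emph{in-expectation}, random-epoch-length analysis with the high-probability interleaving against the deterministic-length NCF subroutine: the epoch count $K$ that controls both the union-bound confidence $p/(2K)$ for NCF calls and the accumulated ZO bias is itself random and depends on where the NCF steps land, so I would fix an a-priori upper bound $K_{\max}$ on the number of outer iterations (from the potential argument, since every outer iteration either is an SCSG epoch making $\Omega(\eta\epsilon^2\cdot\text{length})$ progress or an NCF step making $\Omega(\delta^3/\rho^2)$ progress, and total progress is $\le\Delta_f$), use $K_{\max}$ in the confidence parameters, and verify that the accumulated $\mathcal{O}(\epsilon)$-bias over all epochs does not degrade the telescoped descent bound — this last point is what pins down the admissible range of the smoothing parameter $\mu$.
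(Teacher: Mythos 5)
Your proposal reproduces the paper's strategy in Appendix E: a one-epoch descent lemma for ZO-SCSG, an expected $\Omega(\delta^3/\rho^2)$ drop per negative-curvature step, a telescoping potential argument against $\Delta_f$ to bound the iteration counts $N_1$ (gradient-test / SCSG) and $N_2$ (NCF steps), Markov plus a union bound over the ZO-NCF-Online calls for the $2/3$ confidence, and the per-iteration query accounting $\tilde{\mathcal{O}}(dB + d\ell^2/\delta^2)$. One small slip: the algorithm flips the sign of the step $x_{t+1} = x_t \pm \frac{\delta}{\rho} v$ uniformly at random rather than selecting the sign to make the first-order term non-positive, so the $\Omega(\delta^3/\rho^2)$ drop holds only in expectation (Lemma~\ref{lemma: negative-curvature-reduction}); this is precisely why the paper applies a Markov bound to $N_2$ as well as to $\sum_t \|\nabla f(x_{t+1})\|^2$, with the inner mini-batch $b$ tuned so that $\E[N_2] \le K/12$.
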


We apply ZO-NCF-Online to ZO-SPIDER to turn it into a local minima finding algorithm and propose ZO-SPIDER-NCF in Algorithm~\ref{alg: ZO-SPIDER-NCF}. As a by-product, we also propose a zeroth-order variant of the SPIDER method in Appendix G that  can converge to an $\epsilon$-approximate FOSP with high probability rather than expectation. Using the same technique as in SPIDER-SFO$^{+}$ \cite{fang2018spider}, that is, instead of moving in a large single step with size $\delta/\rho$ along the approximate negative curvature direction as in ZO-SGD-NCF and ZO-SCSG-NCF, we can split it into $\delta/(\rho \eta)$ equal length mini-steps with size $\eta$. As a result, we can maintain the SPIDER estimates and improve the so-called  non-improvable coupling term $\frac{1}{\delta^3 \epsilon^2}$ by a fact of $\delta$.

\begin{theorem}[informal, full version deferred to Appendix F]
\label{thm: ZO-SPIDER-NCF}
With probability at least $\frac{3}{4}$, Algorithm~\ref{alg: ZO-SPIDER-NCF} outputs an $(\epsilon, \delta)$-approximate local minimum in function query complexity
\begin{equation*}
    \tilde{\mathcal{O}} \left( d \left( \frac{\sigma \ell \Delta_f}{\epsilon^3} + \frac{\sigma \ell \rho \Delta_f }{\epsilon^2 \delta^2} + \frac{\ell^2 \rho \Delta_f}{\delta^3 \epsilon} + \frac{\ell^2 \rho^2 \Delta_f}{\delta^5} + \frac{\sigma^2}{\epsilon^2} + \frac{\sigma \delta \ell}{\rho \epsilon^2} + \frac{\ell^2 }{\delta^2} \right) \right).
\end{equation*}

\end{theorem}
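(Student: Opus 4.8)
The plan is to decompose a run of ZO-SPIDER-NCF into interleaved stages of three kinds: \emph{descent epochs}, in which the current iterate has finite-difference gradient norm $\gtrsim \epsilon$ and an ordinary ZO-SPIDER step is taken; \emph{negative-curvature bursts}, triggered when the gradient is small but ZO-NCF-Online returns a direction $v$ with $v^\T \nabla^2 f(x_t) v \le -\delta/2$; and \emph{termination}, when ZO-NCF-Online returns $\bot$. At termination, Theorem~\ref{thm: ZO-NCF-Online} certifies $\nabla^2 f(x_t) \succeq -\delta \I$ and Proposition~\ref{proposition: verify-gradient-online} certifies $\|\nabla f(x_t)\| \le \epsilon$, so the output is an $(\epsilon,\delta)$-approximate local minimum; the argument then reduces to (a) lower-bounding the per-stage decrease of $f$, (b) upper-bounding the per-stage function-query cost, and (c) a union bound over all stages. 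First I would set up the zeroth-order SPIDER estimator: by the central-difference bias bound for \ref{eq: CoordGradEst} (the gradient analogue of Lemma~\ref{lemma: error-bound-of-zo-hessian-vetor-product}), each coordinate-wise gradient estimate differs from the true gradient by an additive term $\mathcal{O}(\rho\sqrt d\,\mu^2)$, which the choice of $\mu$ in Algorithm~\ref{alg: ZO-SPIDER-NCF} drives below $\epsilon$; then, under Assumption~\ref{assum: basic-high-probability}, a martingale (Azuma/Bernstein) argument over each epoch gives the usual SPIDER error bound $\|v_k - \nabla f(x_k)\| = \mathcal{O}(\epsilon)$ with high probability.

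Next I would run the two descent analyses. For descent epochs, the SPIDER-SFO potential argument with a small step size (of order $\epsilon/\ell$) shows each completed epoch decreases $f$ by $\Omega(\epsilon^2/\ell)$ with high probability; counting the queries per epoch --- one checkpoint gradient of cost $\tilde{\mathcal{O}}(d\sigma^2/\epsilon^2)$ plus $\Theta(\sigma/\epsilon)$ inner steps of mini-batch size $\Theta(\sigma/\epsilon)$, each with the factor $d$ for \ref{eq: CoordGradEst} --- and multiplying by the $\mathcal{O}(\ell\Delta_f/\epsilon^2)$ epochs produces the $\frac{\sigma\ell\Delta_f}{\epsilon^3}$ and $\frac{\sigma^2}{\epsilon^2}$ terms. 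For negative-curvature bursts I would use the mini-step device of SPIDER-SFO$^{+}$: rather than one jump $x_t \pm (\delta/\rho)v$, take $\Theta(\delta/(\rho\eta))$ micro-steps of length $\eta$ along $\pm v$ while keeping the SPIDER estimator maintained. A Taylor expansion using $\rho$-Hessian-Lipschitzness together with $v^\T \nabla^2 f(x_t) v \le -\delta/2$ and $\|\nabla f(x_t)\| = \mathcal{O}(\epsilon)$ shows the whole burst decreases $f$ by $\Omega(\delta^3/\rho^2)$, up to lower-order coupling errors; the key point is that capping the total path length at $\mathcal{O}(\delta/\rho)$ keeps $\nabla^2 f$ along the path within $\mathcal{O}(\delta)$ of $\nabla^2 f(x_t)$, so the curvature stays negative throughout. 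Each burst costs one ZO-NCF-Online call, $\tilde{\mathcal{O}}(d\ell^2/\delta^2)$ queries by Theorem~\ref{thm: ZO-NCF-Online}, plus the micro-step estimator-maintenance cost; summing the per-burst costs over the $\mathcal{O}(\rho^2\Delta_f/\delta^3)$ bursts permitted by the $\Delta_f$ budget and adding the one-time cost of the final NCF call yields the remaining terms $\frac{\sigma\ell\rho\Delta_f}{\epsilon^2\delta^2}$, $\frac{\ell^2\rho\Delta_f}{\delta^3\epsilon}$, $\frac{\ell^2\rho^2\Delta_f}{\delta^5}$, $\frac{\sigma\delta\ell}{\rho\epsilon^2}$ and $\frac{\ell^2}{\delta^2}$. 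Finally, invoking each ZO-NCF-Online with confidence parameter scaled by the inverse of the (polynomial) number of calls, and noting the SPIDER concentration fails with probability $\le 1/8$, a union bound gives overall success probability $\ge 3/4$.

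The hard part will be the coupling between SPIDER bookkeeping and the negative-curvature micro-steps: one must show the error bound $\|v_k - \nabla f(x_k)\| = \mathcal{O}(\epsilon)$ survives a burst of $\Theta(\delta/(\rho\eta))$ consecutive micro-steps of total displacement $\delta/\rho$ \emph{without} inserting a fresh checkpoint, since it is exactly the balance between ``how small must $\eta$ be so the accumulated SPIDER error across the burst stays $\mathcal{O}(\epsilon)$'' and ``how few micro-steps can we afford'' that pins the coupling contribution to the stated $\frac{\ell^2\rho\Delta_f}{\delta^3\epsilon}$-type size rather than the cruder $\frac{1}{\delta^3\epsilon^2}$ of a naive combination. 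A secondary subtlety is purely zeroth-order: every inequality above inherits the $\mathcal{O}(\rho\sqrt d\,\mu^2)$ bias of \ref{eq: CoordGradEst} and the $\mathcal{O}(\rho(\|v\|^2 + \sqrt d\,\mu^2))$ bias of the Hessian-vector estimator from Lemma~\ref{lemma: error-bound-of-zo-hessian-vetor-product}; one verifies that the smoothing parameters chosen in Algorithm~\ref{alg: ZO-SPIDER-NCF} push all these biases below the thresholds in $\epsilon$ and $\delta$ used above, so they do not degrade the final rate.
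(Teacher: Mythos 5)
Your overall plan --- decomposing the run into SPIDER descent steps, negative-curvature micro-step bursts, and termination; SPIDER concentration by martingale bounds under Assumption~\ref{assum: basic-high-probability}; the $\rho$-Hessian-Lipschitz Taylor argument with path length capped at $\mathcal{O}(\delta/\rho)$; and the ``telescoping potential + per-stage probability budget'' argument --- is the same route the paper takes in Appendix~F, and you correctly identify the balance between the micro-step size and the SPIDER error accumulation as the crux that buys the improvement over the crude $1/(\delta^3\epsilon^2)$ coupling.

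However, your description of \emph{when} ZO-NCF-Online is invoked does not match Algorithm~\ref{alg: ZO-SPIDER-NCF}, and this is not cosmetic. You describe a conditional dispatch (``negative-curvature bursts, triggered when the gradient is small but ZO-NCF-Online returns a direction'') as in Algorithm~\ref{alg: ZO-SGD}, but Algorithm~\ref{alg: ZO-SPIDER-NCF} invokes ZO-NCF-Online unconditionally in Line~2 at the start of \emph{every} epoch of length $\mathscr{K}$, including pure descent epochs. This matters for the query count: the term $d\,\ell^2\rho\Delta_f/(\delta^3\epsilon)$ in the statement is exactly (cost of one ZO-NCF-Online call, $\tilde{\mathcal{O}}(d\ell^2/\delta^2)$) multiplied by the descent-epoch budget $J \sim \rho\Delta_f/(\delta\epsilon)$, and it cannot be generated by your accounting, which only charges NCF calls to the $\mathcal{O}(\rho^2\Delta_f/\delta^3)$ bursts plus the final call. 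With your dispatching rule the NCF cost would cap at $d\ell^2\rho^2\Delta_f/\delta^5 + d\ell^2/\delta^2$, so the term you list as ``yielded'' is not actually derivable from your per-burst bookkeeping. Relatedly, the gradient certification at termination in Algorithm~\ref{alg: ZO-SPIDER-NCF} does not come from Proposition~\ref{proposition: verify-gradient-online}: there is no fresh coordinate-gradient batch check. Instead, $\|\nabla f(x_k)\| \le 3\tilde{\epsilon}$ follows from the stopping test $\|v_k\| \le 2\tilde{\epsilon}$ combined with the SPIDER concentration event $\|v_k - \nabla f(x_k)\| \le \tilde{\epsilon}$ (the paper's $\mathcal{H}_{K_0}^3$); you already have this machinery in your plan, so it is a misattribution rather than a missing idea, but it should be corrected. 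To repair the argument, replace your conditional dispatch with the paper's unconditional per-epoch NCF call, use the second-order certification at the epoch start together with the Lipschitz drift $\|\nabla^2 f(x_k) - \nabla^2 f(x_{j\mathscr{K}})\| \le \rho\mathscr{K}\eta = \delta$ to certify $\lambda_{\min}(\nabla^2 f(x_k)) \ge -2\delta$ at the actual termination point, and charge NCF once per epoch in the query count; the rest of your proposal then goes through as written.
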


\begin{remark}
We can boost the confidence the of Theorem~\ref{thm: ZO-SCSG-NCF} and \ref{thm: ZO-SPIDER-NCF} to $1-p$ by running $\log(1/p)$ copies of Algorithm 7 and 8.
\end{remark}

\section{Numerical Experiments}

\textbf{Octopus Function.} We first consider the octopus function proposed by Du et al. \cite{du2017gradient}. The octopus function has $2^d$ local optimum: $x^* = (\pm 4\tau, \dots, \pm 4\tau)^\T$ and $2^{d}-1$ saddle points:
\begin{equation*}
    (0, \dots, 0)^\T, (\pm 4\tau, 0, \dots, 0)^\T, \dots, (\pm 4\tau, \dots, \pm 4\tau, 0)^\T.
\end{equation*}
We compare ZO-GD-NCF, ZPSGD, PAGD, and RSPI on the octopus function with growing dimensions. The parameters corresponding to the octopus function are set with $\tau = e, L = e, \gamma =1$. All algorithms are initialized at point $(0, \dots, 0)^\T$, which is a strict saddle point and the one farthest from the optimal points among the $2^d - 1$ saddle points. 

We set $\epsilon = 1e-4, \delta = \sqrt{\rho \epsilon}$ for all experiments  and report the function value v.s. the number of function queries in Figure \ref{fig: octopus}. For RSPI, we follow the hyperparameter update strategy as described in (\cite{lucchi2021second}, Appendix, Section F): We keep $\sigma_2$ constant and update $\sigma_1 = \rho_{\sigma_1}\sigma_1$ every $T_{\sigma_1}$ iterations. We conduct a grid search for $T_{\sigma_1}$ and $\rho_{\sigma_1}$.


\begin{figure}[!htb]
\centering
\subfigure[d=10]{
\centering
\includegraphics[width=0.23\textwidth]{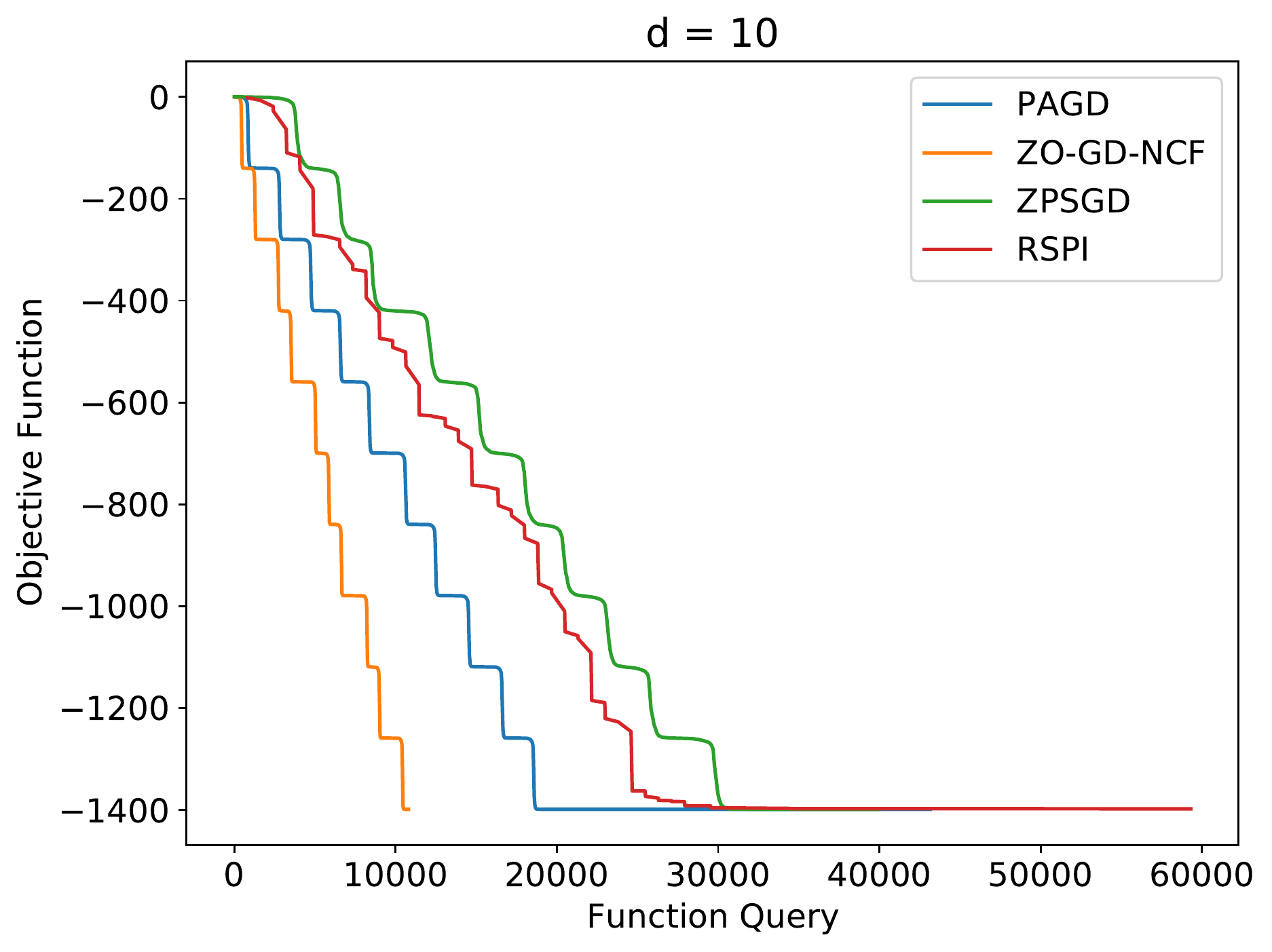}
}
\subfigure[d=30]{
\centering
\includegraphics[width=0.23\textwidth]{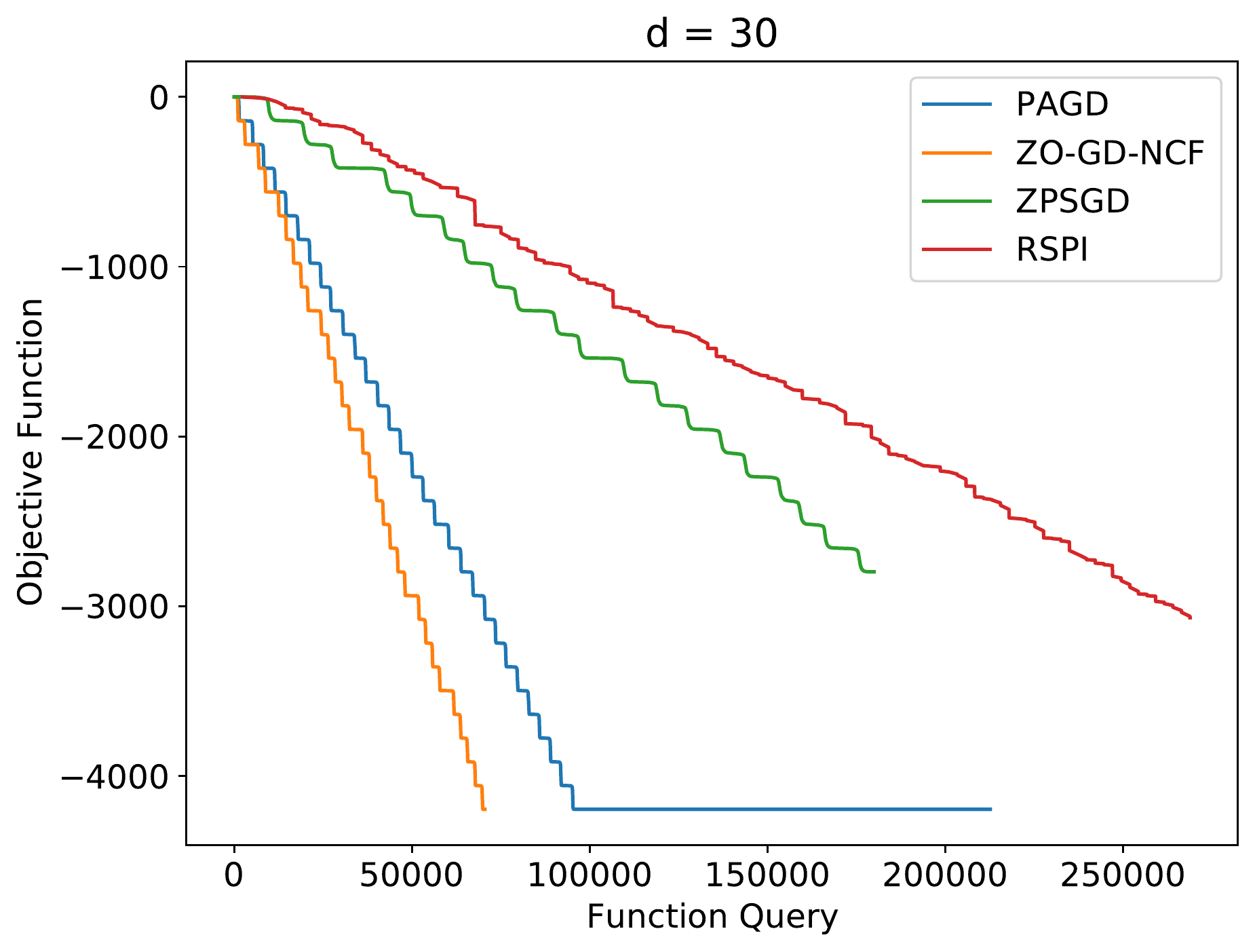}
}
\subfigure[d=50]{
\centering
\includegraphics[width=0.23\textwidth]{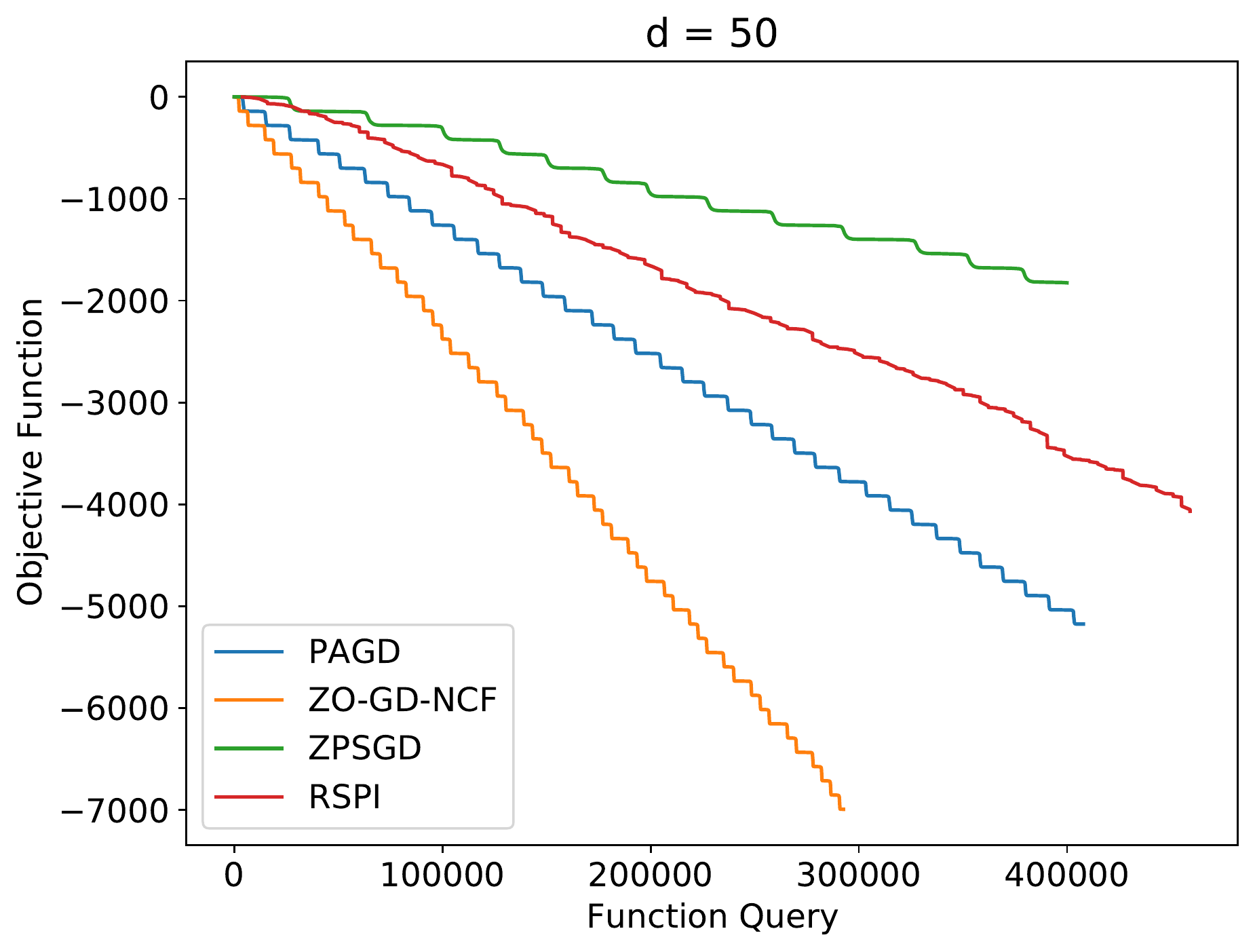}
}
\subfigure[d=100]{
\centering
\includegraphics[width=0.23\textwidth]{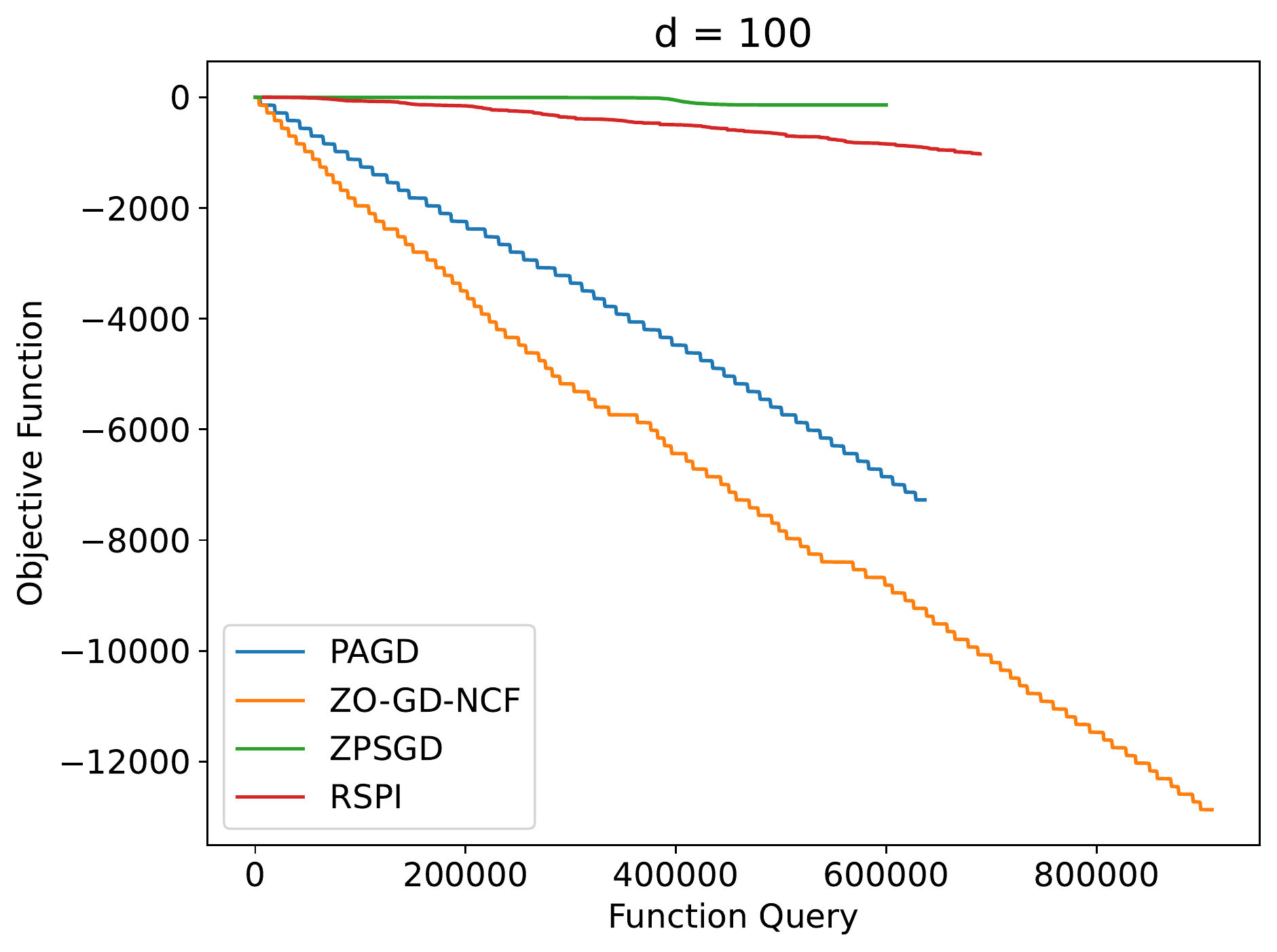}
}

\caption{Performance of ZO-GD-NCF, ZPSGD, PAGD, and RSPI on the octopus function with growing dimensions.}
\label{fig: octopus}

\end{figure}

The results in Figure~\ref{fig: octopus} illustrate that all algorithms are able to escape saddle points. With the increase of the dimension of the octopus function, more function queries are needed for each algorithm to converge to the local minimum. Note that in all experiments, RSPI performs worse than PAGD and ZO-GD-NCF. This is because RSPI is not a gradient based algorithm. Although it can efficiently escape from the saddle point using the negative curvature finding, it converges very slowly when the current point is far from the saddle point due to the random search.

We defer more experimental results to Appendix G.

\section{Conclusion}
In this paper, we analyse two types of ZO negative curvature finding frameworks, which can be used to find the negative curvature directions near a saddle point in the deterministic setting and stochastic setting, respectively. We apply the two frameworks to four ZO algorithms and analyse the complexities for converging to $(\epsilon, \delta)$-approximate SOSPs. Finally, we conduct several numerical experiments to verify the effectiveness of the proposed method in escaping saddle points. 

As a future work, it would be interesting to study the (zeroth-order) unified negative curvature finding frameworks with generic analysis that can be applied to any FOSPs finding algorithms.

\section*{Acknowledgments and Disclosure of Funding}
The authors thank four anonymous reviewers for their constructive comments and suggestions.
Bin Gu was partially supported by the National Natural Science Foundation of China under Grant 62076138.

\bibliography{ref}
\bibliographystyle{plain}

\section*{Checklist}


\begin{enumerate}

\item For all authors...
\begin{enumerate}
  \item Do the main claims made in the abstract and introduction accurately reflect the paper's contributions and scope?
    \answerYes{}
  \item Did you describe the limitations of your work?
    \answerYes{In the conclusion, we mention a way that could improve the current work.}
  \item Did you discuss any potential negative societal impacts of your work?
    \answerNo{}
  \item Have you read the ethics review guidelines and ensured that your paper conforms to them?
    \answerYes{}
\end{enumerate}

\item If you are including theoretical results...
\begin{enumerate}
  \item Did you state the full set of assumptions of all theoretical results?
    \answerYes{}
    \item Did you include complete proofs of all theoretical results?
    \answerYes{Please refer to the Appendix for complete proofs of the theoretical results.}
\end{enumerate}

\item If you ran experiments...
\begin{enumerate}
    \item Did you include the code, data, and instructions needed to reproduce the main experimental results (either in the supplemental material or as a URL)?
    \answerYes{}
    \item Did you specify all the training details (e.g., data splits, hyperparameters, how they were chosen)?
    \answerYes{}
    \item Did you report error bars (e.g., with respect to the random seed after running experiments multiple times)?
    \answerNo{}
    \item Did you include the total amount of compute and the type of resources used (e.g., type of GPUs, internal cluster, or cloud provider)?
    \answerNo{}
\end{enumerate}

\item If you are using existing assets (e.g., code, data, models) or curating/releasing new assets...
\begin{enumerate}
  \item If your work uses existing assets, did you cite the creators?
    \answerYes{}
  \item Did you mention the license of the assets?
    \answerYes{}
  \item Did you include any new assets either in the supplemental material or as a URL?
    \answerNo{}
  \item Did you discuss whether and how consent was obtained from people whose data you're using/curating?
    \answerNA{}
  \item Did you discuss whether the data you are using/curating contains personally identifiable information or offensive content?
    \answerNA{}
\end{enumerate}

\item If you used crowdsourcing or conducted research with human subjects...
\begin{enumerate}
  \item Did you include the full text of instructions given to participants and screenshots, if applicable?
    \answerNA{}
  \item Did you describe any potential participant risks, with links to Institutional Review Board (IRB) approvals, if applicable?
    \answerNA{}
  \item Did you include the estimated hourly wage paid to participants and the total amount spent on participant compensation?
    \answerNA{}
\end{enumerate}

\end{enumerate}

\newpage
\section*{Appendix}
\appendix

\section{Auxiliary Lemmas}

\begin{lemma}[\cite{nesterov2018lectures}, Lemma 1.2.3 \& 1.2.4]
\label{lemma: Lipschitz}
    If $f$ is $\ell$-Lipschitz smooth, then for all $x,y \in \mathbb{R}^d$, 
    \begin{equation*}
        |f(y)-f(x)-\nabla f(x)^T (y-x)| \le \frac{\ell}{2} \|y-x\|^2 .
    \end{equation*}
     If $f$ is $\rho$-Hessian Lipschitz, then for all $x,y \in \mathbb{R}^d$,
     \begin{equation*}
         \left\|\nabla f(y)-\nabla f(x)-\nabla^{2} f(x)(y-x)\right\| \leq \frac{\rho}{2}\|y-x\|^{2}
     \end{equation*}
    and
    \begin{equation*}
        \left|f(y)-f(x)-\nabla f(x)^{T}(y-x)-\frac{1}{2}(y-x)^{T} \nabla^{2} f(x)(y-x)\right| \leq \frac{\rho}{6}\|y-x\|^{3}
    \end{equation*}
\end{lemma}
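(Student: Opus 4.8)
All three bounds follow from the fundamental theorem of calculus along the segment joining $x$ and $y$, combined with the two Lipschitz hypotheses; indeed this is precisely Lemmas~1.2.3 and~1.2.4 of \cite{nesterov2018lectures}, so it may simply be cited, but the self-contained derivation is short and I sketch it. First I would write $f(y)-f(x) = \int_0^1 \nabla f(x+t(y-x))^\T (y-x)\,dt$ and subtract the constant $\nabla f(x)^\T(y-x) = \int_0^1 \nabla f(x)^\T(y-x)\,dt$, obtaining $f(y)-f(x)-\nabla f(x)^\T(y-x) = \int_0^1 \big(\nabla f(x+t(y-x))-\nabla f(x)\big)^\T(y-x)\,dt$. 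Bounding the integrand by Cauchy--Schwarz and $\ell$-smoothness gives $\|\nabla f(x+t(y-x))-\nabla f(x)\|\,\|y-x\| \le \ell t\|y-x\|^2$, and $\int_0^1 \ell t\,dt = \ell/2$, which is the first inequality.

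For the second inequality I would repeat the argument one order higher. Write $\nabla f(y)-\nabla f(x) = \int_0^1 \nabla^2 f(x+t(y-x))(y-x)\,dt$, hence $\nabla f(y)-\nabla f(x)-\nabla^2 f(x)(y-x) = \int_0^1 \big(\nabla^2 f(x+t(y-x))-\nabla^2 f(x)\big)(y-x)\,dt$. Using $\|A v\|\le \|A\|\,\|v\|$ for the spectral norm together with $\rho$-Hessian-Lipschitzness, the integrand has norm at most $\rho t\|y-x\|^2$, and $\int_0^1 \rho t\,dt = \rho/2$.

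For the third inequality I would start again from $f(y)-f(x)-\nabla f(x)^\T(y-x) = \int_0^1 \big(\nabla f(x+t(y-x))-\nabla f(x)\big)^\T(y-x)\,dt$ and substitute the decomposition obtained along the way in the second step: $\nabla f(x+t(y-x))-\nabla f(x) = t\,\nabla^2 f(x)(y-x) + r(t)$ with $\|r(t)\|\le (\rho/2) t^2\|y-x\|^2$. Integrating the leading term gives $\big(\int_0^1 t\,dt\big)(y-x)^\T\nabla^2 f(x)(y-x) = \tfrac12 (y-x)^\T\nabla^2 f(x)(y-x)$, and the remainder contributes at most $\int_0^1 \|r(t)\|\,\|y-x\|\,dt \le \int_0^1 (\rho/2) t^2\|y-x\|^3\,dt = (\rho/6)\|y-x\|^3$, which is the claimed bound.

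There is essentially no obstacle here: every step reduces to bounding a scalar integral of a norm over $[0,1]$. The only mild point is to justify the integral representations (differentiating $t\mapsto f(x+t(y-x))$ and $t\mapsto \nabla f(x+t(y-x))$), which is legitimate because $f$ is twice differentiable and $\nabla^2 f$ is (locally) Lipschitz, so $t\mapsto\nabla^2 f(x+t(y-x))$ is continuous along the segment; restricting to $C^2$ functions, as is standard in this line of work, removes even this concern.
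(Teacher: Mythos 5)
Your derivation is correct and is essentially the standard proof given in Nesterov's textbook (which the paper simply cites without reproducing): write the difference as an integral along the segment, subtract the affine or quadratic model, and bound the integrand pointwise by the relevant Lipschitz constant. Nothing to add.
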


\begin{lemma}[\cite{allen2018neon2}, Fact 2.2]
\label{lemma: variance reduced}
    If $v_1,\dots,v_n \in \R^d $ satisfy $\sum_{i=1}^n v_i =0$, and $S$ is a non-empty, uniform random subset of $[n]$. Then
    \begin{equation}
        \E \left[ \left\| \frac{1}{|S|} \sum_{i \in S}v_i \right\|^2 \right] \le \frac{\mathbb{I}[|S|<n]}{|S|} \frac{1}{n} \sum_{i\in[n]} \|v_i\|^2
    \end{equation}
\end{lemma}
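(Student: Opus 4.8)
The plan is to reduce the claim to a short second-moment computation, treating $S$ as a uniformly random subset of $[n]$ of fixed size $s := |S|$ (the interpretation consistent with the mini-batch sampling use in the paper). First I would dispose of the boundary case $s = n$: then $S = [n]$ deterministically, so $\frac{1}{|S|}\sum_{i\in S} v_i = \frac{1}{n}\sum_{i=1}^n v_i = 0$ by hypothesis, while the right-hand side is also $0$ because $\mathbb{I}[|S|<n] = 0$; the inequality holds trivially. So from now on assume $1 \le s < n$.

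Next I would expand the squared norm by bilinearity of the inner product and push the expectation inside the finite double sum:
\[
\E\left[\left\|\sum_{i\in S} v_i\right\|^2\right] = \sum_{i=1}^n\sum_{j=1}^n \Pr[i\in S,\ j\in S]\,\langle v_i, v_j\rangle .
\]
The two inclusion probabilities I need come from counting size-$s$ subsets: $\Pr[i\in S] = \binom{n-1}{s-1}/\binom{n}{s} = s/n$, and for $i\neq j$, $\Pr[i\in S, j\in S] = \binom{n-2}{s-2}/\binom{n}{s} = \frac{s(s-1)}{n(n-1)}$. Separating the diagonal from the off-diagonal terms then gives
\[
\E\left[\left\|\sum_{i\in S} v_i\right\|^2\right] = \frac{s}{n}\sum_{i=1}^n\|v_i\|^2 + \frac{s(s-1)}{n(n-1)}\sum_{i\neq j}\langle v_i, v_j\rangle .
\]

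The one genuinely substantive step is to use the zero-sum hypothesis $\sum_{i=1}^n v_i = 0$ to handle the cross term: $\sum_{i\neq j}\langle v_i, v_j\rangle = \big\|\sum_{i=1}^n v_i\big\|^2 - \sum_{i=1}^n \|v_i\|^2 = -\sum_{i=1}^n \|v_i\|^2$. Substituting this, simplifying $\frac{s}{n}\big(1 - \frac{s-1}{n-1}\big) = \frac{s}{n}\cdot\frac{n-s}{n-1}$, and dividing through by $s^2$ yields the exact identity
\[
\E\left[\left\|\tfrac{1}{s}\sum_{i\in S} v_i\right\|^2\right] = \frac{1}{s}\cdot\frac{n-s}{n-1}\cdot\frac{1}{n}\sum_{i=1}^n\|v_i\|^2 .
\]
Finally, since $S$ is non-empty we have $s \ge 1$, so $\frac{n-s}{n-1} \le 1$, which turns the identity into the stated inequality $\frac{\mathbb{I}[|S|<n]}{|S|}\cdot\frac{1}{n}\sum_i \|v_i\|^2$. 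There is no real obstacle in this argument; the only places calling for a bit of care are computing the pairwise inclusion probability correctly and remembering to split off the $|S| = n$ case so that the indicator on the right-hand side is accounted for.
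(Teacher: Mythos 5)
Your proof is correct, and it is the standard argument for this fact: the paper itself does not prove the lemma (it imports it as Fact 2.2 from \cite{allen2018neon2}), and the computation there is exactly the one you give --- expand the second moment via pairwise inclusion probabilities for sampling without replacement, use $\sum_i v_i = 0$ to evaluate the cross term, and observe $\frac{n-s}{n-1}\le 1$. Your explicit handling of the $|S|=n$ case, which is what the indicator on the right-hand side encodes, is also correct.
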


\begin{lemma}[\cite{allen2018neon2}, Claim C.2]
\label{lemma: negative-curvature-reduction}
    If $v$ is a unit vector and $v^\T \nabla^2 f(y)v \le -\frac{\delta}{2}$, suppose we choose $y' = y \pm \frac{\delta}{\rho} v$ where the sign is random, then $f(y)-\E[f(y')]\ge \frac{\delta^3}{12\rho^2}$.
\end{lemma}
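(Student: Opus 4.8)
The plan is to apply the third-order inequality from Lemma~\ref{lemma: Lipschitz} (the consequence of $\rho$-Hessian Lipschitzness) at the base point $y$ with displacement $y'-y = \pm\tfrac{\delta}{\rho}v$, and then average over the random sign. The key structural observation is that the sign randomization annihilates the unknown first-order term in expectation, so the guaranteed negative curvature in the quadratic term is what survives.

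First I would fix a sign $s\in\{+1,-1\}$, set $y' = y + s\tfrac{\delta}{\rho}v$, and note that $\|v\|=1$ gives $\|y'-y\| = \delta/\rho$. Then Lemma~\ref{lemma: Lipschitz} yields
\begin{equation*}
f(y') \le f(y) + s\frac{\delta}{\rho}\nabla f(y)^\T v + \frac{1}{2}\left(\frac{\delta}{\rho}\right)^2 v^\T \nabla^2 f(y)\, v + \frac{\rho}{6}\left(\frac{\delta}{\rho}\right)^3 .
\end{equation*}
Next, taking expectation over $s$ drawn uniformly from $\{+1,-1\}$ so that $\E[s]=0$, the linear term drops out and I obtain
\begin{equation*}
\E[f(y')] \le f(y) + \frac{\delta^2}{2\rho^2}\, v^\T \nabla^2 f(y)\, v + \frac{\delta^3}{6\rho^2}.
\end{equation*}

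Finally I would plug in the hypothesis $v^\T \nabla^2 f(y)\, v \le -\tfrac{\delta}{2}$ into the middle term, which gives
\begin{equation*}
\E[f(y')] \le f(y) - \frac{\delta^3}{4\rho^2} + \frac{\delta^3}{6\rho^2} = f(y) - \frac{\delta^3}{12\rho^2},
\end{equation*}
and rearranging yields the claimed $f(y)-\E[f(y')] \ge \tfrac{\delta^3}{12\rho^2}$. There is no real obstacle in this argument: the only point worth isolating is that randomizing the sign eliminates the term $\nabla f(y)^\T v$ whose sign we cannot control, and the step size $\delta/\rho$ is calibrated so that the cubic remainder $\tfrac{\delta^3}{6\rho^2}$ is strictly smaller than the curvature decrease $\tfrac{\delta^3}{4\rho^2}$; the arithmetic $\tfrac14-\tfrac16=\tfrac1{12}$ fixes the constant.
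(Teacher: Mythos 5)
Your proof is correct and is exactly the standard argument behind this claim (the paper itself merely cites Claim~C.2 of Neon2 rather than re-deriving it). The three ingredients you isolate — the third-order Taylor bound from $\rho$-Hessian Lipschitzness, cancellation of the gradient term via the random sign, and the observation that the step size $\delta/\rho$ makes the cubic remainder $\delta^3/(6\rho^2)$ strictly dominated by the quadratic decrease $\delta^3/(4\rho^2)$ — are precisely what the cited proof uses, and the arithmetic $\tfrac14-\tfrac16=\tfrac1{12}$ checks out.
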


\subsection{Lemmas of ZO Gradient Estimators}
\label{subsection: zo-gradient-estimators}

\begin{lemma}
\label{lemma: coord-square-bound}
    For any given smoothing parameter $\mu$ and any $x\in \R^d$, if $f$ is $\ell$-Lipschitz smooth, then for both \ref{eq: CoordGradEst} and the forward difference version of the coordinate-wise gradient: $\hat{\nabla}_{coord} f(x) = \sum_{i=1}^d \frac{f(x+\mu e_i) - f(x)}{\mu}e_i$, we have 
    \begin{equation}
        \|\hat{\nabla}_{coord} f(x) - \nabla f(x)\|^2 \le \ell^2 d \mu^2.
    \end{equation}
    If we further assume that $f$ has $\rho$-Lipschitz Hessian, we have 
    \begin{equation}
        \|\hat{\nabla}_{coord} f(x) - \nabla f(x)\|^2 \le \frac{1}{36}\rho^2 d \mu^4
    \end{equation}
\end{lemma}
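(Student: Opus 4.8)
The plan is to reduce both inequalities to a one-dimensional estimate along each coordinate axis and then apply the Taylor-type bounds collected in Lemma~\ref{lemma: Lipschitz}. First I would write, using orthonormality of $\{e_i\}_{i=1}^d$,
\[
\|\hat{\nabla}_{coord} f(x) - \nabla f(x)\|^2 = \sum_{i=1}^d \left( g_i - \nabla_i f(x) \right)^2,
\]
where $g_i$ is the $i$-th finite-difference quotient — either $g_i = \frac{f(x+\mu e_i) - f(x-\mu e_i)}{2\mu}$ for \ref{eq: CoordGradEst}, or $g_i = \frac{f(x+\mu e_i) - f(x)}{\mu}$ for the forward-difference variant — and $\nabla_i f(x) = \langle \nabla f(x), e_i\rangle$. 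So it suffices to bound $|g_i - \nabla_i f(x)|$ for each $i$ and sum over the $d$ coordinates.

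For the first inequality I would use only $\ell$-smoothness. Applying the first part of Lemma~\ref{lemma: Lipschitz} with $y = x+\mu e_i$ and with $y = x-\mu e_i$ gives $|f(x\pm\mu e_i) - f(x) \mp \mu\nabla_i f(x)| \le \tfrac{\ell}{2}\mu^2$. For the forward difference this already yields $|g_i - \nabla_i f(x)| \le \tfrac{\ell\mu}{2}$; for the central difference, I would express $2\mu\,(g_i - \nabla_i f(x))$ as the difference of the two bracketed quantities and apply the triangle inequality, again obtaining $|g_i - \nabla_i f(x)| \le \tfrac{\ell\mu}{2}$. Squaring and summing over $i$ gives $\|\hat{\nabla}_{coord} f(x) - \nabla f(x)\|^2 \le \tfrac{1}{4}\ell^2 d\mu^2 \le \ell^2 d\mu^2$, valid for both estimators.

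For the sharper $O(\mu^4)$ bound I would instead invoke the third part of Lemma~\ref{lemma: Lipschitz} (the $\rho$-Hessian-Lipschitz consequence) with $y = x\pm\mu e_i$, which gives
\[
\left| f(x\pm\mu e_i) - f(x) \mp \mu\nabla_i f(x) - \tfrac{\mu^2}{2}\,e_i^{\mathsf{T}}\nabla^2 f(x)\,e_i \right| \le \tfrac{\rho}{6}\mu^3 .
\]
The crucial point is that the quadratic term $\tfrac{\mu^2}{2}\,e_i^{\mathsf{T}}\nabla^2 f(x)\,e_i$ is even in $\mu$ and therefore appears with the same sign in both expansions, so it cancels when the two bounds are subtracted; this leaves $|f(x+\mu e_i) - f(x-\mu e_i) - 2\mu\nabla_i f(x)| \le \tfrac{\rho}{3}\mu^3$, i.e. $|g_i - \nabla_i f(x)| \le \tfrac{\rho\mu^2}{6}$ for the central difference. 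Squaring and summing over the $d$ coordinates gives $\|\hat{\nabla}_{coord} f(x) - \nabla f(x)\|^2 \le \tfrac{1}{36}\rho^2 d\mu^4$, as claimed. (For the forward difference this cancellation fails, since the leftover term $\tfrac{\mu^2}{2}\,e_i^{\mathsf{T}}\nabla^2 f(x)\,e_i$ is only $O(\mu^2)$; this is precisely why the $O(\mu^4)$ bound is stated only for the central-difference estimator \ref{eq: CoordGradEst}, matching the ``Central Difference vs.\ Forward Difference'' remark.)

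There is no substantive obstacle: everything follows from Lemma~\ref{lemma: Lipschitz} applied axiswise. The only place demanding care is the sign bookkeeping in the central-difference subtraction that makes the diagonal Hessian term drop out, together with keeping straight which of the two displayed inequalities is asserted for which estimator.
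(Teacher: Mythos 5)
Your proof is correct and follows essentially the same approach as the paper: decompose coordinate-wise, insert and cancel the diagonal Hessian term $\tfrac{\mu^2}{2}\nabla^2_{ii}f(x)$ in the central difference, and apply the cubic Taylor bound from Lemma~\ref{lemma: Lipschitz} axiswise to get $\tfrac{\rho}{3}\mu^3$ per coordinate. The only cosmetic difference is that for the $\ell$-smooth bound the paper defers to a citation while you supply the (slightly tighter, $\tfrac{1}{4}\ell^2 d\mu^2$) Taylor argument directly; and your closing observation about why the forward-difference estimator only achieves $O(\mu^2)$ accuracy correctly explains why the $\rho$-Hessian-Lipschitz bound is asserted only for the central-difference estimator.
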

\begin{proof}
    For the $\ell$-Lipschitz gradient case, the proof directly follows from Lemma 3 in \cite{ji2019improved}.
    For the $\rho$-Hessian Lipschitz case, we have 
\begin{align*}
    \left\| \nabla f(x) - \hat{\nabla}_{coord}f(x) \right\| &=\left\| \sum_{i=1}^d \frac{f(x+\mu e_i) - f(x-\mu e_i)}{2\mu}e_i - \nabla f(x) \right\|  \\
    &=\frac{1}{2\mu} \left\| \sum_{i=1}^d (f(x+\mu e_i) - f(x-\mu e_i) - 2 \mu \nabla_i f(x))e_i \right\| 
\end{align*}

For all $i \in [d]$, we have
\begin{align*}
    & f(x+\mu e_i) - f(x-\mu e_i) - 2 \mu \nabla_i f(x) \\
    =& \left[f(x+\mu e_i) - f(x) - \mu \nabla_i f(x) - \frac{\mu^2}{2} \nabla_{ii}^2 f(x)\right] - \left[ f(x-\mu e_i) - f(x) + \mu \nabla_i f(x) - \frac{\mu^2}{2} \nabla_{ii}^2 f(x) \right] \\
    \le & \left|f(x+\mu e_i) - f(x) - \mu \nabla_i f(x) - \frac{\mu^2}{2} \nabla_{ii}^2 f(x) \right| + \left| f(x-\mu e_i) - f(x) + \mu \nabla_i f(x) - \frac{\mu^2}{2} \nabla_{ii}^2 f(x) \right| \\
    \overset{\textrm{\ding{172}}}{\le} & 2 \cdot \frac{\rho}{6}\mu^3 = \frac{\rho}{3} \mu^3
\end{align*}
where \ding{172} is due to Lemma~\ref{lemma: Lipschitz}. 
    \begin{align}
    & \left\| \nabla f(x) - \hat{\nabla}_{coord}f(x) \right\| \notag \\
    = & \frac{1}{2\mu} \left\| \sum_{i=1}^d (f(x+\mu e_i) - f(x-\mu e_i) - 2 \mu \nabla_i f(x))e_i \right\| \notag \\
    = & \frac{1}{2\mu} \sqrt{\sum_{i=1}^d \left( f(x+\mu e_i) - f(x-\mu e_i) - 2 \mu \nabla_i f(x) \right)^2} \notag\\
    \le & \frac{1}{2\mu} \sqrt{d \left( \frac{\rho \mu^3}{3} \right)^2} =  \frac{\sqrt{d} \rho \mu^2}{6}
    \end{align}
\end{proof}

\begin{lemma}
\label{lemma: RandGradEst}
Let $f_{\mu} (x) = \E_{u\sim U_B} f(x+\mu u)$ be a smooth approximation of $f(x)$, where $U_B$ is the uniform distribution over the $d$-dimension unit Euclidean ball $B$. Given the gradient estimator $\hat{\nabla}_{rand} f(x) = d\frac{f(x+\mu u) - f(x - \mu u)}{2\mu}u$, we have 

If we assume that $f$ is $\ell$-Lipshitz smooth, then it has similar properties as its forward version : $\hat{\nabla}_{rand} f(x) = d\frac{f(x+\mu u) - f(x)}{\mu}u$.
\begin{itemize}
    \item[\normalfont(1)] $|f_{\mu}(x) - f(x)| \le \frac{\ell \mu^2}{2}$.
    \item[\normalfont(2)] $ \E \hat{\nabla}_{rand} f_S (x) = \E \frac{1}{|S|}\sum_{i\in S} f_i(x) = \nabla f_{\mu}(x)$, where $S \in [n]$.
    \item[\normalfont(3)] $\|\nabla f_{\mu}(x) - \nabla f(x)\| \le \frac{\ell d \mu}{2}$ for any $x \in \R^d$.
    \item[\normalfont(4)] $\E [\|\hat{\nabla}_{rand} f(x)\|^2] \le 2d \|\nabla f(x)\|^2 + \frac{\ell^2 d^2 \mu^2}{2}$ for any $x\in \R^d$.
    \item[\normalfont(5)] $\E \|\hat{\nabla}_{rand} f(x) - \hat{\nabla}_{rand} f(y)\|^2 \le 3d \ell^2 \|x-y\|^2 + \frac{3 \ell^2 d^2 \mu^2}{2}$ for any $x, y \in \R^d$
\end{itemize}
If we further assume that $f$ is $\rho$-Hessian Lipschitz, we have 
\begin{itemize}
    \item[\normalfont(1)] $\left|f_\mu (x) - f(x) - \frac{\mu^2}{2}  \frac{\sum_{i=1}^d \lambda_i}{d} \right| \le \frac{\rho \mu^3}{6}$, where $\lambda_i, i=1,\dots, d$ are eigenvalues of $\nabla^2 f(x)$. 
    \item[\normalfont(2)] $\|\nabla f_{\mu}(x) - \nabla f(x)\| \le \frac{\rho d \mu^2}{6}$ for any $x\in \R^d$.
    \item[\normalfont(3)] $\E \|\hat{\nabla}_{rand} f(x)\|^2 \le d \|\nabla f(x)\|^2 + \frac{\rho^2 d^2 \mu^4}{36} $ for any $x\in \R^d$.
    \item[\normalfont(4)] $\E \|\hat{\nabla}_{rand} f(x) - \hat{\nabla}_{rand} f(y)\|^2 \le 2d \ell^2 \|x-y\|^2 + \frac{\rho^2 d^2 \mu^4}{18}$ for any $x, y \in \R^d$

\end{itemize}
\begin{remark}
The inequality of item {\normalfont(1)} shows that increasing the level of smoothness of function $f$ beyond $\ell$-smooth cannot improve the approximation ability of $f$ by $f_\mu$. Consider a special case that $f^*(x) = \frac{\ell \|x\|^2}{2}$, then we have $f^*$ is $\ell$-Lipschitz smooth and $0$-Hessian Lipschitz, and according to item  {\normalfont(1)}
$$|f^*_\mu (x) - f^*(x)| = \left |\frac{\mu^2}{2} \frac{\sum_{i=1}^d \lambda_i}{d}\right|  = \frac{\ell \mu^2}{2}.$$ 
This means that $\frac{\ell \mu^2}{2}$ is a  tight bound of $|f_{\mu}(x) - f(x)|$ when $f$ is assumed to be $\ell$-smooth and $\rho$-Hessian Lipschitz.
\end{remark}
\end{lemma}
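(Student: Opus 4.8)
The plan is to obtain all nine bounds from one structural identity plus Taylor expansion, using the single observation that a \emph{central} difference cancels the Hessian term and so upgrades an $O(\ell\mu^2)$ Taylor remainder to an $O(\rho\mu^3)$ one. First I would record the ball-smoothing (Stokes) identity
\begin{equation*}
    \nabla f_{\mu}(x) \;=\; \frac{d}{2\mu}\,\E_{v\sim S^{d-1}}\big[(f(x+\mu v)-f(x-\mu v))\,v\big],
\end{equation*}
which comes from the divergence theorem on the unit ball; this is exactly item~(2) for a single component, and item~(2) in full follows by linearity of $\hat{\nabla}_{rand}$ in the $f_i$ together with $\E_S f_S=f$. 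I would also fix $E(v):=f(x+\mu v)-f(x-\mu v)-2\mu\nabla f(x)^\T v$, with $|E(v)|\le\ell\mu^2$ under $\ell$-smoothness and $|E(v)|\le\tfrac{\rho\mu^3}{3}$ under $\rho$-Hessian-Lipschitzness (the $\tfrac{\mu^2}{2}v^\T\nabla^2f(x)v$ contributions canceling in the symmetric difference), both via Lemma~\ref{lemma: Lipschitz}, and use the moment facts $\E_{v\sim S^{d-1}}[v]=0$, $\E_{v\sim S^{d-1}}[vv^\T]=\tfrac1d\I$, and vanishing of all odd moments of $v$.

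Given this, the individual items are bookkeeping. Item~(1): average the Taylor expansion of $f(x+\mu u)$ over $u\sim U_B$; under $\ell$-smoothness the linear term drops out, giving $|f_{\mu}(x)-f(x)|\le\tfrac{\ell\mu^2}{2}$, while under $\rho$-Hessian-Lipschitzness one keeps the quadratic term (whose ball-average is $\tfrac{\mu^2}{2}\tfrac{\sum_i\lambda_i}{d}$ up to the normalization constant of the uniform law) and bounds the cubic remainder by $\tfrac{\rho\mu^3}{6}$. Item~(3) and its Hessian analogue: substitute the split into the smoothing identity, so $\nabla f_{\mu}(x)-\nabla f(x)=\tfrac{d}{2\mu}\E_v[E(v)v]$ and hence $\|\nabla f_{\mu}(x)-\nabla f(x)\|\le\tfrac{d}{2\mu}\sup_v|E(v)|$, which is $\tfrac{d\ell\mu}{2}$ resp.\ $\tfrac{d\rho\mu^2}{6}$. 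Item~(4) and its analogue: write $\E\|\hat{\nabla}_{rand}f(x)\|^2=\tfrac{d^2}{4\mu^2}\E_v[(2\mu\nabla f(x)^\T v+E(v))^2]$, expand, use $\E_v[(\nabla f(x)^\T v)^2]=\|\nabla f(x)\|^2/d$ so the leading term becomes $d\|\nabla f(x)\|^2$, and control the remainder-squared contribution by $\tfrac{d^2}{4\mu^2}\sup_v E(v)^2$. Item~(5) and its analogue: subtract the linearization at $y$, i.e.\ work with $\phi(z):=f(z)-f(y)-\nabla f(y)^\T(z-y)$, so that $\hat{\nabla}_{rand}f(x)-\hat{\nabla}_{rand}f(y)=\hat{\nabla}_{rand}\phi(x)-\hat{\nabla}_{rand}\phi(y)$ (the common affine part cancels under the coupled direction $u$), with $\nabla\phi(y)=0$ and $\|\nabla\phi(x)\|\le\ell\|x-y\|$; expanding $\phi$ about $x$, so that $\nabla\phi(y)=0$ kills the $y$-linear term, and applying $\|a+b+c\|^2\le 3(\|a\|^2+\|b\|^2+\|c\|^2)$ produces the factor-$3$ constants in the statement.

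The delicate step is the constant-tracking in the second-moment bounds, items~(4)--(5) and their Hessian-Lipschitz versions: expanding $(2\mu\nabla f(x)^\T v+E(v))^2$ leaves a cross term $\tfrac{d^2}{\mu}\E_v[(\nabla f(x)^\T v)E(v)]$ that is \emph{not} odd in $v$, so it must be absorbed rather than discarded. I would handle it via the identity $\E_v[(\nabla f(x)^\T v)E(v)]=\tfrac{2\mu}{d}\nabla f(x)^\T(\nabla f_{\mu}(x)-\nabla f(x))$, which follows from the smoothing identity and the vanishing of the odd Hessian moment; combined with items~(3)/(Hessian-2) this bounds the cross term by a quantity of order $\mu^{2}$ (resp.\ $\mu^{3}$) times $\|\nabla f(x)\|$, hence negligible in the small-$\mu$ regime in which these estimators are always used, or alternatively it can be split by a Young-type inequality tuned so the coefficient of $\|\nabla f(x)\|^2$ stays as claimed and the residual is pushed into the $\mu$-dependent term. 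The remaining pieces --- the divergence-theorem computation behind the smoothing identity, and verifying the tightness remark for $f^*(x)=\tfrac{\ell}{2}\|x\|^2$ --- are routine.
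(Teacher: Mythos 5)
Your architecture coincides with the paper's: the divergence-theorem identity for $\nabla f_\mu$, the central-difference Taylor remainder $E(v)=f(x+\mu v)-f(x-\mu v)-2\mu\nabla f(x)^\T v$ bounded by $\ell\mu^2$ resp.\ $\tfrac{\rho\mu^3}{3}$ via Lemma~\ref{lemma: Lipschitz}, and the sphere moment $\E[vv^\T]=\tfrac1d\I$. These are exactly the quantities $a_u(x,\pm\mu)$ and $\E(uu^\T)=\tfrac1d\I$ in the paper's own proof, so for most items the two arguments are essentially the same. Your linearization-subtraction device for item~(5) and its Hessian analogue (working with $\phi(z)=f(z)-f(y)-\nabla f(y)^\T(z-y)$ so the common affine part cancels in the coupled difference) is a genuine small simplification the paper does not use.

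Your worry about the cross term in the second-moment bounds is correct, and the defect lies in the lemma itself, not in your reconstruction. In Hessian-Lipschitz item~(3) the paper passes from
\begin{equation*}
\E_u\bigl|a_u(x,\mu)-a_u(x,-\mu)+2\mu\langle\nabla f(x),u\rangle\bigr|^2
\quad\text{to}\quad
\E_u\Bigl[2\bigl(|a_u(x,\mu)|^2+|a_u(x,-\mu)|^2\bigr)+4\mu^2\langle\nabla f(x),u\rangle^2\Bigr],
\end{equation*}
silently discarding the cross contribution $\E_u\bigl[4\mu\,(a_u(x,\mu)-a_u(x,-\mu))\langle\nabla f(x),u\rangle\bigr]$. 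As you observe, $a_u(x,\mu)-a_u(x,-\mu)$ is odd in $u$ (since $a_{-u}(x,\mu)=a_u(x,-\mu)$), so its product with the odd factor $\langle\nabla f(x),u\rangle$ is even and does not average to zero; it is of size $\Theta(\rho\mu^4\|\nabla f(x)\|)$ and contributes an additional $\Theta(\rho\mu^2 d^2\|\nabla f(x)\|)$ to the final bound. No Young split can remove it while preserving the advertised constants $d\|\nabla f(x)\|^2+\tfrac{\rho^2d^2\mu^4}{36}$: either the leading coefficient must become $cd$ with $c>1$ (as in the $\ell$-smooth item~(4), where the spare factor of $2$ absorbs the cross term) or a mixed $\rho\mu^2\|\nabla f(x)\|$ term must remain. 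The same issue appears in Hessian-Lipschitz item~(4). So your two hedges --- ``negligible for small $\mu$'' and a tuned Young inequality --- feel unsatisfying precisely because the stated constants are unattainable; a corrected version should read, e.g., $\E\|\hat{\nabla}_{\mathrm{rand}}f(x)\|^2\le 2d\|\nabla f(x)\|^2+c\,\rho^2 d^2\mu^4$, or carry the mixed term explicitly.

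There is a second, independent slip you gestured at with ``up to the normalization constant of the uniform law'': for $u\sim U_B$ (ball, not sphere) the second moment is $\E[uu^\T]=\tfrac{1}{d+2}\I$, so $\E_{u\sim U_B}\langle\nabla^2 f(x)u,u\rangle=\tfrac{\mathrm{tr}\,\nabla^2 f(x)}{d+2}$, not $\tfrac{\mathrm{tr}\,\nabla^2 f(x)}{d}$ as the paper uses in Hessian-Lipschitz item~(1). For the quadratic example $f^*(x)=\tfrac{\ell}{2}\|x\|^2$ in the Remark this gives $|f^*_\mu(x)-f^*(x)|=\tfrac{\ell\mu^2}{2}\cdot\tfrac{d}{d+2}$, so the advertised tight bound $\tfrac{\ell\mu^2}{2}$ is only asymptotic in $d$.
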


\begin{proof}
Let $\alpha(d)$ be the volume of the unit ball in $\R^d$, and $\beta(d)$ be the surface area of the
unit sphere in $\R^d$. Denote by $B$ and $S_p$ the unit ball and unit sphere, respectively.

When $f$ is $\ell$-Lipschitz smooth, the proof directly follow from Lemma4.1 in \cite{gao2018information} and \cite{ji2019improved}.

When $f$ is $\rho$-Hessian Lipschitz, We first prove item $(1)$. 
\begin{align*}
    & \frac{1}{\alpha(d)} \int_{B} \left[ f(x+\mu u) - f(x) - \<\nabla f(x), \mu u\> - \frac{\mu^2}{2} \<\nabla^2 f(x)u, u\> \right] du \\
    = & f_\mu (x) - f(x) - \frac{\mu^2}{2} \E_{u \sim U_B}\<\nabla^2 f(x)u, u\> \\
    = & f_\mu (x) - f(x) - \frac{\mu^2}{2}  \frac{\textrm{tr}\left(\nabla^2 f(x)\right)}{d} 
    = f_\mu (x) - f(x) - \frac{\mu^2}{2} \frac{\sum_{i=1}^d \lambda_i}{d} 
\end{align*}
where $\lambda_i, i=1,\dots, d$ are eigenvalues of $\nabla^2 f(x)$. Therefore
\begin{align*}
    & \left|f_\mu (x) - f(x) - \frac{\mu^2}{2}  \frac{\sum_{i=1}^d \lambda_i}{d} \right| \\
    \le & \frac{1}{\alpha(d)} \int_{B} \left| f(x+\mu u) - f(x) - \<\nabla f(x), \mu u\> - \frac{\mu^2}{2} \<\nabla^2 f(x)u, u\> \right| du \\
    \le & \frac{1}{\alpha(d)} \int_B \frac{\rho \mu^3}{6} \|u\|^3  du \overset{\textrm{\ding{172}}}{\le}  \frac{\rho \mu^3}{6} \frac{d}{d+3} \le \frac{\rho \mu^3}{6}
\end{align*}
where \ding{172} is due to $\frac{1}{\alpha(d)} \int_B \|u\|^p du = \frac{d}{d+p}$ as proved in Lemma 7.3(a) in \cite{gao2018information}.

Then we prove item $(2)$. Denote 
$$a_u(x, \mu) = f(x+\mu u) - f(x) - \<\nabla f(x), \mu u\> - \frac{\mu^2}{2} u^\T \nabla^2 f(x) u,$$ 
we have $|a_u(x, \mu)| = |a_u (x, -\mu)| \le \frac{\mu^3}{6} \rho \|u\|^3$ according to Lemma~\ref{lemma: Lipschitz}. Then we have 
\begin{align*}
    & \|\nabla f_{\mu}(x) - \nabla f(x) \| \\
    = & \left\| \frac{1}{\beta(d)} \int_{S_p} \frac{d}{\mu} f(x+\mu u) u du - \nabla f(x)\right\| \\
    = & \left\| \frac{1}{\beta(d)} \int_{S_p} \frac{d}{2\mu} \left(f(x+\mu u) - f(x-\mu u)\right) u du - \nabla f(x)\right\| \\
    = & \left\| \frac{1}{\beta(d)} \int_{S_p} \frac{d}{2\mu} \left(f(x+\mu u) - f(x-\mu u)\right) u du - 2\frac{1}{
    \beta(d)}\int_{S_p} \frac{d}{2 \mu}\<\nabla f(x), \mu u \>u du \right\| \\
    = & \frac{d}{2 \beta(d) \mu} \left\| \int_{S_p} \left( f(x+\mu u) - f(x - \mu u) - 2\<\nabla f(x), \mu u\>\right) u du  \right\| \\
    \le & \frac{d}{2 \beta(d)\mu} \int_{S_p} (|a_u(x, \mu)| + |a_u (-\mu)|) \|u\| du
    \le  \frac{\rho d \mu^2}{6 \beta(d)} \int_{S_p}  \|u\|^4 du= \frac{\rho d \mu^2}{6}
\end{align*}

For item $(3)$. we have 
\begin{align*}
    &\E \|\hat{\nabla}_{rand} f(x)\|^2  \\
    = & \frac{1}{\beta(d)} \int_{S_p} \frac{d^2}{4\mu^2} \left| f(x+\mu u) - f(x-\mu u) \right|^2 \|u\|^2 du \\
    = & \frac{d^2}{\beta(d) 4\mu^2} \int_{S_p} \left| a_u(x, \mu) - a_u(x, -\mu) + 2 \<\nabla f(x), \mu u\> \right|^2  du \\
    \le & \frac{d^2}{\beta(d) 4\mu^2} \left[\int_{S_p} 2\left( |a_u(x, \mu)|^2 + |a_u(x, -\mu)|^2  \right)du + 4 \mu^2 \int_{S_p} \nabla^\T f(x) u u^\T \nabla f(x) du \right] \\
    \overset{\textrm{\ding{172}}}{=}& \frac{d^2}{\beta(d) 4\mu^2} \left[\int_{S_p} 2\left( |a_u(x, \mu)|^2 + |a_u(x, -\mu)|^2  \right)du + 4 \mu^2 \frac{\beta(d)}{d} \|\nabla f(x)\|^2 \right] \\
    \le & \frac{d^2}{\beta(d) 4\mu^2} \left[\int_{S_p} 4 \left( \frac{\mu^3}{6} \rho \|u\|^3 \right)^2du + 4 \mu^2 \frac{\beta(d)}{d} \|\nabla f(x)\|^2 \right] \\
    = & d \|\nabla f(x)\|^2 + \frac{\rho^2 d^2 \mu^4}{36}
\end{align*}
where \ding{172} is due to $\E(u u^\T) = \frac{1}{\beta(d) }\int_{S_p}u u^\T = \frac{1}{d}\I$ as proved in Lemma 6.3 in \cite{gao2018information}. 

For item $(4)$, we have
\begin{align*}
    & \E\|\hat{\nabla}_{rand} f(x) - \hat{\nabla}_{rand} f(y) \|^2 \\
    = & \E \left\| d \frac{f(x+\mu u) - f(x-\mu u)}{2 \mu}u - d \frac{f(y+\mu u) - f(y-\mu u)}{2 \mu}u \right\|^2 \\
    = & \frac{d^2}{4\mu^2} \E \left\| \left[ f(x+\mu u) - f(x-\mu u) \right]u - \left[ f(y+\mu u) - f(y-\mu u) \right]u \right\|^2 \\
    = & \frac{d^2}{4\mu^2} \E \left\| \left(a_u(x,\mu) - a_u(x,-\mu) -(a_u(y,\mu) + a_u(y,-\mu)  \right)u + 2 \<\nabla f(x) - \nabla f(y), \mu u \>u \right\|^2 \\
    \le & \frac{d^2}{4\mu^2} \E \left[ 2 \left( |a_u(x,\mu)|^2 + |a_u(x,-\mu)|^2 + |a_u(y,\mu)|^2 + |a_u(y,-\mu)|^2 \right) \|u\|^2 \right.\\
    & \left. + 8\|\<\nabla f(x) - \nabla f(y), \mu u \>u \|^2 \right] \\
    \le & \frac{d^2}{4\mu^2} \E \left[ 8 \frac{\mu^6}{36} \rho^2 \|u\|^6 \cdot\|u\|^2  + 8\|\<\nabla f(x) - \nabla f(y), \mu u \>u \|^2 \right] \\
    = & 2d^2 \E \|\<\nabla f(x) - \nabla f(y), u\> u\|^2 + \frac{\rho^2 d^2 \mu^4}{18} \E\|u\|^8 
    =  2d^2 \E \<\nabla f(x)-\nabla f(y), u\>^2 + \frac{\rho^2 d^2 \mu^4}{18} \\
    = & 2 d^2 (\nabla f(x) - \nabla f(y))^\T \E (u u^\T) (\nabla f(x) - \nabla f(y)) + \frac{\rho^2 d^2 \mu^4}{18} \\
    \overset{\textrm{\ding{172}}}{=} & 2 d \|\nabla f(x) - \nabla f(y)\|^2 + \frac{\rho^2 d^2 \mu^4}{18} \\
    \le & 2d \ell^2 \|x-y\|^2 + \frac{\rho^2 d^2 \mu^4}{18}
\end{align*}
where \ding{172} is due to $\E(u u^\T) = \frac{1}{\beta(d) }\int_{S_p}u u^\T = \frac{1}{d}\I$ as proved in Lemma 6.3 in \cite{gao2018information}.

\end{proof}

\subsection{ZO Hessian-Vector Product Estimator}

\begin{proof}[\textbf{Proof of Lemma~\ref{lemma: error-bound-of-zo-hessian-vetor-product}}]

If $f$ is $\rho$-Hessian Lipschitz, 
\begin{align}
\label{eq: hessian-vector-error-bound}
    \left\|\nabla f(x+v) - \nabla f(x) - \nabla^2 f(x) v \right\| \le \frac{\rho}{2} \left\| v \right\|^2
\end{align}
this inequality uses Lemma~\ref{lemma: Lipschitz}. 
Then we have 
    \begin{align*}
        & \left\|\nabla^2 f(x_0) v - \mathcal{H}_f(x) v \right\| \\
        = & \left\| \nabla^2 f(x_0) v - \left( \nabla f(x+v) - \nabla f(x) \right) + \left( \nabla f(x+v) - \nabla f(x) \right) - \mathcal{H}_f(x) v \right\| \\
        \overset{\textrm{\ding{172}}}{\le} & \left\| \nabla^2 f(x_0) v - \left( \nabla f(x+v) - \nabla f(x) \right) \right\| + \left\| \left( \nabla f(x+v) - \nabla f(x) \right) - \mathcal{H}_f(x) v \right\| \\
        = & \left\| \nabla^2 f(x_0) v - \left( \nabla f(x+v) - \nabla f(x) \right) \right\| + \left\| \left( \nabla f(x+v) - \nabla f(x) \right) - (\hat{\nabla}_{coord} f(x+v) - \hat{\nabla}_{coord} f(x)) \right\| \\
        \overset{\textrm{\ding{173}}}{\le} & \left\| \nabla^2 f(x_0) v - \left( \nabla f(x+v) - \nabla f(x) \right) \right\| + \left\|\nabla f(x+v) - \hat{\nabla}_{coord} f(x+v) \right\| + \left\|\nabla f(x) - \hat{\nabla}_{coord} f(x) \right\| \\
        \overset{\textrm{\ding{174}}}{\le} & \frac{\rho}{2} \|v\|^2 + \frac{\sqrt{d} \rho \mu^2}{3} 
    \end{align*}
    where \ding{172} and \ding{173} are due to the triangle inequality; \ding{174} is due to Eq. \eqref{eq: hessian-vector-error-bound} and Lemma~\ref{lemma: coord-square-bound}.
\end{proof}

\section{Proof of Proposition~\ref{proposition: verify-gradient-online}}

\begin{proof}
    \textbf{Online stochastic setting.} Let $S :=\{ S_1,\dots,S_m\}$ be $m=\mathcal{O}(\log\frac{1}{p})$ random uniform subsets of $[n]$, each of cardinality $B=\max\{\frac{128\sigma^2}{\epsilon^2},1\}$. Denote by $v_j = \frac{1}{B}\sum_{i\in S_j}\nabla f_i(x)$ and $\hat{v}_j = \frac{1}{B}\sum_{i\in S_j}\hat{\nabla}_{coord}f_i (x)$, according to Lemma~\ref{lemma: variance reduced} we have 
    \begin{align*}
        \E_{S_j} \left[ \|v_j - \nabla f(x)\|^2 \right] \le \frac{1}{B} \cdot \frac{1}{n} \sum_{i\in [n]} \|v_j - \nabla f(x)\|^2 \le \frac{\sigma^2}{B} = \frac{\epsilon^2}{128}
    \end{align*}
    Then, according to the Chebyshev’s inequality: $P(|x- \E(x)|>u)\le \frac{var(x)}{u^2}$, with probability at least $1/2$ over the randomness of $S_j$ we have
    \begin{align*}
        \left|\|\hat{v}_j\| - \|\nabla f(x)\| \right| &\le |\|\hat{v}_j\|-\|v_j\|| + |\|v_j - \nabla f(x)\|| \\
        & \le \|\hat{v}_j - v_j\| +\|v_j - \nabla f(x)\| \le \frac{\rho \sqrt{d}\mu^2}{6} + \frac{\epsilon}{8}  \le \frac{\epsilon}{4}
    \end{align*}
    where the third inequality comes from Lemma~\ref{lemma: coord-square-bound} and the Chebyshev's inequality by setting $u = \frac{\epsilon}{8}$; the last inequality is because we choose the smoothing parameter $\mu$ such that $\mu \le 
    \sqrt{\frac{3\epsilon}{4\rho \sqrt{d}}}$.
    We denote the non-decreasing order 
    \begin{align*}
        & \pi \circ S :=  \{S_{\pi(1)}, S_{\pi(2)}, \dots, S_{\pi(m)}\} \\
        \textrm{s.t.} \quad & \left|\|\hat{v}_{\pi(1)}\| - \|\nabla f(x)\| \right| \ge \dots \ge \left|\|\hat{v}_{\pi(m)}\| - \|\nabla f(x)\| \right|
    \end{align*}
    Then we define the event
    \begin{equation*}
        \mathcal{H}_{j} = \left( \left| \|\hat{v}_{\pi(j)}\| - \|\nabla f(x)\| \right| \ge \frac{\epsilon}{4} \right)
    \end{equation*}
    We have $\textrm{Pr} \left( \mathcal{H}_j  \right) \le \frac{1}{2}$ for all $j \in [m]$. Using the fact that
    \begin{equation*}
        \mathcal{H}_{\pi( \left\lfloor \frac{m}{2} \right\rfloor )} \subseteq \left( \left| \|\hat{v}_{\pi(j)} \|- \|\nabla f(x)\| \right| \ge \frac{\epsilon}{4}, \forall j \le \left\lfloor \frac{m}{2} \right\rfloor  \right) = \bigcap_{j=1}^{\left\lfloor \frac{m}{2} \right\rfloor } \mathcal{H}_{\pi(j)}
    \end{equation*}
    we have 
    \begin{equation*}
        \textrm{Pr} \left(\mathcal{H}_{\pi( \left\lfloor \frac{m}{2} \right\rfloor )}\right) \le \textrm{Pr} \left(\bigcap_{j=1}^{\left\lfloor \frac{m}{2} \right\rfloor } \mathcal{H}_{\pi(j)}\right) = \prod_{j=1}^{\left\lfloor \frac{m}{2} \right\rfloor } \textrm{Pr} \left(\mathcal{H}_{\pi(j)}\right) \le \left(\frac{1}{2}\right)^{\left\lfloor \frac{m}{2} \right\rfloor}
    \end{equation*}
    
    If we choose $m=\mathcal{O}(\log(1/p))$, we have with probability at least $1-p$, it satisfies that at least $\left\lfloor \frac{m}{2} \right\rfloor + 1$ of the vectors $v_j$ satisfy $\left|\|\hat{v}_j\| - \|\nabla f(x)\| \right|\le \frac{\epsilon}{4}$. Then we select $v^* = v_j$ where $j\in[m]$ is index that gives the median value of $\|\hat{v}_j\|$,  then it satisfies $|\|\hat{v}_j\|- \|\nabla f(x)\||\le \frac{\epsilon}{4}$. Finally, we can check if $\|\hat{v}_j\| \le \frac{3\epsilon}{4}$, then $\|\nabla f(x)\| \le \epsilon$, and if not, then $\|\nabla f(x)\| \ge \frac{\epsilon}{2}$.
    
    \textbf{Deterministic.} The case in the offline deterministic setting is much simpler than the online setting. According to Lemma~\ref{lemma: coord-square-bound} we have
    \begin{equation*}
        \|\hat{\nabla}_{coord} f(x) - \nabla f(x)\| \le \frac{\rho\sqrt{d} \mu^2}{6}
    \end{equation*}
    If we choose $\mu \le \sqrt{\frac{3\epsilon}{2\rho \sqrt{d}}}$ we get $\|\hat{\nabla}_{coord} f(x) - \nabla f(x)\| \le \frac{\epsilon}{4}$. Thus, we can check if $\|\hat{\nabla}_{coord} f(x)\|$, then $\|\nabla f(x)\| \le \epsilon$, and if not, then $\|\nabla f(x)\| \ge \frac{\epsilon}{2}$.
\end{proof}

\section{Proof of Zeroth-Order Negative Curvature Search}
\subsection{Proof of Online setting}
\begin{proof}[\textbf{Proof of Lemma~\ref{thm: online-weak}}]
    We first recall the parameter settings in Algorithm~\ref{alg: ZO-NCF-Online-weak}:
    \begin{equation*}
    \label{eq: T1}\tag{T1}
    \eta \gets \frac{\delta}{C_0^2 \ell^2 \log(100d)}, T \gets \frac{C_0^2 \log(100d)}{\eta \delta}, \sigma \gets (100d)^{-3C_0}\frac{\eta^2 \delta^3}{\rho}, r \gets (100d)^{C_0} \sigma
    \end{equation*}
    Denote by $i_t \in [n]$ the random index $i$ chosen at the $t$-th iteration in Algorhtm \ref{alg: ZO-NCF-Online-weak}. 
    Let $\xi_t$ be the error vector such that
    \begin{equation*}
    \label{eq: T2}\tag{T2}
        \xi_t := \nabla^2 f_{i_t}(x_0)(x_t - x_0) - \mathcal{H}_{f_{i_t}}(x_0)(x_t - x_0).
    \end{equation*}
    Then the error vector can be bounded by
    \begin{equation*}
        \|\xi_t \| \le  \rho(\frac{\|x_t-x_0\|^2}{2} + \frac{\sqrt{d}\mu_t^2}{3}) \le \rho \sqrt{d}\|x_t-x_0\|^2.
    \end{equation*}
    where the first inequality is due to By Lemma~\ref{lemma: error-bound-of-zo-hessian-vetor-product}; the second inequality is due to $\mu_t = \|x_t - x_0\|$ and $d\ge1$. According to the definition of $\xi_t$ in \eqref{eq: T2}, we have 
    \begin{equation*}
    \label{eq: T3}\tag{T3}
        x_{t+1} = x_t - \eta \mathcal{H}_{f_{i_t}}(x_0) (x_t - x_0) = x_t -\eta \nabla^2 f_{i_t} (x_0) (x_t - x_0) +\eta \xi_t.
    \end{equation*}
    Then we define the following notations, 
    \begin{equation*}
        z_t = x_t - x_0,\quad \A_t = \B_t + \mathbf{R}_t \quad \text{where} \quad \B_t = \nabla^2 f_{i_t}(x_0), \quad \mathbf{R}_t = -\frac{\xi_t z_t^\T}{\|z_t\|^2},
    \end{equation*}
    From \eqref{eq: T3}, we have 
    \begin{equation*}
        z_{t+1} = z_t -\eta \B_t z_t + \eta \xi_t = (\I - \eta \A_t)z_t.
    \end{equation*}
    As long as Algorithm~\ref{alg: ZO-NCF-Online-weak} does not terminate, we have
    \begin{equation*}
        \|\mathbf{R}_t\| \le \rho \sqrt{d} \|z_t\| \overset{\textrm{\ding{172}}}{\le}  \rho \sqrt{d} r, \quad \|\B_t\|\le \ell, \quad \|\A_t\| \le \|\B_t\| + \|\mathbf{R}_t\| \le \|\B_t\| + \rho \sqrt{d} r  \overset{\textrm{\ding{173}}}{\le} 2\ell
    \end{equation*}
    where \ding{172} holds since we always have $\|z_t\|=\|x_t-x_0\|\le r$ as long as Algorithm~\ref{alg: ZO-NCF-Online-weak} does not terminate; \ding{173} holds since the parameter setting of $r$ as in Algorithm~\ref{alg: ZO-NCF-Online-weak} such that $\rho \sqrt{d} r = \frac{\sqrt{d} \delta^5 }{C_0^4 (100d)^{2C_0} \log^2 (100d) \ell^4} \le \frac{\delta^5}{\ell^4} \le \ell$.
    
    Define
    \begin{align*}
       \Phi_t &  = z_{t+1}z_{t+1}^\T \overset{\textrm{\ding{172}}}{=} (\I - \eta \A_t)\cdots(\I-\eta \A_1)\xi \xi^\T (\I-\eta \A_1)^\T \cdots (\I - \eta \A_t)^\T \\
        &= (\I - \eta \A_t) \Phi_{t-1} (\I - \eta \A_t)^\T \\
        &= (\I - \eta \A_t) \Phi_{t-1} (\I - \eta \A_t) \\
        w_t & = \frac{z_t}{\|z_t\|} = \frac{z_t}{\sqrt{\mathrm{tr} (\Phi_{t-1})}}
    \end{align*}
    where \ding{172} is because $z_1 = x_1 - x_0 =\xi$. As long as Algorithm~\ref{alg: ZO-NCF-Online-weak} does not terminate, we have
    \begin{align*}
        \mathrm{tr}(\Phi_t) &= \|z_{t+1}\|^2 =  \|(\I -\eta \A_t)z_t\|^2 \\
        & = \|z_t\|^2 - 2 \eta z_t^\T\A_t z_t + \eta^2 z_t^\T \A_t^2 z_t \\
        & = \|z_t\|^2 \left( 1-2\eta w_t^\T \A_t w_t + \eta^2 w_t^\T \A_t^2 w_t \right) \\
        & = \mathrm{tr} (\Phi_{t-1}) \left( 1-2\eta w_t^\T \A_t w_t + \eta^2 w_t^\T \A_t^2 w_t \right)  \\
        & \le \mathrm{tr} (\Phi_{t-1}) \left( 1-2\eta w_t^\T \A_t w_t + 4\eta^2 \ell^2 \right) \\
        & \le \mathrm{tr} (\Phi_{t-1}) \left( 1-2\eta w_t^\T \B_t w_t + 2\eta \|\mathbf{R}_t\| + 4\eta^2 \ell^2 \right) \\
        & \le \mathrm{tr} (\Phi_{t-1}) \left( 1-2\eta w_t^\T \B_t w_t + 2 \eta \rho \sqrt{d} r + 4 \eta^2 \ell^2 \right) \\
        & \le \mathrm{tr} (\Phi_{t-1}) \left( 1-2\eta w_t^\T \B_t w_t + 8 \eta^2 \ell^2 \right)
    \end{align*}
    where the last inequality is because $  \frac{\rho \sqrt{d} r}{\eta} = \frac{\sqrt{d} \delta^4}{(100d)^{2C_0} C_0^2 \log(100d) \ell^2} \le \frac{\delta^4}{\ell^2} \le \ell^2 $. On the other hand,
    \begin{align*}
        \mathrm{tr}(\Phi_t) &= \mathrm{tr} (\Phi_{t-1}) \left( 1-2\eta w_t^\T \A_t w_t + \eta^2 w_t^\T \A_t^2 w_t \right)  \\
        & \ge \mathrm{tr} (\Phi_{t-1}) \left( 1-2\eta w_t^\T \A_t w_t \right)  \\
        & \ge \mathrm{tr} (\Phi_{t-1}) \left( 1-2\eta w_t^\T \B_t w_t - 2\eta \|\mathbf{R}_t\|  \right) \\
        & \ge \mathrm{tr} (\Phi_{t-1}) \left( 1-2\eta w_t^\T \B_t w_t - 4 \eta \rho d r  \right) \\
        & \ge \mathrm{tr} (\Phi_{t-1}) \left( 1-2\eta w_t^\T \B_t w_t - 8\eta^2 \ell^2  \right)
    \end{align*}
    Then take the logarithm on both sides of the inequality, we get
    \begin{equation*}
        \log\left( 1-2\eta w_t^\T \B_t w_t - 8\eta^2 \ell^2  \right) \le \log\left(\mathrm{tr}(\Phi_t)\right) - \log \left(\mathrm{tr}(\Phi_{t-1})\right)  \le   \log\left( 1-2\eta w_t^\T \B_t w_t + 8 \eta^2 \ell^2 \right).
    \end{equation*}
    Define 
    \begin{equation*}
        \lambda = -\lambda_{min}(\nabla^2 f(x_0)) = -\lambda_{min} (\E [\B_t]), \quad \A := \nabla^2 f(x_0) = \E[\B_t].
    \end{equation*}
    We know $w_t^\T \B_t w_t \in [-\ell,\ell]$ and $\E [w_t^\T \B_t w_t] = w_t^\T \A w_t \ge -\lambda$. Then we have  
    \begin{align*}
        \log\left(\mathrm{tr}(\Phi_t)\right) - \log \left(\mathrm{tr}(\Phi_{t-1})\right) & \in [\log\left( 1-2\eta w_t^\T \B_t w_t - 8\eta^2 \ell^2  \right), \log\left( 1-2\eta w_t^\T \B_t w_t + 8 \eta^2 \ell^2 \right)]\\
        & \overset{\textrm{\ding{172}}}{\subset} [-2(2\eta \ell+ 8 \eta^2 \ell^2 ), 2 \eta \ell + 8 \eta^2 \ell^2] \\
        & \overset{\textrm{\ding{173}}}{\subset} [-6 \eta \ell, 3\eta \ell] \\
        \E \left[ \log \left( 1-2\eta w_t^\T \B_t w_t + 8 \eta^2 \ell^2 \right) \right] & \overset{\textrm{\ding{174}}}{\le} \log \left(\E \left[\log \left( 1-2\eta w_t^\T \B_t w_t + 8 \eta^2 \ell^2 \right)\right] \right) \le  2 \eta \lambda + 8 \eta^2 \ell^2 
    \end{align*}
    above \ding{172} uses the fact that $|\log(1-x)| \le 2|x|, \forall x \in[-1/2,1/2]$ and $\log(1+x) < x, \forall x >0$; \ding{173} is because $\eta  = \frac{\delta}{C_0^2 \ell^2 \log(100d)} \le \frac{1}{8 \ell}$; \ding{174} is due to the concavity of log and Jensen's inequality. Applying Lemma~\ref{lemma: Azuma-Hoeffding inequality} by setting $\rho = 3\eta \ell, \beta = 2\eta\lambda + 8\eta^2 \ell^2$, we have 
    \begin{equation*}
        \mathrm{Pr}\left[ \log ( \mathrm{tr}(\Phi_t)) -\log(\mathrm{tr}(\Phi_0))  \ge (2 \eta \lambda + 8 \eta^2 \ell^2)t + 6 \eta \ell \sqrt{t \log(\frac{t}{p})} \right]\le p/t.
    \end{equation*}
    then we define
    \begin{equation*}
        T_0 =  \min \left\{ \frac{1}{2\eta \lambda}\cdot \frac{\log(\frac{r^2}{\sigma^2})}{3}, \frac{1}{8\eta^2 \ell^2}\cdot \frac{\log(\frac{r^2}{\sigma^2})}{3}, T_0' \right\}
    \end{equation*}
    where $T_0'$ is the largest positive integer such that $6\eta\ell \sqrt{T_0'\log(\frac{T_0'}{p})} \le \frac{\log(\frac{r^2}{\sigma^2})}{3}$.
    Thus, with the choice of $T_0$ we have
    \begin{equation*}
        (2 \eta \lambda + 8 \eta^2 \ell^2)T_0 + 6 \eta \ell \sqrt{T_0 \log(\frac{T_0}{p})} \le \frac{\log(\frac{r^2}{\sigma^2})}{3} + \frac{\log(\frac{r^2}{\sigma^2})}{3} + \frac{\log(\frac{r^2}{\sigma^2})}{3} = \log(\frac{r^2}{\sigma^2})
    \end{equation*}
    Therefore, for $t \le T_0$, we have 
    \begin{equation*}
        \mathrm{Pr}\left[ \log ( \mathrm{tr}(\Phi_t)) -\log(\mathrm{tr}(\Phi_0)) \ge \log(\frac{r^2}{\sigma^2}) \right] \le \frac{p}{T_0}
    \end{equation*}
    Taking a union bound over $t\in[T_0]$, we have 
    \begin{align*}
        \mathrm{Pr}\left[ \bigcup_{t=1}^{T_0} \log ( \mathrm{tr}(\Phi_t)) -\log(\mathrm{tr}(\Phi_0)) \ge \log(\frac{r^2}{\sigma^2}) \right] 
        \le \sum_{t=1}^{T_0} \mathrm{Pr}\left[ \log ( \mathrm{tr}(\Phi_t)) -\log(\mathrm{tr}(\Phi_0)) \ge \log(\frac{r^2}{\sigma^2}) \right] 
        \le p  
    \end{align*}
    which is equivalent to
    \begin{equation*}
        \mathrm{Pr}\left[ \bigcap_{t=1}^{T_0} \log ( \mathrm{tr}(\Phi_t)) -\log(\mathrm{tr}(\Phi_0)) \ge \log(\frac{r^2}{\sigma^2}) \right]
         \ge 1-p
    \end{equation*}
    By definition, $\mathrm{tr}(\Phi_t) = \|x_t - x_0\|^2$ and $\mathrm{tr}(\Phi_0) = \|\xi\|^2 = \sigma^2$. Then we know with probability at least $1-p$, for every $t \in [T_0], \|x_{t+1} - x_t\| < r$. Thus Algorithm~\ref{alg: ZO-NCF-Online-weak} will not terminate before iteration $T_0$.
    
    \fbox{
    \parbox{\textwidth}{
	Then we prove that when $\lambda > \delta$, Algorithm~\ref{alg: ZO-NCF-Online-weak} outputs a vector $v$, with probability at least $\frac{2}{3}$, $v^\T\A v \le -\frac{3}{4}\delta$.
    }
    }
    We first note when  $\lambda \geq \delta$, using our choice of $\eta$  and $T_{0}$, we have  $T_{0} \le \frac{\log \left(\frac{r^2}{\sigma^2}\right)}{6 \eta \lambda}$. Denote by  $v_{t+1} \stackrel{\text { def }}{=}\left(\I-\eta \B_{t}\right) \cdots\left(\I-\eta \B_{1}\right) \xi$ and $u_{t} \stackrel{\text { def }}{=} z_{t}-v_{t}$  with  $u_{1}=0$, we have
\begin{align*}
    u_{t+1} = z_{t+1}-v_{t+1} &=\prod_{s=1}^{t}\left(\I-\eta \mathbf{A}_{s}\right) \xi-\prod_{s=1}^{t}\left(\I-\eta \B_{s}\right) \xi\\
    &=\left(\I-\eta\B_{t}\right)\left(z_{t}-v_{t}\right)-\eta \mathbf{R}_{t} z_{t}\\
    & = \left(\I-\eta\B_{t}\right)u_t-\eta \mathbf{R}_{t} z_{t}
\end{align*}
then, before Algorithm~\ref{alg: ZO-NCF-Online-weak} stops, we have:
\begin{equation*}
\label{eq: T4}\tag{T4}
    \left\|u_{t+1}-\left(\mathbf{I}-\eta \mathbf{B}_{t}\right) u_{t}\right\|
    = \eta\left\|\mathbf{R}_{t} z_{t}\right\| \overset{\textrm{\ding{172}}}{\le}  2d \eta \rho  r^2
\end{equation*}
above, \ding{172} is due to $\|\mathbf{R}_t z_t\| \le \|\mathbf{R}_t\|\cdot\|z_t\|\le 2\rho d \|z_t\|^2$. Using Young's inequality  $\|a+b\|_{2}^{2} \leq(1+\beta)\|a\|_{2}^{2}+\left(1+\frac{1}{\beta}\right)\|b\|_{2}^{2}$  for every  $\beta>0$, we have:
\begin{align*}
    \left\|u_{t+1}\right\|^{2} 
    & \le (1+\eta^2 \ell^2)\|\left(\I-\eta \B_{t}\right) u_{t}\|^2 + (1+\frac{1}{\eta^2 \ell^2})\|u_{t+1}-\left(\mathbf{I}-\eta \mathbf{B}_{t}\right) u_{t}\|^2 \\
    & \overset{\textrm{\ding{172}}}{\le}  (1+\eta^2 \ell^2)\|\left(\I-\eta \B_{t}\right) u_{t}\|^2 + (1+\frac{1}{\eta^2 \ell^2}) 4 \eta^2 d^2 \rho^2 r^4 \\
    & = (1+\eta^2 \ell^2)\|\left(\I-\eta \B_{t}\right) u_{t}\|^2 + \frac{1+\ell^2 \eta^2}{\ell^2} 4 d^2 \rho^2 r^4 \\
    & \overset{\textrm{\ding{173}}}{\le} \left(1+\eta^{2} \ell^{2}\right)\|(\I-\eta \B_{t}) u_{t}\|^{2}+8 d^2 \frac{\rho^2}{\ell^2} r^{4} \\
    & = \left(1+\eta^{2} \ell^{2}\right) \|u_t\|^2 \left(1-2 \eta \frac{u_t^\T \B_t u_t}{\left\|u_t\right\|^{2}}+ \eta^2\frac{u_t^\T \B_t^2 u_t}{\|u_t\|^2}\right)+8 d^2 \left(\frac{\rho}{\ell}\right)^{2} r^{4} \\
    & \overset{\textrm{\ding{173}}}{\le} \|u_{t}\|^{2}\left(1-2 \eta \frac{u_t^\T \B_t u_t}{\left\|u_t\right\|^{2}}+10 \eta^{2} \ell^{2}\right)+8 d^2 \left(\frac{\rho}{\ell}\right)^{2} r^{4}.
\end{align*}
above, \ding{172} holds due to \eqref{eq: T4}; \ding{173} is because $\eta \ell\le 1$; \ding{174} is because $\|\B_t\| \le \ell$.  Apply Lemma~\ref{lemma: probability-inequality} by setting $\alpha = 2\eta \ell + 10 \eta^2 \ell^2 \le 2.5\eta \ell$ and $\beta = 2\eta \lambda$, we have 
\begin{equation}
\label{eq: pr-2}
    \mathrm{Pr}\left[ \|u_t\| \ge 16 d \frac{\rho}{\ell} r^2 t e^{\eta \lambda t + 8\eta \ell \sqrt{t \log\frac{t}{p}}} \right] \le p
\end{equation}
Apply Lemma~\ref{lemma: growth-and-curvature} we conclude that for each $t\in[T]$, w.p. at least $99/100$:
\begin{itemize}
    \item Norm growth:  $\|v_t\| \ge \frac{1}{C}\left(e^{\left(\eta \lambda-32 \eta^{2} \ell^{2}\right) t} \sigma / \sqrt{d}\right)$.

    \item Negative curvature:  $\frac{-v_{t+1}^{\top} \mathbf{A} v_{t+1}}{\left\|v_{t+1}\right\|^{2}} \leq-(1-C \eta \ell) \lambda+C\left(\frac{\log d}{\eta t}+\sqrt{\frac{\ell^{2}}{t}}+\lambda \eta^{2} \ell^{2} t\right)$.
\end{itemize}
Define
\begin{equation}
\label{eq: T_1}
    T_1 = \frac{\log \frac{2 C \sqrt{d} r}{\sigma}}{\eta \lambda-32 \eta^{2} \ell^{2}}=\frac{C_{0}(\log d / p)+\log (2 C \sqrt{d})}{\eta \lambda-32 \eta^{2} \ell^{2}} \leq \frac{2 C_{0} \cdot \log d / p}{\eta \lambda-32 \eta^{2} \ell^{2}} \leq \frac{4 C_{0} \cdot \log d / p}{\eta \lambda} \leq \frac{4 C_{0} \cdot \log d / p}{\eta \delta}<T
\end{equation}
where the second equality is because $r= (d/p)^{C_0} \sigma$. When $t=T_1$, by ``norm growth'' property, we know that w.p. at least $99/100$:
\begin{equation}
\label{eq: pr-3}
    \|v_{T_1}\| \ge \frac{1}{C}\left( e^{\left(\eta \lambda-32 \eta^{2} \ell^{2}\right) T_1} \sigma / \sqrt{d} \right) \overset{\eqref{eq: T_1}}{=} 2r
\end{equation}
Combing with \eqref{eq: pr-2}, we have w.p. at least $98/100$,
\begin{align}
\label{eq: pr-4}
\frac{\|u_{T_{1}}\|}{\|v_{T_{1}}\|} 
& \le \frac{16 d \rho r^{2} T_{1} e^{\eta \lambda T_{1}+8 \eta \ell \sqrt{T_{1} \log \frac{T_{1}}{p}}}}{\ell \cdot 2 r} \notag\\
& \le 16 C\left(\frac{d^{\frac{3}{2}} \rho r^{2} T_{1} e^{\eta \lambda T_{1}+8 \eta \ell \sqrt{T_{1} \log \frac{T_{1}}{p}}}}{\ell e^{\left(\eta \lambda-32 \eta^{2} \ell^{2}\right) T_{1} }\sigma}\right) \notag\\
&\leq 16 C\left(\frac{d^{\frac{3}{2}} \rho r^{2} T_{1}}{\ell \sigma} e^{8 \eta \ell \sqrt{T_{1} \log \frac{T_{1}}{p}}+32 \eta^{2} \ell^2 T_{1}}\right) \notag\\
& \overset{\textrm{\ding{172}}}{\le} 16 C\left(\frac{d^{\frac{3}{2}} \rho r^{2} T_{1}}{\ell \sigma} e^{16 \sqrt{\log \frac{T_{1}}{p}}}\right) \notag\\
& \overset{\textrm{\ding{173}}}{\le} 16 C\left(\frac{r^2}{\sigma} \cdot \frac{d^{\frac{3}{2}} \rho  T_{1}^{2}}{\ell  p}\right)
\overset{\textrm{\ding{174}}}{\le}  \frac{\delta}{100 \ell} 
\le \frac{1}{100}
\end{align}
where inequality \ding{172} is because $\eta^2 \ell^2 T_1 \le \eta^2 \ell^2 T = 1$; Inequality \ding{173} is because $e^{16 \sqrt{\log (x)}} < x$ for sufficiently large $x$; Inequality \ding{174} is because 
\begin{equation*}
    \frac{r^2}{\sigma} = (\frac{d}{p})^{2C_0} \sigma = (\frac{d}{p})^{-C_0} \frac{\eta^2 \delta^3}{\rho}
\end{equation*}
and thus, for sufficiently large $C_0$ and $p = 1/100$, we have
\begin{equation*}
    (\frac{d}{p})^{-C_0}\le \frac{p}{d^{3/2}}\cdot\frac{1}{C_0 (4C_0 \log(\frac{d}{p}))^2}
\end{equation*}
This implies 
\begin{equation*}
     \frac{r^2}{\sigma} \le \frac{\eta^2 \delta^3 p}{C_0 d^{3/2} \rho (4C_0 \log(\frac{d}{p}))^2} =\frac{\delta p}{C_0 d^{3/2} \rho (\frac{4C_0 \log(\frac{d}{p})}{\eta \delta})^2} \overset{\eqref{eq: T_1}}{\le }\frac{\delta p }{C_0 d^{3/2} \rho T_1^2 }.
\end{equation*}
Thus for sufficiently large $C_0 \ge 1600C$ \ding{174} holds.

Putting together with $\|v_{T_1}\|\ge 2r$, we have w.p. at least $97/100$, $\|z_{T_1}\|= \|u_{T_1} + v_{T_1}\| \ge r$. This means that Algorithm~\ref{alg: ZO-NCF-Online-weak} will terminate within $T_1 \le T$ iterations.

Since w.p. $\ge 99/100$, Algorithm~\ref{alg: ZO-NCF-Online-weak} will not terminate before $T_0\ge \frac{\log(r^2/\sigma^2)}{6\eta \lambda}$. Thus w.p. at least $96/100$, Algorithm~\ref{alg: ZO-NCF-Online-weak} will terminate at $t\in[T_0, T_1]$.

Using the ``negative curvature'' property, we have w.p. at least $\ge 99/100$,
\begin{equation*}
\label{eq: pr-5}
    \frac{v_t^\T \A v_t}{\|v_t\|^2} \le -(1-C\eta \ell)\lambda + C\left( \frac{\log d}{\eta T_0} + \sqrt{\frac{\ell^2}{T_0}} + \lambda \eta^2 \ell^2 T_1 \right).
\end{equation*}
Since $T_0 \ge \frac{\log(\frac{r}{\sigma})^2}{6\eta \lambda} = \frac{C_0 \log(d/p)}{3\eta \lambda} \ge \frac{C_0 \log d}{3 \eta \lambda}$, thus,
\begin{equation*}
    \frac{\log d}{\eta T_0} \le \frac{3 \lambda}{C_0}
\end{equation*}
By the choice of $\eta$ we have $\eta \ell^2 \le \frac{\delta}{C_0^2} \le \frac{\lambda}{C_0^2}$ and 
\begin{align*}
    \frac{\ell^2}{T_0} \le \frac{3 \eta \lambda \ell^2}{C_0} &  \le \frac{\lambda^2}{C_0}, \\
    \lambda \eta^2 \ell^2 T_1 \le \lambda \eta^2 \ell^2 \frac{4C_0 \log(d/p)}{\eta \lambda} & \le \lambda \frac{4C_0 \log d}{C_0^2 \log d} \le \frac{4\lambda}{C_0}
\end{align*}
Then we have w.p. at least $95/100$, Algorithm~\ref{alg: ZO-NCF-Online-weak} terminates at $t\in [T_0,T_1]$ and 
\begin{equation*}
    \frac{v_t^\T \A v_t}{\|v_t\|^2} \le -\frac{15}{16}\lambda \le -\frac{15}{16}\delta.
\end{equation*}
Since $\|u_t + v_t\| = \|z_t\| \ge r$, we have w.p. at least $99/100$,
\begin{equation*}
    \frac{\|u_t\|}{\|u_t\|+\|v_t\|} \le \frac{\|u_t\|}{\|u_t + v_t\|} \le \frac{16 d \rho r^2 T_1 e^{\eta \lambda T_1 + 8\eta \ell \sqrt{T_1 \log\frac{T_1}{p}}}}{\ell r} \overset{\eqref{eq: pr-4}}{\le} \frac{\delta}{50 \ell}.
\end{equation*}
This implies $\frac{\|u_t\|}{\|v_t\|} \le \frac{\delta}{49\ell}$. In sum, we have w.p. at least $94/100$:
\begin{align*}
\frac{z_{t}^{\top} \mathbf{A} z_{t}}{\left\|z_{t}\right\|^{2}} 
&= \frac{\left\|v_{t}\right\|^{2}}{\left\|z_{t}\right\|^{2}} \cdot \frac{z_{t}^{\top} \mathbf{A} z_{t}}{\left\|v_{t}\right\|^{2}} 
= \frac{\left\|v_{t}\right\|^{2}}{\left\|z_{t}\right\|^{2}} \cdot \frac{(v_t+u_t)^\T \A (v_t + u_t)}{\|v_t\|^2} \\
&\leq \frac{\left\|v_{t}\right\|^{2}}{\left\|z_{t}\right\|^{2}} \cdot \frac{v_{t}^{\top} \mathbf{A} v_{t}+4 \ell \left\|u_t\right\|\left\|v_{t}\right\|}{\left\|v_{t}\right\|^{2}} \\
& \leq \frac{\left\|v_{t}\right\|^{2}}{\left\|z_{t}\right\|^{2}} \cdot\left(\frac{v_{t}^{\top} \mathbf{A} v_{t}}{\left\|v_{t}\right\|^{2}}+\frac{4 \ell\left\|u_{t}\right\|}{\left\|v_{t}\right\|}\right) \leq \frac{\left\|v_{t}\right\|^{2}}{\left\|z_{t}\right\|^{2}} \cdot\left(-\frac{15}{16} \delta+\frac{4}{49} \delta\right) \leq-\frac{17 \delta}{20} \frac{\left\|v_{t}\right\|^{2}}{\left\|z_{t}\right\|^{2}} \\
& \leq-\frac{17 \delta}{20}\left(1-\frac{\left\|u_{t}\right\|^{2}}{\left\|z_{t}\right\|^{2}}\right) \leq-\frac{17 \delta}{20} \cdot \frac{49}{50}<-\frac{3}{4} \delta
\end{align*}

\end{proof}

\begin{proof}[\textbf{Proof of Lemma~\ref{thm: online}}]
    By Lemma~\ref{lemma: error-bound-of-zo-hessian-vetor-product} with $\mu = \|v\|$ we have 
    \begin{equation*}
        \left\| v^\T (\mathcal{H}_{f_i}(x) - \nabla^2 f_i(x)) v \right\| \le \rho \sqrt{d} \|v\|^3. 
    \end{equation*}
    Define 
    \begin{equation*}
        z_j = v^\T \mathcal{H}_{f_{i_j}} v,
    \end{equation*}
    then $z_1,\dots,z_m$ are i.i.d. random variables with 
    \begin{align*}
        |z_j| & \le \left\| v^\T (\mathcal{H}_{f_{i_j}}(x) - \nabla^2 f_i(x)) v \right\| + \left\| v^\T \nabla^2 f_{i_{j}}(x) v \right\| \\
        & \le  \rho \sqrt{d} \|v\|^3 + \ell \|v\|^2.
    \end{align*}
    By Chernoff inequality, we have
    \begin{equation*}
        \mathrm{Pr} \left[ |z- \E[z]| \ge 2(\rho \sqrt{d} \|v\|^3 + \ell \|v\|^2) \sqrt{\frac{1}{m} \log \frac{1}{p}} \right] \le p.
    \end{equation*}
    Since $|\E[z] - v^\T \nabla^2 f(x) v| \le \rho \sqrt{d} \|v\|^3$, we have
    \begin{equation*}
        \mathrm{Pr}\left[ \left| \frac{z}{\|v\|^2} - \frac{v^\T \nabla^2 f(x) v}{\|v\|^2} \right| \le 2(\rho \sqrt{d} \|v\| + \ell) \sqrt{\frac{1}{m} \log \frac{1}{p}} + \rho \sqrt{d} \|v\| \right] \ge 1-p.
    \end{equation*}
        
\end{proof}

\begin{lemma}
\label{lemma: Azuma-Hoeffding inequality}
    Consider the random variables $\{x_t\}_{t=0}^T$ with respect to random events $\{\mathcal{F}\}_{t=0}^T$ and $\log (1-a_0), \log (1-a_1), \dots, \log (1-a_T) \in [-2\rho,\rho]$ where each  $x_{t}$  and  $a_{t}$  only depend on  $\mathcal{F}_{1}, \ldots, \mathcal{F}_{t}$.
    \begin{equation*}
        \log x_t = \log x_{t-1} + \log(1-a_t) \quad \text { and } \quad \mathbb{E}\left[\log (1-a_{t}) \mid \mathcal{F}_{1}, \ldots, \mathcal{F}_{t-1}\right] \leq \beta.
    \end{equation*}
    Then we have for every $p\in (0,1)$,
    \begin{equation*}
        \mathrm{Pr} \left[ \log x_t - \log x_0 \ge \beta T + 2 \rho \sqrt{T \log \frac{T}{p}} \right] \le p/T
    \end{equation*}
\end{lemma}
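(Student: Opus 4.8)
Iterating the recursion $\log x_t = \log x_{t-1} + \log(1-a_t)$ gives $\log x_t - \log x_0 = \sum_{s=1}^{t}\log(1-a_s)$, so we must control a sum of terms each confined to the deterministic interval $[-2\rho,\rho]$ and each having conditional mean at most $\beta$. The plan is the textbook route: remove the drift to obtain a supermartingale, then apply the exponential-moment (Chernoff) method to its bounded increments. Concretely, write $Y_s:=\log(1-a_s)$ and set $M_t := \log x_t - \beta t$. Since $\mathbb{E}[Y_t\mid \mathcal{F}_1,\dots,\mathcal{F}_{t-1}]\le \beta$, the increment $M_t-M_{t-1}=Y_t-\beta$ has conditional mean $\le 0$, so $\{M_t\}_{t\ge 0}$ is a supermartingale, and $M_t-M_{t-1}$ always lies in the fixed interval $[-2\rho-\beta,\ \rho-\beta]$, whose length is $3\rho$.

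\textbf{Reducing the event.} First I would observe that the event in question sits inside a cleaner one: on $\{\log x_t-\log x_0\ge \beta T + 2\rho\sqrt{T\log(T/p)}\}$ one has $M_t-M_0=\log x_t-\log x_0-\beta t\ge \beta(T-t)+2\rho\sqrt{T\log(T/p)}\ge 2\rho\sqrt{T\log(T/p)}$, using $t\le T$ together with $\beta\ge 0$ (the regime in which this lemma is applied, e.g.\ $\beta=2\eta\lambda+8\eta^2\ell^2$ in the proof of Lemma~\ref{thm: online-weak}). Hence it suffices to bound $\mathrm{Pr}[M_t-M_0\ge \lambda]$ with $\lambda:=2\rho\sqrt{T\log(T/p)}$.

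\textbf{Concentration.} Here I would invoke the standard Chernoff argument for supermartingales with bounded increments. Applying Hoeffding's lemma conditionally to each increment $M_s-M_{s-1}$ --- mean $\le 0$, supported on a deterministic interval of length $3\rho$ --- yields $\mathbb{E}[e^{\theta(M_s-M_{s-1})}\mid \mathcal{F}_1,\dots,\mathcal{F}_{s-1}]\le \exp(c_0\theta^2\rho^2)$ for every $\theta>0$ and an absolute constant $c_0$; iterating the tower property gives $\mathbb{E}[e^{\theta(M_t-M_0)}]\le \exp(c_0 t\theta^2\rho^2)$, and Markov's inequality together with optimization over $\theta$ produce a tail bound of the form $\mathrm{Pr}[M_t-M_0\ge \lambda]\le \exp(-c\,\lambda^2/(t\rho^2))$ with $c>0$ absolute.

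\textbf{Finishing, and the delicate points.} Substituting $\lambda=2\rho\sqrt{T\log(T/p)}$ and bounding $t\le T$ in the denominator collapses the exponent to $-\Omega(\log(T/p))$, so the probability is $\le p/T$ once the constants are accounted for; the coefficient ``$2$'' in the statement is simply a convenient round value of this absolute multiple, and its exact size never matters since the lemma feeds only into $\tilde{\mathcal{O}}(\cdot)$ complexity bounds. I expect the only points needing care to be (i) that $\log x_t$ itself is not a martingale, so one must subtract $\beta t$ before any concentration inequality applies, and (ii) the monotonicity bookkeeping --- replacing $\beta t$ by $\beta T$ and $\sqrt{t}$ by $\sqrt{T}$, which both rely on $t\le T$ and $\beta\ge 0$, and must be performed after the event reduction rather than before. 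Nothing beyond the stated boundedness of $\log(1-a_s)$ and the one-sided drift bound on its conditional mean is used.
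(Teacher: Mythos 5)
Your proposal is correct and follows essentially the same route as the paper: the paper simply cites ``a general form of the Azuma--Hoeffding inequality'' for bounded-increment sequences with drift, and your supermartingale-plus-Chernoff argument is exactly a proof of that cited form (drift removal via $M_t=\log x_t-\beta t$, conditional Hoeffding lemma on increments of range $3\rho$, tower property, Markov). The caveat you already flag is real but harmless: with the sharp Hoeffding constant the deviation $2\rho\sqrt{T\log(T/p)}$ only gives a tail of $(p/T)^{8/9}$, so the coefficient should strictly be $3\rho$ rather than $2\rho$ -- the paper's own one-line derivation is no tighter on this point -- and both your argument and the paper's implicitly use $\beta\ge 0$, which holds in the lemma's application.
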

\begin{proof}
    Applying a general form of Azuma-Hoeffding inequality \cite{azuma-hoeffding}, we have 
    \begin{equation*}
        \mathrm{Pr} \left[ \log x_t - \log x_0 \ge \epsilon_1 \right] \le \exp \{-\frac{2\epsilon_1^2}{((\beta+2\rho)^2)T }\}
    \end{equation*}
    Let $\exp \{-\frac{2\epsilon_1^2}{(\mu+2\rho)^2 T }\} = p/T$, we get
    \begin{align*}
        \epsilon_1 &= (\beta + 2\rho)\sqrt{\frac{1}{2}T\log \frac{T}{p}}\\
        &\le \beta T + 2\rho \sqrt{T\log \frac{T}{p}} := \epsilon
    \end{align*}
    So we have 
    \begin{equation*}
        \mathrm{Pr} [ \log x_t - \log x_0 \ge \epsilon ] \le \mathrm{Pr} [ \log x_t - \log x_0 \ge \epsilon_1 ] \le p/T
    \end{equation*}
\end{proof}

\begin{lemma}[\cite{allen2018neon2}]
\label{lemma: probability-inequality}
    Consider random events  $\left\{\mathcal{F}_{t}\right\}_{t \geq 1}$  and random variables  $x_{1}, \ldots, x_{T} \geq 0$  and  $a_{1}, \ldots, a_{T} \in   [-\alpha, \alpha]$  for  $\alpha \in[0,1 / 2]$  where each  $x_{t}$  and  $a_{t}$  only depend on  $\mathcal{F}_{1}, \ldots, \mathcal{F}_{t}$.  Letting  $x_{0} = 0$  and suppose there exist constant  $b \geq 0$  and $\beta>0$  such that for every  $t \geq 1$:
    \begin{equation*}
        x_{t} \leq x_{t-1}\left(1-a_{t}\right)+b \quad \text { and } \quad \mathbb{E}\left[a_{t} \mid \mathcal{F}_{1}, \ldots, \mathcal{F}_{t-1}\right] \geq-\beta.
    \end{equation*}
Then, we have for every  $p \in(0,1)$: $\mathrm{Pr}\left[x_{T} \geq T \cdot b \cdot e^{\beta T+2 \alpha \sqrt{T \log \frac{T}{p}}}\right] \leq p$.
\end{lemma}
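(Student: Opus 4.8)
The plan is to reduce the recursive inequality to a uniform bound on a fixed family of ``tail products'' and then apply a martingale concentration bound to each member of that family. First I would unroll the recursion: since $\alpha\le 1/2$ forces $1-a_t\in[1/2,3/2]>0$ for every $t$, iterating $x_t\le x_{t-1}(1-a_t)+b$ from $x_0=0$ gives
\begin{equation*}
    x_T \;\le\; b\sum_{s=1}^{T} P_{s,T}, \qquad P_{s,T} := \prod_{t=s+1}^{T}(1-a_t),
\end{equation*}
with the convention $P_{T,T}=1$. Hence it suffices to prove that, with probability at least $1-p$, one has $P_{s,T}\le \exp\!\big(\beta T+2\alpha\sqrt{T\log(T/p)}\big)$ simultaneously for all $s\in\{1,\dots,T\}$; summing then yields $x_T < Tb\exp\!\big(\beta T+2\alpha\sqrt{T\log(T/p)}\big)$, which is the claim.

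Next I would fix $s<T$ (the case $s=T$ is trivial since $P_{T,T}=1<e^{\beta T+\cdots}$) and write $\log P_{s,T}=\sum_{t=s+1}^{T}\log(1-a_t)$, legitimate because all factors are positive. I would split each term into its conditional mean and a martingale difference $D_t:=\log(1-a_t)-\mathbb{E}[\log(1-a_t)\mid\mathcal{F}_{1:t-1}]$, a genuine martingale-difference sequence for the filtration generated by $\{\mathcal{F}_t\}$ since $a_t$ is $\mathcal{F}_{1:t}$-measurable. Using $\log(1+u)\le u$ together with the hypothesis $\mathbb{E}[a_t\mid\mathcal{F}_{1:t-1}]\ge-\beta$, the conditional mean obeys $\mathbb{E}[\log(1-a_t)\mid\mathcal{F}_{1:t-1}]\le\mathbb{E}[-a_t\mid\mathcal{F}_{1:t-1}]\le\beta$, so $\log P_{s,T}\le\sum_{t=s+1}^{T}D_t+\beta(T-s)\le\sum_{t=s+1}^{T}D_t+\beta T$.

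It then remains to control the martingale $\sum_{t=s+1}^{T}D_t$. Conditionally on $\mathcal{F}_{1:t-1}$, the variable $\log(1-a_t)$ lies in the deterministic interval $[\log(1-\alpha),\log(1+\alpha)]$ of width $w=\log\frac{1+\alpha}{1-\alpha}$; since $\frac{d}{d\alpha}\log\frac{1+\alpha}{1-\alpha}=\frac{2}{1-\alpha^2}\le\frac{8}{3}$ for $\alpha\in[0,1/2]$, one has $w\le\frac{8}{3}\alpha$. The Azuma--Hoeffding inequality for a martingale of length $T-s\le T$ whose conditional increments lie in intervals of width at most $w$ gives
\begin{equation*}
    \mathrm{Pr}\!\left[\sum_{t=s+1}^{T}D_t\ge 2\alpha\sqrt{T\log(T/p)}\right] \le \exp\!\left(-\frac{2(2\alpha)^2\,T\log(T/p)}{(T-s)\,w^2}\right) \le \exp\!\left(-\tfrac{9}{8}\log(T/p)\right) \le \frac{p}{T}.
\end{equation*}
A union bound over $s\in\{1,\dots,T-1\}$ then shows that with probability at least $1-p$ we have $\log P_{s,T}\le\beta T+2\alpha\sqrt{T\log(T/p)}$ for every $s$, and the reduction above finishes the proof. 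The points requiring care — the main (though routine) obstacle — are: (i) invoking the ``bounded conditional range'' form of Azuma--Hoeffding rather than a crude bound on $|D_t|$, since this is precisely what makes the variance proxy $w^2\approx 4\alpha^2$ and hence produces the constant $2\alpha$ in the exponent; and (ii) bounding $T-s$ by $T$ and using $\log(T/p)$ uniformly, so that each segment fails with probability at most $p/T$ and the union bound over the (at most $T$) segments closes at $p$ rather than at $p\log T$. Everything else is the elementary unrolling together with the estimate $w\le\tfrac{8}{3}\alpha$.
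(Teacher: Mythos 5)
Your proof is correct: the unrolling $x_T\le b\sum_{s=1}^T\prod_{t=s+1}^T(1-a_t)$ is valid because $\alpha\le 1/2$ keeps every factor positive, the bound $\mathbb{E}[\log(1-a_t)\mid\mathcal{F}_{1:t-1}]\le\beta$ via $\log(1+u)\le u$ is right, and the bounded-conditional-range Azuma estimate with $w\le\tfrac{8}{3}\alpha$ together with the union bound over the $T$ tail products closes at $p/T$ per segment as you claim. The paper itself gives no proof of this lemma (it is imported verbatim from Allen-Zhu--Li's Neon2), and your argument is essentially the standard one used there, so there is nothing to flag beyond the degenerate case $b=0$, which is a defect of the lemma's statement rather than of your proof.
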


\begin{lemma}[\cite{allen2018neon2}]
\label{lemma: growth-and-curvature}
There exists an absolute constant  $C>0$  such that the following holds: Suppose  $\B_1, \B_{2}, \ldots, \B_{t}$  are i.i.d. random matrices with  $\left\|\B_{i}\right\| \leq \ell$  and  $\E\left[\B_{i}\right]=-\A$. Suppose also  $\lambda_{\max }(\A)=\lambda \geq 0$.  Let
\begin{equation*}
    \forall i=0,1, \ldots, t: \quad v_{i+1} \stackrel{\text { def }}{=}\left(\mathbf{I}-\eta \mathbf{B}_{i}\right) \cdots\left(\mathbf{I}-\eta \mathbf{B}_{1}\right) \xi
\end{equation*}
where  $\xi$ is a random Gaussian vector with norm $\sigma$, and  $\eta \in\left(0, \sqrt{\frac{1}{1350000 t \ell^{2}}}\right]$  is the learning rate. Then, with probability at least  $99/100$:

1. Norm growth:  $\left\|v_{t+1}\right\|_{2} \geq \frac{1}{C}\left(e^{\left(\eta \lambda-32 \eta^{2} \ell^{2}\right) t} \sigma / \sqrt{d}\right)$.

2. Negative curvature:  $-\frac{v_{t+1}^{\top} \mathbf{A} v_{t+1}}{\left\|v_{t+1}\right\|_{2}^{2}} \leq-(1-C \eta \ell) \lambda+C\left(\frac{\log d}{\eta t}+\sqrt{\frac{\ell^{2}}{t}}+\lambda \eta^{2} \ell^{2} t\right)$.
\end{lemma}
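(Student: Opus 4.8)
The plan is to adapt the deterministic \textit{Neon2} analysis of \cite{allen2018neon2} to the zeroth-order setting; the only genuinely new work is controlling the error created by replacing Hessian-vector products with $\mathcal{H}_f(x_0)(\cdot)$. Write $H:=\nabla^2 f(x_0)$ and $\M:=-\tfrac1\ell H+(1-\tfrac{3\delta}{4\ell})\I$, and let $u_1$ be a unit top eigenvector of $\M$ with eigenvalue $\mu_1=\lambda_{\max}(\M)$. Since $\|H\|\le\ell$, the spectrum of $\M$ lies in $[-1,2]$; eigenvalues $\lambda$ of $H$ with $\lambda\ge-\tfrac{3\delta}{4}$ map to eigenvalues of $\M$ in $[-1,1]$, while $\lambda\le-\delta$ maps to eigenvalues $\ge1+\tfrac{\delta}{4\ell}$, and $\lambda_{\min}(H)\le-\delta\iff\mu_1\ge1+\tfrac{\delta}{4\ell}$. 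Let $P$ be the spectral projector of $H$ onto $\{\lambda<-\tfrac{3\delta}{4}\}$ (equivalently of $\M$ onto $\{\text{eigenvalue}>1\}$); note $\mathrm{range}(P)\subseteq\{\lambda\le-\tfrac{\delta}{2}\}$. The proof splits into: (i) the monitored vector $x_{t+1}-x_0$ equals the matrix--Chebyshev iterate $\mathcal{T}_t(\M)\xi$ up to a negligible perturbation; (ii) if $\mu_1\ge1+\tfrac{\delta}{4\ell}$, Chebyshev amplification at $\mu_1$ together with Gaussian anti-concentration of $\langle\xi,u_1\rangle$ forces $\|x_{t+1}-x_0\|\ge r$ at some $t\le T$, so $v\neq\bot$; (iii) whenever $v\neq\bot$ the component of $x_{t+1}-x_0$ in $\mathrm{range}(\I-P)$ has norm $O(\sigma)\ll r$, so the returned unit vector lies almost entirely in $\mathrm{range}(P)$, giving $v^\T H v\le-\tfrac\delta2$.

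For (i): $\mathcal{M}(y)-\M y=-\tfrac1\ell(\mathcal{H}_f(x_0)y-Hy)$, so Lemma~\ref{lemma: error-bound-of-zo-hessian-vetor-product} with $\mu_t=\|y_t\|$ gives $\|\mathcal{M}(y_t)-\M y_t\|\le\tfrac{\rho}{\ell}(\tfrac12+\tfrac{\sqrt d}{3})\|y_t\|^2\le\tfrac{\rho\sqrt d}{\ell}\|y_t\|^2$. In the exact recursion $y_0=0,\,y_1=\xi,\,y_{t+1}=2\M y_t-y_{t-1}$ one has $y_t=\mathcal{U}_{t-1}(\M)\xi$ (second-kind Chebyshev), hence $x_{t+1}-x_0=y_{t+1}-\M y_t=(\mathcal{U}_t-x\mathcal{U}_{t-1})(\M)\xi=\mathcal{T}_t(\M)\xi$; thus Algorithm~\ref{alg: ZO-NCF-Deterministic} is exactly the inexact backward recursion of \cite{allen2017faster} for $\mathcal{T}_T(\M)\xi$. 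Since $\mathcal{U}_n$ differs from $\mathcal{T}_n$ by at most a $\poly(T)$ factor on the spectrum of $\M$ (both are $\le\poly(T)$ on $[-1,1]$, and $\mathcal{U}_n\le(n+1)\mathcal{T}_n$ for arguments $\ge1$), an induction gives $\|y_t\|\le\poly(T)\,r$ before termination, so the per-step perturbation is at most $\tfrac{\rho\sqrt d}{\ell}\poly(T)\,r^2$; with $r=(d/p)^{C_1}\sigma$ and $\sigma=(d/p)^{-2C_1}\tfrac{\delta}{T^4\rho}$ this is $(d/p)^{-\Omega(C_1)}\,r$. Propagating errors through the recursion, and noting that $\mathcal{T}_{\le T}$ and $\mathcal{U}_{\le T}$ are bounded by $\poly(T)$ on $\mathrm{range}(\I-P)$, one gets that $\zeta_t:=(x_{t+1}-x_0)-\mathcal{T}_t(\M)\xi$ satisfies $\|(\I-P)\zeta_t\|\le\tfrac12\sqrt{\delta/(8\ell)}\,r$ for all $t\le T$, while on $\mathrm{range}(P)$ the error stays far below the amplified signal and so can neither create nor prevent a genuine trigger (all provided $C_1$ is a large enough absolute constant; the same choice makes $\sigma\le\tfrac12\sqrt{\delta/(8\ell)}\,r$).

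For (ii): since $\xi=\sigma\xi'/\|\xi'\|$ with $\xi'\sim\mathcal N(0,\I)$, with probability $\ge1-p$ we have $|\langle\xi,u_1\rangle|\ge\sigma/\poly(d/p)$. If $\mu_1\ge1+\tfrac{\delta}{4\ell}$, then $\mathcal{T}_T(\mu_1)\ge\tfrac12(1+\sqrt{\delta/(2\ell)})^T\ge\tfrac12(d/p)^{\Omega(C_1^2)}$ because $T\sqrt{\delta/\ell}=\Theta(C_1^2\log(d/p))$, whence
\begin{equation*}
\|\mathcal{T}_T(\M)\xi\|\ \ge\ \mathcal{T}_T(\mu_1)\,|\langle\xi,u_1\rangle|\ \ge\ \sigma\,(d/p)^{\Omega(C_1^2)}\ \gg\ r ,
\end{equation*}
so by (i) some step $t^\ast\le T$ triggers $\|x_{t+1}-x_0\|\ge r$ and $v\neq\bot$. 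Contrapositively, $v=\bot$ forces $\mu_1<1+\tfrac{\delta}{4\ell}$, i.e. $\lambda_{\min}(H)>-\delta$, so $\nabla^2 f(x_0)\succeq-\delta\I$. For (iii): let $v\neq\bot$, with trigger step $t^\ast$ and $w:=x_{t^\ast+1}-x_0$, so $\|w\|\ge r$ and $\|v\|=1$. Since $P$ commutes with $\M$ and the spectrum of $\M$ on $\mathrm{range}(\I-P)$ lies in $[-1,1]$,
\begin{equation*}
\|(\I-P)w\|\ \le\ \|\mathcal{T}_{t^\ast}(\M)(\I-P)\xi\|+\|(\I-P)\zeta_{t^\ast}\|\ \le\ \|(\I-P)\xi\|+\tfrac12\sqrt{\delta/(8\ell)}\,r\ \le\ \sigma+\tfrac12\sqrt{\delta/(8\ell)}\,r ,
\end{equation*}
hence $\varepsilon:=\|(\I-P)v\|=\|(\I-P)w\|/\|w\|\le\sqrt{\delta/(8\ell)}$. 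Since $P$ commutes with $H$,
\begin{align*}
v^\T\nabla^2 f(x_0)v&=(Pv)^\T H(Pv)+\bigl((\I-P)v\bigr)^\T H\bigl((\I-P)v\bigr)\\
&\le-\tfrac{3\delta}{4}\|Pv\|^2+\ell\|(\I-P)v\|^2\ \le\ -\tfrac{3\delta}{4}+\Bigl(\ell+\tfrac{3\delta}{4}\Bigr)\varepsilon^2\ \le\ -\tfrac{3\delta}{4}+\Bigl(\ell+\tfrac{3\delta}{4}\Bigr)\tfrac{\delta}{8\ell}\ \le\ -\tfrac\delta2 .
\end{align*}
The only failure event is the anti-concentration event for $\xi$ (probability $\le p$), so the stated dichotomy holds with probability $\ge1-p$. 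Finally, each of the $T=\Theta(\log(d/p)\sqrt{\ell/\delta})$ iterations evaluates $\mathcal{H}_f(x_0)(\cdot)$ once at a cost of $4d$ function queries, for a total of $\mathcal{O}(d\log(d/p)\sqrt{\ell/\delta})=\mathcal{O}(\tfrac{d\log(d/p)\sqrt\ell}{\sqrt\delta})$.

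The main obstacle is part (i). In the first-order \textit{Neon2} the per-step Hessian-vector error is \emph{linear} in the iterate norm, whereas here it is \emph{quadratic} and carries an extra $\sqrt d$ (Lemma~\ref{lemma: error-bound-of-zo-hessian-vetor-product}); one must therefore first bound $\|y_t\|$ --- which obeys a second-kind Chebyshev recursion rather than the first-kind one that drives $x_{t+1}-x_0$ --- by the termination radius $r$, and then check that the specific choices of $\sigma$ and $r$ in Algorithm~\ref{alg: ZO-NCF-Deterministic} keep the accumulated deviation below both $\sigma$ and $\sqrt{\delta/\ell}\,r$ even after Chebyshev amplification over $T$ steps. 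Once this bookkeeping is done, parts (ii) and (iii) are the standard Chebyshev-accelerated power-method estimates combined with Gaussian anti-concentration.
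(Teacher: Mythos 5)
Your proposal does not prove the statement in question. Lemma~\ref{lemma: growth-and-curvature} is a purely probabilistic claim about products of \emph{i.i.d.\ random} matrices: given $\B_1,\dots,\B_t$ i.i.d.\ with $\|\B_i\|\le\ell$ and $\E[\B_i]=-\A$, and $v_{i+1}=(\I-\eta\B_i)\cdots(\I-\eta\B_1)\xi$ for a random Gaussian $\xi$, one must show the norm-growth lower bound $\|v_{t+1}\|\ge \frac1C e^{(\eta\lambda-32\eta^2\ell^2)t}\sigma/\sqrt d$ and the Rayleigh-quotient upper bound on $-v_{t+1}^\T\A v_{t+1}/\|v_{t+1}\|^2$. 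This is the core estimate behind Oja's method that the paper uses as a black box (citing \cite{allen2018neon2}) inside the proof of Lemma~\ref{thm: online-weak}. What you have written instead is a proof sketch of Theorem~\ref{thm: deterministic}: your entire argument concerns the \emph{deterministic} algorithm, the matrix $\M=-\frac1\ell\nabla^2f(x_0)+(1-\frac{3\delta}{4\ell})\I$, Chebyshev polynomials $\mathcal{T}_t(\M)$, the inexact backward recurrence, and spectral projectors of $\nabla^2 f(x_0)$. None of these objects appear in the statement, and nothing in your three parts (i)--(iii) addresses random matrix products, the exponential rate $\eta\lambda-32\eta^2\ell^2$, the $1/\sqrt d$ factor from Gaussian anti-concentration, or the error terms $\frac{\log d}{\eta t}+\sqrt{\ell^2/t}+\lambda\eta^2\ell^2 t$ in the curvature bound.

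A proof of the actual statement would have to decompose $\xi$ along the eigenspaces of $\A$, control $\log\|v_{t+1}\|$ via a martingale-difference argument (the randomness of each $\B_i$ enters through $\E[\B_i]=-\A$ and the variance bound $\|\B_i\|\le\ell$, which is where the $32\eta^2\ell^2$ second-order correction comes from), apply Azuma--Hoeffding-type concentration, and use anti-concentration of $\langle\xi,u\rangle$ for the top eigenvector $u$ of $\A$ to obtain the $\sigma/\sqrt d$ initial overlap. The paper itself does not reprove this lemma but imports it verbatim from \cite{allen2018neon2}; if your intent was to supply a self-contained proof, you would need to reproduce that argument, and if your intent was to prove the deterministic theorem, you have answered the wrong question.
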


\subsection{Proof of Deterministic setting}

\begin{proof}[\textbf{Proof of Theorem~\ref{thm: deterministic}}]
For notation simplicity, we denote 
\begin{equation*}
    \A = \nabla^2 f(x_0),\quad \M= -\frac{1}{\ell} \nabla^2 f(x_0) + (1-\frac{3\delta}{4\ell}) \I,\quad \lambda = -\lambda_{min}(\A).
\end{equation*}
Then, we know that all the eigenvalues of $\M$ lie in $[-1,1+ \frac{\lambda-3\delta/4}{\ell}]$. Define 
\begin{equation*}
    \mathcal{M}(y) = \left(-\frac{1}{\ell}\mathcal{H}_f (x_0) + (1- \frac{3\delta}{4\ell})\right) y
\end{equation*}
and use it to approximate $\M y$. Recall that
\begin{equation*}
    y_0 = 0,\quad y_1 = \xi,\quad y_t = 2\mathcal{M}(y_{t-1}) - y_{t-2}.
\end{equation*}
If we set $x_{t+1} = x_0 + y_{t+1} - \mathcal{M}(y_t)$, then it satisfies $x_{t+1} - x_0 \approx \mathcal{T}_t (\M)\xi$ according to the Definition~\ref{def: inexact-backward-recurrence}.

Denote by 
\begin{equation*}
    x_{t+1}^* \triangleq x_0 + \mathcal{T}_t(\M)\xi 
\end{equation*}
the exact solution. We have
\begin{equation*}
    y_t = 2\mathcal{M}(y_{t-1}) -y_{t-2} = 2(x_0-x_t+y_t)-y_{t-2} \Longrightarrow y_t - y_{t-2} = 2(x_t - x_0).
\end{equation*}
Since $\|x_t-x_0\|\le r$ for each $t$ before Algorithm~\ref{alg: ZO-NCF-Deterministic} terminates, we have
\begin{align*}
    \|y_t\| &\le \|y_{t-2}\| + \|y_t - y_{t-2}\| \le \|y_{t-2}\| + 2r \\
    & \le \|y_{t-4}\| + 4r \\
    & \le \cdots \\
    & \le 2tr
\end{align*}
From Lemma~\ref{lemma: Lipschitz}, we have 
\begin{equation*}
    \|\mathcal{M}(y_t) - \M y_t\| = \|-\frac{\mathcal{H}_f(x_0)  }{\ell}y_t + \frac{\nabla^2 f(x_0)}{\ell} y_t\| \le \frac{1}{\ell} \rho \left( \frac{\|y_t\|^2}{2} + \frac{\sqrt{d} \mu_t^2}{3} \right)  \le \frac{\rho \sqrt{d}}{\ell} \|y_t\|^2 \le \frac{2\rho \sqrt{d} r t}{\ell}\|y_t\|.
\end{equation*}
Recall from Definition~\ref{def: chebyshev-polynomial} that
\begin{equation*}
     \mathcal{T}_{t}(x) \in
    \left\{
    \begin{array}{ll}
    [-1,1] & \text { if } x \in[-1,1] \\
    \left[\frac{1}{2}\left(x-\sqrt{x^{2}-1}\right)^{t}, \left(x+\sqrt{x^{2}-1}\right)^{t}\right] & \text { if } x>1
    \end{array}\right.
\end{equation*}
On the other hand, we have for every $x>1, a=x+\sqrt{x^2-1}, b=x-\sqrt{x^2-1}$, it satisfies
\begin{equation*}
    \mathcal{U}_t (x) = \frac{1}{a-b}(a^{t+1} - b^{t+1}) = \sum_{i=0}^{t} a^i b^{t-i} \le (t+1) a^t
\end{equation*}
Then we apply Lemma~\ref{lemma: stable-computation-of-Chebyshev-Polynomials} with eigenvalues of $\M$ in $[a,b] = \left[ 0,1+ \frac{\lambda - 3\delta/4}{\ell} \right]$ and 
\begin{align*}
    & \gamma \triangleq \max \left\{ 1 + \frac{\lambda - 3\delta/4}{\ell} + \sqrt{(1 + \frac{\lambda - 3\delta/4}{\ell})^2 -1}, 1 \right\}, \quad C_c = \gamma^t \|\xi\|=\gamma^t \sigma, \quad C_T =2, \\
    & C_U = t+2, \quad \epsilon = \frac{2\rho \sqrt{d} rt}{\ell}.
\end{align*}
Then according to Lemma~\ref{lemma: stable-computation-of-Chebyshev-Polynomials}, we have 
\begin{equation*}
    \|x_{t+1}^* - x_{t+1}\| \le \frac{40 \sqrt{d} \rho r t^4 \gamma^t \sigma}{\ell}
\end{equation*}
\fbox{
    \parbox{\textwidth}{
	Then we prove that if $\lambda_{min} (\nabla^2 f(x_0)) \le -\delta$, then with probability at least $1-p$, it satisfies that if $v \neq \bot, \|v\|=1$, and $v^\T \nabla^2 f(x_0) v \le -\frac{1}{2}\delta$. In other words, we can assume that $\lambda \ge \delta$.
    }
}

$\lambda \ge \delta$ implies $\gamma >1$, so we can let 
\begin{equation*}
    T_1 \triangleq \frac{\log \frac{4dr}{p \sigma}}{\log \gamma} \le T.
\end{equation*}
By Definition~\ref{def: chebyshev-polynomial} we know that $\|\mathcal{T}_{T_1}(\M)\| \ge \frac{1}{2}\gamma^{T_1} = \frac{2dr}{p\sigma}$. Thus, with probability at least $1-p$, $\|x_{T_1 + 1}^* - x_0\| = \|\mathcal{T}_{T_1}(\M)\xi\| \ge 2r$. Morever, at iteration $T_1$, we have
\begin{equation*}
    \|x_{T_1+1}^* - x_{T_1+1}\| \le \frac{80d \rho r T_1^4 \gamma^{T_1} \sigma}{\ell} \le \frac{80d \rho r T_1^4 \sigma}{\ell} \cdot \frac{4dr }{p \sigma} \le \frac{512 d^2 \rho T_1^4 }{\ell} \cdot \frac{r^2}{p} \overset{\textrm{\ding{172}}}{\le} \frac{\delta}{100\ell}r \le \frac{1}{16}r,
\end{equation*}
where \ding{172} is because $r \le \frac{\delta p}{51200 d^2\rho T_1^4}$. This means $\|x_{T_1+1} - x_0\| \ge r$ so the algorithm must terminate before iteration $T_1 \le T$.

On the other hand, since $\|\mathcal{T}_t(\M)\|\le \gamma^t$, we know that the algorithm will not terminate until $t\ge T_0$:
\begin{equation*}
    T_0 \triangleq \frac{\log \frac{r}{2\sigma}}{\log \gamma}
\end{equation*}
At the time of $t\ge T_0$ of termination, define $\gamma' = 1 + \frac{\lambda - 3\delta/4}{\ell}$, by Definition~\ref{def: chebyshev-polynomial} we have 
\begin{itemize}
    \item $\mathcal{T}_t(\gamma') \ge \frac{1}{2}\gamma^t \ge \frac{1}{2}\gamma^{T_0} \ge \frac{r}{4\sigma} = (\frac{d}{p})^{\Theta(1)}$.
    \item $\forall x \in [-1,1], \mathcal{T}_t(x)\in [-1,1]$.
\end{itemize}
Since all the eigenvalues of $\A$ that are $\ge -3\delta/4$ are mapped to the eigenvalues of $\M$ that are in $[-1,1]$, and the smallest eigenvalue of $\A$ is mapped to the largest eigenvalue $\gamma'$ of $\M$. So we have, with probability at least $1-p$, letting $v_t\triangleq x_{t+1}^*-x_0 = \mathcal{T}_t(\M)\xi$, then it satisfies 
\begin{equation*}
    \frac{v_t^\T \A v_t}{\|v_t\|^2} \le -\frac{3}{4}\delta \le -\frac{5}{8}\delta
\end{equation*}
Therefore, denoting by $z_t \triangleq x_{t+1} - x_0$, we have 
\begin{equation*}
    \frac{\|z_t - v_t\|}{\|z_t-v_t\| + \|v_t\|} \le \frac{\|z_t - v_t\|}{\|z_t\|} \le  \frac{\|x_{t+1}^* - x_{t+1}\|}{r} \le \frac{\delta}{100\ell} \le \frac{1}{16},
\end{equation*}
Finally, we have 
\begin{align*}
    \frac{z_t^\T \A z_t}{\|z_t\|^2} & = \frac{\|v_t\|^2}{\|z_t\|^2} \cdot \frac{z_t^\T \A z_t}{\|v_t\|^2} 
    \leq \frac{\left\|v_{t}\right\|^{2}}{\left\|z_{t}\right\|^{2}} \cdot \frac{v_{t}^{\top} \mathbf{A} v_{t}+4 \ell \left\|z_t - v_t\right\|\left\|v_{t}\right\|}{\left\|v_{t}\right\|^{2}} \\
    & \leq \frac{\left\|v_{t}\right\|^{2}}{\left\|z_{t}\right\|^{2}} \cdot\left(\frac{v_{t}^{\top} \mathbf{A} v_{t}}{\left\|v_{t}\right\|^{2}}+\frac{4 \ell\left\|z_{t} - v_t\right\|}{\left\|v_{t}\right\|}\right) \\
    & \le \frac{\left\|v_{t}\right\|^{2}}{\left\|z_{t}\right\|^{2}} \left( -\frac{5}{8}\delta + \frac{1}{25} \delta \right) \le (1- \frac{\|z_t - v_t\|}{\|z_t\|}) \left( -\frac{5}{8}\delta + \frac{1}{25} \delta \right)\\
    &\le \frac{15}{16} \left( -\frac{5}{8}\delta + \frac{1}{25} \delta \right) \le -\frac{1}{2}\delta.
\end{align*}
\end{proof}

\begin{definition}
\label{def: chebyshev-polynomial}
    Let $\mathcal{T}_{t}(x)$  be the $t$-th Chebyshev polynomial of the first kind and $\mathcal{U}_{t}(x)$  be the  $t$-th Chebyshev polynomial of the second kind, defined as:
\begin{align*}
    &\mathcal{T}_{0}(x):=1, \quad \mathcal{T}_{1}(x):=x, \quad \mathcal{T}_{n+1}(x):=2 x \cdot \mathcal{T}_{n}(x)-\mathcal{T}_{n-1}(x) \\
    &\mathcal{U}_{0}(x) := 1, \quad \mathcal{U}_{1}(x) := 2 x, \quad \mathcal{U}_{n+1}(x) := 2 x \cdot \mathcal{U}_{n}(x)-\mathcal{U}_{n-1}(x)
\end{align*}

then $\mathcal{U}_{n}(x)$ satisfies:  $\frac{d}{d x} \mathcal{T}_{n}(x)=n \mathcal{U}_{n-1}(x)$ and:
\begin{align*}
    \mathcal{T}_{n}(x) &=
    \left\{
    \begin{array}{ll}
    \cos (n \arccos (x)) \in[-1,1] & \text { if } x \in[-1,1] \\
    \frac{1}{2}\left[\left(x-\sqrt{x^{2}-1}\right)^{n}+\left(x+\sqrt{x^{2}-1}\right)^{n}\right] & \text { if } x>1
    \end{array}\right. \\
    \mathcal{U}_{n}(x) &=
    \left\{\begin{array}{ll}
    \in[-t, t] & \text { if } x \in[-1,1] \\
    \frac{1}{2 \sqrt{x^{2}-1}}\left[\left(x+\sqrt{x^{2}-1}\right)^{n+1}-\left(x-\sqrt{x^{2}-1}\right)^{n+1}\right] & \text { if } x>1
    \end{array}\right.
\end{align*}

\end{definition}

\begin{definition}[Inexact backward recurrence, \cite{allen2018neon2}]
\label{def: inexact-backward-recurrence}
    Suppose we want to compute 
\begin{equation*}
    \vec{s}_N \triangleq \sum_{k=0}^{N} \mathcal{T}_k (\M) \vec{c}_k, \quad \text{where}\quad \M \in \R^{d\times d}\quad \text{is symmetric and each} \quad \vec{c}_k \in \R^d.
\end{equation*}
Let $\mathcal{M}$ be an approximate algorithm that satisfies $\|\mathcal{M}(u) - \M u\| \le \epsilon\|u\|$ for every $u\in \R^d$. Then, define inexact backward recurrence to be 
\begin{equation*}
    \hat{b}_{N+1} = 0, \hat{b}_N = \vec{c}_N, \quad \text{and}\quad \forall r\in\{N-1,\dots,0\}: \hat{b}_r \triangleq 2\mathcal{M}(\hat{b}_{r+1}) - \hat{b}_{r+2} + \vec{c}_r \in \R^d,
\end{equation*}
and define the output as $\hat{s}_N \triangleq \hat{b}_0 - \mathcal{M}(\hat{b}_1)$.
If $\epsilon = 0$, then $\hat{s}_N = \vec{s}_N$.
\end{definition}

\begin{lemma}[Stable computation of Chebyshev Polynomials, \cite{allen2018neon2}]
\label{lemma: stable-computation-of-Chebyshev-Polynomials}

For every $N\in\mathbb{N}^*$, suppose the eigenvalues of $\M$ are in $[a,b]$ and suppose there are parameters $C_U \ge 1, C_T \ge 1, \gamma \ge 1, C_c \ge0$ satisfying
\begin{equation*}
    \forall k\in\{0,1,\dots,N\}: \left\{ \gamma^k \|\vec{c}_k\| \le  C_c \quad \textrm{and} \quad \forall x\in[a,b]: |\mathcal{T}_k(x)|\le C_T \gamma^k, |\mathcal{U}_k(x)|\le C_U \gamma^k \right\}.
\end{equation*}
Then, if $\epsilon\le \frac{1}{4N C_U}$, we have
\begin{equation*}
    \|\hat{s}_N - \vec{s}_N\| \le \epsilon \cdot 2(1+2NC_T)NC_U C_c.
\end{equation*}
\end{lemma}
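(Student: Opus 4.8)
The plan is to run the classical backward-stability analysis of Clenshaw's algorithm, lifted to symmetric matrices and executed with the approximate operator $\mathcal{M}$ in place of $\M$. First I would write down the \emph{exact} recurrence $b_{N+1}=b_{N+2}=0$, $b_r=2\M b_{r+1}-b_{r+2}+\vec{c}_r$ for $r=N,\dots,0$, with output $s_N=b_0-\M b_1$; unrolling it (equivalently, iterating the transfer matrix $\begin{pmatrix}2\M & -\I\\ \I & 0\end{pmatrix}$, whose $k$-th power has first column $(\mathcal{U}_k(\M),\mathcal{U}_{k-1}(\M))^{\T}$ with the convention $\mathcal{U}_{-1}=0$) gives $b_r=\sum_{j=0}^{N-r}\mathcal{U}_j(\M)\vec{c}_{r+j}$, and the identity $\mathcal{U}_k(x)-x\mathcal{U}_{k-1}(x)=\mathcal{T}_k(x)$ collapses $b_0-\M b_1$ to $\sum_{j=0}^{N}\mathcal{T}_j(\M)\vec{c}_j=\vec{s}_N$. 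So the exact backward recurrence returns $\vec{s}_N$, and it is enough to bound $\|\hat{s}_N-s_N\|$.

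I would then introduce the local errors $\delta_r:=\mathcal{M}(\hat{b}_r)-\M\hat{b}_r$, which satisfy $\|\delta_r\|\le\epsilon\|\hat{b}_r\|$ by hypothesis, and the accumulated errors $e_r:=\hat{b}_r-b_r$. Subtracting the two recurrences gives the terminal data $e_N=e_{N+1}=0$ and $e_r=2\M e_{r+1}-e_{r+2}+2\delta_{r+1}$, so $e$ satisfies the \emph{same} Chebyshev three-term recurrence as $b$ but driven by the source $2\delta_{r+1}$. By superposition (the same transfer matrix applied to $(2\delta_{r+1+j},0)^{\T}$) one obtains the closed form $e_r=2\sum_{j=0}^{N-1-r}\mathcal{U}_j(\M)\delta_{r+1+j}$; forming $e_0-\M e_1$ and telescoping once more with $\mathcal{U}_k-x\mathcal{U}_{k-1}=\mathcal{T}_k$ produces the clean output-error identity
\begin{equation*}
\hat{s}_N-\vec{s}_N \;=\; e_0-\M e_1-\delta_1 \;=\; \delta_1+2\sum_{j=2}^{N}\mathcal{T}_{j-1}(\M)\,\delta_j .
\end{equation*}

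The remaining ingredient is a uniform bound on $\|\hat{b}_r\|$, which controls $\|\delta_r\|$; I expect this self-referential step to be the main obstacle, resolved by a Gronwall/bootstrap argument using the budget $\epsilon\le\frac{1}{4NC_U}$. Since $\M$ is symmetric with spectrum in $[a,b]$, the hypotheses give $\|\mathcal{T}_k(\M)\|\le C_T\gamma^k$ and $\|\mathcal{U}_k(\M)\|\le C_U\gamma^k$; combined with $\gamma^{r+j}\|\vec{c}_{r+j}\|\le C_c$ this yields $\gamma^{r}\|b_r\|\le(N{-}r{+}1)C_UC_c$, and the closed form for $e_r$ yields $\gamma^{r}\|e_r\|\le 2NC_U\epsilon\cdot M$ where $M:=\max_{0\le s\le N}\gamma^{s}\|\hat{b}_s\|$. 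Inserting $\gamma^{r}\|\hat{b}_r\|\le\gamma^{r}\|b_r\|+\gamma^{r}\|e_r\|$ into the definition of $M$, the bound on $\epsilon$ absorbs at most $\tfrac12 M$, so $M\le 2(N{+}1)C_UC_c$ and hence $\|\delta_j\|\le 2\epsilon(N{+}1)C_UC_c\,\gamma^{-j}$. Substituting into the output-error identity, using $\gamma^{j-1}\|\delta_j\|\le 2\epsilon(N{+}1)C_UC_c$ (as $\gamma\ge1$) and summing the $N-1$ Chebyshev terms together with the $\delta_1$ term, gives $\|\hat{s}_N-\vec{s}_N\|\le\epsilon\cdot 2(1+2NC_T)NC_UC_c$ after routine bookkeeping of the constants.
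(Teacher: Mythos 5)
Your proof is correct, and it follows the same route as the cited reference: the paper itself only cites this lemma from Allen-Zhu and Li's stable matrix Chebyshev approximation work (via \cite{allen2018neon2}) without reproducing the argument, and what you have written is exactly that argument — the Clenshaw backward-stability analysis, with the error recurrence $e_r = 2\M e_{r+1} - e_{r+2} + 2\delta_{r+1}$, the closed form $e_r = 2\sum_{j} \mathcal{U}_j(\M)\delta_{r+1+j}$, the telescoping via $\mathcal{U}_k - x\mathcal{U}_{k-1} = \mathcal{T}_k$ to get $\hat{s}_N - \vec{s}_N = \delta_1 + 2\sum_{k=2}^{N}\mathcal{T}_{k-1}(\M)\delta_k$, and the bootstrap on $M := \max_s \gamma^s\|\hat b_s\|$ using $2\epsilon N C_U \le \tfrac12$. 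The bound you actually derive, $2\epsilon(N+1)\bigl(1+2(N-1)C_T\bigr)C_U C_c$, is marginally sharper than the stated $2\epsilon N(1+2NC_T)C_U C_c$ once $C_T\ge 1$ is used, so it implies the lemma as claimed.
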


\section{Proof of Results of ZO-GD and ZO-SGD}

\subsection{Proof of Theorem~\ref{thm: ZO-SGD} (\texorpdfstring{\textbf{Option \uppercase\expandafter{\romannumeral1}}}{Lg})}

If we update $x_{t+1} = x_t - \frac{\eta}{|S|}\sum_{i\in S}\hat{\nabla}_{coord} f_i (x_t)$, then according to the smoothness of $f(\cdot)$ we have
\begin{align*}
    & f(x_t) - \E_S [f(x_{t+1})] 
    \ge  \E_S \left[ \<\nabla f(x_t),x_t-x_{t+1}\> - \frac{\ell}{2}\|x_t - x_{t+1}\|^2 \right] \\
    = & \eta\<\nabla f(x_t), \hat{\nabla}_{coord} f(x)\> - \frac{\eta^2 \ell}{2} \E_S \left[ \left\| \frac{1}{|S|} \sum_{i \in S} \hat{\nabla}_{coord} f_i(x_t) \right\|^2 \right] \\
    \overset{\textrm{\ding{172}}}{\ge} & \frac{\eta }{2} (\|\nabla f(x_t)\|^2 - \|\nabla f(x_t) - \hat{\nabla}_{coord} f(x_t)\|^2) - \frac{\eta^2 \ell}{2} \E_S \left[ \left\| \frac{1}{|S|} \sum_{i \in S} \hat{\nabla}_{coord} f_i(x_t) \right\|^2 \right] \\
    \overset{\textrm{\ding{173}}}{\ge} & \frac{\eta }{2} (\|\nabla f(x_t)\|^2 - \frac{\rho^2 d \mu^4}{36}) - \eta^2 \ell \left( \|\nabla f(x)\|^2  + \E_S \left[ \left\| \nabla f(x) - \frac{1}{|S|} \sum_{i \in S} \hat{\nabla}_{coord} f_i(x_t) \right\|^2 \right] \right) \\
    = & \frac{\eta}{2}( (1-2\eta \ell)\|\nabla f(x_t)\|^2 - \frac{\rho^2 d \mu^4}{36}) - \eta^2 \ell \E_S \left[ \left\| \nabla f(x) - \frac{1}{|S|} \sum_{i \in S} \hat{\nabla}_{coord} f_i(x_t) \right\|^2 \right] \\
    \overset{\textrm{\ding{174}}}{\ge} & \frac{\eta}{2}( (1-2\eta \ell)\|\nabla f(x_t)\|^2 - \frac{\rho^2 d \mu^4}{36}) -2\eta^2 \ell \E_S \left[ \left\| \nabla f(x) - \frac{1}{|S|} \sum_{i \in S} \nabla f_i(x_t) \right\|^2 \right. \\
    & \left. + \left\|\frac{1}{|S|} \sum_{i \in S} \left( \nabla f_i(x_t)- \hat{\nabla}_{coord} f_i(x_t) \right) \right\|^2 \right] \\
    \overset{\textrm{\ding{175}}}{\ge} & \frac{\eta}{2}( (1-2\eta \ell)\|\nabla f(x_t)\|^2 - \frac{\rho^2 d \mu^4}{36}) - 2\eta^2 \ell (\frac{\sigma^2}{B} + \frac{\rho^2 d \mu^4}{36}) \\
    = &  \frac{\eta - 2\eta^2 \ell}{2}\|\nabla f(x_t)\|^2 - (\frac{\eta}{2} + 2\eta^2 \ell)\frac{\rho^2 d \mu^4}{36} - \frac{2\eta^2 \ell \sigma^2}{B},
\end{align*}
where inequality \ding{172} holds since $-2\<a,b\> \le \|a-b\|^2 - \|a\|^2$; \ding{173} and \ding{174} holds since $\|a+b\|^2 \le 2 (\|a\|^2 + \|b\|^2)$ and Lemma \ref{lemma: coord-square-bound}; inequality \ding{175} holds since Lemma~\ref{lemma: variance reduced}
and Lemma~\ref{lemma: coord-square-bound}. With the choice of $\eta = \frac{1}{4\ell}, \mu \le \sqrt{\frac{3\epsilon}{4\rho \sqrt{d}}}$ and $B = \max\{\frac{32\sigma^2}{\epsilon^2},1\}$ we have 
\begin{equation*}
    f(x_t) - \E_S [f(x_{t+1})] \ge \frac{1}{16 \ell}(\|\nabla f(x)\|^2 - \frac{\epsilon^2 }{8})
\end{equation*}
Thus as long as Line 5 of Algorithm~\ref{alg: ZO-SGD} is reached, we have $f(x_t) - \E f(x_{t+1}) \ge \Omega(\frac{\epsilon^2}{\ell})$. On the other hand, whenever line 10 is reached, we have $v^\T \nabla f(x_t) v \le -\frac{\delta}{2}$. By Lemma~\ref{lemma: negative-curvature-reduction}, we have $f(x_t) - \E f(x_{t+1}) \ge \Omega(\frac{\delta^3}{\rho^2})$.

Then we choose $K=\mathcal{O} \left(\frac{\rho^2 \Delta_f}{\delta^3} + \frac{\ell \Delta_f}{\epsilon^2} \right)$, then the algorithm must terminate. As for the total query complexity, we note that each iteration of Algorithm~\ref{alg: ZO-SGD} needs $\tilde{\mathcal{O}}(B) = \tilde{\mathcal{O}}(\frac{\sigma^2}{\epsilon^2} + 1)$ stochastic gradient estimators in Line 3 and Line 5, totaling $\tilde{\mathcal{O}}(d(\frac{\sigma^2}{\epsilon^2}+1)K)$ function queries, as well as $\tilde{\mathcal{O}}(\frac{\ell^2}{\delta^2})$ stochastic gradient estimators computations with no more than $\mathcal{O}(\frac{\rho^2 \Delta_f}{\delta^3})$ times. Therefore, the total function query complexity is 
\begin{equation*}
    \tilde{\mathcal{O}} \left( d(\frac{\sigma^2}{\epsilon^2}+1)K + d \frac{\ell^2}{\delta^2} \frac{\rho^2 \Delta_f}{\delta^3} \right) 
    = 
    \tilde{\mathcal{O}} \left( d(\frac{\sigma^2}{\epsilon^2}+1) (\frac{\rho^2 \Delta_f}{\delta^3} + \frac{\ell \Delta_f}{\epsilon^2}) + d \frac{\ell^2 \rho^2 \Delta_f}{\delta^5} \right)
\end{equation*}

\subsection{Proof of Theorem~\ref{thm: ZO-SGD} (\texorpdfstring{\textbf{Option \uppercase\expandafter{\romannumeral2}}}{Lg})}

\begin{lemma}
\label{lemma: stochastic-RandGradEst-Error}
For any $x\in \R^d$, we have
\begin{equation*}
    \E \|\frac{1}{|S|}\sum_{i\in S} \hat{\nabla }_{rand} f_i (x_t) - \hat{\nabla}_{rand} f(x_t)\|^2 \le \frac{2d}{|S|} \|\nabla f(x_t)\|^2 + \frac{2d \sigma^2}{|S|} + \frac{\rho^2 d^2 \mu^4}{36|S|}
\end{equation*}
\end{lemma}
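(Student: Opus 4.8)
The plan is to condition on the single random direction $u$ drawn at the current iteration (all component estimators $\hat{\nabla}_{rand}f_i(x_t)$ share this $u$), reduce the quantity to a mini-batch deviation of \emph{fixed} vectors, apply the variance-reduction fact of Lemma~\ref{lemma: variance reduced}, and then average back over $u$ with the second-moment bound of Lemma~\ref{lemma: RandGradEst}.

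First I would fix $u$ and set $v_i \triangleq \hat{\nabla}_{rand} f_i(x_t) = d\,\frac{f_i(x_t+\mu u)-f_i(x_t-\mu u)}{2\mu}\,u$. By linearity of the finite difference in the summand, $\frac{1}{n}\sum_{i=1}^n v_i = d\,\frac{f(x_t+\mu u)-f(x_t-\mu u)}{2\mu}\,u = \hat{\nabla}_{rand} f(x_t)$, so the centered vectors $v_i - \hat{\nabla}_{rand}f(x_t)$ sum to zero. Applying Lemma~\ref{lemma: variance reduced} with $S$ a uniform random subset of $[n]$ gives, conditionally on $u$,
\begin{equation*}
\E_S \Big\| \tfrac{1}{|S|}\sum_{i\in S} v_i - \hat{\nabla}_{rand} f(x_t) \Big\|^2 \le \frac{\mathbb{I}[|S|<n]}{|S|}\cdot\frac{1}{n}\sum_{i=1}^n \big\|v_i - \hat{\nabla}_{rand}f(x_t)\big\|^2 \le \frac{1}{|S|}\cdot\frac{1}{n}\sum_{i=1}^n \|v_i\|^2 ,
\end{equation*}
where the last step uses $\mathbb{I}[\cdot]\le 1$ together with the bias--variance identity $\frac1n\sum_i\|v_i-\bar v\|^2 = \frac1n\sum_i\|v_i\|^2 - \|\bar v\|^2$ to drop the nonnegative term $\|\hat{\nabla}_{rand}f(x_t)\|^2$.

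Next I would take expectation over $u$ and invoke the $\rho$-Hessian-Lipschitz version of item (3) in Lemma~\ref{lemma: RandGradEst}, namely $\E_u\|\hat{\nabla}_{rand} f_i(x_t)\|^2 \le d\|\nabla f_i(x_t)\|^2 + \frac{\rho^2 d^2\mu^4}{36}$ for every $i$, obtaining
\begin{equation*}
\E \Big\| \tfrac{1}{|S|}\sum_{i\in S} \hat{\nabla}_{rand} f_i(x_t) - \hat{\nabla}_{rand} f(x_t) \Big\|^2 \le \frac{d}{|S|}\cdot\frac{1}{n}\sum_{i=1}^n \|\nabla f_i(x_t)\|^2 + \frac{\rho^2 d^2 \mu^4}{36|S|} .
\end{equation*}
Finally I would bound $\frac1n\sum_i\|\nabla f_i(x_t)\|^2$ by writing $\nabla f_i = (\nabla f_i - \nabla f) + \nabla f$, using $\|a+b\|^2\le 2\|a\|^2+2\|b\|^2$ and the bounded-variance part of Assumption~\ref{assum: basic}, $\frac1n\sum_i\|\nabla f_i(x_t) - \nabla f(x_t)\|^2 \le \sigma^2$, to get $\frac1n\sum_i\|\nabla f_i(x_t)\|^2 \le 2\|\nabla f(x_t)\|^2 + 2\sigma^2$; substituting yields the claimed inequality.

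The computation is essentially routine; the only points requiring care are (i) the conditioning on $u$ — recognizing that with a single shared direction per iteration $\hat{\nabla}_{rand}f(x_t)$ is exactly the full-sample average of the $\hat{\nabla}_{rand}f_i(x_t)$, which is what makes Lemma~\ref{lemma: variance reduced} directly applicable — and (ii) using the sharper (Hessian-Lipschitz) second-moment estimate rather than the merely $\ell$-smooth one, so that the residual term scales as $\mu^4$ rather than $\mu^2$.
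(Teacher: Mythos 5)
Your proof is correct and reaches the claimed bound via the same conceptual route the paper takes: condition on the shared direction $u$, observe that linearity makes $\hat{\nabla}_{rand}f(x_t)$ the exact full-sample average of the $\hat{\nabla}_{rand}f_i(x_t)$ so the centered vectors sum to zero, bound the mini-batch deviation, drop the centering to land on $\frac{1}{|S|}\cdot\frac1n\sum_i\|\hat{\nabla}_{rand}f_i(x_t)\|^2$, then apply the Hessian-Lipschitz second-moment bound of Lemma~\ref{lemma: RandGradEst}(3) and the bounded-variance assumption. The only difference is stylistic: where you invoke Lemma~\ref{lemma: variance reduced} directly, the paper re-derives the same sampling-without-replacement variance bound from scratch by introducing indicator variables $I_i=\mathbb{I}(i\in S)$ and computing $\E I_i^2 = |S|/n$, $\E I_iI_j = \frac{|S|(|S|-1)}{n(n-1)}$; your shortcut is cleaner and equally rigorous.
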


\begin{proof}
    Let $z_i = \hat{\nabla }_{rand} f_i (x_t) - \hat{\nabla}_{rand} f(x_t)$ and $I_i = I(i\in S)$, where $I(\cdot)$ is the indicator function. Then we have $\E_i (I_i^2) \frac{|S|}{n}$ and $\E_i (I_i I_j) = \frac{\binom{|S|}{2}}{\binom{n}{2}} = \frac{|S|(|S|-1)}{n(n-1)}, i \neq j$. Then we have \begin{align*}
        &\E \|\frac{1}{|S|}\sum_{i\in S} \hat{\nabla }_{rand} f_i (x_t) - \hat{\nabla}_{rand} f(x_t)\|^2 =  \E \| \frac{1}{|S|}\sum_{i \in S} z_i \|^2 = \frac{1}{|S|^2} \E \|\sum_{i=1}^n z_i I_i\|^2 \\
        =& \frac{1}{|S|^2} \left( \sum_{i=1}^n \E I_i^2 \|z_i\|^2 + \sum_{i\neq j} \E I_i I_j \<z_i,z_j\> \right) = \frac{1}{|S|^2} \E_u \left( \frac{|S|}{n} \sum_{i=1}^n \|z_i\|^2 + \frac{|S|(|S|-1)}{n(n-1)} \sum_{i \neq j } \<z_i,z_j\> \right) \\
        = & \frac{1}{|S|^2} \E_u \left( \left( \frac{|S|}{n} - \frac{|S|(|S|-1)}{n(n-1)} \right) \sum_{i=1}^n \|z_i\|^2 + \frac{|S|(|S|-1)}{n(n-1)} \|\sum_{i=1}^n z_i\|^2 \right) \\
        \overset{\textrm{\ding{172}}}{=} & \E_u \frac{n - |S|}{n(n-1)|S|} \sum_{i=1}^n \|z_i\|^2 \le \frac{1}{|S|} \E_u\frac{1}{n} \sum_{i=1}^n \|z_i\|^2 = \frac{1}{|S|} \E_u \E_i \|\hat{\nabla}_{rand} f_i (x_t) - \hat{\nabla}_{rand} f(x_t)\|^2 \\
        \overset{\textrm{\ding{173}}}{\le} & \frac{1}{|S|} \E_u \E_i \|\hat{\nabla}_{rand} f_i (x_t)\|^2 \overset{\textrm{Lemma~\ref{lemma: RandGradEst}}}{\le}\frac{1}{|S|} \E_i \left( d \|\nabla f_i (x_t)\|^2 + \frac{\rho^2 d^2 \mu^4}{36}\right) \\
        \le & \frac{d}{|S|} \E_i \left( 2\|\nabla f(x_t)\|^2 + 2 \|\nabla f(x_t) - \nabla f_i (x_t)\|^2 \right) + \frac{\rho^2 d^2 \mu^4}{36 |S|} \\
        \le & \frac{2d}{|S|} \|\nabla f(x_t)\|^2 + \frac{2d \sigma^2}{|S|} + \frac{\rho^2 d^2 \mu^4}{36|S|}
    \end{align*}
\end{proof}

    If we update $x_{t+1} = x_t - \frac{\eta}{|S|} \sum_{i \in S} \hat{\nabla}_{rand} f_i (x_t)$, then according to the smoothness of $f_{\mu} (\cdot)$, we have
    \begin{align*}
        & f_{\mu} (x_t) - \E [f_{\mu} (x_{t+1})] \\
        \ge & \E \left[\<\nabla f_{\mu} (x_t), x_t - x_{t+1}\> - \frac{\ell}{2} \|x_t - x_{t+1}\|^2\right] \\
        = &  \E\<\nabla f_{\mu} (x_t), \frac{\eta}{|S|}\sum_{i\in S}\hat{\nabla}_{rand} f_i(x_t)\> - \frac{\eta^2\ell}{2} \E \left\|\frac{1}{|S|}\sum_{i\in S} \hat{\nabla}_{rand}  f_i(x_t) \right\|^2 \\
        \overset{\textrm{\ding{172}}}{\ge} & \eta \|\nabla f_{\mu} (x_t)\|^2 - \frac{3 \eta^2 \ell}{2} \E \left(\left\| \frac{1}{|S|}\sum_{i\in S} \hat{\nabla}_{rand}  f_i(x_t) - \hat{\nabla}_{rand} f(x_t) \right\|^2 + \left\| \hat{\nabla }_{rand} f(x_t) - \nabla f_\mu (x_t) \right\|^2 \right. \\
        & \left.+ \|\nabla f_\mu (x_t)\|^2 \right) \\
        \overset{\textrm{\ding{173}}}{\ge} & \eta\left(1 - \frac{3\eta\ell}{2}\right) \|\nabla f_\mu(x_t)\|^2 - \frac{3\eta^2 \ell}{2} \left( \frac{2d}{|S|} \|\nabla f(x_t)\|^2 + \frac{2d \sigma^2}{|S|} + \frac{\rho^2 d^2 \mu^4}{36|S|} + d \|\nabla f(x_t)\|^2 + \frac{\rho^2 d^2 \mu^4}{36} \right) \\
        \ge& \eta\left(1 - \frac{3\eta\ell}{2}\right) \|\nabla f_\mu(x_t)\|^2 - \eta^2 \ell \left(  \frac{9 d}{2} \|\nabla f(x_t)\|^2 + \frac{3d \sigma^2}{|S|} + \frac{\rho^2 d^2 \mu^4}{24}\right) \\
        \ge & \eta\left(1 - \frac{3\eta\ell}{2}\right) \left(\frac{1}{2} \|\nabla f (x_t)\|^2 - \|\nabla f(x_t) - \nabla f_\mu (x_t)\|^2  \right)- \eta^2 \ell \left(  \frac{9 d}{2} \|\nabla f(x_t)\|^2 + \frac{3d \sigma^2}{|S|} + \frac{\rho^2 d^2 \mu^4}{24}\right) \\
        \ge & \eta\left(1 - \frac{3\eta\ell}{2}\right) \left(\frac{1}{2} \|\nabla f (x_t)\|^2 - \frac{\rho^2 d^2 \mu^4}{36}  \right)- \eta^2 \ell \left(  \frac{9 d}{2} \|\nabla f(x_t)\|^2 + \frac{3d \sigma^2}{|S|} + \frac{\rho^2 d^2 \mu^4}{24}\right) \\
        \ge & \eta(\frac{1}{2} - 8d\eta \ell) \|\nabla f(x_t)\|^2 - 3d \eta^2 \ell \frac{\sigma^2}{|S|} - \eta \frac{\rho^2 d^2 \mu^4}{36}
    \end{align*}
where \ding{172} is due to $\|a+b+c\|^2 \le 3 (\|a\|^2 + \|b\|^2 + \|c\|^2)$; \ding{173} is due to Lemma~\ref{lemma: RandGradEst} and Lemma~\ref{lemma: stochastic-RandGradEst-Error}.

Since $|f(x) - f_{\mu}(x)|\le  \frac{\ell \mu^2}{2}$, we have
\begin{align*}
    &f(x_t) - \E[f(x_{t+1})] \\
    \ge & f_{\mu}(x_t) - \E[f_{\mu}(x_{t+1})] - \ell \mu^2\\
    \ge & \eta(\frac{1}{2} - 8d\eta \ell) \|\nabla f(x_t)\|^2 - 3d \eta^2 \ell \frac{\sigma^2}{|S|} - \eta \frac{\rho^2 d^2 \mu^4}{36} - \ell \mu^2
\end{align*}
With the choice of $\eta = \frac{1}{32 d\ell}, \mu = \min \left\{\sqrt{\frac{3\epsilon}{4\rho d}}, \frac{\epsilon}{32 \sqrt{d} \ell} \right\}, B = \max\{ \frac{8 \sigma^2}{\epsilon^2}, 1\}$, we have 
\begin{align*}
    &f(x_t) - \E[f(x_{t+1})] \\
    \ge & \frac{1}{128d\ell} \|\nabla f(x_t)\|^2 - \frac{1}{256 d \ell} \frac{\sigma^2}{B} - \frac{1}{128d \ell} \frac{\rho^2 d^2 \mu^4}{9} - \ell \mu^2 \\
    = & \frac{1}{128d\ell} \left( \|\nabla f(x_t)\|^2 -\frac{ \sigma^2}{2B} - \frac{\rho^2 d^2 \mu^4}{9} - 128 d \ell^2 \mu^2 \right) \\
    \ge & \frac{1}{128d\ell} \left( \|\nabla f(x_t)\|^2 - \frac{\epsilon^2}{8} \right)
\end{align*}

Thus as long as Line 6 of Algorithm~\ref{alg: ZO-SGD} is reached, we have $f(x_t) - \E f(x_{t+1}) \ge \Omega(\frac{\epsilon^2}{d \ell})$. On the other hand, whenever line 10 is reached, we have $v^\T \nabla f(x_t) v \le -\frac{\delta}{2}$. By Lemma~\ref{lemma: negative-curvature-reduction}, we have $f(x_t) - \E f(x_{t+1}) \ge \Omega(\frac{\delta^3}{\rho^2})$.

Then we choose $K=\mathcal{O} \left(\frac{\rho^2 \Delta_f}{\delta^3} + \frac{d\ell \Delta_f}{\epsilon^2} \right)$, then the algorithm must terminate. As for the total query complexity, we note that each iteration of Algorithm~\ref{alg: ZO-SGD} needs $\tilde{\mathcal{O}}(B) = \tilde{\mathcal{O}}(\frac{\sigma^2}{\epsilon^2} + 1)$ stochastic gradient estimators in Line 6 and $\tilde{\mathcal{O}}(\frac{\sigma^2}{\epsilon^2} + 1)$ deterministic coordinate-wise gradient estimators Line 3, totaling $\tilde{\mathcal{O}}(d(\frac{\sigma^2}{\epsilon^2}+1)K)$ function queries, as well as $\tilde{\mathcal{O}}(\frac{\ell^2}{\delta^2})$ stochastic gradient estimators computations with no more than $\mathcal{O}(\frac{\rho^2 \Delta_f}{\delta^3})$ times. Therefore, the total function query complexity is 
\begin{equation*}
    \tilde{\mathcal{O}} \left( (d\frac{\sigma^2}{\epsilon^2}+d)K + d \frac{\ell^2}{\delta^2} \frac{\rho^2 \Delta_f}{\delta^3} \right) 
    = 
    \tilde{\mathcal{O}} \left( d(\frac{\sigma^2}{\epsilon^2}+1) (\frac{\rho^2 \Delta_f}{\delta^3} + \frac{d \ell \Delta_f}{\epsilon^2}) + d \frac{\ell^2 \rho^2 \Delta_f}{\delta^5} \right)
\end{equation*}

\section{Applying Zeroth-Order Negative Curvature Finding to ZO-SCSG}

In this section, we first propose a zeroth-order variant of the SCSG \cite{lei2017non} method in Algorithm~\ref{alg: ZO-SCSG}. At the beginning of the $j$-th epoch, we estimate the gradient $\nabla f_{\mathcal{I}_j} (\tilde{x}_{j-1})$ by \ref{eq: CoordGradEst} over a batch sampling set $\mathcal{I}_j$ with size $B$. In the inner loop iterations, the stochastic gradient estimator $v_{k-1}^j$ is either constructed by \ref{eq: CoordGradEst} or by \ref{eq: RandGradEst} 
over a mini-batch sampling set $\mathcal{I}_{k-1}^j$ with size $b$. After running ZO-SCSG for one epoch, we have the following lemma:

\begin{algorithm}[htbp]
	\caption{ZO-SCSG}
	\label{alg: ZO-SCSG}
	\renewcommand{\algorithmicrequire}{\textbf{Input:}} 
	\renewcommand{\algorithmicensure}{\textbf{Output:}}
	\begin{algorithmic}[1]
		\Require Number of stages $T$, initial point $\tilde{x}_0$, batch size $B$, mini-bath size $b$, learning rate $\eta>0$.
		\For{$j = 1,\dots, T$}
		\State Uniformly randomly sample a batch $\mathcal{I}_j \subset [n]$ with $|\mathcal{I}_j| = B$
		\State $v_j = \hat{\nabla}_{coord} f_{\mathcal{I}_j} (\tilde{x}_{j-1})$
		\State $x_0^j = \tilde{x}_{j-1}$
		\State \textbf{Option \uppercase\expandafter{\romannumeral1}: } $N_j \sim \text{Geom}( \frac{B}{B+b})$
		\textbf{Option \uppercase\expandafter{\romannumeral2 }: } $N_j \sim \text{Geom}(\frac{B}{B+b/d})$
		\For{$k = 1, \dots, N_j$}
		\State Randomly pick $\mathcal{I}_{k-1}^j \subset [n]$ with size $b$
		\State \textbf{Option \uppercase\expandafter{\romannumeral1}: } $v_{k-1}^j = \hat{\nabla}_{coord} f_{\mathcal{I}_{k-1}^j} (x_{k-1}^j) - \hat{\nabla}_{coord} f_{\mathcal{I}_{k-1}^j} (x_0^j) + v_j$
		\State \textbf{Option \uppercase\expandafter{\romannumeral2}: } $v_{k-1}^j = \hat{\nabla}_{rand} f_{\mathcal{I}_{k-1}^j} (x_{k-1}^j) - \hat{\nabla}_{rand} f_{\mathcal{I}_{k-1}^j} (x_0^j) + v_j$
		\State $x_k^j = x_{k-1}^j - \eta v_{k-1}^j$
		\EndFor
		\State $\tilde{x}_j = x_{N_j}^j$
	    \EndFor
	\end{algorithmic}
\end{algorithm}

\begin{algorithm}[htbp]
	\caption{ZO-SCSG-NCF}
	\label{alg: ZO-SCSG-NCF}
	\renewcommand{\algorithmicrequire}{\textbf{Input:}} 
	\renewcommand{\algorithmicensure}{\textbf{Output:}}
	\begin{algorithmic}[1]
		\Require Function $f$, starting point $x_0$, batch size $B$, mini-batch size $b$, $K$, $\epsilon>0$ and $\delta>0$.
		\If{$b > B$}
		\Return \hyperref[alg: ZO-SGD]{ZO-SGD}($f, x_0, \frac{2}{3}, \epsilon, \delta$)
		\EndIf
		\For{$t = 0,\dots,K-1$}
		\State uniformly randomly choose a set $\mathcal{B}$ with batch size $\mathcal{O}(\frac{\sigma^2}{\epsilon^2} \log K)$
		\If{$\|\hat{\nabla}_{coord} f_{\mathcal{B}}(x_t)\|\ge \frac{3\epsilon}{4}$ }
        \State $x_{t+1} \gets$ apply \hyperref[alg: ZO-SCSG]{ZO-SCSG} on $x_t$ for one epoch with batch size $B$ and mini-batch size $b$
	    \Else 
	    \State $v \gets$ \hyperref[alg: ZO-NCF-Online]{ZO-NCF-Online} ($f, x_t, \delta, \frac{1}{20K}$)
	    \If{$v=\bot$}
	    \Return $x_t$
	    \Else
	    \quad $x_{t+1} = x_t \pm \frac{\delta}{\rho} v$
	    \EndIf
	    \EndIf
	    \EndFor
	\end{algorithmic}
\end{algorithm}

\begin{lemma}[One epoch analysis]
Under Assumption~\ref{assum: basic}, \textbf{Option \uppercase\expandafter{\romannumeral1}:}
\label{lemma: ZO-SCSG}
    Let $\eta \ell = \gamma \left(\frac{b}{B}\right)^{\frac{2}{3}}$. Suppose $ \gamma \le \frac{1}{4}$, $B \ge 8b$ and $b \ge 1$, then after running \hyperref[alg: ZO-SCSG]{ZO-SCSG} for one epoch, we have 
    \begin{align}
        \left( \frac{B}{b} \right)^{\frac{1}{3}} \E\|\nabla f(\tilde{x}_j)\|^2 \le  \frac{4\ell}{\gamma} \left(f(\tilde{x}_{j-1}) - f(\tilde{x}_j)\right) + \frac{30\sigma^2 }{b^{\frac{1}{3}} B^{\frac{2}{3}}} + c \left(\frac{B}{b}\right)^{\frac{1}{3 }} \ell^2 d \mu^2,
    \end{align}
    where $c$ is a sufficiently large constant. 
    \textbf{Option \uppercase\expandafter{\romannumeral2}:}
    Let $\eta \ell = \gamma \left(\frac{b/d}{B}\right)^{\frac{2}{3}}$. Suppose $ \gamma \le \frac{1}{8}$, $B \ge 8b/d$ and $b \ge d$, then after running \hyperref[alg: ZO-SCSG]{ZO-SCSG} for one epoch, we have 
    \begin{align}
    \left( \frac{B}{b/d} \right)^{\frac{1}{3}} \E\|\nabla f(\tilde{x}_j)\|^2 \le \frac{8\ell}{\gamma} \E \left( f(\tilde{x}_{j-1}) - f(\tilde{x}_j) \right) + \frac{72 \sigma^2}{(b/d)^{\frac{1}{3}} B^{\frac{2}{3}}} + c \left( \frac{B}{b/d} \right)^{\frac{1}{3}} \ell^2 d^2 \mu^2,
    \end{align}
    where $c$ is a sufficiently large constant.
\end{lemma}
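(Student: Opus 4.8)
The plan is to carry out the standard one-epoch analysis of SCSG \cite{lei2017non}, treating the zeroth-order gradient estimation as two extra error sources---an $O(\sqrt d\,\ell\mu)$ bias in every (sub-sampled) gradient and an $O(\ell^2 d\mu^2)$ inflation of its second moment---that are eventually collected into the last term. I would first write the one-step descent inequality for a single inner iterate of epoch $j$: applying $\ell$-smoothness of $f$ to $x_k^j = x_{k-1}^j - \eta v_{k-1}^j$ and taking the conditional expectation over $\mathcal{I}_{k-1}^j$ (and, in Option~II, over the random direction $u$), one gets $\mathbb{E}[f(x_k^j)\mid\mathcal{F}_{k-1}] \le f(x_{k-1}^j) - \tfrac{\eta}{2}\|\nabla f(x_{k-1}^j)\|^2 + c\eta\, e_k$, where $e_k := \mathbb{E}[\|v_{k-1}^j - \nabla f(x_{k-1}^j)\|^2\mid\mathcal{F}_{k-1}]$ and the $\tfrac{\ell\eta^2}{2}\|v_{k-1}^j\|^2$ term has been absorbed using $\eta\ell\le 1$ (ensured by $\gamma\le 1/4$).

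The main step is the bound on $e_k$. I would decompose $v_{k-1}^j - \nabla f(x_{k-1}^j)$ into (i) the variance-reduction fluctuation $\hat{\nabla}_{coord}f_{\mathcal{I}_{k-1}^j}(x_{k-1}^j) - \hat{\nabla}_{coord}f_{\mathcal{I}_{k-1}^j}(x_0^j)$ minus its $\mathcal{I}_{k-1}^j$-conditional mean, (ii) the large-batch error $\hat{\nabla}_{coord}f_{\mathcal{I}_j}(x_0^j) - \hat{\nabla}_{coord}f(x_0^j)$, and (iii) the deterministic zeroth-order bias $\hat{\nabla}_{coord}f(x_{k-1}^j) - \nabla f(x_{k-1}^j)$, the last two being $\mathcal{F}_{k-1}$-measurable. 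For (i), writing each central finite difference as an average of directional derivatives shows that $\hat{\nabla}_{coord}f_i$ is $O(\ell)$-Lipschitz up to an additive $O(\sqrt d\,\ell\mu)$, so Lemma~\ref{lemma: variance reduced} gives $\mathbb{E}\|(\mathrm i)\|^2 \lesssim \tfrac{\ell^2}{b}\mathbb{E}\|x_{k-1}^j-x_0^j\|^2 + \tfrac{\ell^2 d\mu^2}{b}$; for (ii), Lemma~\ref{lemma: variance reduced} and Assumption~\ref{assum: basic} give $\mathbb{E}\|(\mathrm{ii})\|^2 \le \tfrac{2\sigma^2}{B}\mathbb{I}[B<n] + O(\tfrac{\ell^2 d\mu^2}{B})$; and for (iii), Lemma~\ref{lemma: coord-square-bound} gives $\|(\mathrm{iii})\|^2 \le \ell^2 d\mu^2$. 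Combining, $e_k \lesssim \tfrac{\ell^2}{b}\mathbb{E}\|x_{k-1}^j-x_0^j\|^2 + \tfrac{\sigma^2}{B}\mathbb{I}[B<n] + \ell^2 d\mu^2$.

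Next I would close the coupled recursion for $R_k := \mathbb{E}\|x_k^j - x_0^j\|^2$: from the update and the bound on $e_k$ one gets $R_k \le (1+\theta)R_{k-1} + O(\eta^2)\big(\mathbb{E}\|\nabla f(x_{k-1}^j)\|^2 + \tfrac{\sigma^2}{B} + \ell^2 d\mu^2\big)$ with $\theta = O(\eta^2\ell^2/b)$, and since $\mathbb{E}[N_j] = B/b \ge 8$, the conditions $\gamma\le 1/4$, $B\ge 8b$ and $\eta\ell=\gamma(b/B)^{2/3}$ keep $(1+\theta)^{N_j}$ bounded, so that $\sum_{k=1}^{N_j}R_{k-1}$ is dominated by $\eta\,\mathbb{E}[N_j]\sum_k\mathbb{E}\|\nabla f(x_{k-1}^j)\|^2$ plus lower-order terms and the variance-reduction contribution to the descent is reabsorbed. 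Summing the one-step inequality over $k=1,\dots,N_j$ telescopes the function values to $f(\tilde x_{j-1})-f(\tilde x_j)$, while the memoryless structure of the geometric $N_j$ turns $\mathbb{E}\big[\sum_k\|\nabla f(x_{k-1}^j)\|^2\big]$ into $\mathbb{E}[N_j]\,\mathbb{E}\|\nabla f(\tilde x_j)\|^2$; rearranging, dividing by $\eta$, substituting $\eta\ell=\gamma(b/B)^{2/3}$ and $\mathbb{E}[N_j]=B/b$, and multiplying through by $(B/b)^{1/3}$ reproduces the stated inequality with the explicit constants $4\ell/\gamma$ and $30$.

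For Option~II I would repeat the argument with the \ref{eq: RandGradEst}-based estimators, invoking items (3)--(5) of Lemma~\ref{lemma: RandGradEst} in place of Lemma~\ref{lemma: coord-square-bound}: the mean of each estimator is now $\nabla f_\mu$ rather than $\nabla f$, converted back at the cost of an $O(d\ell\mu)$ bias by item (3), and the second moment of every $\hat{\nabla}_{rand}$-difference carries an extra factor $d$, so a mini-batch of size $b$ behaves exactly like one of size $b/d$; this forces $b\ge d$, $B\ge 8b/d$, $\gamma\le 1/8$ and inflates the zeroth-order error to $\ell^2 d^2\mu^2$, giving the second displayed bound. I expect the main obstacle to be the third step---simultaneously closing the squared-distance recursion and exploiting the random stopping time while keeping every zeroth-order error term at the $O(\ell^2 d\mu^2)$ (resp.\ $O(\ell^2 d^2\mu^2)$) order and pinning down the constants; the remainder is the standard SCSG bookkeeping carried through with two extra slack terms.
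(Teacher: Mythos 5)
Your high-level decomposition of $v_{k-1}^j-\nabla f(x_{k-1}^j)$ into a mini-batch fluctuation, the epoch-start error $\hat e_j$, and the deterministic ZO bias, and the observation that for geometric $N_j$ one has $\E\bigl[\sum_{i=0}^{N_j-1}a_i\bigr]=\E[N_j]\,\E[a_{N_j}]$ (equivalently Lemma A.2 of Lei et al.), are both correct and are indeed the same ingredients the paper relies on. But the core of your plan---closing a \emph{self-contained} recursion $R_k\le(1+\theta)R_{k-1}+O(\eta^2)\bigl(\E\|\nabla f(x_{k-1}^j)\|^2+\sigma^2/B+\ell^2 d\mu^2\bigr)$ with $\theta=O(\eta^2\ell^2/b)$ and then unrolling it over the random horizon---does not go through, and the paper deliberately takes a different route to avoid exactly this problem. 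Expanding $\E\|x_k^j-x_0^j\|^2=\E\|x_{k-1}^j-x_0^j\|^2-2\eta\,\E\langle x_{k-1}^j-x_0^j,\ \nabla f(x_{k-1}^j)+\hat e_{k-1}^j\rangle+\eta^2\E\|v_{k-1}^j\|^2$, the unsigned cross term $-2\eta\,\E\langle x_{k-1}^j-x_0^j,\nabla f\rangle$ has to be absorbed by Young's inequality, and there is no choice of the Young parameter that gives \emph{both} a contraction factor $1+O(\eta^2\ell^2/b)$ \emph{and} a $\|\nabla f\|^2$-coefficient of order $\eta^2$: making the former small forces the latter to be $O(b/\ell^2)$, while making the latter $O(\eta^2)$ forces the contraction factor to be $1+\Omega(1)$. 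In the latter case $(1+\Omega(1))^{N_j}$ is not controllable over a geometric $N_j$ with mean $B/b$, since $\eta\,\E[N_j]=\gamma(B/b)^{1/3}/\ell$ grows.

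What the paper does instead is to never bound this cross term per step. It applies the geometric identity \emph{to the raw per-step recurrence itself}, before any Young splitting, turning it into the epoch-level relation of Lemma~\ref{lemma: ZO-SCSG-Coord-2}:
\begin{equation*}
    \Bigl(\tfrac{b}{B}-\tfrac{2\eta^2\ell^2}{b}\Bigr)\E\|\tilde x_j-\tilde x_{j-1}\|^2 + 2\eta\,\E\langle\hat e_{N_j}^j,\tilde x_j-\tilde x_{j-1}\rangle \le -2\eta\,\E\langle\nabla f(\tilde x_j),\tilde x_j-\tilde x_{j-1}\rangle + \cdots ,
\end{equation*}
and only then, after taking the Lyapunov combination $2\times$Lemma~\ref{lemma: ZO-SCSG-Coord-1} $+\ (b/\eta)\times$Lemma~\ref{lemma: ZO-SCSG-Coord-2}, applies Young's inequality once against the now-explicit $\E\|\tilde x_j-\tilde x_{j-1}\|^2$ term. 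A further key device you do not mention is Lemma~\ref{lemma: ZO-SCSG-Coord-3}, which shows $\tfrac{b}{B}\,\E\langle\hat e_{N_j}^j,\tilde x_j-\tilde x_{j-1}\rangle=-\eta\,\E\langle\hat e_{N_j}^j,\nabla f(\tilde x_j)\rangle-\eta\,\E\|\hat e_{N_j}^j\|^2$; this makes the two $\langle\hat e,\cdot\rangle$ cross terms in the combined inequality cancel \emph{exactly} rather than being given away to Cauchy--Schwarz, which is what allows the clean constants in the final bound. Your one-step descent already absorbs $-\eta\langle\hat e_k^j,\nabla f(x_k^j)\rangle$ into the $c\eta\,e_k$ slack, so this cancellation is lost. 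In short: the error accounting is right, but the order of operations (apply the geometric identity to each per-step recurrence first, take a Lyapunov combination, \emph{then} Young once at the epoch level, using the exact cross-term cancellation of Lemma~\ref{lemma: ZO-SCSG-Coord-3}) is what makes the argument close; unrolling the distance recursion and handling the random horizon afterwards does not.
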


\begin{remark}
The epoch size $N_j$ obeys the Geometric distribution, \emph{i.e.}, $N_j \sim \text{Geom}(\frac{B}{B+b})$ in  \textbf{Option \uppercase\expandafter{\romannumeral1}} and $N_j \sim \text{Geom}(\frac{B}{B+b/d})$ in  \textbf{Option \uppercase\expandafter{\romannumeral2}}. Since in expectation we have $\E(N_j)_{N_j\sim \text{Geom}(\theta)} = \frac{\theta}{1-\theta}$ \cite{lei2017non}, then for both \textbf{Option \uppercase\expandafter{\romannumeral1}} and \textbf{Option \uppercase\expandafter{\romannumeral2}}, the function query complexity in each epoch is $\mathcal{O}(d\cdot B)$.
\end{remark}

\begin{theorem}
Under Assumption~\ref{assum: basic}, if we set $\mu_1 = \sqrt{\frac{3\epsilon}{4\rho\sqrt{d}}}$ and other parameters as follows, 
\begin{align*}
    \textbf{Option \uppercase\expandafter{\romannumeral1}:}  B & =\max\{\frac{480\sigma^2}{\epsilon^2},1\}, b=\max\{1,\Theta(\frac{(\epsilon^2 + \sigma^2)\epsilon^4 \rho^6}{\delta^9 \ell^3})\}, K=\Theta(\frac{\ell b^{\frac{1}{3}}\Delta_f}{\epsilon^2 B^{\frac{1}{3}}}), \mu_2 = \frac{\epsilon}{4\sqrt{c d}\ell}; \\
    \textbf{Option \uppercase\expandafter{\romannumeral2}:} B & = \max\{1, \frac{1152\sigma^2}{\epsilon^2}\}, b=\max\{1,d\Theta(\frac{(\epsilon^2 + \sigma^2)\epsilon^4 \rho^6}{\delta^9 \ell^3})\}, K=\Theta(\frac{\ell (b/d)^{\frac{1}{3}}\Delta_f}{\epsilon^2 B^{\frac{1}{3}}}), \mu_2 = \frac{\epsilon}{4\sqrt{c} d \ell},
\end{align*}
where $\mu_1$ and $\mu_2$ are only use in Line 4 and Line 5 of Algorithm~\ref{alg: ZO-SCSG-NCF}, respectively. With probability at least $\frac{2}{3}$, for both \textbf{Option \uppercase\expandafter{\romannumeral1}} and \textbf{Option \uppercase\expandafter{\romannumeral2}}, Algorithm~\ref{alg: ZO-SCSG-NCF} outputs an $(\epsilon, \delta)$-approximate local minimum in function query complexity 
\begin{equation*}
    \tilde{\mathcal{O}}( d ( \frac{\ell  \Delta_f }{\epsilon^\frac{4}{3} \sigma^{\frac{2}{3}} } + \frac{\rho^2 \Delta_f}{\delta^3} ) ( \frac{\sigma^2 }{\epsilon^2} + \frac{\ell^2}{\delta^2} ) + d \frac{\ell \Delta_f}{\epsilon^2} \frac{\ell^2}{\delta^2}  ).
\end{equation*}

\end{theorem}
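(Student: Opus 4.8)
The plan is to replay the proof of Theorem~\ref{thm: ZO-SGD}, with the single-step descent estimate of ZO-SGD replaced by the one-epoch analysis of ZO-SCSG (Lemma~\ref{lemma: ZO-SCSG}) and the query accounting redone with the SCSG epoch cost. The degenerate branch $b>B$ of Algorithm~\ref{alg: ZO-SCSG-NCF} simply returns the output of \hyperref[alg: ZO-SGD]{ZO-SGD}$(f,x_0,\tfrac{2}{3},\epsilon,\delta)$, so correctness and complexity there are inherited from Theorem~\ref{thm: ZO-SGD}; assume $b\le B$ below. With $\mu_1=\sqrt{3\epsilon/(4\rho\sqrt d)}$ and the batch size $\mathcal{O}((\sigma^2/\epsilon^2)\log K)$ of the verification step, Proposition~\ref{proposition: verify-gradient-online} ensures that at each iteration, with probability $1-\mathcal{O}(1/K)$, a pass of the test implies $\|\nabla f(x_t)\|\ge\epsilon/2$ and a fail implies $\|\nabla f(x_t)\|\le\epsilon$; together with Theorem~\ref{thm: ZO-NCF-Online} applied to each of the $\le K$ calls to \hyperref[alg: ZO-NCF-Online]{ZO-NCF-Online} (confidence $1/(20K)$ each), a union bound keeps all these events valid simultaneously with a constant probability close to $1$. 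Conditioning on this, whenever \hyperref[alg: ZO-NCF-Online]{ZO-NCF-Online} returns $\bot$ we have both $\nabla^2 f(x_t)\succeq-\delta\I$ and $\|\nabla f(x_t)\|\le\epsilon$, so the returned $x_t$ is an $(\epsilon,\delta)$-approximate SOSP.

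\emph{Progress per phase.} Split the executed iterations into maximal runs of consecutive SCSG epochs separated by the NCF iterations. Each NCF update $x_{t+1}=x_t\pm(\delta/\rho)v$ has $v^\T\nabla^2 f(x_t)v\le-\delta/2$, so by Lemma~\ref{lemma: negative-curvature-reduction} it decreases $f$ by $\ge\delta^3/(12\rho^2)$ in expectation; hence the number of NCF iterations, and so the number of runs, is $O(\rho^2\Delta_f/\delta^3)$ in expectation. Within a run every epoch except the last one has $\|\nabla f(\tilde x_{j-1})\|\ge\epsilon/2$ and $\|\nabla f(\tilde x_j)\|\ge\epsilon/2$ (the test passed at both endpoints), so Lemma~\ref{lemma: ZO-SCSG} (Option~I) applies; choosing $B\ge480\sigma^2/\epsilon^2$ and $\mu_2\le\epsilon/(4\sqrt{cd}\,\ell)$ forces each of the error terms $30\sigma^2/(b^{1/3}B^{2/3})$ and $c(B/b)^{1/3}\ell^2 d\mu_2^2$ to be at most $\tfrac{1}{4}(B/b)^{1/3}(\epsilon^2/4)$, whence $\E[f(\tilde x_{j-1})-f(\tilde x_j)]\ge\Omega(\gamma(B/b)^{1/3}\epsilon^2/\ell)$ for such an epoch (the geometric epoch length $N_j$ is handled via the tower rule, exactly as in the SCSG analysis). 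Telescoping $f$ along each run and summing, the total number of SCSG epochs is $O(\tfrac{\ell b^{1/3}\Delta_f}{\gamma B^{1/3}\epsilon^2}+\tfrac{\rho^2\Delta_f}{\delta^3})$ in expectation; taking $K$ as in the statement (with $\gamma=\Theta(1)$, enlarged to also absorb the $\rho^2\Delta_f/\delta^3$ NCF count) and invoking Markov's inequality gives that, with probability at least $2/3$, the loop halts within $K$ iterations and outputs an $(\epsilon,\delta)$-approximate local minimum.

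\emph{Query complexity.} Each of the $\le K$ iterations pays $\tilde{\mathcal{O}}(d(\sigma^2/\epsilon^2+1))$ for the coordinate-wise verification; each SCSG epoch pays $\mathcal{O}(dB)$ queries, since $\E[N_j]=B/b$ inner iterations cost $\mathcal{O}(db)$ each and the leading full batch costs $\mathcal{O}(dB)$ (the Remark after Lemma~\ref{lemma: ZO-SCSG}); each of the $\le K$ calls to \hyperref[alg: ZO-NCF-Online]{ZO-NCF-Online} pays $\tilde{\mathcal{O}}(d\ell^2/\delta^2)$ (Theorem~\ref{thm: ZO-NCF-Online}). Since $B=\Theta(\sigma^2/\epsilon^2+1)$ the total is $\tilde{\mathcal{O}}(K\,d(\sigma^2/\epsilon^2+\ell^2/\delta^2+1))$; substituting $K$, $B=\max\{480\sigma^2/\epsilon^2,1\}$, and $b=\max\{1,\Theta((\epsilon^2+\sigma^2)\epsilon^4\rho^6/(\delta^9\ell^3))\}$ and simplifying — the choice of $b$ is exactly what balances the coupling contributions, and the sub-cases $b=1$ and $B=1$ produce the $\epsilon^{-4/3}\sigma^{-2/3}$ and $\epsilon^{-2}$ factors — yields $\tilde{\mathcal{O}}(d(\tfrac{\ell\Delta_f}{\epsilon^{4/3}\sigma^{2/3}}+\tfrac{\rho^2\Delta_f}{\delta^3})(\tfrac{\sigma^2}{\epsilon^2}+\tfrac{\ell^2}{\delta^2})+d\tfrac{\ell\Delta_f}{\epsilon^2}\tfrac{\ell^2}{\delta^2})$. \textbf{Option~II} runs verbatim with the inner loop using the random gradient estimator \ref{eq: RandGradEst}: one uses Lemma~\ref{lemma: ZO-SCSG} (Option~II), the extra factor $d$ inside $b$, and $\mu_2\le\epsilon/(4\sqrt c\,d\ell)$; the $(B/b)^{1/3}$ and $d$ factors in the epoch count and the per-epoch cost cancel, so the final bound is unchanged.

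\emph{Main obstacle.} Lemma~\ref{lemma: ZO-SCSG} bounds $\E\|\nabla f(\tilde x_j)\|^2$, the gradient at the \emph{end} of an epoch, while the test is performed at the \emph{start} $x_t$; closing this gap is precisely what forces the decomposition into runs and the passage through expectation plus Markov, rather than the clean deterministic per-iteration descent available for ZO-GD. The remaining delicate points are controlling the geometric epoch lengths $N_j$ in the telescoping and checking that the single choice of $(B,b,\mu_2)$ simultaneously drives every ZO smoothing error below $\Theta(\epsilon^2)$ and balances the non-improvable coupling term $\tfrac{1}{\epsilon^2\delta^3}$.
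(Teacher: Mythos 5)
Your proof is correct and follows essentially the same route as the paper: both rest on Lemma~\ref{lemma: ZO-SCSG} for the one-epoch SCSG descent (with the same choices of $B$ and $\mu_2$ forcing the two error terms below $\Theta(\epsilon^2)$), Lemma~\ref{lemma: negative-curvature-reduction} for the NCF step, a telescoping of $f$ combined with Markov's inequality to bound the iteration count with probability $\ge 2/3$, and the per-iteration accounting $\tilde{\mathcal{O}}(d(B+\ell^2/\delta^2))$ to obtain the final complexity. Your "runs" decomposition and the explicit treatment of the $b>B$ branch and the union bound over verification/NCF events are organizational refinements of the same argument rather than a genuinely different approach.
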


\begin{remark}
The problem described in Remark~\ref{remark: ZO-SGD} doesn't exist in ZO-SCSG-NCF as we only evaluate the magnitude of the gradient after each epoch (\emph{i.e.}, Line 4 in Algorithm~\ref{alg: ZO-SCSG-NCF}), and the function query complexity is almost the same in the inner loop for both \textbf{Option \uppercase\expandafter{\romannumeral1}} and \textbf{Option \uppercase\expandafter{\romannumeral2}}. We can boost the confidence in Theorem~\ref{thm: ZO-SCSG-NCF} from $2/3$ to $1-p$ by running $\log 1/p$ copies of \hyperref[alg: ZO-SCSG-NCF]{ZO-SCSG-NCF}.
\end{remark}


\subsection{One Epoch Analysis of  \texorpdfstring{\hyperref[alg: ZO-SCSG]{ZO-SCSG} (\textbf{Option \uppercase\expandafter{\romannumeral1}})}{Lg}}

\begin{lemma}[\cite{lei2017non}]
    Let $N\sim \text{Geom}(\gamma)$ for $\gamma >0$. Then for any sequence $D_0, D_1, \dots$ with $\E|D_N|<\infty$
    \begin{equation*}
        \E(D_N - D_{N+1}) = \left( \frac{1}{\gamma} - 1\right)(D_0 - \E D_N).
    \end{equation*}
\end{lemma}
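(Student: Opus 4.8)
The plan is to prove this by a direct computation from the probability mass function of the geometric distribution, followed by one reindexing of an infinite series. First I would fix the convention: under the parametrization used throughout the ZO-SCSG analysis (so that $\E N = \gamma/(1-\gamma)$, matching the remark that $\E(N_j)_{N_j \sim \text{Geom}(\theta)} = \theta/(1-\theta)$), $N \sim \text{Geom}(\gamma)$ means $P(N = k) = (1-\gamma)\gamma^k$ for $k = 0, 1, 2, \dots$. The hypothesis $\E|D_N| < \infty$ is exactly the statement that $\sum_{k \ge 0} \gamma^k |D_k| < \infty$, so all series below converge absolutely and may be reindexed freely.

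Write $S := \E D_N = \sum_{k \ge 0} (1-\gamma)\gamma^k D_k$ and $\E D_{N+1} = \sum_{k \ge 0}(1-\gamma)\gamma^k D_{k+1}$. Multiplying the second identity by $\gamma$ and shifting the summation index via $j = k+1$ gives
\[
\gamma\, \E D_{N+1} = \sum_{k \ge 0}(1-\gamma)\gamma^{k+1}D_{k+1} = \sum_{j \ge 1}(1-\gamma)\gamma^j D_j = S - (1-\gamma)D_0,
\]
hence $\E D_{N+1} = \bigl(S - (1-\gamma)D_0\bigr)/\gamma$. Subtracting, I obtain
\[
\E(D_N - D_{N+1}) = S - \frac{S - (1-\gamma)D_0}{\gamma} = \frac{(\gamma-1)S + (1-\gamma)D_0}{\gamma} = \frac{1-\gamma}{\gamma}\,(D_0 - S) = \Bigl(\tfrac{1}{\gamma} - 1\Bigr)(D_0 - \E D_N),
\]
which is precisely the claimed identity.

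There is essentially no real obstacle here beyond bookkeeping; the only point deserving a word of care is the interchange/reindexing of the infinite sum, which is licensed by the absolute summability coming from $\E|D_N| < \infty$, together with pinning down the parametrization of $\text{Geom}(\gamma)$ so that $\E N = \gamma/(1-\gamma)$ (as used for $N_j$ in \hyperref[alg: ZO-SCSG]{ZO-SCSG}). As an alternative derivation one could invoke memorylessness — condition on $\{N = 0\}$ versus $\{N \ge 1\}$, noting that $N - 1$ conditioned on $\{N \ge 1\}$ is again $\text{Geom}(\gamma)$ — but the generating-function-style manipulation above is the shortest route and is what I would present.
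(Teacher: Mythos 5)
Your computation is correct: with the parametrization $P(N=k)=(1-\gamma)\gamma^k$ (which is the one consistent with the paper's use of $\E N_j = \theta/(1-\theta)$), the reindexing $\gamma\,\E D_{N+1} = \E D_N - (1-\gamma)D_0$ immediately yields the identity, and the hypothesis $\E|D_N|<\infty$ indeed licenses the shift since $\sum_k \gamma^k|D_{k+1}| \le \gamma^{-1}\sum_j \gamma^j |D_j|$. The paper does not reproduce an argument at all — it simply defers to Lemma A.2 of the cited SCSG reference — and your derivation is exactly the standard proof of that lemma, so there is nothing to add beyond noting that you have supplied the details the paper omits.
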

\begin{proof}
    Then proof directly follows from Lemma A.2 in \cite{lei2017non}.
\end{proof}

\begin{lemma}
\label{lemma: ZO-SCSG-Coord-1}
Suppose $\eta\ell <1$, then under Assumption~\ref{assum: basic},
\begin{align*}
    & \eta(1-\ell\eta) B \E\|\nabla f(\tilde{x}_j)\|^2 + \eta B \E\<\hat{e}_{N_j}^j, \nabla f(\tilde{x}_j)\> \\
    \le & b \left(f(\tilde{x}_{j-1}) - \E f(\tilde{x}_j)\right) + \frac{\ell^3 \eta^2 B}{b} \E\|\tilde{x}_j - \tilde{x}_{j-1}\|^2 + \frac{4\ell^3 \eta^2 d \mu^2 B}{b} + \ell \eta^2 B \E \|\hat{e}_{N_j}^j\|^2
\end{align*}

\end{lemma}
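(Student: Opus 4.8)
The plan is to run the standard one-step $\ell$-smoothness descent argument inside a fixed epoch and then collapse the epoch using the geometric stopping-time identity of \cite{lei2017non} stated just above (the ``$\E(D_N-D_{N+1})=(\tfrac1\gamma-1)(D_0-\E D_N)$'' lemma), keeping $\hat{e}_{N_j}^j$ opaque since it is the ZO-smoothing-plus-snapshot part of the estimator error to be bounded in a later lemma. I would write the inner iterates of epoch $j$ as $x_0^j=\tilde{x}_{j-1},x_1^j,\dots$, continued one virtual step past $N_j$ with a fresh size-$b$ mini-batch $\mathcal{I}_{N_j}^j$, so that $D_k:=f(x_k^j)$ is defined for every $k\ge 0$ and the identity applies with $\gamma=\tfrac{B}{B+b}$, giving $\E[f(\tilde{x}_j)-f(x_{N_j+1}^j)]=\tfrac{b}{B}\bigl(f(\tilde{x}_{j-1})-\E f(\tilde{x}_j)\bigr)$ since $\E N_j=B/b$.

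First I would invoke the descent inequality of Lemma~\ref{lemma: Lipschitz}: since $f$ is $\ell$-smooth and $x_{k+1}^j-x_k^j=-\eta v_k^j$, for every $k$ one has $f(x_k^j)-f(x_{k+1}^j)\ge \eta\langle\nabla f(x_k^j),v_k^j\rangle-\tfrac{\ell\eta^2}{2}\|v_k^j\|^2$. Writing $v_k^j=\nabla f(x_k^j)+\bigl(v_k^j-\nabla f(x_k^j)\bigr)$ and using $\|v_k^j\|^2\le 2\|\nabla f(x_k^j)\|^2+2\|v_k^j-\nabla f(x_k^j)\|^2$ turns the right-hand side into
\[
\eta(1-\ell\eta)\|\nabla f(x_k^j)\|^2+\eta\langle\nabla f(x_k^j),v_k^j-\nabla f(x_k^j)\rangle-\ell\eta^2\|v_k^j-\nabla f(x_k^j)\|^2 ,
\]
which is already the shape of the claimed bound, the hypothesis $\eta\ell<1$ making the leading coefficient nonnegative.

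The substance is controlling the estimator error at the random index $k=N_j$. I would split $v_{N_j}^j-\nabla f(\tilde{x}_j)=w+\hat{e}_{N_j}^j$, where
\[
w:=\tfrac1b\sum_{i\in\mathcal{I}_{N_j}^j}\Bigl(\bigl[\hat{\nabla}_{coord}f_i(\tilde{x}_j)-\hat{\nabla}_{coord}f_i(\tilde{x}_{j-1})\bigr]-\bigl[\hat{\nabla}_{coord}f(\tilde{x}_j)-\hat{\nabla}_{coord}f(\tilde{x}_{j-1})\bigr]\Bigr)
\]
is the mini-batch sampling part and $\hat{e}_{N_j}^j:=\hat{\nabla}_{coord}f(\tilde{x}_j)-\hat{\nabla}_{coord}f(\tilde{x}_{j-1})+v_j-\nabla f(\tilde{x}_j)$ is the term in the statement (which, unlike in first-order SCSG, is not conditionally mean zero because $v_j$ depends on the outer batch). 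Conditionally on all randomness up to the production of $x_{N_j}^j$, $w$ has mean zero while $\hat{e}_{N_j}^j$ is measurable; hence in expectation $\langle\nabla f(\tilde{x}_j),w\rangle$ drops out and $\E\|v_{N_j}^j-\nabla f(\tilde{x}_j)\|^2=\E\|w\|^2+\E\|\hat{e}_{N_j}^j\|^2$, which keeps the constants clean. Comparing each $\hat{\nabla}_{coord}f_i(\tilde{x}_j)-\hat{\nabla}_{coord}f_i(\tilde{x}_{j-1})$ to the true gradient difference, using $\ell$-smoothness of $f_i$ together with the $\ell$-Lipschitz ZO error bound of Lemma~\ref{lemma: coord-square-bound} ($\|\hat{\nabla}_{coord}f_i-\nabla f_i\|^2\le\ell^2 d\mu^2$), and then applying Lemma~\ref{lemma: variance reduced} to the centered summands, yields $\E\|w\|^2\le \tfrac{\ell^2}{b}\E\|\tilde{x}_j-\tilde{x}_{j-1}\|^2+\tfrac{4\ell^2 d\mu^2}{b}$; carrying the factor $\ell\eta^2$ produces exactly the extra terms $\tfrac{\ell^3\eta^2 B}{b}\E\|\tilde{x}_j-\tilde{x}_{j-1}\|^2$ and $\tfrac{4\ell^3\eta^2 d\mu^2 B}{b}$, while $\ell\eta^2\E\|\hat{e}_{N_j}^j\|^2$ is carried along untouched.

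To finish, I would apply the displayed one-step inequality at $k=N_j$, take expectations, substitute the geometric identity for $\E[f(\tilde{x}_j)-f(x_{N_j+1}^j)]$ and the bound on $\E\|w\|^2$, and multiply through by $B$; rearranging then gives the claim. The delicate point is the stopping-time bookkeeping: one must check that the one-step descent and the $w/\hat{e}$ split remain valid at the virtual index $N_j$, that the event $\{N_j=k\}$ is independent of the step-$k$ mini-batch (so that the conditionally mean-zero term $w$ genuinely drops out despite $N_j$ being random), and that the splitting constants in the repeated Young-type inequalities stay inside the stated ones --- which is where the parent-lemma hypotheses $B\ge 8b$ and $\gamma\le\tfrac14$ come in.
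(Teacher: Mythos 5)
Your overall route is the same as the paper's — a one-step $\ell$-smoothness descent, a variance bound on the SCSG correction term via Lemma~\ref{lemma: variance reduced}, and then the geometric stopping-time identity of \cite{lei2017non} to collapse the epoch — and your definition of $\hat{e}_{N_j}^j$ is, after rearranging, exactly the paper's $\hat{e}_k^j$ at $k=N_j$. The stopping-time bookkeeping you flag (independence of $\{N_j=k\}$ from the step-$k$ mini-batch, Fubini) is also handled as you expect.

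The one genuine gap is a factor of $2$ that you lose and then implicitly claim back. You upper-bound the second moment via Young's inequality, $\|v_k^j\|^2\le 2\|\nabla f(x_k^j)\|^2 + 2\|v_k^j-\nabla f(x_k^j)\|^2$, so that the coefficient multiplying $\E\|w\|^2$ is $\ell\eta^2$. To land on the stated $\frac{\ell^3\eta^2 B}{b}\E\|\tilde{x}_j-\tilde{x}_{j-1}\|^2 + \frac{4\ell^3\eta^2 d\mu^2 B}{b}$ you therefore need $\E\|w\|^2 \le \frac{\ell^2}{b}\E\|\tilde{x}_j-\tilde{x}_{j-1}\|^2 + \frac{4\ell^2 d\mu^2}{b}$, which is what you write. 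But the argument you sketch — Lemma~\ref{lemma: variance reduced} on the centered sums, then comparison to the true gradient difference plus the ZO error of Lemma~\ref{lemma: coord-square-bound} — inevitably goes through a triangle/Young split of $\hat{\nabla}_{coord}f_i(x_k)-\hat{\nabla}_{coord}f_i(x_0)$ into the true difference plus two pointwise ZO errors, yielding $\frac{1}{b}\bigl(2\ell^2\|x_k^j-x_0^j\|^2 + 8\ell^2 d\mu^2\bigr)$, i.e.\ twice your claimed constant. The paper avoids this by not applying Young's at the $\|v\|^2$ stage: it uses the exact conditional decomposition $\E_{\mathcal{I}_k^j}\|v_k^j\|^2 = \|\nabla f(x_k^j)+\hat{e}_k^j\|^2 + \E_{\mathcal{I}_k^j}\|w\|^2$ (valid since $\E w=0$ given the history), so the variance piece keeps its $\frac{\ell\eta^2}{2}$ coefficient and the looser $\frac{2\ell^2}{b}\|\cdot\|^2+\frac{8\ell^2 d\mu^2}{b}$ bound gives exactly the stated constants. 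Young's is only applied afterward to the deterministic-given-the-history term $\|\nabla f+\hat{e}\|^2\le 2\|\nabla f\|^2 + 2\|\hat{e}\|^2$, which is where the $\eta(1-\ell\eta)$ and $\ell\eta^2\|\hat{e}\|^2$ coefficients come from. Replacing your crude split with this exact variance identity fixes the constants and completes the argument.
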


\begin{proof}
By Lemma~\ref{lemma: Lipschitz}, we have 
\begin{equation*}
    f(x_{k+1}^j) \le f(x_k^j) - \<x_{k+1}^j - x_k^j, \nabla f(x_k^j)\> + \frac{\ell}{2} \|x_{k+1}^j - x_k^j\|^2 \le f(x_k^j) - \eta \<v_k^j, \nabla f(x_k^j)\> + \frac{\ell \eta^2 }{2} \|v_k^j\|^2
\end{equation*}
Define the following notation, 
\begin{align*}
    \hat{e}_j = & v_j - \hat{\nabla}_{coord} f(x_0^j) \\
    \hat{e}_k^j = & \hat{\nabla}_{coord} f(x_k^j) - \nabla f(x_k^j) +  \hat{e}_j
\end{align*}
Then we have 
\begin{equation*}
    \E_{\mathcal{I}_k^j} v_k^j = \E_{\mathcal{I}_k^j} \left(\hat{\nabla}_{coord} f_{\mathcal{I}_k^j}(x_k^j) - \hat{\nabla}_{coord} f_{\mathcal{I}_k^j}(x_0^j) + v_j \right) = \nabla f(x_k^j) + \hat{e}_k^j
\end{equation*}
Taking expectation over the above inequality we have 
\begin{align*}
    & \E_{\mathcal{I}_k^j} f(x_{k+1}^j) \\
    \le &  f(x_k^j) - \eta \<\E_{\mathcal{I}_k^j} v_k^j, \nabla f(x_k^j)\> + \frac{\ell \eta^2 }{2} \E_{\mathcal{I}_k^j}\|v_k^j\|^2 \\
    = & f(x_k^j) - \eta \<\nabla f(x_k^j) + \hat{e}_k^j, \nabla f(x_k^j)\> + \frac{\ell \eta^2}{2} \E_{\mathcal{I}_k^j} \|v_k^j\|^2 \\
    = & f(x_k^j) - \eta \|\nabla f(x_k^j)\|^2  - \eta\<\hat{e}_k^j, \nabla f(x_k^j)\> + \frac{\ell \eta^2}{2} \E_{\mathcal{I}_k^j} \|v_k^j\|^2 
\end{align*}
Then we bound the term $\E_{\mathcal{I}_k^j} \|v_k^j\|^2$ by using the fact that $\E\|a\|^2 = \E\|a-\E a\|^2 + \|\E a\|^2$.
\begin{align*}
    & \E_{\mathcal{I}_k^j} \|v_k^j\|^2 = \E_{\mathcal{I}_k^j} \|v_k^j - \E_{\mathcal{I}_k^j} v_k^j\|^2 + \|\E_{\mathcal{I}_k^j} v_k^j\|^2 \\
    =& \E_{\mathcal{I}_k^j} \|\hat{\nabla}_{coord} f_{\mathcal{I}_k^j} (x_k^j) - \hat{\nabla}_{coord} f_{\mathcal{I}_k^j} (x_0^j) - \left( \hat{\nabla}_{coord} f(x_k^j) - \hat{\nabla}_{coord} f(x_0^j) \right) \|^2 + \|\nabla f(x_k^j) + \hat{e}_k^j\|^2 \\
    \le & \E_{\mathcal{I}_k^j} \|\hat{\nabla}_{coord} f_{\mathcal{I}_k^j} (x_k^j) - \hat{\nabla}_{coord} f_{\mathcal{I}_k^j} (x_0^j) - \left( \hat{\nabla}_{coord} f(x_k^j) - \hat{\nabla}_{coord} f(x_0^j) \right) \|^2  + 2\|\nabla f(x_k^j)\|^2 + 2\|\hat{e}_k^j\|^2
\end{align*}
By Lemma~\ref{lemma: variance reduced},
\begin{align*}
    & \E_{\mathcal{I}_k^j} \|\hat{\nabla}_{coord} f_{\mathcal{I}_k^j} (x_k^j) - \hat{\nabla}_{coord} f_{\mathcal{I}_k^j} (x_0^j) - \left( \hat{\nabla}_{coord} f(x_k^j) - \hat{\nabla}_{coord} f(x_0^j) \right) \|^2 \\
    \le & \frac{1}{b} \cdot \frac{1}{n}\sum_{i=1}^n \|\hat{\nabla}_{coord} f_i (x_k^j) - \hat{\nabla}_{coord} f_i (x_0^j) - \left( \hat{\nabla}_{coord} f(x_k^j) - \hat{\nabla}_{coord} f(x_0^j) \right)\|^2 \\
    = & \frac{1}{b} \left( \frac{1}{n} \sum_{i=1}^n \| \hat{\nabla}_{coord} f_i (x_k^j) - \hat{\nabla}_{coord} f_i (x_0^j) \|^2 - \|\hat{\nabla}_{coord} f(x_k^j) - \hat{\nabla}_{coord} f(x_0^j)\|^2 \right) \\
    \le & \frac{1}{b} \cdot \frac{1}{n} \sum_{i=1}^n \| \hat{\nabla}_{coord} f_i (x_k^j) - \hat{\nabla}_{coord} f_i (x_0^j) \|^2 \\
    \le & \frac{1}{b} \cdot \frac{1}{n} \sum_{i=1}^n \left(2\|\nabla f_i(x_k^j) - \nabla f_i(x_0^j)\|^2 + 2\| \hat{\nabla}_{coord} f_i (x_k^j) - \hat{\nabla}_{coord} f_i (x_0^j) - \left(\nabla f_i(x_k^j) - \nabla f_i(x_0^j)\right) \|^2\right) \\
    \le & \frac{1}{b} \cdot \frac{1}{n} \sum_{i=1}^n \left( 2\|\nabla f_i(x_k^j) - \nabla f_i(x_0^j)\|^2 + 4 \|\hat{\nabla}_{coord} f_i (x_k^j) - \nabla f_i (x_k^j)\|^2 + 4\| \hat{\nabla}_{coord} f_i (x_0^j) - \nabla f_i(x_0^j) \|^2 \right) \\
    \le & \frac{1}{b} \cdot \frac{1}{n} \sum_{i=1}^n \left( 2\|\nabla f_i(x_k^j) - \nabla f_i(x_0^j)\|^2 + 8\ell^2 d \mu^2 \right) \\
    \le & \frac{1}{b} \cdot \frac{1}{n} \sum_{i=1}^n \left( 2 \ell^2 \|x_k^j - x_0^j\|^2 + 8\ell^2 d \mu^2 \right) \\
    = & \frac{2\ell^2}{b}\|x_k^j - x_0^j\|^2 + \frac{8 \ell^2 d \mu^2}{b}
\end{align*}
Therefore,
\begin{equation*}
    \E_{\mathcal{I}_k^j} \|v_k^j\|^2 \le \frac{2\ell^2}{b}\|x_k^j - x_0^j\|^2 + \frac{8}{b}  \ell^2 d \mu^2 + 2\|\nabla f(x_k^j)\|^2 + 2 \|\hat{e}_k^j\|^2
\end{equation*}
So we have 
\begin{align*}
    & \E_{\mathcal{I}_k^j} f(x_{k+1}^j) \\
    \le & f(x_k^j) - \eta \|\nabla f(x_k^j)\|^2  - \eta\<\hat{e}_k^j, \nabla f(x_k^j)\> + \frac{\ell \eta^2}{2} \left( \frac{2\ell^2}{b}\|x_k^j - x_0^j\|^2 + \frac{8}{b}  \ell^2 d \mu^2 + 2\|\nabla f(x_k^j)\|^2 + 2 \|\hat{e}_k^j\|^2 \right) \\
    = & f(x_k^j) - \eta(1-\ell\eta)\|\nabla f(x_k^j)\|^2 - \eta\<\hat{e}_k^j, \nabla f(x_k^j)\> + \frac{\ell^3 \eta^2}{b}\|x_k^j - x_0^j\|^2 + \frac{4\ell^3 \eta^2 d \mu^2}{b} + \ell \eta^2 \|\hat{e}_k^j\|^2
\end{align*}
Let $\E_j$ denotes the expectation over $\mathcal{I}_0^k, \mathcal{I}_1^k, \dots$, given $N_j$. Since $\mathcal{I}_{k+1}^j, \mathcal{I}_{k+2}^j, \dots$ are independent of $x_k^j$, the above inequality implies that
\begin{align*}
    & \eta(1-\ell\eta) \E_j\|\nabla f(x_k^j)\|^2 + \eta \E_j\<\hat{e}_k^j, \nabla f(x_k^j)\> \\
    \le & \E_j f(x_k^j) - \E_j f(x_{k+1}^j) + \frac{\ell^3 \eta^2}{b} \E_j\|x_k^j - x_0^j\|^2 + \frac{4\ell^3 \eta^2 d \mu^2}{b} + \ell \eta^2 \E_j \|\hat{e}_k^j\|^2
\end{align*}
Let $k = N_j$, by taking expectation to $N_j$ and using Fubini's theorem, we have
\begin{align*}
    & \eta(1-\ell\eta) \E_{N_j}\E_j\|\nabla f(x_{N_j}^j)\|^2 + \eta \E_{N_j}\E_j\<\hat{e}_{N_j}^j, \nabla f(x_{N_j}^j)\> \\
    \le & \E_{N_j} \left(\E_j f(x_{N_j}^j) - \E_j f(x_{N_j+1}^j)\right) + \frac{\ell^3 \eta^2}{b} \E_{N_j}\E_j\|x_{N_j}^j - x_0^j\|^2 + \frac{4\ell^3 \eta^2 d \mu^2}{b} + \ell \eta^2 \E_{N_j}\E_j \|\hat{e}_{N_j}^j\|^2 \\
    = & \frac{b}{B} \left(f(x_0^j) - \E_{N_j} \E_j f(x_{N_j}^j)\right) + \frac{\ell^3 \eta^2}{b} \E_j \E_{N_j}\|x_{N_j}^j - x_0^j\|^2 + \frac{4\ell^3 \eta^2 d \mu^2}{b} + \ell \eta^2 \E_{N_j}\E_j \|\hat{e}_{N_j}^j\|^2
\end{align*}
Substituting $X_{N_j}^j, x_0^j$ by $\tilde{x}_{j}, \tilde{x}_{j-1}$ and take a further expectation to the past randomness, we get
\begin{align*}
    & \eta(1-\ell\eta) \E\|\nabla f(\tilde{x}_j)\|^2 + \eta \E\<\hat{e}_{N_j}^j, \nabla f(\tilde{x}_j)\> \\
    \le & \frac{b}{B} \left(f(\tilde{x}_{j-1}) - \E f(\tilde{x}_j)\right) + \frac{\ell^3 \eta^2}{b} \E\|\tilde{x}_j - \tilde{x}_{j-1}\|^2 + \frac{4\ell^3 \eta^2 d \mu^2}{b} + \ell \eta^2 \E \|\hat{e}_{N_j}^j\|^2
\end{align*}
Multiplying both sides by $B$, we have 
\begin{align*}
    & \eta(1-\ell\eta) B \E\|\nabla f(\tilde{x}_j)\|^2 + \eta B \E\<\hat{e}_{N_j}^j, \nabla f(\tilde{x}_j)\> \\
    \le & b \left(f(\tilde{x}_{j-1}) - \E f(\tilde{x}_j)\right) + \frac{\ell^3 \eta^2 B}{b} \E\|\tilde{x}_j - \tilde{x}_{j-1}\|^2 + \frac{4\ell^3 \eta^2 d \mu^2 B}{b} + \ell \eta^2 B \E \|\hat{e}_{N_j}^j\|^2
\end{align*}

\end{proof}

\begin{lemma}
\label{lemma: ZO-SCSG-Coord-2}
Suppose $2\eta^2\ell^2 B < b^2$, under Assumption~\ref{assum: basic}
\begin{align*}
    & \left( \frac{b}{B} - \frac{2\eta^2 \ell^2}{b}  \right) \E\|\tilde{x}_j - \tilde{x}_{j-1}\|^2 + 2 \eta \E \<\hat{e}_{N_j}^j, \tilde{x}_j - \tilde{x}_{j-1}\> \\
    \le & - 2 \eta \E \<\nabla f(\tilde{x}_j), \tilde{x}_j - \tilde{x}_{j-1}\>  +\frac{8\eta^2 \ell^2 d \mu^2}{b}  + 2\eta^2 \E \|\nabla f(\tilde{x}_j)\|^2 + 2\eta^2 \E \|\hat{e}_{N_j}^j\|^2
\end{align*}
\end{lemma}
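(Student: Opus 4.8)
The plan is to mimic the telescoping-plus-random-stopping argument from the proof of Lemma~\ref{lemma: ZO-SCSG-Coord-1}, but applied to the squared displacement $\|x_k^j-x_0^j\|^2$ rather than to the function value. First I would expand, for each inner step,
\begin{equation*}
\|x_{k+1}^j-x_0^j\|^2=\|x_k^j-x_0^j\|^2-2\eta\<x_k^j-x_0^j,v_k^j\>+\eta^2\|v_k^j\|^2 ,
\end{equation*}
and take the conditional expectation $\E_{\mathcal{I}_k^j}$ over the mini-batch. Using $\E_{\mathcal{I}_k^j}v_k^j=\nabla f(x_k^j)+\hat{e}_k^j$ together with the bound $\E_{\mathcal{I}_k^j}\|v_k^j\|^2\le \frac{2\ell^2}{b}\|x_k^j-x_0^j\|^2+\frac{8\ell^2 d\mu^2}{b}+2\|\nabla f(x_k^j)\|^2+2\|\hat{e}_k^j\|^2$, both of which are established inside the proof of Lemma~\ref{lemma: ZO-SCSG-Coord-1}, this gives
\begin{equation*}
\E_{\mathcal{I}_k^j}\|x_{k+1}^j-x_0^j\|^2\le\Bigl(1+\frac{2\eta^2\ell^2}{b}\Bigr)\|x_k^j-x_0^j\|^2-2\eta\<x_k^j-x_0^j,\nabla f(x_k^j)+\hat{e}_k^j\>+\frac{8\eta^2\ell^2 d\mu^2}{b}+2\eta^2\|\nabla f(x_k^j)\|^2+2\eta^2\|\hat{e}_k^j\|^2 .
\end{equation*}

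Next I would take the full expectation $\E_j$ over $\mathcal{I}_0^j,\mathcal{I}_1^j,\dots$ given $N_j$ and rearrange into
\begin{equation*}
\E_j\|x_k^j-x_0^j\|^2-\E_j\|x_{k+1}^j-x_0^j\|^2\ge-\frac{2\eta^2\ell^2}{b}\E_j\|x_k^j-x_0^j\|^2+2\eta\,\E_j\<x_k^j-x_0^j,\nabla f(x_k^j)+\hat{e}_k^j\>-\frac{8\eta^2\ell^2 d\mu^2}{b}-2\eta^2\E_j\|\nabla f(x_k^j)\|^2-2\eta^2\E_j\|\hat{e}_k^j\|^2 .
\end{equation*}
Then, exactly as in Lemma~\ref{lemma: ZO-SCSG-Coord-1}, I would set $k=N_j$, take the expectation over $N_j\sim\text{Geom}(B/(B+b))$ and apply Fubini's theorem. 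The identity $\E(D_N-D_{N+1})=(1/\gamma-1)(D_0-\E D_N)$ for $N\sim\text{Geom}(\gamma)$ from \cite{lei2017non}, with $\gamma=B/(B+b)$ so that $1/\gamma-1=b/B$, and the fact $\|x_0^j-x_0^j\|^2=0$, collapse the telescoped left-hand side to $-\frac{b}{B}\E\|x_{N_j}^j-x_0^j\|^2$. Substituting $x_{N_j}^j=\tilde{x}_j$, $x_0^j=\tilde{x}_{j-1}$ and taking a further expectation over the past randomness yields
\begin{equation*}
-\frac{b}{B}\E\|\tilde{x}_j-\tilde{x}_{j-1}\|^2\ge-\frac{2\eta^2\ell^2}{b}\E\|\tilde{x}_j-\tilde{x}_{j-1}\|^2+2\eta\,\E\<\tilde{x}_j-\tilde{x}_{j-1},\nabla f(\tilde{x}_j)+\hat{e}_{N_j}^j\>-\frac{8\eta^2\ell^2 d\mu^2}{b}-2\eta^2\E\|\nabla f(\tilde{x}_j)\|^2-2\eta^2\E\|\hat{e}_{N_j}^j\|^2 .
\end{equation*}

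To finish I would multiply through by $-1$ and move the term $\frac{2\eta^2\ell^2}{b}\E\|\tilde{x}_j-\tilde{x}_{j-1}\|^2$ and the cross term $2\eta\,\E\<\hat{e}_{N_j}^j,\tilde{x}_j-\tilde{x}_{j-1}\>$ to the left-hand side, which is precisely the claimed inequality. The step that needs genuine care — and the only place the hypothesis $2\eta^2\ell^2 B<b^2$ is used — is the interchange of the expectation over the random horizon $N_j$ with the other expectations: that hypothesis is equivalent to $\bigl(1+2\eta^2\ell^2/b\bigr)\cdot\frac{B}{B+b}<1$, which forces the per-step blow-up factor to be strictly subcritical for a $\text{Geom}(B/(B+b))$ horizon, hence makes $\E\|x_{N_j}^j-x_0^j\|^2$ and every term appearing above finite so that the geometric summation lemma legitimately applies; all the remaining manipulations are routine rearrangements paralleling the proof of Lemma~\ref{lemma: ZO-SCSG-Coord-1}.
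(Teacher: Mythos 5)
Your proposal is correct and follows essentially the same argument as the paper's proof: expand $\|x_{k+1}^j-x_0^j\|^2$, plug in $\E_{\mathcal{I}_k^j}v_k^j=\nabla f(x_k^j)+\hat{e}_k^j$ and the variance bound on $\E_{\mathcal{I}_k^j}\|v_k^j\|^2$ established in the proof of Lemma~\ref{lemma: ZO-SCSG-Coord-1}, then set $k=N_j$ and collapse via Fubini together with the $\text{Geom}(B/(B+b))$ identity from \cite{lei2017non} using $D_0=0$. Your remark that the hypothesis $2\eta^2\ell^2 B<b^2$ is exactly the subcriticality condition $(1+2\eta^2\ell^2/b)\cdot\frac{B}{B+b}<1$ needed to keep $\E\|x_{N_j}^j-x_0^j\|^2$ finite (so the geometric-summation lemma legitimately applies) is a correct and useful observation that the paper's proof leaves implicit.
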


\begin{proof}
Since $x_{k+1}^j = x_k^j - \eta v_k^j$, we have 
\begin{align*}
    & \E_{\mathcal{I}_k^j} \|x_{k+1}^j - x_0^j\|^2 \\
    = & \|x_k^j - x_0^j\|^2 - 2 \eta \<\E_{\mathcal{I}_k^j} v_k^j, x_k^j - x_0^j\> + \eta^2 \E_{\mathcal{I}_k^j} \|v_k^j\|^2 \\
    = & \|x_k^j - x_0^j\|^2 - 2 \eta \<\nabla f(x_k^j), x_k^j - x_0^j\> - 2 \eta \<\hat{e}_k^j, x_k^j - x_0^j\> + \eta^2 \E_{\mathcal{I}_k^j} \|v_k^j\|^2 \\
    \le & \|x_k^j - x_0^j\|^2 - 2 \eta \<\nabla f(x_k^j), x_k^j - x_0^j\> - 2 \eta \<\hat{e}_k^j, x_k^j - x_0^j\> \\
    & + \eta^2 \left( \frac{2\ell^2}{b}\|x_k^j - x_0^j\|^2 + \frac{8}{b}  \ell^2 d \mu^2 + 2\|\nabla f(x_k^j)\|^2 + 2 \|\hat{e}_k^j\|^2 \right)  \\
    = & \left(1 + \frac{2\eta^2 \ell^2}{b}  \right)\|x_k^j - x_0^j\|^2 - 2 \eta \<\nabla f(x_k^j), x_k^j - x_0^j\> - 2 \eta \<\hat{e}_k^j, x_k^j - x_0^j\>  \\
    & +\frac{8\eta^2 \ell^2 d \mu^2}{b} + 2\eta^2 \|\nabla f(x_k^j)\|^2 + 2\eta^2 \|\hat{e}_k^j\|^2
\end{align*}
Using the notation $\E_j$ we have
\begin{align*}
    & 2 \eta \E_j \<\nabla f(x_k^j), x_k^j - x_0^j\> + 2 \eta \E_j \<\hat{e}_k^j, x_k^j - x_0^j\> \\
    \le & \left(1 + \frac{2\eta^2 \ell^2}{b}  \right) \E_j\|x_k^j - x_0^j\|^2 - \E_j \|x_{k+1}^j - x_0^j\|^2 +\frac{8\eta^2 \ell^2 d \mu^2}{b} + 2\eta^2 \|\nabla f(x_k^j)\|^2 + 2\eta^2 \E_j \|\hat{e}_k^j\|^2
\end{align*}
Let $k = N_j$, by taking expectation with respect to $N_j$ and using Fubini's theorem, we have 
\begin{align*}
    & 2 \eta \E_{N_j}\E_j \<\nabla f(x_{N_j}^j), x_{N_j}^j - x_0^j\> + 2 \eta \E_{N_j}\E_j \<\hat{e}_{N_j}^j, x_{N_j}^j - x_0^j\> \\
    \le & \left(1 + \frac{2\eta^2 \ell^2}{b}  \right) \E_{N_j} \E_j\|x_{N_j}^j - x_0^j\|^2 - \E_{N_j} \E_j \|x_{{N_j}+1}^j - x_0^j\|^2 +\frac{8\eta^2 \ell^2 d \mu^2}{b} \\
    & + 2\eta^2 \E_{N_j} \|\nabla f(x_{N_j}^j)\|^2 + 2\eta^2 \E_{N_j} \E_j \|\hat{e}_{N_j}^j\|^2 \\
    = & \left(- \frac{b}{B} + \frac{2\eta^2 \ell^2}{b}  \right) \E_{N_j} \E_j\|x_{N_j}^j - x_0^j\|^2  +\frac{8\eta^2 \ell^2 d \mu^2}{b}  + 2\eta^2 \E_{N_j} \|\nabla f(x_{N_j}^j)\|^2 + 2\eta^2 \E_{N_j} \E_j \|\hat{e}_{N_j}^j\|^2
\end{align*}
Substituting $x_{N_j}^j, x_0^j$ by $\tilde{x}_{j}, \tilde{x}_{j-1}$ and take a further expectation to the past randomness, we get
\begin{align*}
    & 2 \eta \E \<\nabla f(\tilde{x}_j), \tilde{x}_j - \tilde{x}_{j-1}\> + 2 \eta \E \<\hat{e}_{N_j}^j, \tilde{x}_j - \tilde{x}_{j-1}\> \\
    \le & \left(- \frac{b}{B} + \frac{2\eta^2 \ell^2}{b}  \right) \E\|\tilde{x}_j - \tilde{x}_{j-1}\|^2  +\frac{8\eta^2 \ell^2 d \mu^2}{b}  + 2\eta^2 \E \|\nabla f(\tilde{x}_j)\|^2 + 2\eta^2 \E \|\hat{e}_{N_j}^j\|^2
\end{align*}
Swapping the order we get
\begin{align*}
    & \left( \frac{b}{B} - \frac{2\eta^2 \ell^2}{b}  \right) \E\|\tilde{x}_j - \tilde{x}_{j-1}\|^2 + 2 \eta \E \<\hat{e}_{N_j}^j, \tilde{x}_j - \tilde{x}_{j-1}\> \\
    \le & - 2 \eta \E \<\nabla f(\tilde{x}_j), \tilde{x}_j - \tilde{x}_{j-1}\>  +\frac{8\eta^2 \ell^2 d \mu^2}{b}  + 2\eta^2 \E \|\nabla f(\tilde{x}_j)\|^2 + 2\eta^2 \E \|\hat{e}_{N_j}^j\|^2
\end{align*}

\end{proof}

\begin{lemma}
\label{lemma: ZO-SCSG-Coord-3}
\begin{align*}
    \frac{b}{B}\E \<\hat{e}_k^j, \tilde{x}_j - \tilde{x}_{j-1}\> = -\eta \E  \<\hat{e}_{N_j}^j,\nabla f(\tilde{x}_j)\> - \eta  \E \|\hat{e}_{N_j}^j\|^2
\end{align*}
\end{lemma}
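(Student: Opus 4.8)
The plan is to reduce this identity to the one-epoch geometric-distribution trick of Lei--Jordan (the lemma on $N\sim\mathrm{Geom}(\gamma)$ quoted above), together with one conditional-expectation step that folds $\nabla f(\tilde{x}_j)$ and $\|\hat{e}_{N_j}^j\|^2$ into a single inner product. Throughout, $\hat{e}_k^j$ in the statement means $\hat{e}_{N_j}^j$, the error at the final inner iterate $\tilde{x}_j=x_{N_j}^j$, and I write $\gamma=\tfrac{B}{B+b}$ for the geometric parameter of $N_j$ (Option~I), so that $\tfrac{1}{\gamma}-1=\tfrac{b}{B}$.

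\textbf{Step 1 (absorb the $\nabla f$ term).} Recall from the proof of Lemma~\ref{lemma: ZO-SCSG-Coord-1} that $\E_{\mathcal{I}_k^j}[v_k^j]=\nabla f(x_k^j)+\hat{e}_k^j$, and that $\hat{e}_k^j$ depends only on the batch $\mathcal{I}_j$ and on $x_k^j$, hence is measurable with respect to the randomness generated strictly before the mini-batch $\mathcal{I}_{N_j}^j$ is drawn; moreover $N_j$ is independent of all mini-batches. Conditioning on that $\sigma$-algebra (and on $N_j$), pulling $\hat{e}_{N_j}^j$ out of the conditional expectation, and using $x_{N_j+1}^j-\tilde{x}_j=-\eta v_{N_j}^j$, I get
\begin{align*}
 -\eta\,\E\<\hat{e}_{N_j}^j,\nabla f(\tilde{x}_j)\>-\eta\,\E\|\hat{e}_{N_j}^j\|^2
 &= -\eta\,\E\<\hat{e}_{N_j}^j,\ \nabla f(\tilde{x}_j)+\hat{e}_{N_j}^j\> \\
 &= -\eta\,\E\<\hat{e}_{N_j}^j,\ v_{N_j}^j\> = \E\<\hat{e}_{N_j}^j,\ x_{N_j+1}^j-\tilde{x}_j\>.
\end{align*}
So it remains to prove $\tfrac{b}{B}\,\E\<\hat{e}_{N_j}^j,\ \tilde{x}_j-\tilde{x}_{j-1}\>=\E\<\hat{e}_{N_j}^j,\ x_{N_j+1}^j-\tilde{x}_j\>$.

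\textbf{Step 2 (geometric identity).} Now I condition on all the mini-batch randomness of the $j$-th epoch, freezing the whole (virtual, infinite) sequence of iterates $x_0^j,x_1^j,\dots$ and leaving only $N_j$ random and independent. Apply the geometric-distribution lemma to the potential $D_k:=\<\hat{e}_k^j,\ x_k^j-x_0^j\>$, which satisfies $D_0=0$ and, treating $\hat{e}_k^j$ as the epoch-level error exactly as in the first-order SCSG analysis, $D_k-D_{k+1}=\eta\<\hat{e}_k^j,v_k^j\>$. The lemma gives $\E(D_{N_j}-D_{N_j+1})=(\tfrac{1}{\gamma}-1)(D_0-\E D_{N_j})=-\tfrac{b}{B}\,\E D_{N_j}$, i.e.
\[
 \eta\,\E\<\hat{e}_{N_j}^j,\ v_{N_j}^j\>=-\tfrac{b}{B}\,\E\<\hat{e}_{N_j}^j,\ x_{N_j}^j-x_0^j\>=-\tfrac{b}{B}\,\E\<\hat{e}_{N_j}^j,\ \tilde{x}_j-\tilde{x}_{j-1}\>,
\]
and since $\E\<\hat{e}_{N_j}^j,x_{N_j+1}^j-\tilde{x}_j\>=-\eta\,\E\<\hat{e}_{N_j}^j,v_{N_j}^j\>$ this is exactly the remaining claim from Step~1; restoring the outer expectation finishes the proof. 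The genuinely $k$-dependent part of $\hat{e}_k^j$, namely the coordinate-estimator bias $\hat{\nabla}_{coord}f(x_k^j)-\nabla f(x_k^j)$ of magnitude $O(\rho\sqrt{d}\mu^2)$, is small and is carried along inside the $\mu^2$ error terms already present in Lemma~\ref{lemma: ZO-SCSG}.

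\textbf{The main obstacle} is the measurability/conditioning bookkeeping: setting up the nested conditioning in Step~1 so that $\hat{e}_{N_j}^j$ may legitimately be moved inside $\E[v_{N_j}^j\mid\cdot]$, and confirming that the potential sequence $\{D_k\}$ is independent of $N_j$ so the geometric identity applies verbatim. Once the filtration is pinned down, the rest is the same routine telescoping as in the classical SCSG one-epoch analysis.
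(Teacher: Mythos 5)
Your proof is correct and essentially the same as the paper's: both hinge on the Lei--Jordan geometric-stopping identity applied to the telescoping potential $M_k^j=\langle\hat e_k^j,x_k^j-x_0^j\rangle$ (your $D_k$) with $M_0^j=0$, plus the conditional mean $\E_{\mathcal{I}_k^j}[v_k^j]=\nabla f(x_k^j)+\hat e_k^j$, the only difference being that you apply the conditional-mean step first (folding the right-hand side into $-\eta\,\E\langle\hat e_{N_j}^j,v_{N_j}^j\rangle$) while the paper computes the increment $\E_{\mathcal{I}_k^j}(M_{k+1}^j-M_k^j)$ first. The $k$-dependence of $\hat e_k^j$ in Option~I that you flag is a real wrinkle the paper glosses over, but it contributes only an $O(\rho\sqrt d\,\mu^2)$ cross term that both treatments absorb into the $\mu^2$ budget of Lemma~\ref{lemma: ZO-SCSG}.
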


\begin{proof}
Let $M_k^j = \<\hat{e}_k^j, x_k^j - x_0^j\>$. Then we have 
\begin{equation*}
    \E_{N_j} \<\hat{e}_k^j, \tilde{x}_j - \tilde{x}_{j-1}\> = \E_{N_j} M_{N_j}^j
\end{equation*}
Since $N_j$ is independent of $x_0^j, \hat{e}_k^j$, we have 
\begin{equation*}
    \E \<\hat{e}_k^j, \tilde{x}_j - \tilde{x}_{j-1}\> = \E M_{N_j}^j
\end{equation*}
Also we have $M_0^j = 0$. On the other hand,
\begin{align*}
    \E_{\mathcal{I}_k^j} \left( M_{k+1}^j - M_k^j \right) &= \E_{\mathcal{I}_k^j} \<\hat{e}_k^j, x_{k+1}^j - x_k^j\> = -\eta \<\hat{e}_k^j,  \E_{\mathcal{I}_k^j} v_k^j\> \\
    & = -\eta \<\hat{e}_k^j, \nabla f(x_k^j)\> - \eta \|\hat{e}_k^j\|^2
\end{align*}
Using the same notation $\E_j$ as in the proof in Lemma~\ref{lemma: ZO-SCSG-Coord-1} and Lemma~\ref{lemma: ZO-SCSG-Coord-2}, we have 
\begin{align*}
    \E_j \left( M_{k+1}^j - M_k^j \right) = -\eta \<\hat{e}_k^j, \E_j \nabla f(x_k^j)\> - \eta \E_j \|\hat{e}_k^j\|^2
\end{align*}
Let $k = N_j$, by taking the expectation with respect to $N_j$ and using Fubini's theorem, we have , we have 
\begin{align*}
    \frac{b}{B} \E_{N_j} \E_j M_{N_j}^j  = -\eta \<\hat{e}_{N_j}^j, \E_{N_j}\E_j \nabla f(x_{N_j}^j)\> - \eta \E_{N_j}\E_j \|\hat{e}_{N_j}^j\|^2
\end{align*}
Substituting $x_{N_j}^j, x_0^j$ by $\tilde{x}_{j}, \tilde{x}_{j-1}$ and take a further expectation to the past randomness, we get
\begin{align*}
    \frac{b}{B}\E \<\hat{e}_k^j, \tilde{x}_j - \tilde{x}_{j-1}\> = -\eta \E  \<\hat{e}_{N_j}^j,\nabla f(\tilde{x}_j)\> - \eta  \E \|\hat{e}_{N_j}^j\|^2
\end{align*}

\end{proof}

\begin{lemma}[\cite{ji2019improved}]
\label{lemma: ZO-SCSG-Coord-4}
Define $\hat{e}_j = v_j - \hat{\nabla}_{coord} f(x_0^j) = \hat{\nabla}_{coord} f_{\mathcal{I}_j} (\tilde{x}_{j-1}) - \hat{\nabla}_{coord} f(\tilde{x}_{j-1})$, we have
\begin{equation*}
    \|\hat{e}_j\|^2 \le \frac{3(2\ell^2 d \mu^2 + \sigma^2) }{B}
\end{equation*}
\end{lemma}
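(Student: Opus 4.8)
The plan is to reduce everything to the variance-reduction inequality of Lemma~\ref{lemma: variance reduced} combined with the coordinate-wise approximation bound of Lemma~\ref{lemma: coord-square-bound}. The key observation is that the estimator $\hat{\nabla}_{coord}(\cdot)$ is \emph{linear} in its argument function, so $\hat{\nabla}_{coord} f_{\mathcal{I}_j}(\tilde{x}_{j-1}) = \frac{1}{B}\sum_{i\in\mathcal{I}_j}\hat{\nabla}_{coord} f_i(\tilde{x}_{j-1})$ and likewise $\hat{\nabla}_{coord} f(\tilde{x}_{j-1}) = \frac{1}{n}\sum_{i=1}^n \hat{\nabla}_{coord} f_i(\tilde{x}_{j-1})$. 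Setting $w_i := \hat{\nabla}_{coord} f_i(\tilde{x}_{j-1}) - \hat{\nabla}_{coord} f(\tilde{x}_{j-1})$, we have $\sum_{i=1}^n w_i = 0$ and $\hat{e}_j = \frac{1}{B}\sum_{i\in\mathcal{I}_j} w_i$, so Lemma~\ref{lemma: variance reduced} applied to the uniform random subset $\mathcal{I}_j$ of size $B$ gives $\E\|\hat{e}_j\|^2 \le \frac{1}{B}\cdot\frac{1}{n}\sum_{i=1}^n \|w_i\|^2$.

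It then remains to bound the average $\frac{1}{n}\sum_i \|w_i\|^2$ uniformly in $x=\tilde{x}_{j-1}$. I would split $w_i = (\hat{\nabla}_{coord} f_i(x) - \nabla f_i(x)) + (\nabla f_i(x) - \nabla f(x)) + (\nabla f(x) - \hat{\nabla}_{coord} f(x))$ and apply $\|a+b+c\|^2 \le 3(\|a\|^2+\|b\|^2+\|c\|^2)$. Since each $f_i$ and $f$ are $\ell$-Lipschitz smooth (Assumption~\ref{assum: basic}), Lemma~\ref{lemma: coord-square-bound} bounds the first and third terms by $\ell^2 d\mu^2$ each, hence $\|w_i\|^2 \le 3\big(2\ell^2 d\mu^2 + \|\nabla f_i(x) - \nabla f(x)\|^2\big)$. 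Averaging over $i$ and using the bounded-variance part of Assumption~\ref{assum: basic}, i.e.\ $\frac{1}{n}\sum_i \|\nabla f_i(x) - \nabla f(x)\|^2 \le \sigma^2$, yields $\frac{1}{n}\sum_i \|w_i\|^2 \le 3(2\ell^2 d\mu^2 + \sigma^2)$. Combining with the previous display gives $\E\|\hat{e}_j\|^2 \le 3(2\ell^2 d\mu^2 + \sigma^2)/B$, which is the claimed estimate.

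There is no deep obstacle here; the only points requiring care are (i) that the stated inequality should be read as a bound on $\E\|\hat{e}_j\|^2$ over the draw of $\mathcal{I}_j$, since the $1/B$ decay is a variance effect and cannot hold deterministically, and (ii) using the three-way split of $w_i$ through $\nabla f_i$ and $\nabla f$ separately — rather than a two-way split through $\hat{\nabla}_{coord}(f_i-f)$, which would carry the smoothness constant $2\ell$ into Lemma~\ref{lemma: coord-square-bound} — so that the constants match exactly the $3(2\ell^2 d\mu^2 + \sigma^2)$ appearing in the statement. If instead one assumes the stronger pointwise bound of Assumption~\ref{assum: basic-high-probability}, the same computation applies verbatim with $\|\nabla f_i(x)-\nabla f(x)\|^2\le\sigma^2$ used termwise, still in expectation over $\mathcal{I}_j$.
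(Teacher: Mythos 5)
Your proof is correct and complete; the paper itself gives no argument here, deferring entirely to Lemma~4 of the cited reference, and your derivation (linearity of $\hat{\nabla}_{coord}$, the sampling-variance bound of Lemma~\ref{lemma: variance reduced}, and the three-way split whose endpoints are controlled by Lemma~\ref{lemma: coord-square-bound} and the bounded-variance assumption) is exactly the standard route and reproduces the constant $3(2\ell^2 d\mu^2+\sigma^2)/B$. Your two caveats are also well taken: the bound can only hold in expectation over the draw of $\mathcal{I}_j$ (the paper indeed uses it as $\E\|\hat{e}_j\|^2$ in the proof of Lemma~\ref{lemma: ZO-SCSG}, so the missing expectation in the statement is a typo), and the three-way split through $\nabla f_i$ and $\nabla f$ separately is what makes the constants match.
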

\begin{proof}
The proof directly follows from Lemma 4 in \cite{ji2019improved}.
\end{proof}

\begin{proof}[\textbf{Proof of Lemma~\ref{lemma: ZO-SCSG} (Option \uppercase\expandafter{\romannumeral1})}]
Multiplying Lemma~\ref{lemma: ZO-SCSG-Coord-1} by $2$, Lemma~\ref{lemma: ZO-SCSG-Coord-2} by $\frac{b}{\eta}$ and summing them up, we have 
\begin{align*}
    & 2\eta B \left( 1-\eta \ell - \frac{b}{B} \right) \E\|\nabla f(\tilde{x}_j)\|^2 + \frac{b^3 - 2\eta^2 \ell^2 b B - 2\ell^3 \eta^3 B^2}{\eta b B} \E \|\tilde{x}_j - \tilde{x}_{j-1}\|^2 \\
    & + 2\eta B \E\<\hat{e}_{N_j}^j, \nabla f(\tilde{x}_j)\> + 2 b \<\hat{e}_{N_j}^j, \tilde{x}_j - \tilde{x}_{j-1}\> \\
    \le & -2b \E\<\nabla f(\tilde{x}_j), \tilde{x}_j - \tilde{x}_{j-1}\> + 2b \left( f(\tilde{x}_{j-1}) - f(\tilde{x}_j) \right) + ( \ell \frac{B}{b} + \frac{1}{\eta}) 8\eta^2 \ell^2 d \mu^2 + (2\eta^2 \ell B + 2\eta b) \E\|\hat{e}_{N_j}^j\|^2
\end{align*}
By Lemma~\ref{lemma: ZO-SCSG-Coord-3}, 
\begin{align*}
    2\eta B \E\<\hat{e}_{N_j}^j, \nabla f(\tilde{x}_j)\> + 2 b \<\hat{e}_{N_j}^j, \tilde{x}_j - \tilde{x}_{j-1}\> = - 2\eta B \E\|\hat{e}_{N_j}^j\|^2
\end{align*}
So the above inequality can simplified as
\begin{align*}
    & 2\eta B \left( 1-\eta \ell - \frac{b}{B} \right) \E\|\nabla f(\tilde{x}_j)\|^2 + \frac{b^3 - 2\eta^2 \ell^2 b B - 2\ell^3 \eta^3 B^2}{\eta b B} \E \|\tilde{x}_j - \tilde{x}_{j-1}\|^2 \\
    \le & -2b \E\<\nabla f(\tilde{x}_j), \tilde{x}_j - \tilde{x}_{j-1}\> + 2b \left( f(\tilde{x}_{j-1}) - f(\tilde{x}_j) \right) + ( \ell \frac{B}{b} + \frac{1}{\eta}) 8\eta^2 \ell^2 d \mu^2 \\
    &+ (2\eta^2 \ell B + 2\eta b + 2\eta B) \E\|\hat{e}_{N_j}^j\|^2
\end{align*}
Using the fact that $2\<a,b\>\le \beta\|a\|^2 + \frac{1}{\beta}\|b\|^2$ for any $\beta>0$, we have
\begin{align*}
    &-2b \E\<\nabla f(\tilde{x}_j), \tilde{x}_j - \tilde{x}_{j-1}\> \\
    \le & \frac{\eta b B}{b^3 - 2\eta^2 \ell^2 b B - 2\ell^3 \eta^3 B^2} b^2 \E \|\nabla f(\tilde{x}_j)\|^2 + \frac{b^3 - 2\eta^2 \ell^2 b B - 2\ell^3 \eta^3 B^2}{\eta b B} \E \|\tilde{x}_j - \tilde{x}_{j-1}\|^2
\end{align*}
Then we conclude that
\begin{align*}
    &\eta B \left( 2- 2\eta\ell - 2\frac{b}{B} - \frac{b^3}{b^3 - 2\eta^2 \ell^2 b B - 2\ell^3 \eta^3 B^2} \right) \E\|\nabla f(\tilde{x}_j)\|^2 \\
    \le & 2b \left( f(\tilde{x}_{j-1}) - f(\tilde{x}_j) \right) + ( \ell \frac{B}{b} + \frac{1}{\eta}) 8\eta^2 \ell^2 d \mu^2 + (2\eta^2 \ell B + 2\eta b + 2\eta B) \E\|\hat{e}_{N_j}^j\|^2
\end{align*}
Multiplying both sides by $\frac{\ell}{b}$, we have 
\begin{align*}
    & \eta \ell \frac{B}{b} \left( 2- 2\eta\ell - 2\frac{b}{B} - \frac{b^3}{b^3 - 2\eta^2 \ell^2 b B - 2\ell^3 \eta^3 B^2} \right) \E\|\nabla f(\tilde{x}_j)\|^2 \\
    \le & 2\ell \left( f(\tilde{x}_{j-1}) - f(\tilde{x}_j) \right) + \left(\frac{\ell^2 B}{b^2} + \frac{\ell }{b \eta}\right) 8\eta^2 \ell^2 d\mu^2 + (2\eta^2 \ell^2 \frac{B}{b} + 2\eta \ell + 2\eta \ell \frac{B}{b}) \E\|\hat{e}_{N_j}^j\|^2
\end{align*}
Let $\eta \ell = \gamma \left( \frac{b}{B} \right)^{\frac{2}{3}}$ and $b \ge 1, \frac{B}{b} \ge 8 \ge \frac{8}{b}$, we have 
\begin{align*}
    b^3 - 2\eta^2 \ell^2 b B - 2\ell^3 \eta^3 B^2 = &  b^3 \left(  1 - 2\gamma^2 b (\frac{b}{B})^{\frac{1}{3}} - 2 \gamma^3 b^{-1}\right) \\
    \ge & b^3 (1 - \gamma^2 - 2 \gamma^3) 
\end{align*}
Then the above inequality can be simplified as 
\begin{align*}
    & \gamma \left( \frac{B}{b} \right)^{\frac{1}{3}} \left(2 - 2\gamma (\frac{b}{B})^{\frac{2}{3}} - 2 \frac{b}{B} - \frac{1}{1 - \gamma^2 -2\gamma^3} \right)\E\|\nabla f(\tilde{x}_j)\|^2 \\
    \le & 2\ell \left( f(\tilde{x}_{j-1}) - f(\tilde{x}_j) \right) + 2\gamma \left( 1 + \gamma \left(\frac{b}{B}\right)^{\frac{2}{3}} + \frac{b}{B} \right) \left(\frac{B}{b}\right)^{\frac{1}{3}} \E\|\hat{e}_{N_j}^j\|^2  + \gamma \left( \gamma (\frac{b}{B})^{\frac{1}{3}} + (\frac{b}{B})^{\frac{2}{3}} \right) \frac{8\ell^2 d \mu^2}{b}
\end{align*}
Since $B \ge 8b, \gamma \le \frac{1}{4}$, we have 
\begin{align*}
    2 - 2\gamma (\frac{b}{B})^{\frac{2}{3}} - 2 \frac{b}{B} - \frac{1}{1 - \gamma^2 -2\gamma^3} \ge & 2-\frac{\gamma}{2} - \frac{1}{4} - \frac{1}{1 - \gamma^2 -2\gamma^3} \ge 0.5 \\
    1 + \gamma \left(\frac{b}{B}\right)^{\frac{2}{3}} + \frac{b}{B} \le & 1 + \frac{\gamma}{4} + \frac{1}{8} \le \frac{19}{16} \\
    \gamma (\frac{b}{B})^{\frac{1}{3}} + (\frac{b}{B})^{\frac{2}{3}} \le & \frac{1}{2}\gamma + \frac{1}{4} \le \frac{3}{8}
\end{align*}
Thus we have
\begin{align*}
    \left( \frac{B}{b} \right)^{\frac{1}{3}} \E\|\nabla f(\tilde{x}_j)\|^2 \le  \frac{4\ell}{\gamma} \left(f(\tilde{x}_{j-1}) - f(\tilde{x}_j)\right) + 5 \left( \frac{B}{b} \right)^{\frac{1}{3}} \E\|\hat{e}_{N_j}^j\|^2 + \frac{6\ell^2 d \mu^2 }{b}
\end{align*}
Using Lemma~\ref{lemma: ZO-SCSG-Coord-4}, we have 
\begin{align*}
    \E\|\hat{e}_{N_j}^j\|^2 = & \E\|\nabla f(x_{N_j}^j) - \hat{\nabla}_{coord} f(x_{N_j}^j) + \hat{e}_j \|^2 \\
    \le & 2 \|\nabla f(x_{N_j}^j) - \hat{\nabla}_{coord} f(x_{N_j}^j)\|^2 + 2\E\|\hat{e}_j\|^2 \\
    \le & 2 \ell^2 d \mu^2 + 2 \left(\frac{3(2\ell^2 d \mu^2 + \sigma^2) }{B} \right)
\end{align*}
Thus we obtain
\begin{align*}
    \left( \frac{B}{b} \right)^{\frac{1}{3}} \E\|\nabla f(\tilde{x}_j)\|^2 \le  \frac{4\ell}{\gamma} \left(f(\tilde{x}_{j-1}) - f(\tilde{x}_j)\right) + \frac{30\sigma^2 }{b^{\frac{1}{3}} B^{\frac{2}{3}}} + c \left(\frac{B}{b}\right)^{\frac{1}{3 }} \ell^2 d \mu^2
\end{align*}
where $c$ is a sufficient large constant. Telescope the sum in $j=1, \dots, T$, and using the definition of $\tilde{x}_T^*$, we finally get
\begin{equation*}
    \E \|\tilde{x}_T^*\|^2 = \frac{1}{T} \sum_{j=1}^T \E \|\nabla f(\tilde{x}_j)\|^2 \le \frac{\frac{4\ell}{\gamma}}{T \left( \frac{B}{b/d}\right)^{\frac{1}{3}}} \cdot \E\left( f(\tilde{x}_0 ) - f(\tilde{x}_T) \right) + \frac{30 \sigma^2}{B} + c \ell^2 d \mu^2
\end{equation*}

\end{proof}


\subsection{Proof of Second-Order Stationary Point (\texorpdfstring{\textbf{Option \uppercase\expandafter{\romannumeral1}}}{Lg})}

\begin{proof}[\textbf{Proof of Theorem~\ref{thm: ZO-SCSG-NCF}}]
Let $N_1$ and $N_2$ be the number of times we reach Line 7 and 9 of Algorithm~\ref{alg: ZO-SCSG-NCF}. From Lemma~\ref{lemma: ZO-SCSG} of ZO-SCSG we know that for one epoch with size $B = \max\{1, \frac{480 \sigma^2}{\epsilon^2}\} $, mini-batch size $b\ge 1$ and the smoothing parameter $\mu = \frac{\epsilon}{4\sqrt{cd}\ell}$, we have 
\begin{equation*}
    \E \|x_{t+1}\|^2 \le \frac{4 \ell }{\gamma} \left(\frac{b}{B}\right)^{\frac{1}{3}} \E \left( f(x_t) - f(x_{t+1}) \right) + \frac{\epsilon^2}{8}
\end{equation*}
Then, if $\|\nabla f(x_{t+1})\| \ge 
\frac{\epsilon}{2}$, we have $x_{t+1} = x_{t+1}$; if $v = \bot$, we set $x_{t+1} = x_{t+1}$ for  if $v \neq \bot$, we have $f(x_{t+1}) - \E f(x_{t+1}) \ge \frac{\delta^3}{12 \rho^2}$ (here the expectation is taken on the randomness of sign of $v$). Thus we have 
\begin{equation*}
    \frac{\gamma B^{\frac{1}{3}}}{ 4\ell b^{\frac{1}{3}} } \E \left[\sum_{t=0}^{K-1} \left( \|\nabla f(x_{t+1} )\|^2 - \frac{\epsilon^2}{8} \right) \right] + \frac{\delta^3 }{12 \rho^2 } \E [N_2] \le \Delta_f
\end{equation*}
On one hand, since we have chosen $K$ such that $K \ge \Omega\left( \frac{\ell b^{\frac{1}{3}} \Delta_f}{\epsilon^2 B^{\frac{1}{3}}}\right) = \Omega \left( \frac{\ell b^{\frac{1}{3}} \Delta_f}{\epsilon^2 (1+\frac{\sigma^2}{\epsilon^2})^{\frac{1}{3}}} \right)$, then by Markov's inequality, with probability at least $\frac{5}{6}$, it satisfies $\sum_{t=0}^{K-1} \|\nabla f(x_{t+1})\|^2 \le \frac{\epsilon}{4} K$. As a sequence, at least half of the indices $t=0, \dots, K-1$ will satisfy $\|\nabla f(x_{t+1})\| \le \frac{\epsilon}{2}$, which means that $N_1 \ge \frac{K}{2}$.

On the other hand, we have $\frac{\delta^3}{12 \rho^2} \E[N_2] \le \Delta_f + \frac{K \gamma B^{\frac{1}{3}} \epsilon^2}{32 \ell b^{\frac{1}{3}}}$. Since $K \ge \Omega\left( \frac{\ell b^{\frac{1}{3}} \Delta_f}{\epsilon^2 B^{\frac{1}{3}}}\right) = \Omega \left( \frac{\ell b^{\frac{1}{3}} \Delta_f}{\epsilon^2 (1+\frac{\sigma^2}{\epsilon^2})^{\frac{1}{3}}} \right)$, we have $ \E[N_2] \le \frac{K \gamma B^{\frac{1}{3}} \epsilon^2 \rho^2}{\ell \delta^3 b^{\frac{1}{3}}}$. As long as $B \le \mathcal{O}\left( \frac{\ell^3 \delta^9 b}{\epsilon^6 \rho^6} \right)$, or equivalently $b \ge \Omega\left( \frac{B\epsilon^6 \rho^6}{\delta^9 \ell^3} \right)$, we have $\E[N_2] \le \frac{K}{12}$. Therefore, with provability at least $\frac{5}{6}$, it satisfies $N_2 \le 
\frac{k}{2}$.

Since $N_1 \ge N_2$, this means with probability at least $\frac{2}{3}$ the algorithm must terminate and output some $x_{t+1}$ in an iteration.

Finally, the per-iteration complexity of Algorithm~\ref{alg: ZO-SCSG-NCF} is dominated by $\tilde{\mathcal{O}}(B)$ stochastic gradient estimators per iteration for both \hyperref[alg: ZO-SCSG]{ZO-SCSG} and estimating $\|\nabla f(x_{t+1})\|$, as well as $\tilde{\mathcal{O}}(\frac{\ell^2}{\delta^2})$ invoking Algorithm~\ref{alg: ZO-NCF-Online}. The total function query complexity is 
\begin{align*}
    \tilde{\mathcal{O}}\left( d K \left( B+ \frac{\ell^2}{\delta^2}  \right) \right) &= \tilde{\mathcal{O}}\left( d \left( \frac{\ell b^{\frac{1}{3}} \Delta_f }{\epsilon^2 B^{\frac{1}{3}}} \right) \left( B+ \frac{\ell^2}{\delta^2} \right) \right) \\
    &= \tilde{\mathcal{O}}\left( d \left( \frac{\ell \max\{1, \frac{B^{\frac{1}{3}}\epsilon^2 \rho^2}{\delta^3 \ell}\} \Delta_f }{\epsilon^2 B^{\frac{1}{3}}} \right) \left( B+ \frac{\ell^2}{\delta^2} \right) \right) \\
    &= \tilde{\mathcal{O}}\left( d \left( \frac{\ell (1 + \frac{B^{\frac{1}{3}}\epsilon^2 \rho^2}{\delta^3 \ell}) \Delta_f }{\epsilon^2 B^{\frac{1}{3}}} \right) \left( B+ \frac{\ell^2}{\delta^2} \right) \right) \\
    &= \tilde{\mathcal{O}}\left( d \left( \frac{\ell  \Delta_f }{\epsilon^2 B^{\frac{1}{3}}} + \frac{\rho^2 \Delta_f}{\delta^3} \right) \left( B+ \frac{\ell^2}{\delta^2} \right) \right) \\
    &= \tilde{\mathcal{O}}\left( d \left( \frac{\ell  \Delta_f }{\epsilon^2 \max\{1, \frac{\sigma^2}{\epsilon^2}\}^{\frac{1}{3}}} + \frac{\rho^2 \Delta_f}{\delta^3} \right) \left( \max\{1, \frac{\sigma^2}{\epsilon^2}\} + \frac{\ell^2}{\delta^2} \right) \right) \\
    &= \tilde{\mathcal{O}}\left( d \left( \frac{\ell  \Delta_f }{\epsilon^\frac{4}{3} \sigma^{\frac{2}{3}} } + \frac{\rho^2 \Delta_f}{\delta^3} \right) \left( \frac{\sigma^2 }{\epsilon^2} + \frac{\ell^2}{\delta^2} \right) + d \frac{\ell \Delta_f}{\epsilon^2} \frac{\ell^2}{\delta^2}  \right)
\end{align*}

\end{proof}

\subsection{One Epoch Analysis of  \texorpdfstring{\hyperref[alg: ZO-SCSG]{ZO-SCSG} (\textbf{Option \uppercase\expandafter{\romannumeral2}})}{Lg}}

From Algorithm~\ref{alg: ZO-SCSG}, we know that all randomness in epoch $j$, iteration $k$ come from three part: $\left. 1\right)$ random selection of $\mathcal{I}_{k-1}^j$ in Line 7; $\left. 2 \right)$ random direction of $u$ in estimating the gradient in Line 9; $\left. 3\right)$ random generation of $N_j$.

\begin{lemma}
Under Assumption~\ref{assum: basic},
\begin{equation*}
    \E_u\E_{\mathcal{I}_k^j} \|v_k^j\|^2 \le \frac{3d \ell^2}{b} \|x_k^j - x_0^j\|^2 + \frac{3 \ell^2 d^2 \mu^2}{2 b} + 2\|\nabla f_\mu (x_k^j)\|^2 + 2 \|\hat{e}_j\|^2 
\end{equation*}

\end{lemma}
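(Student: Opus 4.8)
The plan is to follow the Option~I argument (Lemma~\ref{lemma: ZO-SCSG-Coord-1}), decomposing $\E_u\E_{\mathcal{I}_k^j}\|v_k^j\|^2$ into a variance part and a squared-mean part via the identity $\E\|X\|^2=\E\|X-\E X\|^2+\|\E X\|^2$, but now the ``inner'' randomness consists of the mini-batch $\mathcal{I}_k^j$ together with the smoothing direction(s) appearing in $\hat\nabla_{rand}$. First I would compute the mean: by item~(2) of the $\ell$-smooth part of Lemma~\ref{lemma: RandGradEst}, $\E_u\hat\nabla_{rand}f_i(x)=\nabla (f_i)_\mu(x)$, and since $\mathcal{I}_k^j$ is a uniform subset this gives $\E_u\E_{\mathcal{I}_k^j}[v_k^j]=\nabla f_\mu(x_k^j)-\nabla f_\mu(x_0^j)+v_j=\nabla f_\mu(x_k^j)+\hat e_j$, with $\hat e_j:=v_j-\nabla f_\mu(x_0^j)$. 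Young's inequality $\|a+b\|^2\le 2\|a\|^2+2\|b\|^2$ then bounds the squared-mean part by $2\|\nabla f_\mu(x_k^j)\|^2+2\|\hat e_j\|^2$, which accounts for the last two terms of the claim.

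It remains to bound the variance by $\frac{3d\ell^2}{b}\|x_k^j-x_0^j\|^2+\frac{3\ell^2 d^2\mu^2}{2b}$. Conditioning on the smoothing direction(s), $\hat\nabla_{rand}$ is linear in the function, so $\hat\nabla_{rand}f(x)=\frac1n\sum_{i}\hat\nabla_{rand}f_i(x)$, and the $n$ vectors $\hat\nabla_{rand}f_i(x_k^j)-\hat\nabla_{rand}f_i(x_0^j)-\big(\hat\nabla_{rand}f(x_k^j)-\hat\nabla_{rand}f(x_0^j)\big)$ sum to zero; applying Lemma~\ref{lemma: variance reduced} therefore produces the factor $1/b$ and bounds the conditional mini-batch variance by $\frac1b\cdot\frac1n\sum_i\|\hat\nabla_{rand}f_i(x_k^j)-\hat\nabla_{rand}f_i(x_0^j)\|^2$ (the subtracted mean only shrinks the average squared norm). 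Averaging over the directions and invoking item~(5) of the $\ell$-smooth part of Lemma~\ref{lemma: RandGradEst}, which applies to every $f_i$ since each $f_i$ is $\ell$-Lipschitz smooth, gives $\E\|\hat\nabla_{rand}f_i(x_k^j)-\hat\nabla_{rand}f_i(x_0^j)\|^2\le 3d\ell^2\|x_k^j-x_0^j\|^2+\tfrac{3\ell^2 d^2\mu^2}{2}$ for each $i$; substituting and recombining with the squared-mean term finishes the proof.

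The main obstacle is organizing the two sources of randomness — the mini-batch $\mathcal{I}_k^j$ and the random smoothing direction(s) inside $\hat\nabla_{rand}$ — in the right order: one must first average over $\mathcal{I}_k^j$ with the directions frozen and coupled within each difference $\hat\nabla_{rand}f_i(x_k^j)-\hat\nabla_{rand}f_i(x_0^j)$ so that Lemma~\ref{lemma: variance reduced} genuinely extracts the $1/b$, then average over the directions, and only afterwards apply the sharp per-component estimate of Lemma~\ref{lemma: RandGradEst}; performing the expectations in the wrong order either loses the $1/b$ or spoils the exact constants $3d\ell^2$ and $\tfrac32\ell^2 d^2$ appearing in the statement. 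Everything else is a routine combination of $\|a+b\|^2\le 2\|a\|^2+2\|b\|^2$ with the cited lemmas.
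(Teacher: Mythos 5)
Your decomposition of $\E_u\E_{\mathcal{I}_k^j}\|v_k^j\|^2$ into a variance and a squared-mean part, the identification of the mean $\E_u\E_{\mathcal{I}_k^j}[v_k^j]=\nabla f_\mu(x_k^j)+\hat e_j$, and the Young-inequality bound on the squared-mean part all agree with the paper. The gap is in the variance part. By conditioning on the smoothing directions and invoking Lemma~\ref{lemma: variance reduced}, you are forced to center the mini-batch sum at the \emph{conditional} empirical mean $\bar a := \hat\nabla_{rand}f(x_k^j)-\hat\nabla_{rand}f(x_0^j)=\frac1n\sum_i\big(\hat\nabla_{rand}f_i(x_k^j)-\hat\nabla_{rand}f_i(x_0^j)\big)$ — that is precisely what makes the $n$ vectors sum to zero. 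What you then bound is $\E_u\!\left[\E_{\mathcal{I}_k^j}\|\frac1b\sum_{i\in\mathcal{I}_k^j}(a_i-\bar a)\|^2\right]$, which is the within-group part $\E_u[\mathrm{Var}_{\mathcal{I}}(v_k^j\mid u)]$ of the law of total variance. The quantity you need is $\E_u\E_{\mathcal{I}_k^j}\|v_k^j-(\nabla f_\mu(x_k^j)+\hat e_j)\|^2$, which equals $\E_u[\mathrm{Var}_{\mathcal{I}}(v_k^j\mid u)]+\mathrm{Var}_u(\E_{\mathcal{I}}[v_k^j\mid u])$. You never address the second term $\mathrm{Var}_u(\E_{\mathcal{I}}[v_k^j\mid u])=\E_u\|\bar a-(\nabla f_\mu(x_k^j)-\nabla f_\mu(x_0^j))\|^2$, and it is not zero — it is of the same order $\frac1b\cdot\frac1n\sum_i\E_u\|a_i\|^2$ as the term you keep, so simply omitting it is not harmless for the stated constants.

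The paper sidesteps this by \emph{not} conditioning on the directions first: it centers the deviation directly at the unconditional mean $\nabla f_\mu(x_k^j)-\nabla f_\mu(x_0^j)$, expands $\frac1{b^2}\E_u\E_{\mathcal{I}_k^j}\|\sum_{i\in\mathcal{I}_k^j}z_i\|^2$ over the joint randomness, observes that the off-diagonal terms vanish (each summand $z_i$, taken together with its own independent smoothing direction, is a mean-zero pair), and the diagonal terms give the $1/b$ factor and the final substitution via Lemma~\ref{lemma: RandGradEst}. The structural point you miss is that the natural i.i.d.\ unit here is the pair ``(index $i$, direction $u_i$),'' and freezing the directions first splits that unit and creates the leftover between-group term. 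To repair your argument you would either have to add and bound $\E_u\|\bar a-c\|^2$ explicitly (costing at least a factor 2 in the constant unless you invoke the sharp finite-population form of the variance identity), or abandon the ``condition on $u$ first'' ordering and work with the joint expectation as the paper does.
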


\begin{proof}
Define the following notation, 
\begin{equation*}
    \hat{e}_j = v_j - \nabla f_\mu(x_0^j)
\end{equation*}
Then we have 
\begin{equation*}
    \E_u \E_{\mathcal{I}_k^j} v_k^j = \E_u \E_{\mathcal{I}_k^j} \left(\hat{\nabla}_{rand} f_{\mathcal{I}_k^j}(x_k^j) - \hat{\nabla}_{rand} f_{\mathcal{I}_k^j}(x_0^j) + v_j \right) = \nabla f_\mu(x_k^j) + \hat{e}_j
\end{equation*}
Using the fact that $\E\|a\|^2 = \E\|a- \E a\|^2+ \|\E a\|^2$, we have 
\begin{align*}
    &\E_u\E_{\mathcal{I}_k^j} \|v_k^j\|^2 = \E_u\E_{\mathcal{I}_k^j} \|v_k^j - \E_u\E_{\mathcal{I}_k^j} v_k^j\|^2 + \|\E_u\E_{\mathcal{I}_k^j} v_k^j\|^2 \\
    = & \E_u\E_{\mathcal{I}_k^j} \| \hat{\nabla}_{rand} f_{\mathcal{I}_k^j} (x_k^j) - \hat{\nabla}_{rand} f_{\mathcal{I}_k^j} (x_0^j) - \left( \nabla f_\mu (x_k^j) - \nabla f_\mu(x_0^j) \right) \|^2 + \| \nabla f_\mu(x_k^j) + \hat{e}_j \|^2 \\
    \le & \E_u\E_{\mathcal{I}_k^j} \| \hat{\nabla}_{rand} f_{\mathcal{I}_k^j} (x_k^j) - \hat{\nabla}_{rand} f_{\mathcal{I}_k^j} (x_0^j) - \left( \nabla f_\mu (x_k^j) - \nabla f_\mu(x_0^j) \right) \|^2 + 2\|\nabla f_\mu (x_k^j)\|^2 + 2 \|\hat{e}_j\|^2.
\end{align*}
By Lemma~\ref{lemma: variance reduced}, we have
\begin{align*}
    & \E_u\E_{\mathcal{I}_k^j} \| \hat{\nabla}_{rand} f_{\mathcal{I}_k^j} (x_k^j) - \hat{\nabla}_{rand} f_{\mathcal{I}_k^j} (x_0^j) - \left( \nabla f_\mu (x_k^j) - \nabla f_\mu(x_0^j) \right) \|^2 \\
    = & \frac{1}{b^2}\E_u \E_{\mathcal{I}_k^j} \| \sum_{i \in \mathcal{I}_k^j} \left( \hat{\nabla}_{rand} f_i (x_k^j) - \hat{\nabla}_{rand} f_i (x_0^j) - \left( \nabla f_\mu (x_k^j) - \nabla f_\mu(x_0^j) \right) \right) \|^2 \\
    =& \frac{1}{b^2} \E_u\E_{\mathcal{I}_k^j} \sum_{i \in \mathcal{I}_k^j} \|\hat{\nabla}_{rand} f_i (x_k^j) - \hat{\nabla}_{rand} f_i (x_0^j) - \left( \nabla f_\mu (x_k^j) - \nabla f_\mu(x_0^j) \right)\|^2 \\
    & +\frac{1}{b^2} \E_u\E_{\mathcal{I}_k^j} \sum_{i \neq j} \left\langle \hat{\nabla}_{rand} f_i (x_k^j) - \hat{\nabla}_{rand} f_i (x_0^j) - \left( \nabla f_\mu (x_k^j) - \nabla f_\mu(x_0^j) \right),  \right.\\
    & \left. \hat{\nabla}_{rand} f_j (x_k^j) - \hat{\nabla}_{rand} f_j (x_0^j) - \left( \nabla f_\mu (x_k^j) - \nabla f_\mu(x_0^j) \right) \right\rangle \\
    = & \frac{1}{b^2} \E_u\E_{\mathcal{I}_k^j} \sum_{i \in \mathcal{I}_k^j} \|\hat{\nabla}_{rand} f_i (x_k^j) - \hat{\nabla}_{rand} f_i (x_0^j) - \left( \nabla f_\mu (x_k^j) - \nabla f_\mu(x_0^j) \right)\|^2 \\
    = & \frac{1}{b} \E_u \frac{1}{n} \sum_{i \in [n]} \|\hat{\nabla}_{rand} f_i (x_k^j) - \hat{\nabla}_{rand} f_i (x_0^j) - \left( \nabla f_\mu (x_k^j) - \nabla f_\mu(x_0^j) \right)\|^2 \\
    = & \frac{1}{b} \cdot  \E_u \left( \frac{1}{n} \sum_{i \in [n]} \|\hat{\nabla}_{rand} f_i (x_k^j) - \hat{\nabla}_{rand} f_i (x_0^j)\|^2 \right) - \frac{1}{b} \|\nabla f_\mu (x_k^j) - \nabla f_\mu(x_0^j)\|^2  \\
    \le & \frac{1}{b} \cdot  \E_u \left( \frac{1}{n} \sum_{i \in [n]} \|\hat{\nabla}_{rand} f_i (x_k^j) - \hat{\nabla}_{rand} f_i (x_0^j)\|^2 \right) \\ 
    \overset{\textrm{Lemma~\ref{lemma: RandGradEst}}}{\le} & \frac{3d \ell^2}{b} \|x_k^j - x_0^j\|^2 + \frac{3 \ell^2 d^2 \mu^2}{2 b}
\end{align*}
Therefore
\begin{equation*}
    \E_u\E_{\mathcal{I}_k^j} \|v_k^j\|^2 \le \frac{3d \ell^2}{b} \|x_k^j - x_0^j\|^2 + \frac{3 \ell^2 d^2 \mu^2}{2 b} + 2\|\nabla f_\mu (x_k^j)\|^2 + 2 \|\hat{e}_j\|^2 
\end{equation*}

\end{proof}

\begin{lemma}
\label{lemma: ZO-SCSG-rand-1}
Suppose $\eta\ell <1$, then under Assumption~\ref{assum: basic},
    \begin{align*}
    & \eta (1-\ell \eta) B \E \|\nabla f_\mu (\tilde{x}_j)\|^2 + \eta B \E\<\hat{e}_j, \nabla f_\mu (\tilde{x}_j)\> \\
    \le & b/d \E \left( f_\mu (\tilde{x}_{j-1}) - f_\mu(\tilde{x}_j) \right) + \frac{3d \ell^3 \eta^2 B}{2b}  \E\|\tilde{x}_j - \tilde{x}_{j-1}\|^2 + \frac{3 \ell^3 \eta^2 d^2 \mu^2 B}{4 b}  + \ell \eta^2 B \E\|\hat{e}_j\|^2
    \end{align*}
\end{lemma}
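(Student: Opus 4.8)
The plan is to mirror the proof of Lemma~\ref{lemma: ZO-SCSG-Coord-1}, replacing the coordinate-wise estimator \ref{eq: CoordGradEst} by \ref{eq: RandGradEst} and working with the smoothed surrogate $f_\mu(x)=\E_{u\sim U_B}f(x+\mu u)$ in place of $f$. Since $f_\mu$ is an average of $\ell$-Lipschitz smooth functions it is itself $\ell$-Lipschitz smooth, so applying Lemma~\ref{lemma: Lipschitz} to $f_\mu$ along the inner update $x_{k+1}^j = x_k^j - \eta v_k^j$ gives
\[
f_\mu(x_{k+1}^j) \le f_\mu(x_k^j) - \eta\<v_k^j,\nabla f_\mu(x_k^j)\> + \tfrac{\ell\eta^2}{2}\|v_k^j\|^2 .
\]
By the definition $\hat e_j = v_j - \nabla f_\mu(x_0^j)$ together with item (2) of Lemma~\ref{lemma: RandGradEst} (which gives $\E_u\hat\nabla_{rand} f = \nabla f_\mu$), one has $\E_u\E_{\mathcal I_k^j} v_k^j = \nabla f_\mu(x_k^j) + \hat e_j$; note that, unlike the coordinate case, $\hat e_j$ carries no point-dependent bias here, which slightly simplifies the bookkeeping.

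Next I would take the conditional expectation over the mini-batch $\mathcal I_k^j$ and the smoothing direction $u$ in the descent inequality and substitute the second-moment bound from the preceding lemma,
\[
\E_u\E_{\mathcal I_k^j}\|v_k^j\|^2 \le \tfrac{3d\ell^2}{b}\|x_k^j-x_0^j\|^2 + \tfrac{3\ell^2 d^2\mu^2}{2b} + 2\|\nabla f_\mu(x_k^j)\|^2 + 2\|\hat e_j\|^2 .
\]
Collecting the $\|\nabla f_\mu(x_k^j)\|^2$ contributions (the term $\tfrac{\ell\eta^2}{2}\cdot 2\|\nabla f_\mu(x_k^j)\|^2$ combining with $-\eta\|\nabla f_\mu(x_k^j)\|^2$ into $-\eta(1-\ell\eta)\|\nabla f_\mu(x_k^j)\|^2$, and using $\tfrac{\ell\eta^2}{2}\cdot\tfrac{3\ell^2 d^2\mu^2}{2b}=\tfrac{3\ell^3\eta^2 d^2\mu^2}{4b}$, $\tfrac{\ell\eta^2}{2}\cdot 2\|\hat e_j\|^2=\ell\eta^2\|\hat e_j\|^2$, and $\tfrac{\ell\eta^2}{2}\cdot\tfrac{3d\ell^2}{b}=\tfrac{3d\ell^3\eta^2}{2b}$), this gives a per-step inequality which, after rearranging, isolates $\eta(1-\ell\eta)\E\|\nabla f_\mu(x_k^j)\|^2 + \eta\E\<\hat e_j,\nabla f_\mu(x_k^j)\>$ on the left against a telescoping $f_\mu$-difference plus the terms $\tfrac{3d\ell^3\eta^2}{2b}\E\|x_k^j-x_0^j\|^2$, $\tfrac{3\ell^3\eta^2 d^2\mu^2}{4b}$, and $\ell\eta^2\E\|\hat e_j\|^2$ on the right.

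Finally I would invoke the geometric-stopping identity of \cite{lei2017non} (the lemma stating $\E(D_N - D_{N+1}) = (\tfrac1\gamma-1)(D_0 - \E D_N)$ for $N\sim\mathrm{Geom}(\gamma)$): here $N_j\sim\mathrm{Geom}(\tfrac{B}{B+b/d})$, so $\tfrac1\gamma-1 = \tfrac{b/d}{B}$. Since $\mathcal I_{k+1}^j,\mathcal I_{k+2}^j,\dots$ are independent of $x_k^j$, one sets $k=N_j$, takes expectation over $N_j$ (using Fubini to exchange it with the inner expectations, exactly as in Lemma~\ref{lemma: ZO-SCSG-Coord-1}), substitutes $\tilde x_j = x_{N_j}^j$ and $\tilde x_{j-1}=x_0^j$, takes a further expectation over the past randomness, and multiplies through by $B$; this produces precisely the claimed bound. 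The only genuinely delicate point — and hence the main obstacle — is tracking the three nested layers of randomness (the mini-batch $\mathcal I_k^j$, the smoothing direction $u$, and the geometric index $N_j$) and justifying the Fubini exchange; everything else is a routine translation of the Option~I computation.
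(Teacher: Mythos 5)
Your proposal is correct and follows essentially the same route as the paper's proof: descent lemma applied to the $\ell$-smooth surrogate $f_\mu$, the identity $\E_u\E_{\mathcal I_k^j}v_k^j=\nabla f_\mu(x_k^j)+\hat e_j$, substitution of the second-moment bound from the preceding lemma, and then the geometric-stopping identity with Fubini followed by multiplication by $B$. Your side remark that $\hat e_j$ is $k$-independent here (unlike the coordinate case, where a term $\hat{\nabla}_{coord}f(x_k^j)-\nabla f(x_k^j)$ must be carried) is accurate and is indeed the one structural simplification over the Option I proof.
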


\begin{proof}
By Lemma~\ref{lemma: Lipschitz}, we have 
\begin{equation*}
    f_\mu(x_{k+1}^j) \le f_\mu(x_k^j) - \<x_{k+1}^j - x_k^j, \nabla f(x_k^j)\> + \frac{\ell}{2} \|x_{k+1}^j - x_k^j\|^2 \le f_\mu(x_k^j) - \eta \<v_k^j, \nabla f_\mu(x_k^j)\> + \frac{\ell \eta^2 }{2} \|v_k^j\|^2
\end{equation*}
Taking expectation over the above inequality we have 
\begin{align*}
    f_\mu(x_{k+1}^j) 
    \le & f_\mu(x_k^j) - \eta \<\nabla f_\mu (x_k^j) + \hat{e}_j, \nabla f_\mu(x_k^j)\> + \frac{\ell \eta^2 }{2} \E_u\E_{\mathcal{I}_k^j}\|v_k^j\|^2 \\
    = & f_\mu(x_k^j) -\eta \|\nabla f_\mu (x_k^j)\|^2 - \eta \<\hat{e}_j, \nabla f_\mu (x_k^j)\> + \frac{\ell \eta^2 }{2} \E_u\E_{\mathcal{I}_k^j}\|v_k^j\|^2 \\
    \le & f_\mu(x_k^j) -\eta (1-\ell \eta) \|\nabla f_\mu (x_k^j)\|^2 - \eta \<\hat{e}_j, \nabla f_\mu (x_k^j)\> \\
    & + \frac{3d \ell^3 \eta^2}{2b} \|x_k^j - x_0^j\|^2 + \frac{3 \ell^3 \eta^2 d^2 \mu^2}{4 b}  + \ell \eta^2 \|\hat{e}_j\|^2 
\end{align*}
Let $\E_j$ denote the expectation over $\mathcal{I}_0^k, \mathcal{I}_1^k, \dots,$ given $N_j$. Since $\mathcal{I}_{k+1}^j, \mathcal{I}_{k+2}^j, \dots$ are independent of $x_k^j$, the above inequality implies that
\begin{align*}
    & \eta (1-\ell \eta)\E_j \|\nabla f_\mu (x_k^j)\|^2 + \eta \E_j\<\hat{e}_j, \nabla f_\mu (x_k^j)\> \\
    \le & \E_j f_\mu(x_k^j) - \E_j f_\mu(x_{k+1}^j) + \frac{3d \ell^3 \eta^2}{2b} \E_j \|x_k^j - x_0^j\|^2 + \frac{3 \ell^3 \eta^2 d^2 \mu^2}{4 b}  + \ell \eta^2 \|\hat{e}_j\|^2
\end{align*}
Let $k=N_j$, by taking expectation to $N_j$ and using Fubini's theorem, we have 
\begin{align*}
    & \eta (1-\ell \eta)\E_{N_j}\E_j \|\nabla f_\mu (x_{N_j}^j)\|^2 + \eta \E_{N_j}\E_j\<\hat{e}_j, \nabla f_\mu (x_{N_j}^j)\> \\
    \le & \E_{N_j} \left( \E_j f_\mu(x_{N_j}^j) - \E_j f_\mu(x_{N_j+1}^j) \right) + \frac{3d \ell^3 \eta^2}{2b} \E_{N_j}\E_j \|x_{N_j}^j - x_0^j\|^2 + \frac{3 \ell^3 \eta^2 d^2 \mu^2}{4 b}  + \ell \eta^2 \|\hat{e}_j\|^2 \\
    = & \frac{b/d}{B} \E_{N_j}\left( f_\mu (x_0^j) - f_\mu(x_{N_j}^j) \right) + \frac{3d \ell^3 \eta^2}{2b} \E_j \E_{N_j} \|x_{N_j}^j - x_0^j\|^2 + \frac{3 \ell^3 \eta^2 d^2 \mu^2}{4 b}  + \ell \eta^2 \|\hat{e}_j\|^2
\end{align*}
then we have 
\begin{align*}
    & \eta (1-\ell \eta) B \E \|\nabla f_\mu (\tilde{x}_j)\|^2 + \eta B \E\<\hat{e}_j, \nabla f_\mu (\tilde{x}_j)\> \\
    \le & b/d \E \left( f_\mu (\tilde{x}_{j-1}) - f_\mu(\tilde{x}_j) \right) + \frac{3d \ell^3 \eta^2 B}{2b}  \E\|\tilde{x}_j - \tilde{x}_{j-1}\|^2 + \frac{3 \ell^3 \eta^2 d^2 \mu^2 B}{4 b}  + \ell \eta^2 B \E\|\hat{e}_j\|^2
    \end{align*}

\end{proof}

\begin{lemma}
\label{lemma: ZO-SCSG-rand-2}
Suppose $3 d^2 \ell^2 \eta^2 B< b^2$, then under Assumption~\ref{assum: basic},
\begin{align*} 
    & \left( \frac{b/d}{B} - \frac{3d \ell^2 \eta^2}{b}\right) \E \|\tilde{x}_j - \tilde{x}_{j-1}\|^2 + 2\eta \E \<\hat{e}_j, \tilde{x}_j - \tilde{x}_{j-1}\> \\
    \le & -2 \eta \E \<\nabla f_\mu (\tilde{x}_j), \tilde{x}_j - \tilde{x}_{j-1}\> + \frac{3 \ell^2 \eta^2 d^2 \mu^2}{2 b}  + 2 \eta^2\E\|\nabla f_\mu (\tilde{x}_j)\|^2 + 2 \eta^2 \E \|\hat{e}_j\|^2 
\end{align*}
\end{lemma}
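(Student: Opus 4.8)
The plan is to replay the argument that established Lemma~\ref{lemma: ZO-SCSG-Coord-2}, substituting the \ref{eq: RandGradEst}-based bounds for the \ref{eq: CoordGradEst}-based ones and tracking the smoothed function $f_\mu$ in place of $f$. First I would expand the squared distance to the epoch anchor along the inner-loop update $x_{k+1}^j = x_k^j - \eta v_k^j$:
\[
\E_u \E_{\mathcal{I}_k^j} \|x_{k+1}^j - x_0^j\|^2 = \|x_k^j - x_0^j\|^2 - 2\eta \< \E_u \E_{\mathcal{I}_k^j} v_k^j, x_k^j - x_0^j\> + \eta^2 \E_u \E_{\mathcal{I}_k^j}\|v_k^j\|^2 .
\]
Plugging in $\E_u\E_{\mathcal{I}_k^j} v_k^j = \nabla f_\mu(x_k^j) + \hat{e}_j$ together with the second-moment bound $\E_u\E_{\mathcal{I}_k^j}\|v_k^j\|^2 \le \frac{3d\ell^2}{b}\|x_k^j - x_0^j\|^2 + \frac{3\ell^2 d^2\mu^2}{2b} + 2\|\nabla f_\mu(x_k^j)\|^2 + 2\|\hat{e}_j\|^2$ established in the preceding lemma yields the one-step recursion
\[
\E_u\E_{\mathcal{I}_k^j}\|x_{k+1}^j - x_0^j\|^2 \le \Big(1 + \tfrac{3d\ell^2\eta^2}{b}\Big)\|x_k^j - x_0^j\|^2 - 2\eta\<\nabla f_\mu(x_k^j), x_k^j - x_0^j\> - 2\eta\<\hat e_j, x_k^j - x_0^j\> + \tfrac{3\ell^2 d^2\mu^2\eta^2}{2b} + 2\eta^2\|\nabla f_\mu(x_k^j)\|^2 + 2\eta^2\|\hat e_j\|^2 .
\]

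Next, writing $\E_j$ for the conditional expectation over the mini-batch and direction randomness given $N_j$, I would move the two inner products to the left, set $k = N_j$, and take expectation over $N_j$; the independence of the randomness at steps $k+1, k+2, \dots$ from $x_k^j$ justifies Fubini's theorem exactly as in the Option~\uppercase\expandafter{\romannumeral1} analysis. The random epoch length enters through the identity $\E_{N_j}(D_{N_j} - D_{N_j+1}) = \frac{b/d}{B}(D_0 - \E_{N_j} D_{N_j})$, valid for $N_j \sim \text{Geom}(\frac{B}{B+b/d})$, applied to $D_k = \|x_k^j - x_0^j\|^2$ with $D_0 = 0$; this collapses the telescoping pair $(1+\frac{3d\ell^2\eta^2}{b})\E\|x_{N_j}^j - x_0^j\|^2 - \E\|x_{N_j+1}^j - x_0^j\|^2$ to $\big(\frac{3d\ell^2\eta^2}{b} - \frac{b/d}{B}\big)\E\|x_{N_j}^j - x_0^j\|^2$. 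Substituting $x_{N_j}^j = \tilde x_j,\ x_0^j = \tilde x_{j-1}$, taking a further expectation over the randomness of the earlier epochs, and transposing the term $\frac{b/d}{B}\E\|\tilde x_j - \tilde x_{j-1}\|^2$ and the $\hat e_j$ inner product to the left gives precisely the claimed inequality; the hypothesis $3d^2\ell^2\eta^2 B < b^2$ is exactly what keeps the coefficient $\frac{b/d}{B} - \frac{3d\ell^2\eta^2}{b}$ nonnegative, so the rearrangement is meaningful.

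The expansions, the Cauchy--Schwarz-type splittings, and the invocation of the already-proved second-moment bound are routine. The step that demands the most care is the reduction over the geometric epoch length: one must apply the Geometric identity to the correct quantity, check the independence needed for Fubini, and make sure the success parameter is $\frac{B}{B+b/d}$ (not $\frac{B}{B+b}$ as in Option~\uppercase\expandafter{\romannumeral1}), so that the factor multiplying $\E\|\tilde x_j - \tilde x_{j-1}\|^2$ is $\frac{b/d}{B}$ rather than $\frac{b}{B}$. With that bookkeeping in place, collecting terms finishes the proof.
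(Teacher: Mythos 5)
Your proposal matches the paper's proof essentially step for step: expand the squared anchor distance along the inner-loop update, substitute $\E_u\E_{\mathcal{I}_k^j} v_k^j = \nabla f_\mu(x_k^j)+\hat e_j$ and the second-moment bound from the preceding lemma, set $k=N_j$, and collapse the telescope via the geometric identity with success parameter $\tfrac{B}{B+b/d}$ so that the coefficient becomes $\tfrac{b/d}{B}-\tfrac{3d\ell^2\eta^2}{b}$. You have also correctly identified that the hypothesis $3d^2\ell^2\eta^2 B< b^2$ is exactly the nonnegativity condition for that coefficient, so nothing is missing.
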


\begin{proof}
Since $x_{k+1}^j = x_k^j - \eta v_k^j$, we have 
\begin{align*}
    & \E_u \E_{\mathcal{I}_k^j} \|x_{k+1}^j - x_0^j\|^2 \\
    = & \|x_k^j - x_0^j\|^2 - 2 \eta \<\E_u \E_{\mathcal{I}_k^j} v_k^j, x_k^j - x_0^j\> + \eta^2 \E_u \E_{\mathcal{I}_k^j} \|v_k^j\|^2 \\
    = & \|x_k^j - x_0^j\|^2 - 2 \eta \<\nabla f_\mu (x_k^j), x_k^j - x_0^j\>  - 2\eta \<\hat{e}_j, x_k^j - x_0^j\> + \eta^2 \E_u \E_{\mathcal{I}_k^j} \|v_k^j\|^2 \\
    \le & \left(1 + \frac{3d \ell^2 \eta^2}{b}\right)\|x_k^j - x_0^j\|^2 - 2 \eta \<\nabla f_\mu (x_k^j), x_k^j - x_0^j\>  - 2\eta \<\hat{e}_j, x_k^j - x_0^j\> +  \frac{3 \ell^2 \eta^2 d^2 \mu^2}{2 b} \\
    & + 2 \eta^2\|\nabla f_\mu (x_k^j)\|^2 + 2 \eta^2 \|\hat{e}_j\|^2 
\end{align*}
Using the notation $\E_j$ we have
\begin{align*}
    & 2 \eta \E_j \<\nabla f_\mu (x_k^j), x_k^j - x_0^j\>  + 2\eta \E_j \<\hat{e}_j, x_k^j - x_0^j\> \\
    \le & \left(1 + \frac{3d \ell^2 \eta^2}{b}\right) \E_j \|x_k^j - x_0^j\|^2 - \E_j \|x_{k+1}^j - x_0^j\|^2 + \frac{3 \ell^2 \eta^2 d^2 \mu^2}{2 b} + 2 \eta^2\|\nabla f_\mu (x_k^j)\|^2 + 2 \eta^2 \|\hat{e}_j\|^2 
\end{align*}
Let $k=N_j$, by taking expectation to $N_j$ and using Fubini's theorem, we have 
\begin{align*}
    & 2 \eta \E_{N_j} \E_j \<\nabla f_\mu (x_{N_j}^j), x_{N_j}^j - x_0^j\>  + 2\eta \E_{N_j} \E_j \<\hat{e}_j, x_{N_j}^j - x_0^j\> \\
    \le & \left(1 + \frac{3d \ell^2 \eta^2}{b}\right)\E_{N_j} \E_j\|x_{N_j}^j - x_0^j\|^2 - \E_{N_j} \E_j \|x_{{N_j}+1}^j - x_0^j\|^2 + \frac{3 \ell^2 \eta^2 d^2 \mu^2}{2 b} \\
    & + 2 \eta^2\E_{N_j}\|\nabla f_\mu (x_{N_j}^j)\|^2 + 2 \eta^2 \|\hat{e}_j\|^2 \\
    = & \left( - \frac{b/d}{B} + \frac{3d \ell^2 \eta^2}{b}\right)\E_{N_j} \E_j\|x_{N_j}^j - x_0^j\|^2 + \frac{3 \ell^2 \eta^2 d^2 \mu^2}{2 b}  + 2 \eta^2\E_{N_j}\|\nabla f_\mu (x_{N_j}^j)\|^2 + 2 \eta^2 \|\hat{e}_j\|^2 
\end{align*}
Substituting $x_{N_j}^j, x_0^j$ by $\tilde{x}_{j}, \tilde{x}_{j-1}$ and take a further expectation to the past randomness, we get
\begin{align*} 
    & \left( \frac{b/d}{B} - \frac{3d \ell^2 \eta^2}{b}\right) \E \|\tilde{x}_j - \tilde{x}_{j-1}\|^2 + 2\eta \E \<\hat{e}_j, \tilde{x}_j - \tilde{x}_{j-1}\> \\
    \le & -2 \eta \E \<\nabla f_\mu (\tilde{x}_j), \tilde{x}_j - \tilde{x}_{j-1}\> + \frac{3 \ell^2 \eta^2 d^2 \mu^2}{2 b}  + 2 \eta^2\E\|\nabla f_\mu (\tilde{x}_j)\|^2 + 2 \eta^2 \E \|\hat{e}_j\|^2 
\end{align*}

\end{proof}

\begin{lemma}
\label{lemma: ZO-SCSG-rand-3}
\begin{equation*}
    \frac{b/d}{B} \E \<\hat{e}_j, \tilde{x}_j - \tilde{x}_{j-1}\> = -\eta \E\<\hat{e}_j, \nabla f_\mu(\tilde{x}_j)\> - \eta \E \|\hat{e}_j\|^2
\end{equation*}
\end{lemma}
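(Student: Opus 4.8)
The plan is to follow the proof of Lemma~\ref{lemma: ZO-SCSG-Coord-3} essentially verbatim, simplified by the fact that in \textbf{Option \uppercase\expandafter{\romannumeral2}} the epoch-level error $\hat{e}_j = v_j - \nabla f_\mu(x_0^j)$ does \emph{not} depend on the inner iteration index $k$. First I would introduce the auxiliary martingale-type quantity $M_k^j := \langle \hat{e}_j,\, x_k^j - x_0^j\rangle$, noting that $M_0^j = 0$ and that $M_{N_j}^j = \langle \hat{e}_j,\, \tilde{x}_j - \tilde{x}_{j-1}\rangle$ since $\tilde{x}_j = x_{N_j}^j$ and $\tilde{x}_{j-1} = x_0^j$.

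Next, using the inner update $x_{k+1}^j = x_k^j - \eta v_k^j$ and the measurability of $\hat{e}_j$ and $x_k^j$ with respect to the randomness up through iteration $k$, together with the already-established identity $\E_u\E_{\mathcal{I}_k^j} v_k^j = \nabla f_\mu(x_k^j) + \hat{e}_j$, I would compute the one-step drift
\begin{align*}
  \E_u\E_{\mathcal{I}_k^j}\bigl(M_{k+1}^j - M_k^j\bigr)
  = -\eta\,\bigl\langle \hat{e}_j,\, \E_u\E_{\mathcal{I}_k^j} v_k^j \bigr\rangle
  = -\eta\,\langle \hat{e}_j,\, \nabla f_\mu(x_k^j)\rangle - \eta\,\|\hat{e}_j\|^2 .
\end{align*}
Taking the inner expectation $\E_j$ over $\mathcal{I}_0^j,\mathcal{I}_1^j,\dots$ given $N_j$, the same relation holds with $M_k^j$, $\langle \hat{e}_j,\nabla f_\mu(x_k^j)\rangle$, and $\|\hat{e}_j\|^2$ replaced by their $\E_j$-expectations.

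Then I would set $k = N_j$ and invoke the geometric-distribution identity $\E(D_N - D_{N+1}) = (\tfrac{1}{\gamma}-1)(D_0 - \E D_N)$ recalled earlier (from \cite{lei2017non}), applied to the deterministic sequence $D_k := \E_j M_k^j$ with $\gamma = \tfrac{B}{B+b/d}$, so that $\tfrac{1}{\gamma}-1 = \tfrac{b/d}{B}$. Since $D_0 = 0$, a use of Fubini's theorem gives
\begin{align*}
  \frac{b/d}{B}\,\E_{N_j}\E_j M_{N_j}^j
  = -\eta\,\E_{N_j}\E_j\langle \hat{e}_j,\, \nabla f_\mu(x_{N_j}^j)\rangle - \eta\,\E_j\|\hat{e}_j\|^2,
\end{align*}
where the last term leaves the $N_j$-average because $\hat{e}_j$ is generated independently of $N_j$. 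Substituting $x_{N_j}^j = \tilde{x}_j$ and $x_0^j = \tilde{x}_{j-1}$, and taking a further expectation over the batch $\mathcal{I}_j$ and the randomness of the previous epochs, yields the claimed identity.

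The argument has no real obstacle; the only mildly delicate point is the bookkeeping of the nested expectations — inner iteration randomness versus the geometric index $N_j$ versus the batch $\mathcal{I}_j$ versus the history of earlier epochs — together with making sure the geometric identity is instantiated with success probability $\tfrac{B}{B+b/d}$ (the \textbf{Option \uppercase\expandafter{\romannumeral2}} value) rather than the $\tfrac{B}{B+b}$ used in \textbf{Option \uppercase\expandafter{\romannumeral1}}.
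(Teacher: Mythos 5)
Your proof follows the paper's argument exactly: define $M_k^j=\langle \hat e_j,\,x_k^j-x_0^j\rangle$, compute the constant part of the one-step drift via $\E_u\E_{\mathcal{I}_k^j} v_k^j=\nabla f_\mu(x_k^j)+\hat e_j$, then pass to the geometric index $N_j$ with success probability $\tfrac{B}{B+b/d}$. The only cosmetic difference is that you invoke the geometric-distribution identity by name, whereas the paper compresses that step into the phrase ``taking expectation with respect to $N_j$ and using Fubini's theorem''; your version is a touch more explicit but substantively identical.
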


\begin{proof}
Let $M_k^j = \<\hat{e}_j, x_k^j - x_0^j\>$. Then we have 
\begin{equation*}
    \E_{N_j} \<\hat{e}_j, \tilde{x}_j - \tilde{x}_{j-1}\> = \E_{N_j} M_{N_j}^j
\end{equation*}
Since $N_j$ is independent of $x_0^j, \hat{e}_j$, we have 
\begin{equation*}
    \E \<\hat{e}_j, \tilde{x}_j - \tilde{x}_{j-1}\> = \E M_{N_j}^j
\end{equation*}
Also we have $M_0^j = 0$. On the other hand,
\begin{align*}
    \E_u \E_{\mathcal{I}_k^j} \left(M_{k+1}^j - M_k^j\right) = & \E_u \E_{\mathcal{I}_k^j} \<\hat{e}_j, x_{k+1}^j - x_k^j\> = -\eta \<\hat{e}_j, \E_u \E_{\mathcal{I}_k^j} v_k^j\> \\
    = & -\eta \<\hat{e}_j, \nabla f_\mu(x_k^j)\> - \eta \|\hat{e}_j\|^2
\end{align*}
Using the notation $E_j$, we have 
\begin{equation*}
    \E_j \left(M_{k+1}^j - M_k^j\right) =  -\eta \<\hat{e}_j, \E_j\nabla f_\mu(x_k^j)\> - \eta \|\hat{e}_j\|^2
\end{equation*}
Let $k = N_j$, by taking the expectation with respect to $N_j$ and using Fubini's theorem, we have 
\begin{equation*}
    \frac{b/d}{B} \E_{N_j} M_{N_j}^j = -\eta \<\hat{e}_j, \E_{N_j}\E_j\nabla f_\mu(x_{N_j}^j)\> - \eta \|\hat{e}_j\|^2
\end{equation*}
Substituting $x_{N_j}^j, x_0^j$ by $\tilde{x}_{j}, \tilde{x}_{j-1}$ and take a further expectation to the past randomness, we get
\begin{equation*}
    \frac{b/d}{B} \E \<\hat{e}_j, \tilde{x}_j - \tilde{x}_{j-1}\> = -\eta \E\<\hat{e}_j, \nabla f_\mu(\tilde{x}_j)\> - \eta \E \|\hat{e}_j\|^2
\end{equation*}

\end{proof}

\begin{proof}[\textbf{Proof of Lemma~\ref{lemma: ZO-SCSG} (Option \uppercase\expandafter{\romannumeral2})}]

Multiplying Lemma~\ref{lemma: ZO-SCSG-rand-1} by 2, Lemma~\ref{lemma: ZO-SCSG-rand-2} by $\frac{b/d}{\eta}$ and summing them up, we have
\begin{align*}
    & 2\eta B(1 - \eta \ell - \frac{b/d}{B})\E \|\nabla f_\mu (\tilde{x}_j)\|^2 + \frac{b^3/d^2 - 3 \ell^2 \eta^2 b B - 3d \ell^3 \eta^3 B^2}{\eta b B} \E \|\tilde{x}_j - \tilde{x}_{j-1}\|^2 \\
    & + 2 \eta B \E\<\hat{e}_j, \nabla f_\mu (\tilde{x}_j)\> + 2b/d \<\hat{e}_j, \tilde{x}_j - \tilde{x}_{j-1}\> \\
    \le & -2 b/d \E \<\nabla f_\mu(\tilde{x}_j), \tilde{x}_j - \tilde{x}_{j-1}\> + 2 b/d \E \left(f_\mu (\tilde{x}_{j-1}) - f_\mu (\tilde{x}_j) \right) + \frac{3\ell^2 \eta^2 d^2 \mu^2}{2 b}\left(B \ell + \frac{b/d}{\eta}\right) \\
    & + (2 \ell \eta^2 B + 2\eta b/d) \E\|\hat{e}_j\|^2
\end{align*}
By Lemma~\ref{lemma: ZO-SCSG-rand-3},
\begin{equation*}
    2 \eta B \E\<\hat{e}_j, \nabla f_\mu (\tilde{x}_j)\> + 2 b/d \E \<\hat{e}_j, \tilde{x}_j - \tilde{x}_{j-1}\>  =  - 2 \eta B \E \|\hat{e}_j\|^2
\end{equation*}
So the above inequality can be simplified as 
\begin{align*}
    & 2\eta B(1 - \eta \ell - \frac{b/d}{B})\E \|\nabla f_\mu (\tilde{x}_j)\|^2 + \frac{b^3/d^2 - 3 \ell^2 \eta^2 b B - 3d \ell^3 \eta^3 B^2}{\eta b B} \E \|\tilde{x}_j - \tilde{x}_{j-1}\|^2 \\
    \le & -2 b/d \E \<\nabla f_\mu(\tilde{x}_j), \tilde{x}_j - \tilde{x}_{j-1}\> + 2 b/d \E \left(f_\mu (\tilde{x}_{j-1}) - f_\mu (\tilde{x}_j) \right) + \frac{3\ell^2 \eta^2 d^2 \mu^2}{2 b}\left(B \ell+ \frac{b/d}{\eta}\right) \\
    & + (2 \ell \eta^2 B + 2\eta b/d + 2\eta B) \E\|\hat{e}_j\|^2
\end{align*}
Using the fact that $2\<a,b\>\le \beta\|a\|^2 + \frac{1}{\beta}\|b\|^2$ for any $\beta>0$, we have
\begin{align*}
    & -2 b/d \E \<\nabla f_\mu(\tilde{x}_j), \tilde{x}_j - \tilde{x}_{j-1}\> \\
    \le & \frac{\eta b B}{b^3/d^2 - 3 \ell^2 \eta^2 b B - 3d \ell^3 \eta^3 B^2} (\frac{b}{d})^2 \E \|\nabla f_\mu(\tilde{x}_j)\|^2 + \frac{b^3/d^2 - 3 \ell^2 \eta^2 b B - 3d \ell^3 \eta^3 B^2}{\eta b B} \E \|\tilde{x}_j - \tilde{x}_{j-1}\|^2
\end{align*}
Then we conclude that 
\begin{align*}
    &\frac{\eta \ell B}{b/d} \left( 2- 2\eta \ell - 2\frac{b/d}{B} - \frac{b^3 /d^2}{b^3/d^2 - 3 \ell^2 \eta^2 b B - 3d \ell^3 \eta^3 B^2}\right) \E \|\nabla f_\mu(\tilde{x}_j)\|^2 \\
    \le & 2 \ell \E(f_\mu (\tilde{x}_{j-1}) - f_\mu (\tilde{x}_j)) + \frac{3\ell^2 \eta^2 d^2 \mu^2}{2 b}\left(\frac{B\ell^2}{b/d} + \frac{\ell}{\eta}\right) + \frac{2\eta \ell B}{b/d} \left( 1 + \eta \ell+ \frac{b/d}{B} \right) \E\|\hat{e}_j\|^2
\end{align*}
Let $\eta \ell = \gamma \left( \frac{b/d}{B} \right)^{\frac{2}{3}}$ and $b/d \ge 1, \frac{B}{b/d} \ge 8 \ge \frac{8}{b/d}$,
\begin{align*}
    b^3/d^2 - 3 \ell^2 \eta^2 b B - 3d \ell^3 \eta^3 B^2 = & b^3/d^2 \left( 1 - 3\gamma^2 (b/d)^{-1} (\frac{b/d}{B})^{\frac{1}{3}} - 3 \gamma^3 (b/d)^{-1} \right) \\
    \ge &  b^3/d^2 \left( 1 - \frac{3}{2}\gamma^2 - 3\gamma^3  \right)
\end{align*}
Then the above inequality can be simplified as 
\begin{align*}
    & \gamma \left(\frac{B}{b/d}\right)^{\frac{1}{3}} \left(2 - 2\gamma \left( \frac{b/d}{B} \right)^{\frac{2}{3}} - 2  \frac{b/d}{B} - \frac{1}{1 - \frac{3}{2}\gamma^2 - 3\gamma^3} \right) \E \|\nabla f_\mu (\tilde{x}_j)\|^2 \\
    \le & 2 \ell \E\left(f_\mu (\tilde{x}_{j-1}) - f_\mu (\tilde{x}_j) \right) + \frac{3\ell^2 d^2 \mu^2}{2b} \gamma \left( (\frac{b/d}{B})^{\frac{1}{3}} \gamma + (\frac{b/d}{B})^{\frac{2}{3}} \right) \\
    & + 2\gamma \left(\frac{B}{b/d}\right)^{\frac{1}{3}} (1+\gamma (\frac{b/d}{B})^{\frac{2}{3}} + \frac{b/d}{B}) \E \|\hat{e}_j\|^2
\end{align*}
Since $\frac{B}{b/d} \ge 8, \gamma \le \frac{1}{8}$, we have 
\begin{align*}
    2 - 2\gamma \left( \frac{b/d}{B} \right)^{\frac{2}{3}} - 2  \frac{b/d}{B} - \frac{1}{1 - \frac{3}{2}\gamma^2 - 3\gamma^3} \ge & 2 - \frac{\gamma}{2} - \frac{1}{4} - \frac{1}{1 - \frac{3}{2}\gamma^2 - 3\gamma^3} \ge 0.65 \\
    (\frac{b/d}{B})^{\frac{1}{3}} \gamma  + (\frac{b/d}{B})^{\frac{2}{3}}  \le & \frac{\gamma}{2}  + \frac{1}{4}  \le \frac{5}{16}  \\
    1+\gamma (\frac{b/d}{B})^{\frac{2}{3}} + \frac{b/d}{B} \le & 1 + \frac{\gamma}{4} + \frac{1}{8} \le \frac{37}{32}
\end{align*}
Thus we have 
\begin{align*}
    \left( \frac{B}{b/d} \right)^{\frac{1}{3}} \E\|\nabla f_\mu (\tilde{x}_j)\|^2  \le \frac{4 \ell}{\gamma} \E\left(f_\mu (\tilde{x}_{j-1}) - f_\mu (\tilde{x}_j) \right) + \frac{\ell^2 d^2 \mu^2 }{ b} + 4\left( \frac{B}{b/d} \right)^{\frac{1}{3}} \E\|\hat{e}_j\|^2
\end{align*}
Using Lemma~\ref{lemma: ZO-SCSG-Coord-4}, we have 
\begin{align*}
    & \E \|\hat{e}_j\|^2 \\
    = & \E \|v_j - \nabla f_\mu(x_0^j)\|^2 =  \E \|\hat{\nabla}_{coord} f_{\mathcal{I}_j} (x_0^j) \|^2 \\
    \le & 3\E \|\hat{\nabla}_{coord} f_{\mathcal{I}_j} (x_0^j) - \hat{\nabla}_{coord} f(x_0^j)\|^2 + 3\E \|\hat{\nabla}_{coord} f(x_0^j) - \nabla f(x)\|^2 + 3\E\|\nabla f(x_0^j) - \nabla f_\mu(x_0^j) \|^2 \\
    \le & \frac{9 (2\ell^2 d \mu^2 + \sigma^2)}{B} + 3 \ell^2 d \mu^2 + \frac{3 \ell^2 d^2 \mu^2}{4}
\end{align*}
Using Lemma~\ref{lemma: RandGradEst}, we have 
\begin{align*}
    & \E\left(f_\mu (\tilde{x}_{j-1}) - f_\mu (\tilde{x}_j) \right) \le \E \left( f(\tilde{x}_{j-1}) - f(\tilde{x}_j) \right) + \ell \mu^2 \\
    & \E \|\nabla f_\mu (\tilde{x}_j)\|^2 \ge \frac{1}{2} \E\|\nabla f(\tilde{x}_j)\|^2 - \frac{1}{2} \E \|\nabla f(\tilde{x}_j) - \nabla f_\mu (\tilde{x}_j)\|^2 \ge \frac{1}{2} \E\|\nabla f(\tilde{x}_j)\|^2 - \frac{\ell^2 d^2 \mu^2}{8}
\end{align*}
Thus we obtain
\begin{align*}
    & \left( \frac{B}{b/d} \right)^{\frac{1}{3}} \left( \frac{1}{2} \E\|\nabla f(\tilde{x}_j)\|^2 - \frac{\ell^2 d^2 \mu^2}{8} \right) \\
    \le & \frac{4\ell }{\gamma} \left( \E \left( f(\tilde{x}_{j-1}) - f(\tilde{x}_j) \right) + \ell \mu^2 \right) + \frac{\ell^2 d^2 \mu^2 }{ b} + 4\left( \frac{B}{b/d} \right)^{\frac{1}{3}} \left( \frac{9 (2\ell^2 d \mu^2 + \sigma^2)}{B} + 3 \ell^2 d \mu^2 + \frac{3 \ell^2 d^2 \mu^2}{4} \right) \\
    \le & \frac{4\ell }{\gamma} \left( \E \left( f(\tilde{x}_{j-1}) - f(\tilde{x}_j) \right) + \ell \mu^2 \right) + \frac{\ell^2 d^2 \mu^2 }{ b} + 4\left( \frac{B}{b/d} \right)^{\frac{1}{3}} \left( \frac{9 (2\ell^2 d \mu^2 + \sigma^2)}{B} + 4 \ell^2 d^2 \mu^2  \right)
\end{align*}
Finally we get
\begin{align*}
    \left( \frac{B}{b/d} \right)^{\frac{1}{3}} \E\|\nabla f(\tilde{x}_j)\|^2 \le \frac{8\ell}{\gamma} \E \left( f(\tilde{x}_{j-1}) - f(\tilde{x}_j) \right) + \frac{72 \sigma^2}{(b/d)^{\frac{1}{3}} B^{\frac{2}{3}}} + c \left( \frac{B}{b/d} \right)^{\frac{1}{3}} \ell^2 d^2 \mu^2
\end{align*}
where $c$ is a sufficient large constant. Telescope the sum in $j=1, \dots, T$, and using the definition of $\tilde{x}_T^*$, we finally get
\begin{equation*}
    \E \|\tilde{x}_T^*\|^2 \le \frac{\frac{8\ell}{\gamma}}{T \left( \frac{B}{b/d}\right)^{\frac{1}{3}}} \cdot \E\left( f(\tilde{x}_0 ) - f(\tilde{x}_T) \right) + \frac{72 \sigma^2}{B} + c \ell^2 d^2 \mu^2
\end{equation*}

\end{proof}

    

\subsection{Proof of Second-Order Stationary Point (\texorpdfstring{\textbf{Option \uppercase\expandafter{\romannumeral2}}}{Lg})}

\begin{proof}[\textbf{Proof of Theorem~\ref{thm: ZO-SCSG-NCF}}]
Let $N_1$ and $N_2$ be the number of times we reach Line 7 and 9 of Algorithm~\ref{alg: ZO-SCSG-NCF}. From Lemma~\ref{lemma: ZO-SCSG} of ZO-SCSG we know that for one epoch with size $B = \max\{1, \frac{1152 \sigma^2}{\epsilon^2}\} $, mini-batch size $b\ge 1$ and the smoothing parameter $\mu = \frac{\epsilon}{4\sqrt{c}\ell d}$, we have 
\begin{equation*}
    \E \|x_{t+1}\|^2 \le \frac{8 \ell }{\gamma} \left(\frac{b/d}{B}\right)^{\frac{1}{3}} \E \left( f(x_t) - f(x_{t+1}) \right) + \frac{\epsilon^2}{8}
\end{equation*}
Then, if $\|\nabla f(x_{t+1})\| \ge 
\frac{\epsilon}{2}$, we have $x_{t+1} = x_{t+1}$; if $v = \bot$, we set $x_{t+1} = x_{t+1}$ for  if $v \neq \bot$, we have $f(x_{t+1}) - \E f(x_{t+1}) \ge \frac{\delta^3}{12 \rho^2}$ (here the expectation is taken on the randomness of sign of $v$). Thus we have 
\begin{equation*}
    \frac{\gamma B^{\frac{1}{3}}}{ 8 \ell (b/d)^{\frac{1}{3}} } \E \left[\sum_{t=0}^{K-1} \left( \|\nabla f(x_{t+1} )\|^2 - \frac{\epsilon^2}{8} \right) \right] + \frac{\delta^3 }{12 \rho^2 } \E [N_2] \le \Delta_f
\end{equation*}
On one hand, since we have chosen $K$ such that $K \ge \Omega\left( \frac{\ell (b/d)^{\frac{1}{3}} \Delta_f}{\epsilon^2 B^{\frac{1}{3}}}\right) = \Omega \left( \frac{\ell (b/d)^{\frac{1}{3}} \Delta_f}{\epsilon^2 (1+\frac{\sigma^2}{\epsilon^2})^{\frac{1}{3}}} \right)$, then by Markov's inequality, with probability at least $\frac{5}{6}$, it satisfies $\sum_{t=0}^{K-1} \|\nabla f(x_{t+1})\|^2 \le \frac{\epsilon}{4} K$. As a sequence, at least half of the indices $t=0, \dots, K-1$ will satisfy $\|\nabla f(x_{t+1})\| \le \frac{\epsilon}{2}$, which means that $N_1 \ge \frac{K}{2}$.

On the other hand, we have $\frac{\delta^3}{12 \rho^2} \E[N_2] \le \Delta_f + \frac{K \gamma B^{\frac{1}{3}} \epsilon^2}{64 \ell (b/d)^{\frac{1}{3}}}$. Since $K \ge \Omega\left( \frac{\ell (b/d)^{\frac{1}{3}} \Delta_f}{\epsilon^2 B^{\frac{1}{3}}}\right) = \Omega \left( \frac{\ell (b/d)^{\frac{1}{3}} \Delta_f}{\epsilon^2 (1+\frac{\sigma^2}{\epsilon^2})^{\frac{1}{3}}} \right)$, we have $ \E[N_2] \le \frac{K \gamma B^{\frac{1}{3}} \epsilon^2 \rho^2}{\ell \delta^3 (b/d)^{\frac{1}{3}}}$. As long as $B \le \mathcal{O}\left( \frac{\ell^3 \delta^9 b/d}{\epsilon^6 \rho^6} \right)$, or equivalently $b/d \ge \Omega\left( \frac{B\epsilon^6 \rho^6}{\delta^9 \ell^3} \right)$, we have $\E[N_2] \le \frac{K}{12}$. Therefore, with provability at least $\frac{5}{6}$, it satisfies $N_2 \le 
\frac{k}{2}$.

Since $N_1 \ge N_2$, this means with probability at least $\frac{2}{3}$ the algorithm must terminate and output some $x_{t+1}$ in an iteration.

Finally, the per-iteration complexity of Algorithm~\ref{alg: ZO-SCSG-NCF} is dominated by $\tilde{\mathcal{O}}(d\cdot B + \frac{B}{b/d} \cdot b ) = \tilde{\mathcal{O}}(dB)$ function queries for \hyperref[alg: ZO-SCSG]{ZO-SCSG} and $\tilde{\mathcal{O}}(dB)$ function queries for estimating $\|\nabla f(x_{t+1})\|$, as well as $\tilde{\mathcal{O}}(\frac{\ell^2}{\delta^2})$ invoking Algorithm~\ref{alg: ZO-NCF-Online}. Thus the total function query complexity is 
\begin{align*}
    \tilde{\mathcal{O}}\left( K \left( d B+ d \frac{\ell^2}{\delta^2}  \right) \right) &= \tilde{\mathcal{O}}\left(  \left( \frac{\ell (b/d)^{\frac{1}{3}} \Delta_f }{\epsilon^2 B^{\frac{1}{3}}} \right) \left(d B+ d\frac{\ell^2}{\delta^2} \right) \right) \\
    &= \tilde{\mathcal{O}}\left( d \left( \frac{\ell \max\{1, \frac{B^{\frac{1}{3}}\epsilon^2 \rho^2}{\delta^3 \ell}\} \Delta_f }{\epsilon^2 B^{\frac{1}{3}}} \right) \left( B+ \frac{\ell^2}{\delta^2} \right) \right) \\
    &= \tilde{\mathcal{O}}\left( d \left( \frac{\ell (1 + \frac{B^{\frac{1}{3}}\epsilon^2 \rho^2}{\delta^3 \ell}) \Delta_f }{\epsilon^2 B^{\frac{1}{3}}} \right) \left( B+ \frac{\ell^2}{\delta^2} \right) \right) \\
    &= \tilde{\mathcal{O}}\left( d \left( \frac{\ell  \Delta_f }{\epsilon^2 B^{\frac{1}{3}}} + \frac{\rho^2 \Delta_f}{\delta^3} \right) \left( B+ \frac{\ell^2}{\delta^2} \right) \right) \\
    &= \tilde{\mathcal{O}}\left( d \left( \frac{\ell  \Delta_f }{\epsilon^2 \max\{1, \frac{\sigma^2}{\epsilon^2}\}^{\frac{1}{3}}} + \frac{\rho^2 \Delta_f}{\delta^3} \right) \left( \max\{1, \frac{\sigma^2}{\epsilon^2}\} + \frac{\ell^2}{\delta^2} \right) \right) \\
    &= \tilde{\mathcal{O}}\left( d \left( \frac{\ell  \Delta_f }{\epsilon^\frac{4}{3} \sigma^{\frac{2}{3}} } + \frac{\rho^2 \Delta_f}{\delta^3} \right) \left( \frac{\sigma^2 }{\epsilon^2} + \frac{\ell^2}{\delta^2} \right) + d \frac{\ell \Delta_f}{\epsilon^2} \frac{\ell^2}{\delta^2}  \right)
\end{align*}

\end{proof}

\section{Applying Zeroth-Order Negative Curvature Finding to ZO-SPIDER}

In this section, we apply ZO-NCF-Online to ZO-SPIDER to turn it into a local minima finding algorithm and propose ZO-SPIDER-NCF in Algorithm~\ref{alg: ZO-SPIDER-NCF}. As a by-product, we also propose a zeroth-order variant of the SPIDER method in Appendix G that  can converge to an $\epsilon$-approximate FOSP with high probability rather than expectation. Using the same technique as in SPIDER-SFO$^{+}$ \cite{fang2018spider}, that is, instead of moving in a large single step with size $\delta/\rho$ along the approximate negative curvature direction as in ZO-SGD-NCF and ZO-SCSG-NCF, we can split it into $\delta/(\rho \eta)$ equal length mini-steps with size $\eta$. As a result, we can maintain the SPIDER estimates and improve the so-called  non-improvable coupling term $\frac{1}{\delta^3 \epsilon^2}$ by a fact of $\delta$.

\begin{algorithm}[htb]
	\caption{ZO-SPIDER-NCF}
	\label{alg: ZO-SPIDER-NCF}
	\renewcommand{\algorithmicrequire}{\textbf{Input:}} 
	\renewcommand{\algorithmicensure}{\textbf{Output:}}
	\begin{algorithmic}[1]
		\Require Function $f$, starting point $x_0$, $\epsilon>0$ and $\delta>0$.
		\For{$j=0$ to $J$}
	    \State $w_1 \gets$ \hyperref[alg: ZO-NCF-Online]{ZO-NCF-Online} $(f, x_k, \delta,\frac{1}{16J})$
	    \State Randomly flip a sign, and set $w_2 = \pm \eta w_1$
	    \For{$k$ to $k+ \mathscr{K}$}
	        \If{k \text{mod} q = 0}
	        \State Sample $\mathcal{S}_1$ from $[n]$ without replacement, $v_k = \hat{\nabla }_{coord} f_{\mathcal{S}_1} (x_k)$
	        \Else
	        \State Sample $\mathcal{S}_2$ from $[n]$ with replacement, 
	        \State  $v_k = \hat{\nabla}_{coord} f_{\mathcal{S}_2}(x_k) - \hat{\nabla}_{coord} f_{\mathcal{S}_2}(x_{k-1}) + v_{k-1}$
	        \EndIf
	        \If{$w_1 \neq \bot$}
	        \State $x_{k+1} = x_k - w_2$
	        \Else 
	            \If{$\|v_k\| \le 2 \tilde{\epsilon}$}
	            \State \Return $x_k$
	            \EndIf
	            \State $x_{k+1} = x_k - \eta (v_k /\|v_k\|)$
	        \EndIf
	        
	    \EndFor
	    
		\EndFor
	\end{algorithmic}
\end{algorithm}

\begin{theorem}
\label{thm: ZO-SPIDER-NCF-full}
Under  Assumption~\ref{assum: basic-high-probability}, 
if we set $\tilde{\epsilon} =  10 \epsilon \log (128(K_0+1)) ,  |\mathcal{S}_1| = \frac{16\sigma^2}{\epsilon^2},  |\mathcal{S}_2| = \frac{16 \sigma}{\epsilon n_0},  \eta = \frac{\epsilon}{\ell n_0},  q = \frac{\sigma n_0}{\epsilon},
\mu = \left( \frac{\epsilon \cdot \tilde{\epsilon}}{8q^2 \rho^2 d}\right)^{\frac{1}{4}} = \tilde{O}(\frac{\epsilon}{d^{1/4}}),  \mathscr{K} = \frac{\delta \ell n_0}{\rho \epsilon}$,
where $n_0 \in [1, \frac{2 \sigma}{\epsilon}]$, then with probability at least $\frac{3}{4}$, Algorithm~\ref{alg: ZO-SPIDER-NCF} outputs $x_k$ with $j\le J = 8 \left(\left\lfloor \max \left( \frac{12 \rho^2 \Delta_f}{\delta^3}, \frac{4 \rho \Delta_f}{\delta \epsilon} \right) \right\rfloor + 1 \right), k\le K_0 = J  \mathscr{K}$ satisfying $\|\nabla f(x_k)\| \le 3 \tilde{\epsilon}, \lambda_{min} (\nabla^2 f(x_k)) \ge -2\delta$ with $\tilde{\epsilon} = 10 \epsilon \log(128 (K_0+1)) =\tilde{\mathcal{O}}(\epsilon)$. The total function query complexity is bounded by 
\begin{equation*}
    \tilde{\mathcal{O}} \left( d \left( \frac{\sigma \ell \Delta_f}{\epsilon^3} + \frac{\sigma \ell \rho \Delta_f }{\epsilon^2 \delta^2} + \frac{\ell^2 \rho \Delta_f}{\delta^3 \epsilon} + \frac{\ell^2 \rho^2 \Delta_f}{\delta^5} + \frac{\sigma^2}{\epsilon^2} + \frac{\sigma \delta \ell}{\rho \epsilon^2} + \frac{\ell^2 }{\delta^2} \right) \right).
\end{equation*}

\end{theorem}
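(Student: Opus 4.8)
The plan is to run a potential-decrease argument over the $J+1$ outer iterations of Algorithm~\ref{alg: ZO-SPIDER-NCF}, each of which is either an \emph{NC phase} (when \hyperref[alg: ZO-NCF-Online]{ZO-NCF-Online} returns $w_1\neq\bot$), consisting of $\mathscr{K}$ straight-line mini-steps $x_{k+1}=x_k\mp\eta w_1$, or a \emph{gradient phase} (when $w_1=\bot$), consisting of at most $\mathscr{K}$ normalized SPIDER steps $x_{k+1}=x_k-\eta v_k/\|v_k\|$. First I would establish the central invariant: with probability $1-o(1)$, $\|v_k-\nabla f(x_k)\|\le\tilde\epsilon$ for every $k\le K_0$. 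Writing $k_0$ for the last restart index $\le k$, decompose $v_k-\nabla f(x_k)$ into the restart term $\hat{\nabla}_{coord}f_{\mathcal{S}_1}(x_{k_0})-\nabla f(x_{k_0})$ and the telescoping martingale $\sum_{i=k_0+1}^{k}\big[\big(\hat{\nabla}_{coord}f_{\mathcal{S}_2}(x_i)-\hat{\nabla}_{coord}f_{\mathcal{S}_2}(x_{i-1})\big)-\big(\nabla f(x_i)-\nabla f(x_{i-1})\big)\big]$. Each increment carries a zeroth-order bias of order $\rho\sqrt d\mu^2$ (Lemma~\ref{lemma: coord-square-bound}) plus a mean-zero stochastic part of a.s.\ magnitude $\lesssim\ell\eta/\sqrt{|\mathcal{S}_2|}$, using Assumption~\ref{assum: basic-high-probability} and the fact that \emph{every} step — NC or gradient — has length exactly $\eta$ (since $\|w_1\|=1$, both $\mp\eta w_1$ and the normalized gradient step move by $\eta$). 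A vector Azuma/Bernstein inequality bounds the accumulated stochastic error over an epoch of length $q$ by $O\big(\sqrt q\,\ell\eta/\sqrt{|\mathcal{S}_2|}\cdot\log K_0\big)$, which is $O(\epsilon\log K_0)$ for the prescribed $q,\eta,|\mathcal{S}_2|,|\mathcal{S}_1|$; the accumulated bias is $q\rho\sqrt d\mu^2=O(\sqrt{\epsilon\tilde\epsilon})$ by the choice of $\mu$; and the restart term is $O(\epsilon\log K_0)$ by $|\mathcal{S}_1|=16\sigma^2/\epsilon^2$. Taking $\tilde\epsilon=10\epsilon\log(128(K_0+1))$ absorbs all of these together with the union bound over $k\le K_0$.

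Given this invariant, the per-phase estimates are routine. In a gradient phase, whenever $\|v_k\|>2\tilde\epsilon$ we get $\langle\nabla f(x_k),v_k/\|v_k\|\rangle\ge\|v_k\|-\tilde\epsilon\ge\tilde\epsilon$, so $\ell$-smoothness gives $f(x_{k+1})\le f(x_k)-\eta\tilde\epsilon+\tfrac{\ell\eta^2}{2}\le f(x_k)-\tfrac{\eta\tilde\epsilon}{2}$ by $\eta=\epsilon/(\ell n_0)$. If the phase halts at some $x_k$ with $\|v_k\|\le2\tilde\epsilon$, then $\|\nabla f(x_k)\|\le3\tilde\epsilon$; moreover, the phase began at a point $\bar x$ with $\nabla^2 f(\bar x)\succeq-\delta\I$ (the $w_1=\bot$ guarantee of Theorem~\ref{thm: ZO-NCF-Online}) and $\|x_k-\bar x\|\le\mathscr{K}\eta=\delta/\rho$, so the $\rho$-Hessian-Lipschitz property yields $\nabla^2 f(x_k)\succeq-2\delta\I$ — the output point. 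In an NC phase, Theorem~\ref{thm: ZO-NCF-Online} gives a unit $w_1$ with $w_1^\T\nabla^2 f(\bar x)w_1\le-\delta/2$, and the $\mathscr{K}$ mini-steps reach $\bar x\mp(\delta/\rho)w_1$; averaging a third-order Taylor expansion over the random sign (the computation behind Lemma~\ref{lemma: negative-curvature-reduction}) gives $\mathbb{E}[f(x_{k+\mathscr{K}})\mid\bar x]\le f(\bar x)-\delta^3/(12\rho^2)$. The SPIDER updates performed during an NC phase do not alter the trajectory, only the bookkeeping of $v_k$, which is why the invariant continues to hold into the next phase.

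Next I would carry out the accounting. A completed gradient phase costs $\ge\mathscr{K}\cdot\tfrac{\eta\tilde\epsilon}{2}=\delta\tilde\epsilon/(2\rho)$ of function value; an NC phase costs $\ge\delta^3/(12\rho^2)$ in conditional expectation. Treating $f(x)-f(x^*)\ge0$ as a supermartingale at the phase level and applying a one-sided Azuma / stopping-time argument shows that with constant probability the algorithm terminates within $J=8\big(\lfloor\max(12\rho^2\Delta_f/\delta^3,\,4\rho\Delta_f/(\delta\tilde\epsilon))\rfloor+1\big)$ phases, hence within $K_0=J\mathscr{K}$ inner iterations, at a point satisfying the two conditions above. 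A union bound over this event, the $J+1$ failure events of \hyperref[alg: ZO-NCF-Online]{ZO-NCF-Online} (each of probability $1/(16J)$), and the SPIDER-invariant failure event leaves success probability $\ge3/4$. Finally, the query cost is $(J+1)$ calls to \hyperref[alg: ZO-NCF-Online]{ZO-NCF-Online} at $\tilde{\mathcal{O}}(d\ell^2/\delta^2)$ each, plus $K_0$ SPIDER inner iterations at $\tilde{\mathcal{O}}(d|\mathcal{S}_2|)$ each and $K_0/q$ restarts at $\tilde{\mathcal{O}}(d|\mathcal{S}_1|)$ each; substituting $|\mathcal{S}_1|=16\sigma^2/\epsilon^2$, $|\mathcal{S}_2|=16\sigma/(\epsilon n_0)$, $q=\sigma n_0/\epsilon$, $\mathscr{K}=\delta\ell n_0/(\rho\epsilon)$, $J=\tilde{\mathcal{O}}(\rho^2\Delta_f/\delta^3+\rho\Delta_f/(\delta\epsilon))$ and collecting terms produces the stated complexity (the split into $\delta/(\rho\eta)$ mini-steps is exactly what converts the non-improvable coupling term $1/(\delta^3\epsilon^2)$ into $1/(\delta^2\epsilon^2)$).

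The hard part will be the first step: proving the SPIDER estimator invariant \emph{uniformly across phase boundaries}. The martingale threads through restarts, gradient steps and NC mini-steps alike, so one must verify that the per-step displacement is genuinely $\eta$ in all three regimes, that the a.s.\ bounds from Assumption~\ref{assum: basic-high-probability} transfer to the \emph{differences} $\hat{\nabla}_{coord}f_{\mathcal{S}_2}(x_i)-\hat{\nabla}_{coord}f_{\mathcal{S}_2}(x_{i-1})$ (not just to the gradients), and that the accumulated zeroth-order bias over an epoch stays below $\tilde\epsilon$ for the prescribed $\mu=\Theta(\epsilon/d^{1/4})$. A secondary difficulty is turning the ``expectation over the random sign'' in the NC phases into a genuine high-probability statement, which is what forces the constant ($3/4$ rather than $1-p$) confidence and the supermartingale stopping argument.
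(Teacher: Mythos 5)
Your proof follows the paper's structure closely: the SPIDER estimator invariant is established uniformly across phases via a martingale concentration argument, exploiting that every step (NC or gradient) has length exactly $\eta$; per-phase descent estimates come from $\ell$-smoothness (gradient phase) and the signed third-order Taylor expansion behind Lemma~\ref{lemma: negative-curvature-reduction} (NC phase); and the second-order condition at the output point is transferred by $\rho$-Hessian-Lipschitz continuity over a trajectory of length at most $\mathscr{K}\eta=\delta/\rho$ from the last negative-curvature check — all of which matches the paper's Lemmas~\ref{lemma: event-3} and~\ref{lemma: sceond-order-stationary-point-high-probability}.

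The one place you would hit a wall is the ``one-sided Azuma'' mentioned for the phase-level potential. Azuma--Hoeffding requires almost-surely bounded martingale increments, but the change in $f$ over an NC phase is \emph{not} a priori bounded: $\|\nabla f\|$ at the start of an NC phase can be arbitrarily large (the algorithm enters the NC branch based only on the sign of the smallest Hessian eigenvalue, not on gradient size), so the displacement $\pm(\delta/\rho)w_1$ can produce an uncontrolled one-sided increase in $f$; only the \emph{expectation} over the random sign is controlled. The paper circumvents this entirely by working with the non-negative potential $\bigl(f(x_{j\mathscr{K}})-f^*\bigr)\mathbb{I}_{\mathcal{H}^4_{j\mathscr{K}}}$ over the nested good events $\mathcal{H}^4_0\supseteq\mathcal{H}^4_{\mathscr{K}}\supseteq\cdots$, telescoping \emph{conditional expectations} (never a high-probability per-phase bound) and invoking non-negativity to deduce $\Pr(\mathcal{H}^4_{K_0})\le 1/8$, then a union bound with $\Pr(\mathcal{H}^3_{K_0})\ge 15/16$ gives the $3/4$ confidence. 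Your ``stopping-time'' and ``$f(x)-f^*\ge0$'' intuitions point at exactly this Markov-with-indicators trick, so you have the right ingredients; but if you literally try Azuma at the phase scale you will need an extra, unavailable assumption bounding $\|\nabla f\|$ along the trajectory.
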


\begin{remark}
We can boost the confidence of Theorem~\ref{thm: ZO-SPIDER-NCF} from $3/4$ to $1-p$ by running $\log 1/p$ copies of \hyperref[alg: ZO-SPIDER-NCF]{ZO-SPIDER-NCF}
\end{remark}

\begin{algorithm}[htb]
	\caption{ZO-SPIDER-Coord (For convergence rates in high probability)}
	\label{alg: ZO-SPIDER}
	\renewcommand{\algorithmicrequire}{\textbf{Input:}} 
	\renewcommand{\algorithmicensure}{\textbf{Output:}}
	\begin{algorithmic}[1]
		\Require function $f$, starting point $x_0$, $\epsilon>0$.
	    
	    \For{$k = 0$ to $K$}
	        \If{k \text{mod} q = 0}
	        \State Sample $\mathcal{S}_1$ from $[n]$, $v_k = \hat{\nabla }_{coord} f_{\mathcal{S}_1} (x^k)$
	        \Else
	        \State  Sample $\mathcal{S}_2$ from $[n]$ with replacement,
	        \State $v_k = \hat{\nabla}_{coord} f_{\mathcal{S}_2}(x^k) - \hat{\nabla}_{coord} f_{\mathcal{S}_2}(x^{k-1}) + v_{k-1}$
	        \EndIf

	        \If{$\|v_k\| \le 2 \tilde{\epsilon}$}
	        \State \Return $x_k$
	        \EndIf
	        \State $x_{k+1} = x_k - \eta (v_k /\|v_k\|)$

	    \EndFor
	    
        \Ensure $x_K$ \Comment{this line is not reached with high probability}	   
	\end{algorithmic}
\end{algorithm}

\begin{theorem}
\label{thm: ZO-SPIDER}
Under the settings of Algorithm~\ref{alg: ZO-SPIDER}, if we set
\begin{align*}
    \tilde{\epsilon} = & 10 \epsilon \log (4(K_0+1)/p) , \quad |\mathcal{S}_1| = \frac{16\sigma^2}{\epsilon^2}, \quad |\mathcal{S}_2| = \frac{16 \sigma}{\epsilon n_0}, \quad \eta = \frac{\epsilon}{\ell n_0}, \quad q = \frac{\sigma n_0}{\epsilon},\\
    \mu = & \left( \frac{\epsilon \cdot \tilde{\epsilon}}{8q^2 \rho^2 d}\right)^{\frac{1}{4}} = \tilde{O}(\frac{\epsilon}{d^{1/4}})
\end{align*}
where $n_0 \in [1, \frac{2 \sigma}{\epsilon}]$. Then under Assumption~\ref{assum: basic-high-probability}, with probability at least $1-p$, Algorithm~\ref{alg: ZO-SPIDER} terminates before $K_0 = \left \lfloor \frac{4 \ell n_0}{\epsilon^2} \right \rfloor + 2 $ iterations and outputs $x_{\mathcal{K}}$ satisfying 
\begin{equation*}
    \|v_{\mathcal{K}}\| \le 2 \tilde{\epsilon}, \quad \|\nabla f (x_{\mathcal{K}})\| \le 3 \tilde{\epsilon}.
\end{equation*}
The function query complexity is $\mathcal{O} \left( d \left( \frac{ 128 \ell \sigma \Delta_f }{\epsilon^3} + \frac{16 \sigma^2}{\epsilon^2} + \frac{32 \sigma}{\epsilon n_0} \right) \right)$.

\end{theorem}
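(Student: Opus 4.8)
The plan is to adapt the standard SPIDER descent analysis to the zeroth-order, high-probability regime. The argument rests on three pieces: (i) a uniform-in-$k$ high-probability bound on the estimation error $e_k := v_k - \nabla f(x_k)$ of the ZO-SPIDER estimator; (ii) a per-step descent inequality that exploits the near-alignment of the normalized direction $v_k/\|v_k\|$ with $\nabla f(x_k)$ whenever $\|v_k\| > 2\tilde\epsilon$; and (iii) a counting argument turning the descent into an iteration, hence query, bound.

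First I would control $e_k$ inside a single epoch $[n_q q,(n_q+1)q)$. At the anchor step the error splits as $e_{n_q q} = \big(\tfrac{1}{|\mathcal{S}_1|}\sum_{i\in\mathcal{S}_1}\nabla f_i - \nabla f\big)(x_{n_q q}) + \big(\hat{\nabla}_{coord} f_{\mathcal{S}_1} - \nabla f_{\mathcal{S}_1}\big)(x_{n_q q})$: under Assumption~\ref{assum: basic-high-probability} the first term is an average of independent mean-zero vectors bounded almost surely by $\sigma$, so a vector Bernstein/Azuma bound gives norm $O(\sigma\sqrt{\log(1/p')/|\mathcal{S}_1|}) = O(\epsilon\sqrt{\log(1/p')})$ with probability $1-p'$, while the second term is $\le \tfrac{\sqrt d\rho\mu^2}{6}$ by Lemma~\ref{lemma: coord-square-bound}. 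For $k$ not a multiple of $q$, the increment $e_k - e_{k-1}$ decomposes into a martingale difference — whose summands have norm at most $\ell\|x_k - x_{k-1}\| + \tfrac{\sqrt d\rho\mu^2}{3} = \ell\eta + \tfrac{\sqrt d\rho\mu^2}{3}$, since normalized steps have length exactly $\eta$ — plus a deterministic ZO-bias term of size $\le \tfrac{\sqrt d\rho\mu^2}{3}$. Applying a vector martingale concentration over the $\le q$ inner steps and using the identity $q\ell^2\eta^2/|\mathcal{S}_2| = \epsilon^2/16$ forced by the parameter choices yields a stochastic contribution $O(\epsilon\sqrt{\log(1/p')})$, while the accumulated bias over the epoch is $q\cdot\tfrac{\sqrt d\rho\mu^2}{3} = O(\sqrt{\epsilon\tilde\epsilon})$ because $\mu$ is chosen so that $\mu^4 = \epsilon\tilde\epsilon/(8q^2\rho^2 d)$. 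Taking $p' = p/(4(K_0+1))$ and a union bound over all $k\le K_0$ gives, with probability $\ge 1-p/2$, that $\|e_k\| \le \tilde\epsilon = 10\epsilon\log(4(K_0+1)/p)$ for every $k$.

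Next I would run the descent on this good event. Whenever $\|v_k\| > 2\tilde\epsilon$, $\ell$-smoothness (Lemma~\ref{lemma: Lipschitz}) gives $f(x_{k+1}) \le f(x_k) - \eta\big\langle\nabla f(x_k), v_k/\|v_k\|\big\rangle + \tfrac{\ell\eta^2}{2}$, and $\big\langle\nabla f(x_k), v_k/\|v_k\|\big\rangle = \|v_k\| - \langle e_k, v_k\rangle/\|v_k\| \ge \|v_k\| - \|e_k\| \ge \|v_k\|/2 > \tilde\epsilon$; since $\ell\eta = \epsilon/n_0 \le \tilde\epsilon$, this yields $f(x_k) - f(x_{k+1}) \ge \tfrac{\eta\tilde\epsilon}{2} \ge \tfrac{\epsilon^2}{2\ell n_0}$. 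Summing over all non-terminating steps bounds their number by $O(\ell n_0\Delta_f/\epsilon^2)$, so with the stated $K_0$ the algorithm must reach Line~9 at some $\mathcal{K} < K_0$, where $\|v_\mathcal{K}\| \le 2\tilde\epsilon$ and therefore $\|\nabla f(x_\mathcal{K})\| \le \|v_\mathcal{K}\| + \|e_\mathcal{K}\| \le 3\tilde\epsilon$. For the query count, each of the $\le K_0$ iterations uses $d|\mathcal{S}_2|$ function queries except every $q$-th, which uses $d|\mathcal{S}_1|$; summing and adding the first-epoch overhead $d(|\mathcal{S}_1| + q|\mathcal{S}_2|)$ gives $\mathcal{O}\big(d(\ell\sigma\Delta_f/\epsilon^3 + \sigma^2/\epsilon^2 + \sigma/(\epsilon n_0))\big)$.

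The hard part will be the high-probability estimation bound of Step~1: obtaining a sharp vector martingale concentration for the recursively updated zeroth-order estimator, whose increments are themselves differences of coordinate-wise estimators at consecutive iterates, and simultaneously calibrating the smoothing radius $\mu$ against the epoch length $q$ so the cumulative deterministic ZO bias stays below $\tilde\epsilon$. A secondary subtlety is that the normalizing factor $\|v_k\|$ is random and only lower-bounded on the good event, so the descent inequality and the error bound must be established on a common high-probability event and the stopping rule analyzed conditionally on it.
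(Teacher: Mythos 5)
Your proposal is correct and follows essentially the same route as the paper: a uniform high-probability bound on the SPIDER estimation error via the anchor-step/inner-step martingale decomposition with the ZO bias calibrated through $\mu$ and $q$ (the paper's Lemma~\ref{lemma: ZO-SPIDER-Coord}, using Lemma~\ref{lemma: concentration inequality} and a union bound with budget $p/(K_0+1)$ per iteration), followed by a per-step descent of order $\epsilon\tilde\epsilon/(\ell n_0)$ on the good event and a telescoping contradiction to force termination before $K_0$. The only differences are cosmetic: you bound $\|v_k-\nabla f(x_k)\|$ by $\tilde\epsilon$ rather than $\|v_k-\nabla f(x_k)\|^2$ by $\epsilon\cdot\tilde\epsilon$, and you derive the descent via Cauchy--Schwarz on $\langle \nabla f(x_k), v_k/\|v_k\|\rangle$ instead of the paper's Young's-inequality form, both of which yield the same conclusion.
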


\begin{lemma}[Proposition 2 in \cite{fang2018spider}]
\label{lemma: concentration inequality}
Let $\epsilon_{1:K}$ be a vector-valued martingale difference sequence with respect to $\mathcal{F}_k$, \emph{i.e.}, for each $k=1,\dots,K, \E[\epsilon_k | \mathcal{F}_{k-1}] =0$ and $\|\epsilon_k\|^2 \le B_k^2$. We have
\begin{equation*}
    \textrm{Pr} \left( \left\| \sum_{k=1}^K \epsilon_k \ge \lambda \right\| \right) \le \exp \left( -\frac{\lambda^2}{4\sum_{k=1}^K} B_k^2 \right),
\end{equation*}
where $\lambda$ is an arbitrary real positive value.

\end{lemma}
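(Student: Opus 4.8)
The plan is to reduce this vector statement to the classical scalar Azuma--Hoeffding inequality (see \cite{azuma-hoeffding}) via a telescoping identity for $\|\cdot\|^2$ together with an optional-stopping (localization) argument. Write $S_0 = 0$, $S_k = \sum_{j=1}^k \epsilon_j$, and set $\sigma^2 := \sum_{k=1}^K B_k^2$. Expanding $\|S_k\|^2 = \|S_{k-1}\|^2 + 2\langle S_{k-1}, \epsilon_k\rangle + \|\epsilon_k\|^2$ and summing over $k$ gives the identity
\[
\|S_K\|^2 = 2\sum_{k=1}^K \langle S_{k-1}, \epsilon_k\rangle + \sum_{k=1}^K \|\epsilon_k\|^2 \le 2\sum_{k=1}^K D_k + \sigma^2, \qquad D_k := \langle S_{k-1}, \epsilon_k\rangle .
\]
Since $S_{k-1}$ is $\mathcal{F}_{k-1}$-measurable and $\E[\epsilon_k \mid \mathcal{F}_{k-1}] = 0$, the sequence $\{D_k\}$ is a \emph{scalar} martingale difference sequence, with the (data-dependent) bound $|D_k| \le \|S_{k-1}\| B_k$.

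First I would localize the random factor $\|S_{k-1}\|$. Let $\tau := \min\{k : \|S_k\| \ge \lambda\} \wedge K$ be the first time the partial sum reaches level $\lambda$. The event $\{k \le \tau\}$ is $\mathcal{F}_{k-1}$-measurable, so $\widetilde D_k := D_k \mathbf{1}[k \le \tau]$ is again a martingale difference sequence, and for $k \le \tau$ one has $\|S_{k-1}\| < \lambda$ by definition of $\tau$, whence $|\widetilde D_k| \le \lambda B_k$ almost surely. On the event $\{\|S_K\| \ge \lambda\}$ we have $\tau \le K$ and $\|S_\tau\| \ge \lambda$, so evaluating the identity above at $k = \tau$ yields $\sum_{k=1}^K \widetilde D_k = \sum_{k=1}^\tau D_k \ge \tfrac12(\lambda^2 - \sigma^2)$. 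Applying the scalar Azuma--Hoeffding inequality to $\sum_k \widetilde D_k$ with increment bounds $\lambda B_k$ then gives
\[
\Pr\!\left( \Big\| \sum_{k=1}^K \epsilon_k \Big\| \ge \lambda \right) \le \Pr\!\left( \sum_{k=1}^K \widetilde D_k \ge \tfrac12(\lambda^2 - \sigma^2) \right) \le \exp\!\left( - \frac{(\lambda^2 - \sigma^2)^2}{8 \lambda^2 \sigma^2} \right),
\]
which one then has to massage into the claimed form $\exp(-\lambda^2/(4\sigma^2))$.

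The main obstacle is precisely this last massaging, since the worst-case increment bound $\lambda B_k$ and the quadratic-in-$\lambda$ threshold $\tfrac12(\lambda^2-\sigma^2)$ both lose constants, so a naive chain of inequalities produces a larger absolute constant in the exponent than $4$. To recover the stated constant one should: (i) for $\lambda \le c\,\sigma$ simply observe that $\exp(-\lambda^2/(4\sigma^2))$ is bounded below by an absolute constant, so the bound is only informative once $\lambda \gtrsim \sigma$ --- which is exactly the regime in which it is invoked in Algorithm~\ref{alg: ZO-SPIDER-NCF} and Theorem~\ref{thm: ZO-SPIDER-NCF-full}; and (ii) for $\lambda \gtrsim \sigma$ sharpen the one-step estimate, bounding the conditional moment generating function $\E[\exp(\theta\|S_k\|) \mid \mathcal{F}_{k-1}]$ directly: Taylor-expand $\|S_{k-1} + \epsilon_k\| \le \|S_{k-1}\| + \langle \widehat{S}_{k-1}, \epsilon_k\rangle + \|\epsilon_k\|^2/(2\|S_{k-1}\|)$ (with $\widehat{S}_{k-1} := S_{k-1}/\|S_{k-1}\|$), apply the scalar Hoeffding bound to the martingale difference $\langle \widehat{S}_{k-1}, \epsilon_k\rangle$ (which satisfies $|\langle \widehat{S}_{k-1}, \epsilon_k\rangle| \le B_k$), and iterate to obtain $\E[\exp(\theta\|S_K\|)] \le \exp(\theta\,\E\|S_K\| + \theta^2\sigma^2)$ with $\E\|S_K\| \le (\sum_k \E\|\epsilon_k\|^2)^{1/2} \le \sigma$; Markov's inequality and optimization over $\theta$ then close the gap. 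This is the Pinelis-type argument behind Proposition~2 of \cite{fang2018spider}; all remaining manipulations are routine.
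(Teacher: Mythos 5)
The paper does not prove this lemma at all: it is imported (with two typos in the display) as Proposition~2 of \cite{fang2018spider}, whose own proof is a Pinelis-type inequality for martingales in $2$-smooth spaces. So the only question is whether your argument would establish the stated bound, and it would not. Your first half --- the expansion of $\|S_K\|^2$, the stopping time $\tau$, and scalar Azuma applied to $\widetilde D_k$ --- is correct, but as you yourself note it yields only $\exp(-(\lambda^2-\sigma^2)^2/(8\lambda^2\sigma^2))$, which dominates $\exp(-\lambda^2/(4\sigma^2))$ unless $\sigma^2\ge(1+\sqrt2)\lambda^2$. Neither proposed repair closes this gap. For (i): the lemma as written carries no constant prefactor, so the regime $\lambda\lesssim\sigma$ is not vacuous --- at $\lambda=\sigma$ you would still owe a proof that $\Pr(\|S_K\|\ge\sigma)\le e^{-1/4}\approx 0.78$, which does not follow from ``the bound is uninformative there.'' (The genuine Proposition~2 of \cite{fang2018spider} has a prefactor of $4$, which this paper silently restores wherever it applies the lemma; with that prefactor your observation (i) would be fine, but then you are proving a different statement than the one displayed.)

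For (ii), the moment-generating-function iteration is not sound as described. The one-step bound $\|S_{k-1}+\epsilon_k\|\le\|S_{k-1}\|+\langle\widehat S_{k-1},\epsilon_k\rangle+\|\epsilon_k\|^2/(2\|S_{k-1}\|)$ carries a term with $\|S_{k-1}\|$ in the denominator, which is uncontrolled (and undefined at $S_{k-1}=0$); iterating the conditional estimate therefore leaves a factor of the form $\E\bigl[\exp\bigl(\theta\sum_k B_k^2/(2\|S_{k-1}\|)\bigr)\bigr]$ that you cannot discard, and there is no mechanism by which $\E\|S_K\|$ could appear on the right-hand side of an iterated conditional bound. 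Worse, even granting the claimed inequality $\E[e^{\theta\|S_K\|}]\le\exp(\theta\sigma+\theta^2\sigma^2)$, Markov plus optimization over $\theta$ gives only $\exp(-(\lambda-\sigma)^2/(4\sigma^2))$, which exceeds the target $\exp(-\lambda^2/(4\sigma^2))$ by the factor $\exp(\lambda/(2\sigma)-1/4)$, unbounded in $\lambda$ and hence not absorbable into any constant. The standard correct route (Pinelis 1994, which is what underlies \cite{fang2018spider}) replaces $e^{\theta\|\cdot\|}$ by the potential $\cosh(\theta\|\cdot\|)$ and uses $2$-smoothness of the Hilbert norm to obtain the supermartingale recursion $\E[\cosh(\theta\|S_k\|)\mid\mathcal F_{k-1}]\le\cosh(\theta\|S_{k-1}\|)\,e^{\theta^2B_k^2/2}$, yielding the stronger bound $2\exp(-\lambda^2/(2\sigma^2))$; if you want a self-contained proof you should reproduce that argument rather than patch the scalar reduction.
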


\subsection{Proof of high probability results for First-Order Stationary Point of Theorem~\ref{thm: ZO-SPIDER} }

Define $\mathcal{K}$ to be the time when Algorithm~\ref{alg: ZO-SPIDER} stops. We have $\mathcal{K} = 0$ if $\|v_0\|\le 2\epsilon$, and $\mathcal{K} = \inf\{k\ge 0: \|v_k\| \le 2\epsilon\} + 1$ if $\|v_0\| \ge 2\epsilon$.

\begin{lemma}
\label{lemma: ZO-SPIDER-Coord}
Define the event,
\begin{equation*}
    \mathcal{H}_{K_0} = \left( \|v_k - \nabla f(x_k)\|^2 \le \epsilon \cdot \tilde{\epsilon}, \forall k \le \min\{\mathcal{K}, K_0\} \right),
\end{equation*}
then $\mathcal{H}_{K_0}$ will happen with probability at least $1-p$.
\end{lemma}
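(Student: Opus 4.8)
The plan is to bound the SPIDER estimation error $y_k := v_k - \nabla f(x_k)$ over the current epoch by unrolling the recursion in Algorithm~\ref{alg: ZO-SPIDER} and then invoking the martingale concentration inequality of Lemma~\ref{lemma: concentration inequality}. Fix $k \le \min\{\mathcal{K},K_0\}$ and let $k_0 := q\lfloor k/q\rfloor$ be the index of the most recent restart, so that $v_{k_0} = \hat{\nabla}_{coord} f_{\mathcal{S}_1}(x_{k_0})$ and $v_s - v_{s-1} = \hat{\nabla}_{coord} f_{\mathcal{S}_2}(x_s) - \hat{\nabla}_{coord} f_{\mathcal{S}_2}(x_{s-1})$ for $s = k_0+1,\dots,k$. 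Telescoping gives
\begin{equation*}
y_k = \big[\hat{\nabla}_{coord} f_{\mathcal{S}_1}(x_{k_0}) - \nabla f(x_{k_0})\big] + \sum_{s=k_0+1}^{k}\Big( v_s - v_{s-1} - \big(\nabla f(x_s)-\nabla f(x_{s-1})\big)\Big).
\end{equation*}
Using Lemma~\ref{lemma: coord-square-bound}, write $\hat{\nabla}_{coord} f = \nabla f + \beta$ with $\|\beta(x)\| \le \tfrac{\sqrt{d}\rho\mu^2}{6}$ for every $x$; then the restart bracket equals $\big[\hat{\nabla}_{coord} f_{\mathcal{S}_1}(x_{k_0}) - \hat{\nabla}_{coord} f(x_{k_0})\big] + \beta(x_{k_0})$, and each summand equals $\zeta_s = \xi_s + b_s$, where $\xi_s$ is the deviation of $\hat{\nabla}_{coord} f_{\mathcal{S}_2}(x_s)-\hat{\nabla}_{coord} f_{\mathcal{S}_2}(x_{s-1})$ from its conditional mean $\hat{\nabla}_{coord} f(x_s)-\hat{\nabla}_{coord} f(x_{s-1})$, and $b_s = \beta(x_s)-\beta(x_{s-1})$ with $\|b_s\| \le \tfrac{\sqrt{d}\rho\mu^2}{3}$.

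The ZO-bias terms are handled without probability: their combined contribution to $\|y_k\|$ is at most $\tfrac{\sqrt{d}\rho\mu^2}{6} + q\cdot\tfrac{\sqrt{d}\rho\mu^2}{3}$, and substituting the prescribed $\mu = \big(\tfrac{\epsilon\tilde\epsilon}{8q^2\rho^2 d}\big)^{1/4}$ makes $\sqrt{d}\rho\mu^2 = \tfrac{1}{q\sqrt{8}}\sqrt{\epsilon\tilde\epsilon}$, so this is $O(\sqrt{\epsilon\tilde\epsilon})$, inside the target. For the two sampling parts I would apply Lemma~\ref{lemma: concentration inequality}. First, for the restart part: by Assumption~\ref{assum: basic-high-probability} together with the ZO-bias bound, the terms $\hat{\nabla}_{coord} f_i(x_{k_0})-\hat{\nabla}_{coord} f(x_{k_0})$ ($i\in\mathcal{S}_1$) are conditionally zero-mean of norm $\le \sigma + \tfrac{\sqrt{d}\rho\mu^2}{3} \le 2\sigma$ almost surely, so with $|\mathcal{S}_1| = 16\sigma^2/\epsilon^2$, Lemma~\ref{lemma: concentration inequality} gives $\|\hat{\nabla}_{coord} f_{\mathcal{S}_1}(x_{k_0})-\hat{\nabla}_{coord} f(x_{k_0})\| = O\!\big(\epsilon\sqrt{\log(1/p')}\big)$ with probability $\ge 1-p'$. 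Second, for $\sum_s\xi_s$, I would expand it into the double sum over pairs (step $s$, index inside $\mathcal{S}_2^s$); since $\|x_s-x_{s-1}\| = \eta$ and each $f_i$ is $\ell$-smooth, these $q|\mathcal{S}_2|$ vectors are conditionally zero-mean of norm $\le \tfrac{2(\ell\eta + \sqrt{d}\rho\mu^2/3)}{|\mathcal{S}_2|}$, and they form a martingale difference sequence for the natural interleaved filtration (the key point being that $x_s$ is measurable with respect to the samples drawn before step $s$). Lemma~\ref{lemma: concentration inequality} then yields $\|\sum_s\xi_s\| \le O\!\big(\sqrt{q|\mathcal{S}_2|}\cdot\tfrac{\ell\eta}{|\mathcal{S}_2|}\cdot\sqrt{\log(1/p')}\big)$, and plugging in $q = \sigma n_0/\epsilon$, $|\mathcal{S}_2| = 16\sigma/(\epsilon n_0)$, $\eta = \epsilon/(\ell n_0)$ makes $q|\mathcal{S}_2|(\ell\eta/|\mathcal{S}_2|)^2 = O(\epsilon^2)$, so this too is $O\!\big(\epsilon\sqrt{\log(1/p')}\big)$.

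Collecting the three contributions, $\|y_k\| \le O\!\big(\epsilon\sqrt{\log(1/p')}\big) + O(\sqrt{\epsilon\tilde\epsilon})$, so $\|y_k\|^2 \le \epsilon\tilde\epsilon$ after taking $p' = p/(4(K_0+1))$ and using $\tilde\epsilon = 10\epsilon\log(4(K_0+1)/p)$, provided the absolute constants from the two applications of Lemma~\ref{lemma: concentration inequality} are tracked so as to fit within the factor $10$. A union bound over all $k \le K_0$ — which, since $\mathcal{K}$ is a stopping time and every prefix of the per-epoch sums obeys the same almost-sure norm bounds, may be taken either directly over $k$ or over the $\le K_0/q+1$ epochs and the prefixes inside them — then upgrades this to the simultaneous statement defining $\mathcal{H}_{K_0}$, at total failure probability $\le p$. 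The main obstacle is bookkeeping rather than any new idea: one must set up the interleaved filtration so that the per-sample terms are a genuine martingale difference sequence despite $x_s$ depending on earlier randomness, and verify that every bias incurred by replacing $\nabla f$ with $\hat{\nabla}_{coord} f$ is, under the stated choice of $\mu$, dominated by the $\sqrt{\epsilon\tilde\epsilon}$ budget.
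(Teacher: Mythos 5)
Your proposal is correct and follows the same overall strategy as the paper's proof: telescope the SPIDER estimator back to the most recent restart, identify a conditional martingale-difference structure over the interleaved samples, invoke the vector martingale concentration of Lemma~\ref{lemma: concentration inequality}, handle the ZO finite-difference bias deterministically via Lemma~\ref{lemma: coord-square-bound}, and union-bound over $k\le K_0$. The one structural difference is in the decomposition before concentration is applied: the paper peels off the ZO bias once to reduce to the \emph{true-gradient} martingale difference sequence (the $\epsilon_{k_0,i}$ and $\epsilon_{j,i}$ built from $\nabla f_i$), then applies Lemma~\ref{lemma: concentration inequality} a single time to the concatenated sequence $\sum_{i}\epsilon_{k_0,i}+\sum_{j,i}\epsilon_{j,i}$, whereas you keep the \emph{ZO-gradient} martingale and apply the concentration inequality twice (once for the restart batch, once for the within-epoch increments), absorbing the two failure events by taking $p'=p/(4(K_0+1))$. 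By $\sqrt{a}+\sqrt{b}\le\sqrt{2(a+b)}$ the two routes differ only by absolute constants, which the factor $10$ in $\tilde\epsilon$ comfortably absorbs. Two small things worth noting: the per-step biases $b_s=\beta(x_s)-\beta(x_{s-1})$ actually telescope to a single $\beta(x_k)$ of size $\sqrt{d}\rho\mu^2/6$, so your crude $q\cdot\sqrt{d}\rho\mu^2/3$ bound is quite loose (the paper is similarly loose, via Cauchy–Schwarz, so this is not a gap); and the ``interleaved filtration'' point you flag is indeed exactly what makes the argument go through — the paper makes it explicit by checking $\E[\epsilon_{j,i}\mid\mathcal{F}_{j-1}]=0$ and $\|x_j-x_{j-1}\|=\eta$ whether the algorithm has stopped or not (via the virtual-update convention you would also need to adopt for $k\ge\mathcal{K}$).
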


\begin{proof}
When $k \ge \mathcal{K} $, the algorithm has already stopped. Define virtual update when $x_{k+1} = x_k$, and $v_k$ is generated by Line 3 and Line 6 in Algorithm~\ref{alg: ZO-SPIDER}. 

Define the event $\tilde{\mathcal{H}}_k = ( \|v_k - \nabla f(x_k)\|^2 \ge \epsilon \cdot \tilde{\epsilon}),  k \in [K_0]$. Then if we can prove that for any $k \in [K_0]$, the probability of  $\tilde{\mathcal{H}}_k$ occurring  is no more than $p/(K_0 + 1)$, \emph{i.e.}, $\textrm{Pr}(\tilde{\mathcal{H}}_k ) \le \frac{p}{K_0 +1}$. Then we have 
\begin{equation*}
    \textrm{Pr}(\mathcal{H}_{K_0}) \ge 1- \textrm{Pr}(\bigcup_{k=0}^{K_0} \tilde{\mathcal{H}}_k) \ge 1- \sum_{k=0}^{K_0} \textrm{Pr} (\tilde{\mathcal{H}}_k) \ge 1-p
\end{equation*}

Now, we prove that $\textrm{Pr}(\tilde{\mathcal{H}}_k ) \le \frac{p}{K_0 +1}, \forall k\in [K_0]$.

First, we have 
\begin{align*}
    \|v_k - \nabla f(x_k)\|^2 \le & 2 \|v_k - \hat{\nabla}_{coord} f(x_k)\|^2 + 2 \| \hat{\nabla}_{coord} f(x_k) - \nabla f(x_k) \|^2 \\
    \le & 2 \|v_k - \hat{\nabla}_{coord} f(x_k)\|^2 + \frac{\rho^2 d \mu^4}{18}
\end{align*}

Denote by $\xi_k$ the randomness in maintaining SPIDER $v_k$ at iteration $k$, and $\mathcal{F}_k = \sigma\{\xi_0,\dots,\xi_k\}$, where $\sigma(\cdot)$, where $\sigma\{\cdot\}$ denotes the sigma field. We know that $x_k$ and $v_k$ are measurable on $\mathcal{F}_{k-1}$.

\ding{182} Then given $\mathcal{F}_{k-1}$, if $k = \left \lfloor k/q \right \rfloor q$ , we define
\begin{equation*}
    \hat{\epsilon}_{k,i} = \frac{1}{|\mathcal{S}_1|} \left( \hat{\nabla}_{coord} f_{\mathcal{S}_1(i)}(x_k) - \hat{\nabla}_{coord} f(x_k) \right), \quad 
    \epsilon_{k,i} = \frac{1}{|\mathcal{S}_1|} \left( \nabla f_{\mathcal{S}_1(i)} (x_k) - \nabla f(x_k) \right)
\end{equation*}
where $\mathcal{S}_1(i)$ denotes the $i$-th random component function seleted at iteration $k$ and $1\le i\le |\mathcal{S}_1|$. We have
\begin{equation*}
    \E [\hat{\epsilon}_{k,i} | \mathcal{F}_{k-1}] = 0, \quad \E [\epsilon_{k,i} | \mathcal{F}_{k-1}] = 0, \quad \|\epsilon_{k,i}\| \overset{\textrm{Assumption~\ref{assum: basic-high-probability}}}{\le} \frac{\sigma}{|\mathcal{S}_1|}
\end{equation*}
and 
\begin{align*}
    &\|v_k - \hat{\nabla}_{coord} f(x_k)\|^2 \\
    = &\|\sum_{i=1}^{|\mathcal{S}_1|}  \hat{\epsilon}_{k,i}\|^2 \\
    \le & 2 \|\sum_{i=1}^{|\mathcal{S}_1|} \epsilon_{k,i}\|^2 + 2\|\sum_{i=1}^{|\mathcal{S}_1|} (\epsilon_{k,i}- \hat{\epsilon}_{k,i}) \|^2 \\
    = & 2 \|\sum_{i=1}^{|\mathcal{S}_1|} \epsilon_{k,i}\|^2 + 2 \|\hat{\nabla}_{coord} f_{\mathcal{S}_1} (x_k) - \hat{\nabla}_{coord} f(x_k) + \nabla f_{\mathcal{S}_1}(x_k) - \nabla f(x_k)\|^2 \\
    \le & 2 \|\sum_{i=1}^{|\mathcal{S}_1|} \epsilon_{k,i}\|^2 + 2 \left(2\|\hat{\nabla}_{coord} f_{\mathcal{S}_1} (x_k) - \nabla f_{\mathcal{S}_1 (x_k)}\|^2 + 2\|\hat{\nabla}_{coord} f(x_k) - \nabla f(x_k)\|^2 \right) \\
    \le & 2 \|\sum_{i=1}^{|\mathcal{S}_1|} \epsilon_{k,i}\|^2 + \frac{2}{9}\rho^2 d \mu^4
\end{align*}
Then we have
\begin{align*}
    \|v_k - \nabla f(x_k)\|^2 \le & 2 \|v_k - \hat{\nabla}_{coord} f(x_k)\|^2 + \frac{\rho^2 d \mu^4}{18} \\
    \le & 4\|\sum_{i=1}^{|\mathcal{S}_1|} \epsilon_{k,i}\|^2 + \frac{1}{2}\rho^2 d \mu^4
\end{align*}
Then we have
\begin{align*}
    &\textrm{Pr} \left( \|v_k - \nabla f(x_k)\|^2 \ge \epsilon \cdot \tilde{\epsilon} | \mathcal{F}_{k-1} \right)\\
    \le & \textrm{Pr} \left( 4 \|\sum_{i=1}^{|\mathcal{S}_1|} \epsilon_{k,i}\|^2 + \frac{1}{2}\rho^2 d \mu^4 \ge \epsilon \cdot \tilde{\epsilon} | \mathcal{F}_{k-1} \right) 
    =  \textrm{Pr} \left(  \|\sum_{i=1}^{|\mathcal{S}_1|} \epsilon_{k,i}\|^2  \ge \frac{\epsilon \cdot \tilde{\epsilon}- \frac{1}{2}\rho^2 d \mu^4}{4} | \mathcal{F}_{k-1} \right) \\
    \le & 4 \exp\left( -\frac{\frac{\epsilon \cdot \tilde{\epsilon}- \frac{1}{2}\rho^2 d \mu^4}{4}}{4 |\mathcal{S}_1| \frac{\sigma^2 }{|\mathcal{S}_1|^2}} \right) = 4 \exp\left( - \frac{\epsilon \cdot \tilde{\epsilon}- \frac{1}{2}\rho^2 d \mu^4}{16 \frac{\sigma^2}{|\mathcal{S}_1|}}\right)
\end{align*}

\ding{183} When $k \neq \left \lfloor k/q \right \rfloor q$, set $k_0 = \left \lfloor k/q \right \rfloor q$ and define 
\begin{align*}
    \epsilon_{j,i} &= \frac{1}{|\mathcal{S}_2|} \left( \nabla f_{\mathcal{S}_2 (i)} (x_j) - \nabla f_{\mathcal{S}_2(i)} (x_{j-1}) - \nabla f(x_j) + \nabla f(x_{j-1}) \right) \\
    \hat{\epsilon}_{j,i} &= \frac{1}{|\mathcal{S}_2|} \left( \hat{\nabla}_{coord} f_{\mathcal{S}_2 (i)} (x_j) - \hat{\nabla}_{coord} f_{\mathcal{S}_2(i)} (x_{j-1}) - \hat{\nabla}_{coord} f(x_j) + \hat{\nabla}_{coord} f(x_{j-1}) \right)
\end{align*}
where $\mathcal{S}_2(i)$ denotes the $i$-th random component function selected at iteration $k$ and $1 \le i\le |\mathcal{S}_2|, k_0 \le j \le k$. We have 
\begin{equation*}
    \E[\epsilon_{j,i} | \mathcal{F}_{j-1}] = 0, \quad \E[\hat{\epsilon}_{j,i} | \mathcal{F}_{j-1}] = 0
\end{equation*}
From the update rule if $k < \mathcal{K}$, we have $\|x_{k+1} - x_k\| = \|\eta v_k/\|v_k\|\| = \eta = \frac{\epsilon}{\ell n_0}$, if $k>\mathcal{K}$, we have $\|x_{k+1} - x_k\| = 0 \le \frac{\epsilon}{\ell n_0}$. We have 
\begin{align*}
    & \|\epsilon_{j,i}\|  \\
    \le & \frac{1}{|\mathcal{S}_2|} \left( \|\nabla f_{\mathcal{S}_2 (i)} (x_j)- \nabla f_{\mathcal{S}_2(i)} (x_{j-1}) \| + \|\nabla f(x_j) - \nabla f(x_j)\| \right) \\
    \le &  \frac{2\ell}{|\mathcal{S}_2|} \|x_j - x_{j-1}\| \le \frac{2\epsilon}{|\mathcal{S}_2| n_0}
\end{align*}
for all $k_0 < k \le k$ and $1\le i \le |\mathcal{S}_2|$, and
\begin{align*}
    &\|v_k - \hat{\nabla}_{coord} f(x_k)\|^2 \\
    = & \left\|\hat{\nabla}_{coord} f_{\mathcal{S}_2} (x_k) - \hat{\nabla}_{coord} f_{\mathcal{S}_2} (x_{k-1}) - \hat{\nabla}_{coord} f(x_k) - \hat{\nabla}_{coord} f(x_{k-1}) + (v_k - \hat{\nabla}_{coord} f(x_{k-1}))\right\|^2 \\
    = & \left\| \sum_{i=1}^{|\mathcal{S}_2|}\hat{\epsilon}_{k,i} + v_{k-1} - \hat{\nabla}_{coord} f(x_{k-1}) \right\|^2  \\
    = & \left\| \sum_{j=k_0+1}^k \sum_{i=1}^{|\mathcal{S}_2|}\hat{\epsilon}_{j,i} + v_{k_0} - \hat{\nabla}_{coord} f(x_{k_0})\right\|^2 = \left\|\sum_{j=k_0+1}^k \sum_{i=1}^{|\mathcal{S}_2|}\hat{\epsilon}_{j,i} + \sum_{i=1}^{|\mathcal{S}_1|} \hat{\epsilon}_{k_0, i}\right\|^2 \\
    \le & 2\left\| \sum_{j=k_0+1}^k \sum_{i=1}^{|\mathcal{S}_2|} \epsilon_{j,i} + \sum_{i=1}^{|\mathcal{S}_1|} \epsilon_{k_0, i} \right\|^2 + 2 \left\| \sum_{j=k_0+1}^k \sum_{i=1}^{|\mathcal{S}_2|}( \epsilon_{j,i} -\hat{\epsilon}_{j,i}) + \sum_{i=1}^{|\mathcal{S}_1|} (\epsilon_{k_0, i} - \hat{\epsilon}_{k_0, i}) \right\|^2 \\
    \le & 2\left\| \sum_{j=k_0+1}^k \sum_{i=1}^{|\mathcal{S}_2|} \epsilon_{j,i} + \sum_{i=1}^{|\mathcal{S}_1|} \epsilon_{k_0, i} \right\|^2 + 2 \left( 2 \left\| \sum_{j=k_0+1}^k \sum_{i=1}^{|\mathcal{S}_2|}( \epsilon_{j,i} -\hat{\epsilon}_{j,i}) \right\|^2 + 2 \left\| \sum_{i=1}^{|\mathcal{S}_1|} (\epsilon_{k_0, i} - \hat{\epsilon}_{k_0, i}) \right\|^2 \right) \\
    \le & 2\left\| \sum_{j=k_0+1}^k \sum_{i=1}^{|\mathcal{S}_2|} \epsilon_{j,i} + \sum_{i=1}^{|\mathcal{S}_1|} \epsilon_{k_0, i} \right\|^2 + 2 \left( 2 \left\| \sum_{j=k_0+1}^k \sum_{i=1}^{|\mathcal{S}_2|}( \epsilon_{j,i} -\hat{\epsilon}_{j,i}) \right\|^2 + \frac{2}{9} \rho^2 d \mu^4 \right)
\end{align*}
The second term can be bounded by
\begin{align*}
    & \left\| \sum_{j=k_0+1}^k \sum_{i=1}^{|\mathcal{S}_2|}( \epsilon_{j,i} -\hat{\epsilon}_{j,i}) \right\|^2 \\
    \le & (k-k_0) \sum_{j=k_0+1}^k \left\| \sum_{i=1}^{\mathcal{S}_2} ( \epsilon_{j,i} -\hat{\epsilon}_{j,i}) \right\|^2 \\
    = & (k-k_0) \sum_{j=k_0+1}^k 4 \left( \left\| \nabla f_{\mathcal{S}_2} (x_j) - \hat{\nabla}_{coord} f_{\mathcal{S}_2} (x_{j})\right\|^2 + \left\| \nabla f_{\mathcal{S}_2} (x_{j-1}) - \hat{\nabla}_{coord} f_{\mathcal{S}_2} (x_{j-1}) \right\|^2 \right. \\
    & \left. + \left\| \nabla f(x_j) - \hat{\nabla}_{coord} f(x_j) \right\|^2 + \left\| \nabla f(x_{j-1}) - \hat{\nabla}_{coord} f(x_{j-1}) \right\|^2 \right) \\
    \le & (k-k_0) \sum_{j=k_0+1}^k 4 \cdot  4 \frac{\rho^2 d \mu^4}{36} = (k-k_0)^2\frac{4}{9}\rho^2 d \mu^4
\end{align*}
Thus we have 
\begin{align*}
    &\|v_k - \nabla f(x_k)\|^2 \\
    \le & 2 \|v_k - \hat{\nabla}_{coord} f(x_k)\|^2 + \frac{\rho^2 d \mu^4}{18} \\
    \le & 4\left\| \sum_{j=k_0+1}^k \sum_{i=1}^{|\mathcal{S}_2|} \epsilon_{j,i} + \sum_{i=1}^{|\mathcal{S}_1|} \epsilon_{k_0, i} \right\|^2 + 4 \left( 2 (k-k_0)^2 \frac{4}{9} \rho^2 d \mu^4 + \frac{2}{9} \rho^2 d \mu^4 \right) + \frac{\rho^2 d \mu^4}{18} \\
    \le & 4 \left\| \sum_{j=k_0+1}^k \sum_{i=1}^{|\mathcal{S}_2|} \epsilon_{j,i} + \sum_{i=1}^{|\mathcal{S}_1|} \epsilon_{k_0, i} \right\|^2 + 4 (k-k_0)^2 \rho^2 d \mu^4
\end{align*}
Using Lemma~\ref{lemma: concentration inequality}, we have
\begin{align*}
    &\textrm{Pr} \left( \|v_k - \nabla f(x_k)\|^2 \ge \epsilon \cdot \tilde{\epsilon} \left|\right. \mathcal{F}_{k_0 - 1} \right) \\
    \le & \textrm{Pr} \left( 4 \left\| \sum_{j=k_0+1}^k \sum_{i=1}^{|\mathcal{S}_2|} \epsilon_{j,i} + \sum_{i=1}^{|\mathcal{S}_1|} \epsilon_{k_0, i} \right\|^2 + 4(k-k_0)^2 \rho^2 d \mu^4 \ge \epsilon \cdot \tilde{\epsilon} \left|\right. \mathcal{F}_{k_0-1} \right)  \\
    = & \textrm{Pr} \left( \left\| \sum_{j=k_0+1}^k \sum_{i=1}^{|\mathcal{S}_2|} \epsilon_{j,i} + \sum_{i=1}^{|\mathcal{S}_1|} \epsilon_{k_0, i} \right\|^2 \ge \frac{\epsilon \cdot \tilde{\epsilon}}{4} - (k-k_0)^2 \rho^2 d \mu^4 \left|\right. \mathcal{F}_{k_0 -1}\right) \\
    \le & 4 \exp \left( -\frac{\frac{\epsilon \cdot \tilde{\epsilon}}{4} - (k-k_0)^2 \rho^2 d \mu^4}{4 |\mathcal{S}_1| \frac{\sigma^2}{|\mathcal{S}_1|^2} + 4 |\mathcal{S}_2| (k-k_0) \frac{4\epsilon^2}{|\mathcal{S}_2|^2 n_0^2}}  \right) \\
    \le & 4\exp \left( -\frac{\frac{\epsilon \cdot \tilde{\epsilon}}{4} - q^2 \rho^2 d \mu^4}{4 \frac{\sigma^2}{|\mathcal{S}_1|} + 4 q \frac{4\epsilon^2}{|\mathcal{S}_2| n_0^2}} \right) = 4 \exp \left( -\frac{\epsilon \cdot \tilde{\epsilon} - 4q^2 \rho^2 d \mu^4}{ 16 \frac{\sigma^2}{|\mathcal{S}_1|} + 16 q \frac{4\epsilon^2}{|\mathcal{S}_2| n_0^2}} \right)
\end{align*}

If we set 
\begin{equation*}
    \tilde{\epsilon} = 10 \epsilon \log \frac{4(K_0+1) }{p} ,  |\mathcal{S}_1| = \frac{16\sigma^2}{\epsilon^2},  |\mathcal{S}_2| = \frac{16 \sigma}{\epsilon n_0}, q = \frac{\sigma n_0}{\epsilon}, \mu = \left( \frac{\epsilon \cdot \tilde{\epsilon}}{8q^2 \rho^2 d}\right)^{\frac{1}{4}} = \tilde{O}(\frac{\epsilon}{d^{1/4}})
\end{equation*}
We will get
\begin{itemize}
    \item When $k \neq \left \lfloor k/q \right \rfloor q$, 
    \begin{align*}
        \textrm{Pr} \left( \|v_k - \nabla f(x_k)\|^2 \ge \epsilon \cdot \tilde{\epsilon} \left|\right. \mathcal{F}_{k_0 - 1} \right) \le 4 \exp \left( -\frac{\epsilon \cdot \tilde{\epsilon} - 4q^2 \rho^2 d \mu^4}{ 16 \frac{\sigma^2}{|\mathcal{S}_1|} + 16 q \frac{4\epsilon^2}{|\mathcal{S}_2| n_0^2}} \right) \le \frac{p}{K_0 + 1}
    \end{align*}
    \item When $k = \left \lfloor k/q \right \rfloor q$,
    \begin{align*}
        \textrm{Pr} \left( \|v_k - \nabla f(x_k)\|^2 \ge \epsilon \cdot \tilde{\epsilon} \left|\right. \mathcal{F}_{k_0 - 1} \right) \le & 4 \exp\left( - \frac{\epsilon \cdot \tilde{\epsilon}- \frac{1}{2}\rho^2 d \mu^4}{16 \frac{\sigma^2}{|\mathcal{S}_1|}}\right)  \\
        \le & 4 \exp \left( -\frac{\epsilon \cdot \tilde{\epsilon} - 4q^2 \rho^2 d \mu^4}{ 16 \frac{\sigma^2}{|\mathcal{S}_1|} + 16 q \frac{4\epsilon^2}{|\mathcal{S}_2| n_0^2}} \right) \le \frac{p}{K_0 + 1}
    \end{align*}
\end{itemize}
This completes the whole proof.

\end{proof}

\begin{lemma}
\label{lemma: descent lemma of zo-spider}
    Under assumption~\ref{assum: basic-high-probability}, we have that on $\mathcal{H}_{K_0} \cap (\mathcal{K} > K_0)$, for all $0 \le k \le K_0$,
    \begin{equation*}
        f(x_{k+1}) - f(x_k) \le -\frac{\epsilon \cdot \tilde{\epsilon}}{4\ell n_0}.
    \end{equation*}
    and 
    \begin{equation*}
        f(x_{K_0 +1} - f(x_0)) \le - \frac{\epsilon \cdot \tilde{\epsilon}}{4\ell n_0}K_0 
    \end{equation*}
\end{lemma}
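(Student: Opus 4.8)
The plan is to run the standard SPIDER descent argument, restricted to the good event $\mathcal{H}_{K_0}\cap(\mathcal{K}>K_0)$. On this event, for every $k\le K_0$ the stopping test $\|v_k\|\le 2\tilde\epsilon$ fails, so $\|v_k\|>2\tilde\epsilon$ and Algorithm~\ref{alg: ZO-SPIDER} performs the normalized step $x_{k+1}=x_k-\eta\,v_k/\|v_k\|$; moreover, since on $\mathcal{K}>K_0$ we have $\min\{\mathcal{K},K_0\}=K_0$, Lemma~\ref{lemma: ZO-SPIDER-Coord} gives $\|v_k-\nabla f(x_k)\|^2\le \epsilon\cdot\tilde\epsilon$ for all $k\le K_0$. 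Because $\tilde\epsilon=10\epsilon\log(4(K_0+1)/p)\ge\epsilon$, this yields $\|v_k-\nabla f(x_k)\|\le\sqrt{\epsilon\tilde\epsilon}\le\tilde\epsilon$.

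First I would invoke $\ell$-smoothness of $f$ (Lemma~\ref{lemma: Lipschitz}) with $y=x_{k+1}$, $x=x_k$: since $\|x_{k+1}-x_k\|=\eta$,
\[ f(x_{k+1})\le f(x_k)-\eta\,\frac{\langle\nabla f(x_k),v_k\rangle}{\|v_k\|}+\frac{\ell\eta^2}{2}. \]
Next I would lower-bound the inner product by writing $\langle\nabla f(x_k),v_k\rangle=\|v_k\|^2-\langle v_k-\nabla f(x_k),v_k\rangle$ and applying Cauchy--Schwarz, so that $\langle\nabla f(x_k),v_k\rangle/\|v_k\|\ge\|v_k\|-\|v_k-\nabla f(x_k)\|>2\tilde\epsilon-\tilde\epsilon=\tilde\epsilon$. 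Combining the two displays gives
\[ f(x_{k+1})-f(x_k)\le-\eta\tilde\epsilon+\frac{\ell\eta^2}{2}. \]

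Then I would substitute $\eta=\epsilon/(\ell n_0)$, so that $-\eta\tilde\epsilon+\ell\eta^2/2=-\frac{\epsilon\tilde\epsilon}{\ell n_0}+\frac{\epsilon^2}{2\ell n_0^2}$, and use $n_0\ge1$ together with $\tilde\epsilon\ge\epsilon$ to see that $\frac{\epsilon^2}{2\ell n_0^2}\le\frac{3\epsilon\tilde\epsilon}{4\ell n_0}$, which delivers the per-step bound $f(x_{k+1})-f(x_k)\le-\frac{\epsilon\tilde\epsilon}{4\ell n_0}$. The second claim then follows by telescoping this inequality over $k=0,1,\dots,K_0$, which all hold simultaneously on the same event, and discarding one (negative) term so as to state the bound with $K_0$ rather than $K_0+1$.

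I do not anticipate a genuine obstacle here; the only points requiring care are (i) confirming that on $\mathcal{K}>K_0$ the normalized update---rather than any stopping or auxiliary branch---is executed at every index $k\le K_0$, which is immediate for Algorithm~\ref{alg: ZO-SPIDER} since it has no other branch; (ii) checking $\tilde\epsilon\ge\epsilon$ directly from its definition so that $\sqrt{\epsilon\tilde\epsilon}\le\tilde\epsilon$ and the error bound from Lemma~\ref{lemma: ZO-SPIDER-Coord} can be turned into a bound on $\|v_k-\nabla f(x_k)\|$ without a spurious square root; and (iii) the elementary constant bookkeeping ensuring the curvature term $\ell\eta^2/2$ is absorbed into a quarter of the first-order descent $\eta\tilde\epsilon$.
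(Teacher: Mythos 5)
Your proof is correct, and it takes a somewhat different (and arguably cleaner) decomposition than the paper's. The paper writes the step length as $\eta_k=\eta/\|v_k\|$, and after expanding $\ell$-smoothness absorbs the cross term $-\eta_k\langle \nabla f(x_k)-v_k, v_k\rangle$ by Young's inequality into $\frac{\eta_k}{2}\|v_k-\nabla f(x_k)\|^2 + \frac{\eta_k}{2}\|v_k\|^2$, then bounds the resulting squared-norm pieces separately, using the event bound $\|v_k-\nabla f(x_k)\|^2\le\epsilon\tilde{\epsilon}$ in exactly that squared form and $\eta_k\le \tfrac{1}{2\ell n_0}$. You instead keep the step norm $\|x_{k+1}-x_k\|=\eta$ explicit, so the quadratic term in the smoothness expansion is simply the constant $\ell\eta^2/2$, and you lower-bound the inner product linearly via Cauchy--Schwarz, $\langle\nabla f(x_k),v_k\rangle/\|v_k\|\ge\|v_k\|-\|v_k-\nabla f(x_k)\|\ge 2\tilde{\epsilon}-\tilde{\epsilon}=\tilde{\epsilon}$. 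This replaces the Young split with a single triangle-type estimate, eliminates all bookkeeping with $\eta_k$, and only costs you the one-line check $\sqrt{\epsilon\tilde{\epsilon}}\le\tilde{\epsilon}$ (from $\tilde{\epsilon}\ge\epsilon$). Both routes use exactly the same two hypotheses furnished by $\mathcal{H}_{K_0}\cap(\mathcal{K}>K_0)$, namely $\|v_k\|\ge 2\tilde{\epsilon}$ and $\|v_k-\nabla f(x_k)\|^2\le\epsilon\tilde{\epsilon}$, and land on the same per-step decrement $-\frac{\epsilon\tilde{\epsilon}}{4\ell n_0}$; your remark about discarding one negative term when telescoping matches what the paper does implicitly, since the sum over $k=0,\dots,K_0$ gives a factor $K_0+1$, which is weakened to $K_0$.
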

    
\begin{proof}
Let $\eta_k = \eta/\|v_k\|$ and since $f$ has $\ell$-Lipschitz continuous gradient, we have 
\begin{align}
\label{eq: descent}
    f(x_{k+1}) & \le  f(x_k) - \<\nabla f(x_k), x_{k+1} - x_k\> + \frac{\ell}{2} \|x_{k+1} - x_k\|^2 \notag\\
    & = f(x_k) - \eta_k \<\nabla f(x_k), v_k\> + \frac{\ell \eta_k^2}{2} \|v_k\|^2 \notag\\
    & = f(x_k) - \eta_k \<\nabla f(x_k) - v_k, v_k\> - \eta_k\|v_k\|^2 + \frac{\ell \eta_k^2}{2} \|v_k\|^2 \notag\\
    & \le f(x_k) + \frac{\eta_k}{2} \|v_k - \nabla f(x_k)\|^2- \eta_k \left( \frac{1}{2} - \frac{\eta_k \ell}{2} \right)\|v_k\|^2
\end{align}
where the last inequality uses the the Cauchy-Schwarz inequality.    Because we are on the event $\mathcal{H}_{K_0} \cap (\mathcal{K} > K_0)$, so $\mathcal{K}-1 \ge K_0$, then for all $0\le k \le K_0$, we have $\|v_k\|\ge 2 \tilde{\epsilon}$, thus 
\begin{equation*}
    \eta_k = \frac{\epsilon}{\ell n_0} \frac{1}{\|v_k\|} \le \frac{\epsilon}{\ell n_0} \frac{1}{2\tilde{\epsilon}} \le \frac{1}{2\ell n_0} \le \frac{1}{2\ell}
\end{equation*}
we have
\begin{equation*}
    \eta_k \left( \frac{1}{2} - \frac{\eta_k \ell}{2} \right)\|v_k\|^2 \ge \eta_k \frac{1}{4} \|v_k\|^2 = \frac{1}{4} \frac{\epsilon}{\ell n_0} \frac{1}{\|v_k\|} \|v_k\|^2 \ge \frac{\epsilon \cdot \tilde{\epsilon}}{2 \ell n_0}
\end{equation*}
and when $\mathcal{H}_{K_0}$ happens, we also have
\begin{equation*}
    \frac{\eta_k}{2} \|v_k - \nabla f(x_k)\|^2 \le \frac{\eta_k}{2} \epsilon \cdot \tilde{\epsilon} \overset{\textrm{$\eta_k \le \frac{1}{2 \ell n_0}$}}{\le} \frac{\epsilon \cdot \tilde{\epsilon}}{4\ell n_0}
\end{equation*}
Hence 
\begin{equation*}
    f(x_{k+1}) \le f(x_k) + \frac{\epsilon \cdot \tilde{\epsilon}}{4\ell n_0} - \frac{\epsilon \cdot \tilde{\epsilon}}{2 \ell n_0} = f(x_k) - \frac{\epsilon \cdot \tilde{\epsilon}}{4\ell n_0}
\end{equation*}
By telescoping the above the inequality, we have
\begin{equation*}
    f(x_{K_0 + 1}) - f(x_0) \le -\frac{\epsilon \cdot \tilde{\epsilon}}{4\ell n_0} K_0.
\end{equation*}

\end{proof}

\begin{proof}[\textbf{Proof of Theorem~\ref{thm: ZO-SPIDER}} ]

If $\mathcal{K} \le K_0 \cap \mathcal{H}_{K_0}$, we have 
$\|v_{\mathcal{K}}\| \le 2 \tilde{\epsilon}$. Because $\|v_{\mathcal{K}} - \nabla f(x_{\mathcal{K}})\| \le \sqrt{\epsilon \cdot \tilde{\epsilon}} \le \tilde{\epsilon}$ if $\mathcal{H}_{K_0}$ occurs, so $\|\nabla f(x_{\mathcal{K}})\| \le 2 \tilde{\epsilon}$.
    
If $\mathcal{K} > K_0 \cap \mathcal{H}_{K_0}$, we have
\begin{align*}
    -\Delta_f \le & f^* - f(x_0) \le f(x_{K_0 +1}) - f(x_0) \le -\frac{\epsilon \cdot \tilde{\epsilon}}{4\ell n_0} K_0 \le  -\frac{\epsilon \cdot \tilde{\epsilon}}{4\ell n_0} \left(\frac{4 \ell \Delta_f n_0}{\epsilon^2} + 1 \right) \\
    \le & -\frac{\epsilon \cdot \tilde{\epsilon}}{4\ell n_0} \left(\frac{4 \ell \Delta_f n_0}{\epsilon \cdot \tilde{\epsilon}} + 1 \right) = - \Delta_f - \frac{\epsilon \cdot \tilde{\epsilon}}{4\ell n_0}
\end{align*}
 which is contradict with the fact that $-\Delta_f > - \Delta_f - \frac{\epsilon \cdot \tilde{\epsilon}}{4\ell n_0}$. This means that when $\mathcal{H}_{K_0}$ happens, then the algorithm must terminate before $K_0$ iterations.
 
 Therefore, the total function query complexity can be bounded by:
 \begin{align*}
     d \left(\left \lceil K_0/q \right \rceil |\mathcal{S}_1| + K_0 |\mathcal{S}_2| \right) \le &  d \left( \left( K_0/q +1 \right) |\mathcal{S}_1| + K_0 |\mathcal{S}_2| \right) \\
     \stackrel{\textrm{\ding{172}}}{ \le} & d \left(|\mathcal{S}_1| + 2 K_0 |\mathcal{S}_2|  \right) \\
     \le & d \left( \frac{16\sigma^2}{\epsilon^2} + 2 \left( \frac{4\ell \Delta_f n_0}{\epsilon^2} + 1 \right) \frac{16 \sigma}{\epsilon n_0} \right) \\
     = & d \left( \frac{ 128 \ell \sigma \Delta_f }{\epsilon^3} + \frac{16 \sigma^2}{\epsilon^2} + \frac{32 \sigma}{\epsilon n_0} \right)
 \end{align*}
 where \ding{172} is because $|\mathcal{S}_1| = q|\mathcal{S}_2|$.
    
\end{proof}

\subsection{Proof of high probability results for Second-Order Stationary Point of  Theorem~\ref{thm: ZO-SPIDER-NCF} }

From Algorithm~\ref{alg: ZO-SPIDER-NCF}, we know that all randomness in iteration $k$ come from three parts: $\left.1\right)$ maintaining SPIDER $v_k$ in Line 5-10; $\left.2\right)$ to conduct the zeroth-order negative-curvature search in Line 2; choosing a random sign of $w_2$ in Line 3. Denote by $\xi_k^1; \xi_k^2; \xi_k^3$ the randomness of from the three parts, respectively. Let $\mathcal{F}_k$ be the filtration involving the full information of $x_{0:k}, v_{0:k}$, \emph{i.e.}, $\mathcal{F}_k = \sigma \{\xi_{0;k}^1, \xi_{0;k}^2, \xi_{0;k-1}^3 \}$. So the randomness in iteration $k$ given $\mathcal{F}_k$ only comes from $\xi_k^3$.

Denote the random index
\begin{equation*}
    \mathcal{I}_k = 
    \begin{cases}
    1 & \text{if} \quad w_1 = \bot \\
    2 & \text{if} \quad w_1 \neq \bot
    \end{cases}
\end{equation*}
then we know that $\mathcal{I}_k$ is measurable on $\mathcal{F}_{\left \lfloor k/\mathscr{K} \right \rfloor\mathscr{K}}$ and also on $\mathcal{F}_k$. When the event $(\mathcal{I}_k=1 \bigcap \|v_k\| \le 2\tilde{\epsilon})$ happens, then the algorithm will be stopped. In this case, we define a virtual update $x_{k+1} = x_k$ in Line 13 and Line 18.

Let $\mathcal{H}_k^1$ denotes the event that algorithm has not stopped before $k$, \emph{i.e.}, 
\begin{equation*}
    \mathcal{H}_k^1 = \bigcap_{i=1}^{k} \left( \left( \|v_i\| \ge 2\tilde{\epsilon} \bigcap \mathcal{I}_i =1 \right) \bigcup \mathcal{I}_i =2 \right)
\end{equation*}
Let $\mathcal{H}_{\mathscr{K}j}^2$ denotes the event that the Zeroth-Order Curvature Finding in iteration $\mathscr{K} j$ runs successfully.

Let $\mathcal{H}_k^3$ denotes the event that
\begin{equation*}
    \mathcal{H}_k^3 = \left( \bigcap_{i=0}^k (\|v_i - \nabla f(x_i)\| \le \epsilon \cdot \tilde{\epsilon}) \right) \bigcap \left( \bigcap_{j=0}^{\left\lfloor k/ \mathscr{K} \right \rfloor} \mathcal{H}_{\mathscr{K} j}^2 \right)
\end{equation*}
Then we have $\mathcal{H}_3^k \in \mathcal{F}_k$, and $\mathcal{H}_1^3 \supseteq \mathcal{H}_2^3 \supseteq \cdots \supseteq \mathcal{H}_k^3 $.

\begin{lemma}
\label{lemma: event-3}
With the setting of Theorem~\ref{thm: ZO-SPIDER-NCF}, and under the Assumption~\ref{assum: basic-high-probability}, we have 
\begin{equation*}
    \textrm{Pr} \left( \mathcal{H}_{K_0}^3 \right) \ge \frac{15}{16}
\end{equation*}
\end{lemma}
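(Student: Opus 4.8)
The plan is to prove $\mathrm{Pr}(\mathcal{H}_{K_0}^3)\ge \tfrac{15}{16}$ by a single union bound over the two families of ``bad'' events whose complement forms $\mathcal{H}_{K_0}^3$: the events $\bigl(\|v_i-\nabla f(x_i)\|^2 > \epsilon\cdot\tilde\epsilon\bigr)$ for $0\le i\le K_0$ coming from the SPIDER gradient estimates, and the events $\neg\mathcal{H}_{\mathscr{K}j}^2$ for $0\le j\le \lfloor K_0/\mathscr{K}\rfloor = J$ coming from the calls to \hyperref[alg: ZO-NCF-Online]{ZO-NCF-Online} in Line~2. As in the proof of Lemma~\ref{lemma: ZO-SPIDER-Coord}, I would first extend the trajectory past the possible stopping time $\mathcal{K}$ by the virtual update $x_{k+1}=x_k$ (still generating $v_k$ from Lines~5--10), so that all $v_k$ with $k\le K_0$ are well defined.

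For the SPIDER part, the key point is that interleaving negative-curvature steps with normalized-SPIDER steps leaves the concentration analysis of Lemma~\ref{lemma: ZO-SPIDER-Coord} intact. In Line~13 the step is $x_{k+1}=x_k-w_2$ with $w_2=\pm\eta w_1$ and $\|w_1\|=1$ (the output of \hyperref[alg: ZO-NCF-Online]{ZO-NCF-Online} is a unit vector by Theorem~\ref{thm: ZO-NCF-Online}), hence $\|x_{k+1}-x_k\|=\eta$, exactly as in Line~18 where $\|x_{k+1}-x_k\|=\|\eta\, v_k/\|v_k\|\|=\eta=\tfrac{\epsilon}{\ell n_0}$. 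Thus the displacement bound $\|x_j-x_{j-1}\|\le \tfrac{\epsilon}{\ell n_0}$ that drives the martingale-difference bound $\|\epsilon_{j,i}\|\le \tfrac{2\ell}{|\mathcal{S}_2|}\|x_j-x_{j-1}\|\le \tfrac{2\epsilon}{|\mathcal{S}_2|n_0}$ holds verbatim, and $v_k$ is maintained by the same every-$q$-steps restart recursion as in Algorithm~\ref{alg: ZO-SPIDER}. Working with the filtration $\mathcal{F}_k=\sigma\{\xi_{0:k}^1,\xi_{0:k}^2,\xi_{0:k-1}^3\}$ (so the fresh sampling noise $\xi_k^1$ at step $k$ is independent of the past and of the curvature/sign randomness), the telescoping of $v_k-\hat\nabla_{coord}f(x_k)$ into conditionally zero-mean, bounded increments and the application of the Azuma-type bound (Lemma~\ref{lemma: concentration inequality}) carry over unchanged. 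With $\tilde\epsilon=10\epsilon\log(128(K_0+1))$, $|\mathcal{S}_1|=16\sigma^2/\epsilon^2$, $|\mathcal{S}_2|=16\sigma/(\epsilon n_0)$, $q=\sigma n_0/\epsilon$ and $\mu=\bigl(\tfrac{\epsilon\tilde\epsilon}{8q^2\rho^2 d}\bigr)^{1/4}$, the same estimate as in Lemma~\ref{lemma: ZO-SPIDER-Coord} yields, for each $k\le K_0$, $\mathrm{Pr}\bigl(\|v_k-\nabla f(x_k)\|^2 > \epsilon\cdot\tilde\epsilon\bigr)\le \tfrac{1}{32(K_0+1)}$, so summing over $k=0,\dots,K_0$ bounds the probability of any bad SPIDER estimate by $\tfrac1{32}$.

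For the negative-curvature part, each invocation $w_1\gets$ \hyperref[alg: ZO-NCF-Online]{ZO-NCF-Online}$(f,x_{\mathscr{K}j},\delta,\tfrac{1}{16J})$ is, by Theorem~\ref{thm: ZO-NCF-Online} applied conditionally on $x_{\mathscr{K}j}$ and on all past randomness (the internal randomness of the call being fresh), correct with probability at least $1-\tfrac{1}{16J}$, and ``correct'' here is precisely the event $\mathcal{H}_{\mathscr{K}j}^2$. A union bound over the $\lfloor K_0/\mathscr{K}\rfloor+1$ curvature calls in a run of length $K_0=J\mathscr{K}$ therefore contributes at most $\tfrac1{32}$ with the constants of Theorem~\ref{thm: ZO-SPIDER-NCF-full}, and adding the SPIDER contribution gives $\mathrm{Pr}(\neg\mathcal{H}_{K_0}^3)\le \tfrac1{32}+\tfrac1{32}=\tfrac1{16}$, i.e.\ $\mathrm{Pr}(\mathcal{H}_{K_0}^3)\ge\tfrac{15}{16}$.

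The main obstacle is the bookkeeping in the SPIDER part: one must verify that the adaptive rule selecting, at each step, between a curvature step and a normalized-gradient step (which depends only on the $\mathcal{F}_{\lfloor k/\mathscr{K}\rfloor\mathscr{K}}$-measurable index $\mathcal{I}_k$, and hence on $\mathcal{F}_{k-1}$ within a block, together with the random sign) preserves both the bounded-difference property $\|x_{k+1}-x_k\|\le\eta$ uniformly over the branch and the sign, and the conditional zero-mean property of the SPIDER increments, so that the proof of Lemma~\ref{lemma: ZO-SPIDER-Coord} can be quoted rather than re-derived. Once this is checked, the remainder is the routine union bound sketched above.
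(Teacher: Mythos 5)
Your proof follows the same structure as the paper's: you split $\mathcal{H}_{K_0}^3$ into the SPIDER-estimate events (controlled by re-running the argument of Lemma~\ref{lemma: ZO-SPIDER-Coord} with the virtual update past $\mathcal{K}$, after verifying that all three step types — the stopped update $0$, the normalized-SPIDER step $\eta v_k/\|v_k\|$, and the NCF step $\pm\eta w_1$ with $\|w_1\|=1$ — keep $\|x_{k+1}-x_k\|\le\eta$, so the martingale-difference bounds carry over) and the ZO-NCF-Online events (controlled by Theorem~\ref{thm: ZO-NCF-Online} conditioned on the past), then apply a union bound. This is exactly the paper's proof, with the same use of the filtration to retain conditional zero-mean increments; the only cosmetic difference is that the paper works with a slightly reduced sigma-field $\mathcal{F}_k^2$ not containing $\xi_k^1$ to make the martingale structure explicit, whereas you state conditioning somewhat loosely while intending the same thing. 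One small remark applying equally to both your writeup and the paper's: with $p=\tfrac{1}{16J}$ passed into the curvature-finding call and $J+1$ such calls, the raw union bound gives roughly $\tfrac{1}{16}$ rather than $\tfrac{1}{32}$; the paper asserts a per-call success probability of $1-\tfrac{1}{32J}$ without justification from the stated parameter, and you likewise claim the curvature calls contribute $\le\tfrac{1}{32}$. Neither derivation closes this factor of two cleanly, but it is a constant-level looseness common to both and easily repaired by tightening $p$; it does not represent a gap in your argument relative to the paper's.
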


\begin{proof}
Denote the event $\tilde{H}_k = (\|v_k - \nabla f(x_k)\| \le \epsilon \cdot \tilde{\epsilon}), 0\le k \le K_0$. If we can prove that $\textrm{Pr}(\tilde{H}_k) \ge 1 - \frac{1}{32 (K_0 + 1)}$, we have $\textrm{Pr} \left( \bigcap_{i=0}^{K_0} \tilde{H}_i \right) \ge 1- \frac{1}{32}$. On the other hand, from Theorem~\ref{thm: ZO-NCF-Online}, we know that each time $\textrm{Pr} \left(\mathcal{H}_{\mathscr{K} j}^2  \right) \ge 1-\frac{1}{32 J}$, so $\textrm{Pr} \left( \bigcap_{j=0}^{\left\lfloor K_0/ \mathscr{K} \right \rfloor} \mathcal{H}_{\mathscr{K} j}^2 \right) \ge 1- \frac{1}{32}$. So we have $\textrm{Pr} ( \mathcal{H}_{K_0}^3) \ge \frac{15}{16}$.

Now, we prove $\textrm{Pr}(\tilde{H}_k) \ge 1 - \frac{1}{32 (K_0 + 1)}$.

Consider the filtration of full information of $x_{0:k}$, $\mathcal{F}_k^2 = \sigma \{\xi_{0;k-1}^1, \xi_{0;k}^2, \xi_{0;k-1}^3 \}$. We know $x_k$ is measurable on $\mathcal{F}_k^2$. Given $\mathcal{F}_k^2$, we have 
\begin{itemize}
    \item when $k = \left \lfloor k/q \right \rfloor q$, 
    \begin{equation*}
        \E_i \left[ \hat{\nabla}_{coord} f_i (x_k) - \hat{\nabla}_{coord} f(x_k) \left.\right| \mathcal{F}_k^2 \right] =0, \quad \E_i \left[ \nabla f_i(x_k) - \nabla f(x_k) \left.\right| \mathcal{F}_k^2 \right]=0
    \end{equation*}
    \item when $k \neq \left \lfloor k/q \right \rfloor q$,
    \begin{align*}
        & \E_i \left[ \hat{\nabla}_{coord} f_i (x_k) - \hat{\nabla}_{coord} f_i (x_{k-1}) + \left( \hat{\nabla}_{coord} f (x_k) - \hat{\nabla}_{coord} f (x_{k-1}) \right) \right] =  0 \\
        & \E_i \left[ \nabla f_i (x_k) - \nabla f_i (x_{k-1}) + \left( \nabla f (x_k) - \nabla f (x_{k-1}) \right) \right] =  0
    \end{align*}
\end{itemize}

Because $x_k$ is generated by one of the three ways:
\begin{itemize}
    \item when $w_1 = \bot$, we have $\|x_k - x_{k-1}\| = \|\eta (v_{k-1}/ \|v_{k-1}\|)\| = \eta = \frac{\epsilon}{\ell n_0}$;
    \item when $w_1 \neq \bot$, we have $\|x_k - x_{k-1}\| = \|\eta w_1\| = \eta = \frac{\epsilon}{\ell n_0}$;
    \item when Algorithm~\ref{alg: ZO-SPIDER-NCF} has already stopped, we have $\|x_{k+1} - x_k\|=0 \le \frac{\epsilon}{\ell n_0}$
\end{itemize}
So $v_k - \nabla f(x_k)$ is martingale. If we set $|\mathcal{S}_1|, |\mathcal{S}_1|, \eta, \mu$ and $\tilde{\epsilon} =  10 \epsilon \log(128 (K_0+1))$ as the same in Lemma~\ref{lemma: ZO-SPIDER-Coord}. Then using the same technique of  Lemma~\ref{lemma: ZO-SPIDER-Coord} with $p = \frac{1}{32}$, we have $\textrm{Pr}(\tilde{H}_k) \ge 1 - \frac{1}{16 (K_0 + 1)}, 0\le k \le K_0$.

\end{proof}

\begin{lemma}
\label{lemma: sceond-order-stationary-point-high-probability}
If $\left( \mathcal{H}_{K_0}^1 \right)^c \cap \mathcal{H}_{K_0}^3$ happens, Algorithm~\ref{alg: ZO-SPIDER-NCF} outputs $x_k$ satisfying
\begin{equation*}
    \|\nabla f(x_k)\| \le 3 \tilde{\epsilon}, \quad \lambda_{min} (\nabla^2 f(x_k)) \ge -2\delta,
\end{equation*}
before $K_0$ iterations.
\end{lemma}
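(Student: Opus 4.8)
The plan is to unpack what termination means for Algorithm~\ref{alg: ZO-SPIDER-NCF} and then read off the two guarantees directly from the events in force. On $\left(\mathcal{H}_{K_0}^1\right)^c$ the algorithm has already stopped at some iteration $k\le K_0$, and the only exit point in Algorithm~\ref{alg: ZO-SPIDER-NCF} is Line 14; reaching it requires, in the epoch $j=\lfloor k/\mathscr{K}\rfloor$ active at time $k$, both $w_1=\bot$ (the branch $\mathcal{I}_k=1$) and $\|v_k\|\le 2\tilde{\epsilon}$. Write $x_{k_j}$, with $k_j=j\mathscr{K}$, for the point at which ZO-NCF-Online was invoked in Line 2 of that epoch.

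For the first-order guarantee I would use $\mathcal{H}_{K_0}^3$, which in particular contains the bound $\|v_k-\nabla f(x_k)\|^2\le \epsilon\cdot\tilde{\epsilon}$; since $\tilde{\epsilon}=10\epsilon\log(128(K_0+1))\ge\epsilon$, this gives $\|v_k-\nabla f(x_k)\|\le\sqrt{\epsilon\tilde{\epsilon}}\le\tilde{\epsilon}$. Combined with $\|v_k\|\le 2\tilde{\epsilon}$ and the triangle inequality, $\|\nabla f(x_k)\|\le 3\tilde{\epsilon}$, which is the first claim.

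For the second-order guarantee, note that $\mathcal{H}_{K_0}^3$ also contains $\mathcal{H}_{\mathscr{K}j}^2$, so the negative-curvature call that produced $w_1$ ran successfully; since it returned $w_1=\bot$, Theorem~\ref{thm: ZO-NCF-Online} yields $\nabla^2 f(x_{k_j})\succeq -\delta\I$. It remains to transfer this to $x_k$. Within epoch $j$, because $w_1=\bot$ throughout that epoch, every update executed before termination has the form $x_{i+1}=x_i-\eta\,(v_i/\|v_i\|)$, hence $\|x_{i+1}-x_i\|=\eta$; summing over the at most $\mathscr{K}$ inner iterations, $\|x_k-x_{k_j}\|\le\mathscr{K}\eta=\frac{\delta\ell n_0}{\rho\epsilon}\cdot\frac{\epsilon}{\ell n_0}=\frac{\delta}{\rho}$. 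By the $\rho$-Hessian-Lipschitz property (Definition~\ref{def: Lipschitz}), $\|\nabla^2 f(x_k)-\nabla^2 f(x_{k_j})\|\le\rho\|x_k-x_{k_j}\|\le\delta$, so $\nabla^2 f(x_k)\succeq\nabla^2 f(x_{k_j})-\delta\I\succeq-2\delta\I$, i.e.\ $\lambda_{\min}(\nabla^2 f(x_k))\ge-2\delta$. The argument is essentially bookkeeping: the only quantitative ingredients are the choice $\mathscr{K}\eta=\delta/\rho$ (exactly what bounds the in-epoch drift of the Hessian by $\delta$) and $\tilde{\epsilon}\ge\epsilon$. The one point requiring care is identifying which point feeds ZO-NCF-Online and verifying that, on the branch where termination occurs, no $\pm\eta w_1$ step was ever taken in that epoch, so that the crude step-count bound $\|x_k-x_{k_j}\|\le\mathscr{K}\eta$ genuinely applies; the substantive analytic work (the martingale concentration behind $\mathcal{H}_{K_0}^3$ and the descent estimate for the iteration/query count) is already carried out in Lemmas~\ref{lemma: ZO-SPIDER-Coord}, \ref{lemma: descent lemma of zo-spider}, and \ref{lemma: event-3}.
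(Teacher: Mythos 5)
Your proof is correct and follows essentially the same route as the paper's: triangle inequality plus $\sqrt{\epsilon\tilde\epsilon}\le\tilde\epsilon$ for the first-order bound, and ZO-NCF-Online returning $\bot$ at the epoch anchor $x_{k_j}$ combined with the $\rho$-Hessian-Lipschitz drift bound $\rho\,\mathscr{K}\eta=\delta$ for the second-order bound. Your explicit note that $w_1=\bot$ in the terminating epoch (so Theorem~\ref{thm: ZO-NCF-Online} applies at $x_{k_j}$, and all intermediate steps have length exactly $\eta$) makes visible a point the paper leaves implicit, but the substance is identical.
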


\begin{proof}
If $\left( \mathcal{H}_{K_0}^1 \right)^c$ happens, then Algorithm~\ref{alg: ZO-SPIDER-NCF} has already stopped before iteration $K_0$ and output $x_k$ with $\|v_k\|\le 2\tilde{\epsilon}$. If $\mathcal{H}_{K_0}^3$ happens, we have
\begin{itemize}
    \item $\|\nabla f(x_k) - v_k\|\le \sqrt{\epsilon \cdot \tilde{\epsilon}} \le \tilde{\epsilon}$. So we have 
\begin{equation*}
    \|\nabla f(x_k)\| \le \|v_k\| + \|\nabla f(x_k) - v_k\| \le 3\tilde{\epsilon}.
\end{equation*}
    \item Each time  the Zeroth-Order Curvature Finding runs successfully, so from Theorem~\ref{thm: ZO-NCF-Online}, we have 
    \begin{equation*}
        \lambda_{min} (\nabla^2 f(x_{k_0})) \ge - \delta,
    \end{equation*}
    where $k_0 = \left \lfloor k /\mathscr{K}  \right\rfloor \mathscr{K}$. 
\end{itemize}
From Assumption~\ref{assum: basic-high-probability}, we have 
\begin{equation*}
    \|\nabla^2 f(x) - \nabla^2 f(y)\| \le \left\| \frac{1}{n} \sum_{i=1}^n \left( \nabla^2 f_i (x) - \nabla^2 f_i (y) \right) \right\| \le \frac{1}{n} \sum_{i=1}^n \|\nabla^2 f_i(x) - \nabla^2 f_i (y)\| \le \rho \|x-y\|.
\end{equation*}
This means that $f$ has $\rho$-Lipschitz Hessian. We have \begin{align*}
    \left\| \nabla^2 f(x_k) - \nabla^2 f(x_{k_0}) \right\| \le & \left\| \sum_{i=k_0}^{k-1} (\nabla^2 f(x_{i+1}) - \nabla^2 f(x_i)) \right\| \\
    \le & \sum_{i=k_0}^{k-1} \rho \|x_{i+1} - x_i\| \le \mathscr{K} \frac{\epsilon}{\ell n_0} \overset{\textrm{$\mathscr{K} = \frac{\delta \ell n_0}{\rho \epsilon}$}}{=} \delta
\end{align*}
Finally we get $\lambda_{min} (\nabla^2 f(x_k)) \ge -2\delta$.

\end{proof}

\begin{proof}[\textbf{Proof of Theorem~\ref{thm: ZO-SPIDER-NCF}}]
Denote $\mathcal{H}_k^4 = \mathcal{H}_k^1 \cap \mathcal{H}_k^3$. For all iteration $\mathcal{K}$ with $\textrm{mod}(\mathcal{K}, \mathscr{K}) = 0$, given $\mathcal{F}_{\mathcal{K}}$, we consider:

\ding{182} When $\mathcal{H}_k^4 \bigcap (\mathcal{I}_{\mathcal{K}}=2)$ happens. From Lemma~\ref{lemma: Lipschitz} and the fact that $f$ is $\rho$-Hessian Lipschitz, we have 
\begin{align*}
    & f(x_{\mathcal{K}+\mathscr{
    K}}) \\
    \le & f(x_{\mathcal{K}}) + \<\nabla f(x_{\mathcal{K}}), x_{\mathcal{K}+\mathscr{
    K}} - x_{\mathcal{K}} \> + \frac{1}{2} (x_{\mathcal{K}+\mathscr{
    K}} - x_{\mathcal{K}})^\T \nabla^2 f(x_{\mathcal{K}}) (x_{\mathcal{K}+\mathscr{
    K}} - x_{\mathcal{K}}) + \frac{\rho}{6} \|x_{\mathcal{K}+\mathscr{
    K}} - x_{\mathcal{K}}\|^3.
\end{align*}
From Theorem~\ref{thm: ZO-NCF-Online}, we have $w_1^\T \nabla^2 f(x_{\mathcal{K}}) w_1 \le -\frac{\delta}{2}$. Take expectation on the random number of the sign we have 
\begin{equation*}
    \E \<\nabla f(x_{\mathcal{K}}), x_{\mathcal{K}+\mathscr{
    K}} - x_{\mathcal{K}} \> = 0
\end{equation*}
Thus we have 
\begin{align*}
    f(x_{\mathcal{K}+\mathscr{
    K}}) \le & f(x_{\mathcal{K}}) + \frac{1}{2} (\eta \cdot \mathscr{K} w_1)^\T \nabla^2 f(x_{\mathcal{K}}) (\eta \cdot \mathscr{K} w_1) + \frac{\rho}{6} \|\eta \cdot \mathscr{K} w_1\|^3 \\
    \overset{\textrm{$\eta = \frac{\epsilon}{\ell n_0}, \mathscr{K} = \frac{\delta \ell n_0}{\rho \epsilon}$}}{=} & f(x_{\mathcal{K}}) - \frac{\delta^3}{4\rho^2} + \frac{\delta^3}{6 \rho^2} = f(x_{\mathcal{K}}) - \frac{\delta^3}{12 \rho^2}
\end{align*}
Then we analysis the difference of $\left(f(x_{\mathcal{K}} - f^*)\right) \mathbb{I}_{\mathcal{H}_{\mathcal{K}}^4}$, where $\mathbb{I}_{\mathcal{H}_{\mathcal{K}}^4}$ is the indication function of $\mathcal{H}_{\mathcal{K}}^4$, then we have
\begin{align}
\label{eq: descent-second}
    & \E \left[ \left( f(x_{\mathcal{K}+ \mathscr{K}} - f^*) \right)\mathbb{I}_{\mathcal{H}_{\mathcal{K}+ \mathscr{K}}^4} \left.\right| \mathcal{F}_{\mathcal{K}} -
    \left( f(x_{\mathcal{K}} - f^*) \right)\mathbb{I}_{\mathcal{H}_{\mathcal{K}}^4} \left.\right| \mathcal{F}_{\mathcal{K}} \right] \notag\\
    = &\E \left[ \left( f(x_{\mathcal{K}+ \mathscr{K}} - f^*) \right) \left(\mathbb{I}_{\mathcal{H}_{\mathcal{K}+ \mathscr{K}}^4} - \mathbb{I}_{\mathcal{H}_{\mathcal{K}}^4} \right)\left.\right| \mathcal{F}_{\mathcal{K}} \right] + \E \left[ \left( f(x_{\mathcal{K} + \mathscr{K}}) - f(x_{\mathcal{K}}) \right) \mathbb{I}_{\mathcal{H}_{\mathcal{K}}^4} \left.\right| \mathcal{F}_{\mathcal{K}} \right] \notag\\
    \le & \textrm{Pr}(\mathcal{H}_{\mathcal{K}}^4 \left.\right| \mathcal{F}_{\mathcal{K}}) \E \left[ \left( f(x_{\mathcal{K} + \mathscr{K}}) - f(x_{\mathcal{K}}) \right) \mathbb{I}_{\mathcal{H}_{\mathcal{K}}^4} \left.\right| \mathcal{F}_{\mathcal{K}} \cap \mathcal{H}_{\mathcal{K}}^4  \right] \notag\\
    \le & - \textrm{Pr}(\mathcal{H}_{\mathcal{K}}^4 \left.\right| \mathcal{F}_{\mathcal{K}}) \frac{\delta^3}{12 \rho^2}
\end{align}
where the last second inequality uses the fact that $\mathbb{I}_{\mathcal{H}_{\mathcal{K}+ \mathscr{K}}^4} - \mathbb{I}_{\mathcal{H}_{\mathcal{K}}^4} \le 0$. 

\ding{183} When $\mathcal{H}_k^4 \bigcap (\mathcal{I}_{\mathcal{K}}=1)$ happens, then for $\mathcal{K} \le k < \mathcal{K}+\mathscr{K}$, from Eq.\eqref{eq: descent}, we have
\begin{equation*}
    f(x_{k+1}) \le f(x_k) + \frac{\eta_k}{2} \|v_k - \nabla f(x_k)\|^2- \eta_k \left( \frac{1}{2} - \frac{\eta_k \ell}{2} \right)\|v_k\|^2
\end{equation*}
where $\eta_k = \eta/\|v_k\|$. If $\mathcal{H}_k^4$ happens we have $\|v_k\|\ge 2\tilde{\epsilon}$ and $\|v_k - \nabla f(x_k)\|\le \epsilon \cdot \tilde{\epsilon}$, then from the proof of Lemma~\ref{lemma: descent lemma of zo-spider}, we also have
\begin{equation*}
    f(x_{k+1}) \le f(x_k) - \frac{\epsilon \cdot \tilde{\epsilon}}{4 \ell n_0}
\end{equation*}
Taking expectation up to $\mathcal{F}_{\mathcal{K}}$, we have
\begin{equation*}
    \E \left[f(x_{k+1}) - f(x_k) \left.\right| \mathcal{F}_{\mathcal{K}}\cap \mathcal{H}_k^4 \right] \le - \frac{\epsilon \cdot \tilde{\epsilon}}{4 \ell n_0}
\end{equation*}
By analyzing the difference of $\left(f(x_k )-f^*\right) \mathbb{I}_{\mathcal{H}_{k}}^4$, we have
\begin{align*}
    & \E \left[ \left( f(x_{k+1} )- f^* \right)\mathbb{I}_{\mathcal{H}_{k+1}^4} \left.\right| \mathcal{F}_{\mathcal{K}} -
    \left( f(x_{k} )- f^* \right)\mathbb{I}_{\mathcal{H}_{k}^4} \left.\right| \mathcal{F}_{\mathcal{K}} \right] \\
    = &\E \left[ \left( f(x_{k+1} )- f^* \right) \left(\mathbb{I}_{\mathcal{H}_{k+1}^4} - \mathbb{I}_{\mathcal{H}_{k}^4} \right)\left.\right| \mathcal{F}_{\mathcal{K}} \right] + \E \left[ \left( f(x_{k+1}) )- f(x_{k} \right) \mathbb{I}_{\mathcal{H}_{k}^4} \left.\right| \mathcal{F}_{\mathcal{K}} \right] \\
    \le & \textrm{Pr}(\mathcal{H}_{k}^4 \left.\right| \mathcal{F}_{\mathcal{K}}) \E \left[ \left( f(x_{k+1}) - f(x_{k}) \right) \mathbb{I}_{\mathcal{H}_{k}^4} \left.\right| \mathcal{F}_{\mathcal{K}} \cap \mathcal{H}_{k}^4  \right] \\
    \le & - \textrm{Pr}(\mathcal{H}_{k}^4 \left.\right| \mathcal{F}_{\mathcal{K}}) \frac{\epsilon \cdot \tilde{\epsilon}}{4 \ell n_0}
\end{align*}
where the last second inequality uses the fact that $\mathbb{I}_{\mathcal{H}_{\mathcal{K}+ \mathscr{K}}^4} - \mathbb{I}_{\mathcal{H}_{\mathcal{K}}^4} \le 0$. By telescoping the above inequality with $k$ from $\mathcal{K}$ to $\mathcal{K} + \mathscr{K}-1$, we have
\begin{align}
\label{eq: descent-first}
    & \E \left[ \left( f(x_{\mathcal{K}+ \mathscr{K}} )- f^* \right)\mathbb{I}_{\mathcal{H}_{\mathcal{K}+ \mathscr{K}}^4} \left.\right| \mathcal{F}_{\mathcal{K}} -
    \left( f(x_{\mathcal{K}} )- f^* \right)\mathbb{I}_{\mathcal{H}_{\mathcal{K}}^4} \left.\right| \mathcal{F}_{\mathcal{K}} \right] \notag\\
    \le & - \frac{\epsilon \cdot \tilde{\epsilon}}{4 \ell n_0} \sum_{i=\mathcal{K}}^{\mathcal{K}+\mathscr{K}} \textrm{Pr}(\mathcal{H}_{i}^4 \left.\right| \mathcal{F}_{\mathcal{K}}) 
    \le  - \frac{\epsilon \cdot \tilde{\epsilon}}{4 \ell n_0} \mathscr{K} \textrm{Pr}(\mathcal{H}_{\mathcal{K} +\mathscr{K}}^4 \left.\right| \mathcal{F}_{\mathcal{K}}) \notag\\
    \overset{\textrm{$\mathscr{K} = \frac{\delta \ell n_0}{\rho \epsilon}$}}{=} & - \textrm{Pr}(\mathcal{H}_{\mathcal{K} +\mathscr{K}}^4 \left.\right| \mathcal{F}_{\mathcal{K}}) \frac{\delta \tilde{\epsilon}}{4 \rho}
\end{align}
where the second inequality uses the fact that $\mathcal{H}_i^4 \subseteq \mathcal{H}_{\mathcal{K}+\mathscr{K}}^4$, thus $\textrm{Pr}(\mathcal{H}_i^4 \left.\right| \mathcal{F}_{\mathcal{K}}) \ge \textrm{Pr}(\mathcal{H}_{\mathcal{K}+\mathscr{K}}^4 \left.\right| \mathcal{F}_{\mathcal{K}})$.

Combining \eqref{eq: descent-second} and \eqref{eq: descent-first} we have 
\begin{align*}
    & \E \left[ \left( f(x_{\mathcal{K}+ \mathscr{K}}) - f^* \right)\mathbb{I}_{\mathcal{H}_{\mathcal{K}+ \mathscr{K}}^4} \left.\right| \mathcal{F}_{\mathcal{K}} -
    \left( f(x_{\mathcal{K}}) - f^* \right)\mathbb{I}_{\mathcal{H}_{\mathcal{K}}^4} \left.\right| \mathcal{F}_{\mathcal{K}} \right] \\
    \le & \min \left(- \textrm{Pr}(\mathcal{H}_{\mathcal{K}}^4 \left.\right| \mathcal{F}_{\mathcal{K}}) \frac{\delta^3}{12 \rho^2}, - \textrm{Pr}(\mathcal{H}_{\mathcal{K} +\mathscr{K}}^4 \left.\right| \mathcal{F}_{\mathcal{K}}) \frac{\delta \tilde{\epsilon} }{4 \rho}\right) \\
    \le & -\textrm{Pr}(\mathcal{H}_{\mathcal{K} +\mathscr{K}}^4 \left.\right| \mathcal{F}_{\mathcal{K}}) \min\left( \frac{\delta^3}{12 \rho^2}, \frac{\delta \tilde{\epsilon} }{4 \rho} \right)
\end{align*}
By taking full expectation on the above inequality, and telescoping the results with $\mathcal{K} = 0, \mathscr{K}, \dots, (J-1)\mathscr{K}$, we have 
\begin{align*}
    & \E \left[ \left( f(x_{J \mathscr{K}}) - f^* \right)\mathbb{I}_{\mathcal{H}_{J \mathscr{K}}^4}  -
    \left( f(x_0) - f^* \right)\mathbb{I}_{\mathcal{H}_0^4}  \right] \\
    \le & - \min\left( \frac{\delta^3}{12 \rho^2}, \frac{\delta \tilde{\epsilon} }{4 \rho} \right) \sum_{j=0}^{J-1} \textrm{Pr}(\mathcal{H}_{j \mathscr{K}}^4 ) \\
    \le & - \min\left( \frac{\delta^3}{12 \rho^2}, \frac{\delta \tilde{\epsilon}}{4 \rho} \right) J \textrm{Pr}(\mathcal{H}_{J \mathscr{K}}^4 )
\end{align*}
Then we use the fact that $f(x_{J \mathscr{K}}) - f^* \ge 0, f(x_0) - f^* \le \Delta_f$ and choose $J = 8 \left(\left\lfloor \max \left( \frac{12 \rho^2 \Delta_f}{\delta^3}, \frac{4 \rho \Delta_f}{\delta \epsilon} \right) \right\rfloor + 1 \right) \ge \frac{8 \Delta_f}{\min\left( \frac{\delta^3}{12 \rho^2}, \frac{\delta \tilde{\epsilon}}{4 \rho} \right)}$, we have 
\begin{equation*}
    \textrm{Pr} \left( \mathcal{H}_{J \mathscr{K}}^4 \right) \overset{\textrm{$K_0 = J \mathscr{K}$}}{=} \textrm{Pr} \left( \mathcal{H}_{K_0}^4 \right) \le \frac{1}{8}
\end{equation*}
Using the union bound and Lemma~\ref{lemma: event-3}, we have 
\begin{align*}
    \textrm{Pr} \left( \mathcal{H}_{K_0}^1 \right) = & \textrm{Pr}\left( \mathcal{H}_{K_0}^1 \bigcap \mathcal{H}_{K_0}^3 \right) + \textrm{Pr}\left( \mathcal{H}_{K_0}^1 \bigcap \left( \mathcal{H}_{K_0}^3 \right)^c \right) = \textrm{Pr} \left( \mathcal{H}_{K_0}^4 \right) + \textrm{Pr}\left( \mathcal{H}_{K_0}^1 \bigcap \left( \mathcal{H}_{K_0}^3 \right)^c \right) \\
    \le & \textrm{Pr} \left( \mathcal{H}_{K_0}^4 \right) + \textrm{Pr} \left( \left( \mathcal{H}_{K_0}^3 \right)^c \right) = \textrm{Pr} \left( \mathcal{H}_{K_0}^4 \right) + 1 - \textrm{Pr} \left( \mathcal{H}_{K_0}^3  \right) \\
    \le & \frac{1}{8} +1 - \frac{15}{16} = \frac{3}{16}
\end{align*}
Then we have
\begin{align*}
    \textrm{Pr} \left( \left(\mathcal{H}_{K_0}^1\right)^c \bigcap \mathcal{H}_{K_0}^3 \right) =& 1 - \textrm{Pr} \left( \mathcal{H}_{K_0}^1 \bigcup \left( \mathcal{H}_{K_0}^3 \right)^c \right) \ge 1 - \textrm{Pr} \left(\mathcal{H}_{K_0}^1\right) - \textrm{Pr}\left( \left( \mathcal{H}_{K_0}^3 \right)^c \right) \\
    = & \textrm{Pr}\left( \mathcal{H}_{K_0}^3 \right) - \textrm{Pr} \left(\mathcal{H}_{K_0}^1\right) \ge \frac{15}{16} -\frac{3}{16} = \frac{3}{4}
\end{align*}
From Lemma~\ref{lemma: sceond-order-stationary-point-high-probability}, we have with probability at least $\frac{3}{4}$, Algorithm~\ref{alg: ZO-SPIDER-NCF} will be terminated and output $x_k$ before $K_0$ iterations satisfying 
\begin{equation*}
    \|\nabla f(x_k)\| \le 3\tilde{\epsilon}, \quad \lambda_{min} (\nabla^2 f(x_k)) \ge -2\delta,
\end{equation*}

Then we compute the total function query complexity:
\begin{itemize}
    \item On the one hand, with probability at least $3/4$, the algorithm stops with no more than $K_0$ iterations, thus the function query complexity of computing the deterministic coordinate-wise gradient in Line 6 and Line 9 of Algorithm~\ref{alg: ZO-SPIDER-NCF} can be bounded by 
    \begin{align*}
        d \left(\left \lceil K_0/q \right \rceil |\mathcal{S}_1| + K_0 |\mathcal{S}_2| \right) \le &  d \left( \left( K_0/q +1 \right) |\mathcal{S}_1| + K_0 |\mathcal{S}_2| \right) 
        \stackrel{\textrm{$|\mathcal{S}_1| = q|\mathcal{S}_2|$}}{ \le}  d \left(|\mathcal{S}_1| + 2 K_0 |\mathcal{S}_2|  \right) \\
        \le & d \left( |\mathcal{S}_1| +  \mathscr{K} \left( 8\left\lfloor \max \left( \frac{12 \rho^2 \Delta_f}{\delta^3}, \frac{4 \rho \Delta_f}{\delta \epsilon} \right) \right\rfloor + 8 \right)  \cdot 2 |\mathcal{S}_2| \right)
    \end{align*}
    \item On the other hand, with probability at least $3/4$, the algorithm stops with no more than $K_0$ iterations, thus there are at most $K_0/\mathscr{K} = J$ times of zeroth-order negative curvature search. The total function query complexity for zeroth-order negative curvature search can be bounded by 
    \begin{equation*}
        \tilde{O} \left( d \left( 8\left\lfloor \max \left( \frac{12 \rho^2 \Delta_f}{\delta^3}, \frac{4 \rho \Delta_f}{\delta \epsilon} \right) \right\rfloor + 8 \right) \frac{\ell^2}{\delta^2} \right)
    \end{equation*}
    where $\tilde{O}$ hides a polylogarithmic factor of $d$.
\end{itemize}

Then by substituting $\mathscr{K} = \frac{\delta \ell n_0}{\rho \epsilon}, |\mathcal{S}_1| = \frac{16\sigma^2}{\epsilon^2}, |\mathcal{S}_2| = \frac{16 \sigma}{\epsilon n_0} $ the total function query complexity can be bounded by 
\begin{align*}
    & \tilde{\mathcal{O}} \left(d \left( \frac{16 \sigma^2}{\epsilon^2} + \left( 2\frac{16 \sigma}{\epsilon n_0}  \frac{\delta \ell n_0}{\rho \epsilon} + \frac{\ell^2}{\delta^2} \right) \cdot \left( 8 \left( \frac{12 \rho^2 \Delta_f}{\delta^3} + \frac{4 \rho \Delta_f}{\delta \epsilon} \right) +8 \right) \right)\right) \\
    = & \tilde{\mathcal{O}} \left(d  \frac{\sigma^2}{\epsilon^2}  + d \left( \frac{\sigma \delta \ell}{\rho \epsilon^2} + \frac{\ell^2}{\delta^2}\right) \left( \frac{\rho^2 \Delta_f}{\delta^3} + \frac{\rho \Delta_f}{\delta \epsilon} + 1\right)   \right) \\
    = & \tilde{\mathcal{O}} \left( d \left( \frac{\sigma \ell \Delta_f}{\epsilon^3} + \frac{\sigma \ell \rho \Delta_f }{\epsilon^2 \delta^2} + \frac{\ell^2 \rho \Delta_f}{\delta^3 \epsilon} + \frac{\ell^2 \rho^2 \Delta_f}{\delta^5} + \frac{\sigma^2}{\epsilon^2} + \frac{\sigma \delta \ell}{\rho \epsilon^2} + \frac{\ell^2 }{\delta^2} \right) \right)
\end{align*}
above, $\tilde{\mathcal{O}}$ hides the polylogarithmic factor of $d$ and the constant factor.
\end{proof}

\section{Additional Experiments}
In this section, we conduct several experiments to verify the effectiveness of our methods for both deterministic setting and stochastic setting. Specifically, for the deterministic setting, we compare our ZO-GD-NCF against three ZO algorithms for escaping saddle points, which are ZPSGD, PAGD, and RSPI. For the stochastic setting, we compare the three algorithms proposed in the paper, which are ZO-SGD-NCF, ZO-SCSG-NCF, and ZO-SPIDER-NCF. We don't compare our methods against ZO-SCRN because each iteration of ZO-SCRN needs to solve a cubic minimization subproblem: $x_{k+1} = \argmin_{x\in \R^d} g^\T(x-x_k) + \frac{1}{2}(x-x_k)^\T H (x-x_k) + \frac{\alpha}{6} \|x-x_k\|^3$,
where $g$ and $H$ are inexact estimations of the full gradient $\nabla f(x_k)$ and $\nabla^2 f(x_k)$ by ZO oracle, respectively. Although many efficient inexact solvers of the cubic minimization subproblem have been proposed, most of them are second-order or first-order methods \cite{agarwal2017finding,carmon2016gradient,cartis2011adaptive1,cartis2011adaptive2}, which is out of the scope of this paper.

\subsection{Algorithms Description}

\begin{algorithm}[htp]
	\caption{Zero-th order Perturbed Stochastic Gradient Descent (ZPSGD)}\label{algo:PSGD}
	\begin{algorithmic}
		\renewcommand{\algorithmicrequire}{\textbf{Input: }}
		\renewcommand{\algorithmicensure}{\textbf{Output: }}
		\Require $x_0$, learning rate $\eta$, noise radius $r$, mini-batch size $m$.
		\For{$t = 0, 1, \ldots, T$}
		\State sample $(z^{(1)}_t, \cdots, z^{(m)}_t) \sim \mathcal{N}(0,\sigma^2 \I)$
		\State $g_t(x_t)  \leftarrow  \sum_{i=1}^m z^{(i)}_t[f(x_t+z^{(i)}_t)- f(x_t)]/(m\sigma^2)$
		\State $x_{t+1} \leftarrow x_t - \eta (g_t(x_t) + \xi_t), \qquad \xi_t \text{~uniformly~} \sim \mathbb{B}_0(r)$
		\EndFor
		\State \textbf{return} $x_T$
	\end{algorithmic}
\end{algorithm}

\begin{algorithm}[H]
 \renewcommand{\thealgorithm}{}
    \centering
    \caption{Initialization: $(\ell, \rho, \epsilon, c, \delta, \Delta_f)$}\label{algo:init}    
    \begin{algorithmic}[1]
        \State $\chi \leftarrow 3\max\{\log(\frac{d\ell\Delta_f}{c\epsilon^2\delta}), 4\}, 
            ~\eta \leftarrow \frac{c}{\ell},
            ~r \leftarrow \frac{\sqrt{c}}{\chi^2}\cdot\frac{\epsilon}{\ell}, 
            ~g_{\text{thres}} \leftarrow \frac{\sqrt{c}}{\chi^2}\cdot \epsilon,
            ~f_{\text{thres}} \leftarrow \frac{c}{\chi^3} \cdot \sqrt{\frac{\epsilon^3}{\rho}}$ 
        \State $~t_{\text{thres}} \leftarrow \frac{\chi}{c^2}\cdot\frac{\ell}{\sqrt{\rho \epsilon}},
            ~S \gets \frac{\sqrt{c}}{\chi}\frac{\sqrt{\rho \epsilon}}{\rho},
            ~h_{low} \leftarrow \frac{1}{c_h}\min\{g_{\text{thres}}, \frac{r\rho\delta S}{2\sqrt{d}} \}$
    \end{algorithmic}
\end{algorithm}
\vspace{-1cm}
\begin{center}
\begin{minipage}{0.49\textwidth}
\begin{algorithm}[H]
    \centering
    \caption{\text{PAGD}($\mathbf{x}_0$)}\label{algo:PAGD}
    \begin{algorithmic}[1]
    \For{$t = 0, 1, \ldots $} \label{alg:marker}
            \State $\mathbf{z}_t \leftarrow q(\mathbf{x}_t, \frac{g_{\text{thres}}}{4c_h})$
            \If{$\| \mathbf{z}_t \| \ge \frac{3}{4}g_{\text{thres}}$}\label{line:if} 
            \State $\mathbf{x}_{t+1} \leftarrow \mathbf{x}_t - \eta \mathbf{z}_t$ 
            \Else \label{line:else} \State  $\mathbf{x}_{t+1} \leftarrow $ EscapeSaddle ($\mathbf{x}_t $)
                \If{$\mathbf{x}_{t+1} = \mathbf{x}_{t}$} {\textbf{return } $\mathbf{x}_{t}$}
                \EndIf
            \EndIf
            \EndFor
    \end{algorithmic}
\end{algorithm}
\end{minipage}
\hfill
\begin{minipage}{0.49\textwidth}
\begin{algorithm}[H]
    \centering
    \caption{EscapeSaddle ($\hat{\mathbf{x}}$)}\label{algo:EscapeSaddle}
    \begin{algorithmic}[1]
        \State $\boldsymbol{\xi} \sim\operatorname{Unif}(B_\mathbf{0}(r))$
        \State $\tilde{\mathbf{x}}_0 \leftarrow \hat{\mathbf{x}} + \boldsymbol{\xi}$
        \For{$i = 0, 1, \ldots t_{\text{thres}} $}
            \If{ $f(\hat{\mathbf{x}})  - f(\tilde{\mathbf{x}}_i)  \geq f_{\text{thres}}$}
                \State \textbf{return }  $\tilde{\mathbf{x}}_i$ 
            \EndIf
            \State $\tilde{\mathbf{x}}_{i+1} \leftarrow \tilde{\mathbf{x}}_i - \eta q( \tilde{\mathbf{x}}_i, h_{low})$
        \EndFor
        \State \textbf{return } $\hat{\mathbf{x}}$ 
    \end{algorithmic}
\end{algorithm}
\end{minipage}
\end{center}

\begin{minipage}{0.46\textwidth}
\begin{algorithm}[H]
\caption{Random search PI (RSPI).}
\begin{algorithmic}[1]
\State {\bf{Parameters}} $\sigma_1,\sigma_2>0$
\State Initialize $x_0$ at random
\For{$k=0, 2, 4, \cdots 2K$}
\State $s_1 \sim S^{d-1}$ (uniformly)
\State $x_{k+1} = \arg \min \{ f(x_k), f(x_k + \sigma_1 s_1), f(x_k - \sigma_1 s_1) \}$
\State $s_2 = \text{DFPI}(x_k)$ 
\State $x_{k+2} = \arg \min \{ f(x_{k+1}), f(x_{k+1} + \sigma_2 s_2), f(x_{k+1} - \sigma_2 s_2) \}$
\State Optional: Update $\sigma_1$ and $\sigma_2$
\EndFor
\end{algorithmic}
\label{algo:random_search_PI}
\end{algorithm}
\end{minipage}
\hfill
\begin{minipage}{0.53\textwidth}
\begin{algorithm}[H] 
\begin{algorithmic}[1]
\State {{\bf{Parameters}} $c,r,\eta>0 \text{ and } T_\text{DFPI} \in \mathbb{Z}^+$}
\State {\bf{Inputs} : $x \in \R^d$, }
\State $s_2^{(0)} \sim S^{d-1}$ (uniformly) 
\For{$t = 0 \dots T_\text{DFPI}-1$}
\State $g_+ = \sum\limits_{i=1}^d \frac{f(x + r\cdot s^{(t)}_2 + c \cdot  e_i) - f(x + r\cdot s^{(t)}_2 - c \cdot  e_i)}{2c} e_i$ 
\State  $g_- = \sum\limits_{i=1}^d \frac{f(x - r\cdot s^{(t)}_2 + c \cdot e_i) - f( x - r\cdot s^{(t)}_2 - c \cdot  e_i)}{2c} e_i$ 
\State \textbf{Update}: $s^{(t+1)}_2 = s^{(t)}_2 - \eta \frac{g_+ - g_-}{2r}$ 
\State Normalize $s^{(t+1)}_2 = s^{(t+1)}_2/\| s^{(t+1)}_2\|$
\EndFor
\State {\bf Return : $s^{(T_{\textmd{DFPI}})}_2$ }
\end{algorithmic}
\caption{Derivative-Free Power Iteration (DFPI)}
\label{alg:inexact_PI}
\end{algorithm}
\end{minipage}

\subsection{Parameter Settings of the Octopus Function Experiment}

The detailed parameter settings of the octopus function experiment are stated in the following table.

\begin{table}[htb]
\caption{Choices of parameters for the experiment of the octopus function.}
\begin{center}
\begin{tabular}{cc}
\hline
\textbf{Algorithm} & \multicolumn{1}{c}{\textbf{Parameters}} \\ 
\hline
\multicolumn{2}{c}{\textbf{d = 10, 30, 50, 100}}\\ 
\hline
ZPSGD     & $\ell= e, \rho = 1e-4, r=\epsilon, \eta = \frac{1}{2\ell}, m = d$  \\ 
\hline
PAGD      & $\ell= e, \rho = e, \eta = \frac{1}{4\ell}, r = \frac{e}{100}, t_{\textmd{thresh}}=1, g_{\textmd{thresh}} = \frac{e\gamma}{100}$       \\ 
\hline
ZO-GD-NCF & $\ell= e, \rho = e, \eta = \frac{1}{4\ell}, p=0.01$ \\ 
\hline
\multicolumn{2}{c}{\textbf{d = 10}}\\ 
\hline
RSPI      & $\ell= e, \rho = e, \sigma_1 = 1, \sigma_2 = 1.25, \rho_{\sigma_1} = 0.95, T_{\sigma_1} = 20, T_{\textmd{DFPI}} = 20$       \\ 
\hline
\multicolumn{2}{c}{\textbf{d = 30}}\\ 
\hline
RSPI      & $\ell= e, \rho = e, \sigma_1 = 1, \sigma_2 = 1.25, \rho_{\sigma_1} = 0.95, T_{\sigma_1} = 20, T_{\textmd{DFPI}} = 20$       \\ 
\hline
\multicolumn{2}{c}{\textbf{d = 50}}\\ 
\hline
RSPI      & $\ell= e, \rho = e, \sigma_1 = 1, \sigma_2 = 1.25, \rho_{\sigma_1} = 0.85, T_{\sigma_1} = 20, T_{\textmd{DFPI}} = 20$       \\ 
\hline
\multicolumn{2}{c}{\textbf{d = 100}}\\ 
\hline
RSPI      & $\ell= e, \rho = e, \sigma_1 = 0.75, \sigma_2 = 0.5, \rho_{\sigma_1} = 0.9, T_{\sigma_1} = 20, T_{\textmd{DFPI}} = 20$       \\ 
\hline
\end{tabular}
\end{center}
\end{table}

\subsection{Comparison between ZO-GD-NCF and Neon2+GD on Octopus Function}

In order to show that our ZO-GD-NCF method will not significantly increase the iteration complexity compared to Neon2+GD \cite{allen2018neon2}, we compare the iteration performance between ZO-GD-NCF and Neon2+GD on octopus function. The parameters corresponding to the octopus function are set with $\tau = e, L = e, \gamma =1$. We initialize the two algorithms at point $(0, \dots, 0)^\T$.

We set $\epsilon = 1e-4, \delta = \sqrt{\rho \epsilon}$ for all experiments  and report the function value v.s. number of iterations in Figure \ref{fig: zo_ncf_gd-vs-neon2}.

\begin{figure}[!htb]
\centering
\subfigure[d=30]{
\centering
\includegraphics[width=0.4\textwidth]{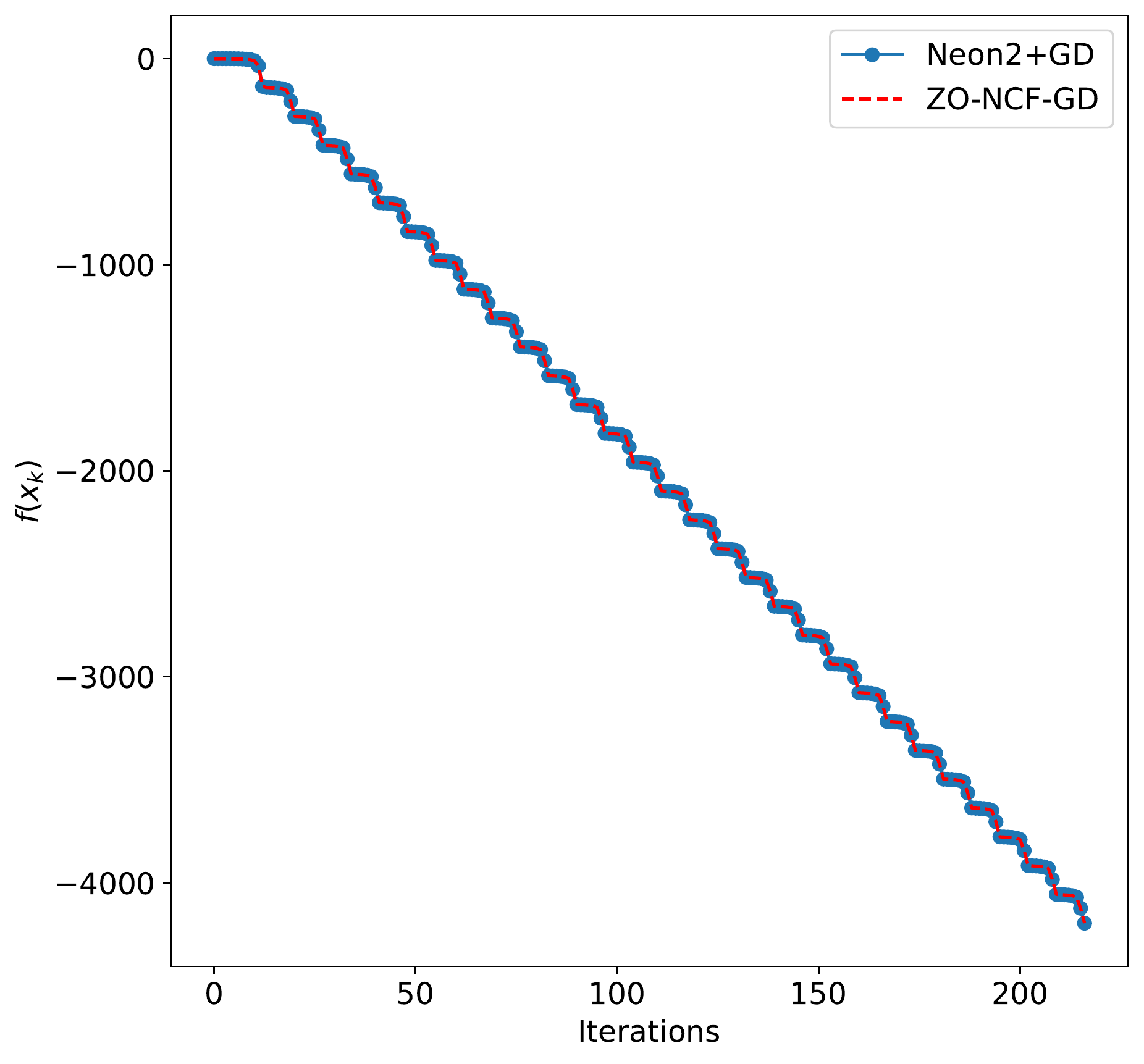}
}
\subfigure[d=50]{
\centering
\includegraphics[width=0.4\textwidth]{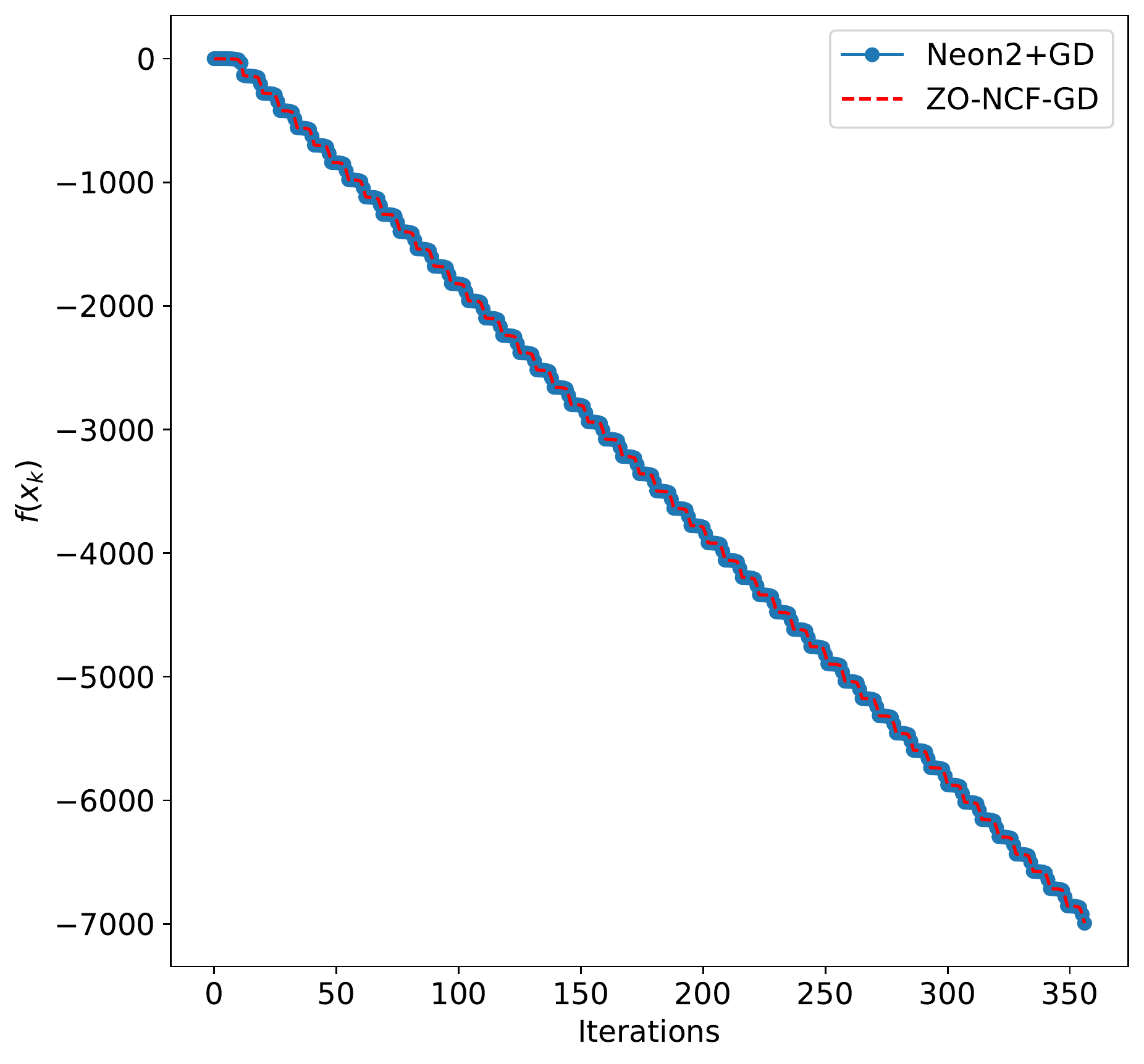}
}

\caption{Comparison of ZO-GD-NCF and Neon2+GD for solving octopus function problem.}
\label{fig: zo_ncf_gd-vs-neon2}
\end{figure}

The results in Figure \ref{fig: zo_ncf_gd-vs-neon2} clearly shows that ZO-GD-NCF have almost the same iteration performance with Neon2+GD. The detailed parameter settings are stated in the following table.

\begin{table}[htb]
\caption{Choices of parameters of ZO-GD-NCF and Neon2+GD.}
\begin{center}
\begin{tabular}{cc}
\hline
\textbf{Algorithm} & \multicolumn{1}{c}{\textbf{Parameters}} \\ 
\hline
ZO-GD-NCF & $\ell= e, \rho = e, \eta = \frac{1}{2\ell}, p=0.01$ \\ 
\hline
Neon2+GD  & $\ell= e, \rho = e,  \eta = \frac{1}{2\ell}, p=0.01$  \\ 
\hline
\end{tabular}
\end{center}
\end{table}

\subsection{Cubic Regularization Problem}
To test performance of the proposed methods for both deterministic setting and stochastic setting. We consider the cubic regularization problem \cite{liu2018adaptive}, which is defined as:
\begin{equation}
\label{eq: cubic regularization problem}
    \min_{w \in \R^d} \frac{1}{2}w^\T A w + b^\T w + \frac{\alpha}{3}\|w\|^3.
\end{equation}
For the deterministic setting, we generate a diagonal $A$ such that 10\% randomly selected diagonal entries are -1 and the rest diagonal entries are uniformly randomly chosen from [1, 2], and set $b$ to a zero vector. For the stochastic setting, we let $A = A' + \E[\textrm{diag}(\xi)]$ and $b = \E[\xi']$, where $A'$ is generated the same way as that in the deterministic setting, $\xi$ are uniformly randomly chosen from [-0.1, 0.1] and $\xi'$ are uniformly randomly chosen from $[-1, 1]$. The parameter $\alpha$ in Eq.~\eqref{eq: cubic regularization problem} is set to $0.5$ for both deterministic setting and stochastic setting. We set $\epsilon =10^{-2}, \delta = \sqrt{\rho \epsilon}$ for all experiments. To test the ability of different algorithms to escape from saddle points, we initialize all algorithms at a saddle point $(0, \dots, 0)^\T$.

\begin{figure}[!htb]
\centering
\subfigure[Deterministic, d=100]{
\centering
\includegraphics[width=0.23\textwidth]{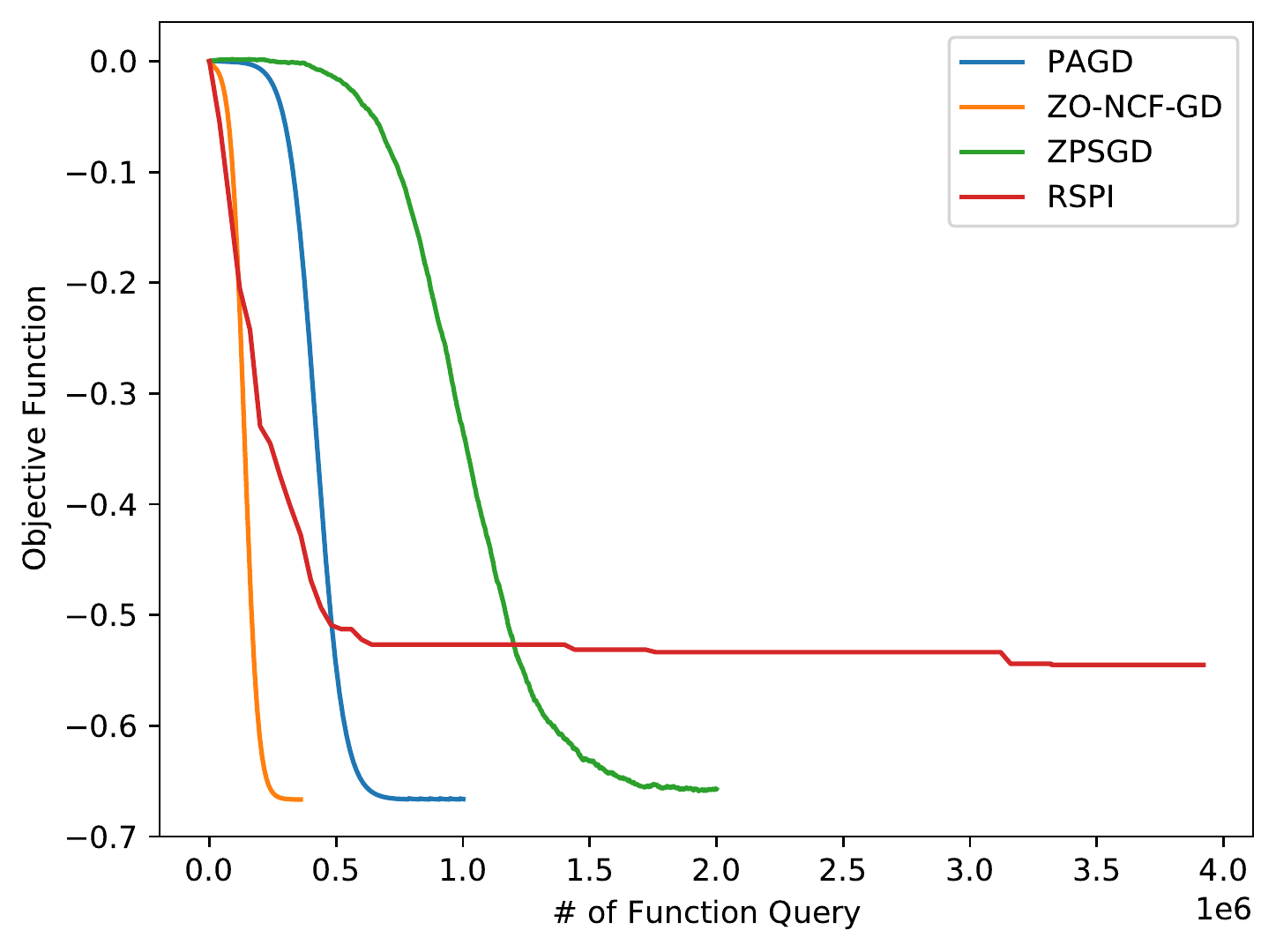}
}
\subfigure[Deterministic, d=200]{
\centering
\includegraphics[width=0.23\textwidth]{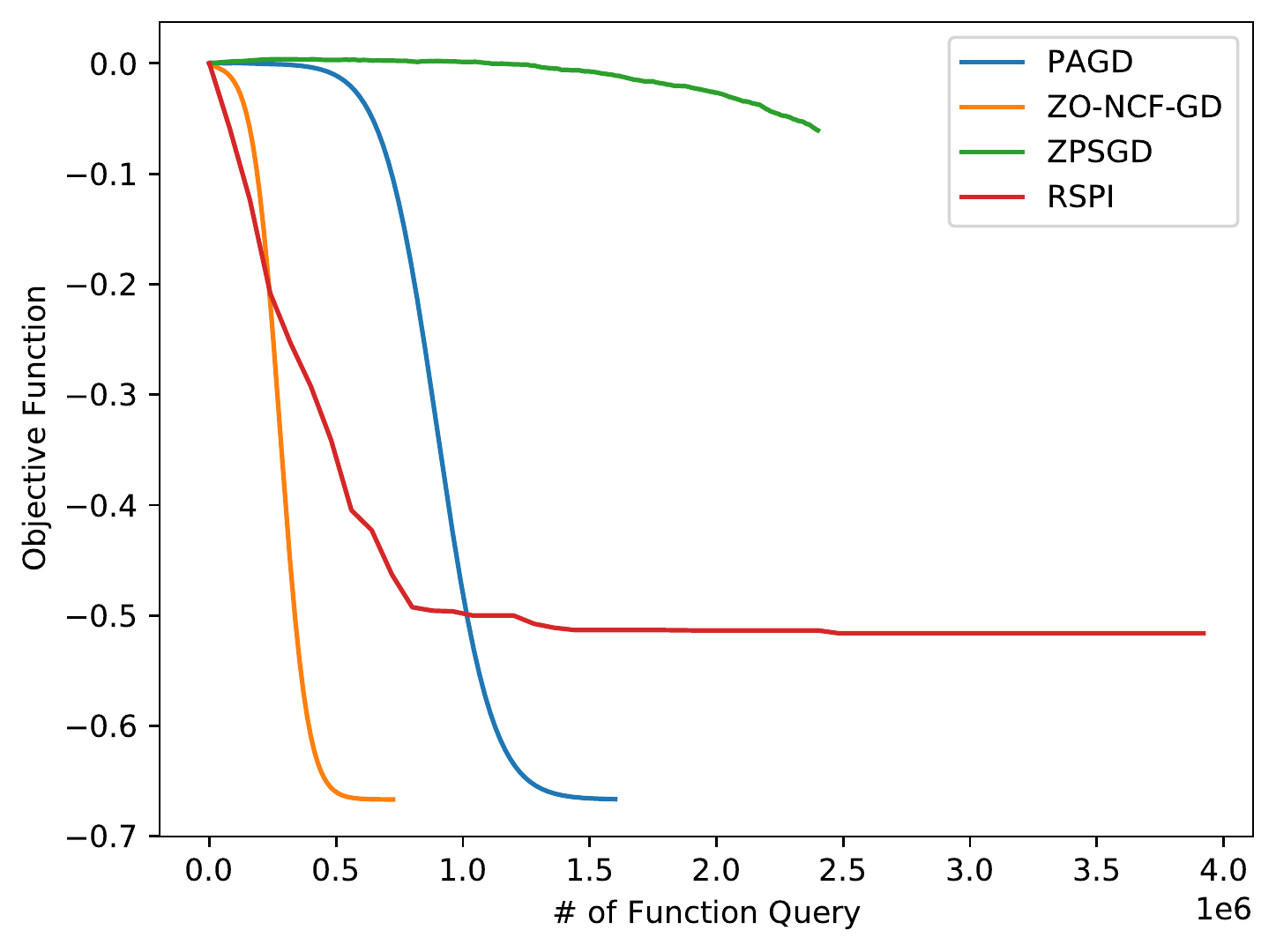}
}
\subfigure[Stochastic, d=20]{
\centering
\includegraphics[width=0.23\textwidth]{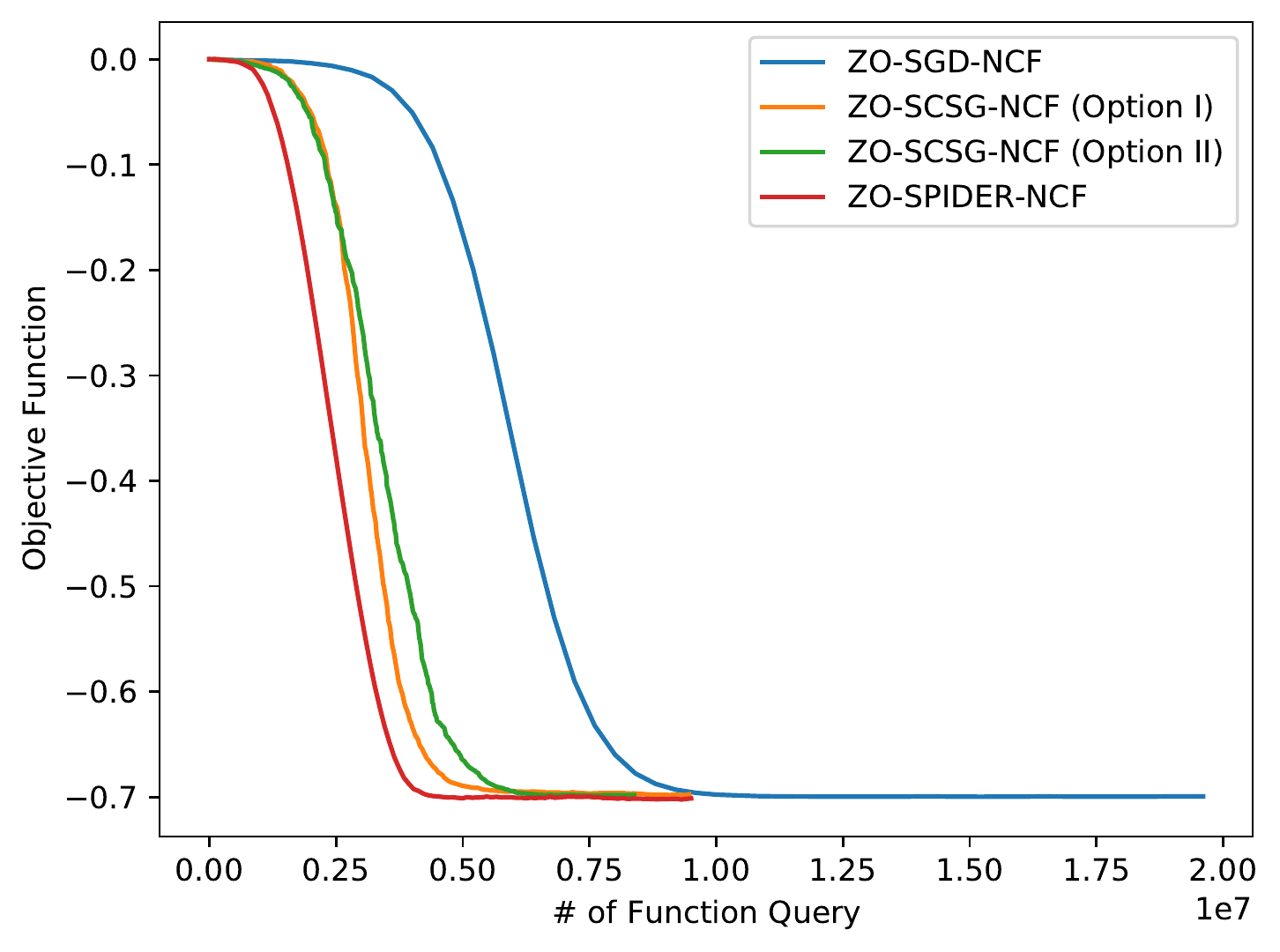}
}
\subfigure[Stochastic, d=100]{
\centering
\includegraphics[width=0.23\textwidth]{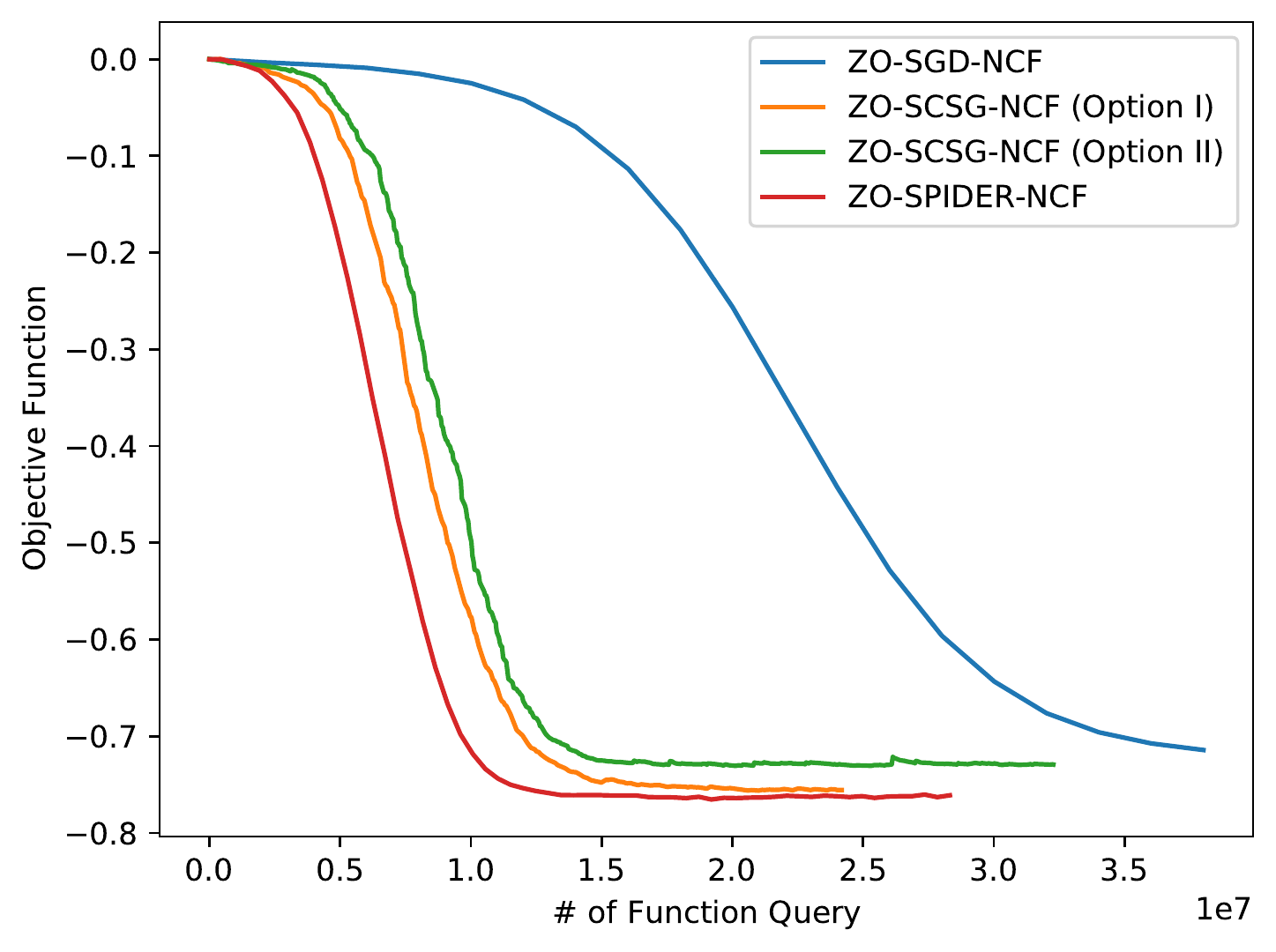}
}

\caption{Comparison of different algorithms for solving cubic regularization problem in deterministic setting and stochastic setting.}
\label{fig: cubic-regularization-problem}
\end{figure}

In deterministic setting, the results in Fig.~\ref{fig: cubic-regularization-problem} (a) and (b) illustrate that,  $\left. 1 \right)$ the negative curvature finding based algorithms (ZO-GD-NCF, RSPI) can escape saddle points more efficient than the random perturbation based algorithms (ZPSGD, PAGD). $\left. 2\right)$ On the other hand, the gradient estimation based algorithms (ZO-GD-NCF, ZPSGD, PAGD) converge faster than the random search based algorithm (RSPI). In the stochastic setting, ZO-SPIDER-NCF converges faster than other three algorithms.

\begin{table}[htb]
\caption{Choices of parameters for the experiment of the cubic regularization problem.}
\begin{center}
\begin{tabular}{ll}
\hline
\textbf{Algorithm} & \multicolumn{1}{c}{\textbf{Parameters}} \\ 
\hline
\multicolumn{2}{c}{\textbf{Deterministic, d = 100}}\\ 
\hline
ZPSGD     & $\ell= 10^2, \rho = 1, r=\epsilon, \eta = \frac{1}{2\ell}, m = d$  \\ 
\hline
PAGD      & $\ell= 10^2, \rho = 1, \eta = \frac{1}{4\ell}, r = 10^{-2}, t_{\textmd{thresh}}=1, g_{\textmd{thresh}} = 10^{-2}$       \\ 
\hline
RSPI      & $\ell= 10^2, \rho = 1, \sigma_1 = 0.4, \sigma_2 = 0.4, \rho_{\sigma_1} = 0.98, T_{\sigma_1} = 10, T_{\textmd{DFPI}} = 100$       \\ 
\hline
ZO-GD-NCF & $\ell= 10^2, \rho = 1, \eta = \frac{1}{4\ell}, p=0.01$ \\ 
\hline
\multicolumn{2}{c}{\textbf{Deterministic, d = 200}}\\ 
\hline
ZPSGD     & $\ell= 10^2, \rho = 1, r=\epsilon, \eta = \frac{1}{2\ell}, m = d$  \\ 
\hline
PAGD      & $\ell= 10^2, \rho = 1, \eta = \frac{1}{4\ell}, r = 10^{-2}, t_{\textmd{thresh}}=1, g_{\textmd{thresh}} = 10^{-2}$       \\ 
\hline
RSPI      & $\ell= 10^2, \rho = 1, \sigma_1 = 0.4, \sigma_2 = 0.4, \rho_{\sigma_1} = 0.98, T_{\sigma_1} = 10, T_{\textmd{DFPI}} = 100$       \\ 
\hline
ZO-GD-NCF & $\ell= 10^2, \rho = 1, \eta = \frac{1}{4\ell}, p=0.01$ \\ 
\hline
\multicolumn{2}{c}{\textbf{Stochastic, d = 20, d=100}}\\ 
\hline
ZO-SGD-NCF & $\ell = 10^2, \rho = 1, \eta = \frac{1}{3\ell}, |S|=128, p = 0.01$  \\
\hline
ZO-SCSG-NCF &\textbf{Option \uppercase\expandafter{\romannumeral1} }: $\ell = 10^2, \rho=1, \eta = \frac{1}{4\ell}, B =128, b=10, p=0.01$  \\
\hline
ZO-SCSG-NCF &\textbf{Option \uppercase\expandafter{\romannumeral2} }: $\ell = 10^2, \rho=1, \eta = \frac{1}{10\ell}, B =128, b=10, p=0.01$  \\
\hline
ZO-SPIDER-NCF & $\ell =10^2, \rho=1, \eta = \frac{1}{15\ell}, |\mathcal{S}_1| = 128, |\mathcal{S}_2| = 10, p=0.01 $ \\
\hline
\end{tabular}
\end{center}
\end{table}

\subsection{Regularized Non-Linear Least-Square Problem}
We next consider the regularized non-linear least-square problem \cite{liu2018adaptive}, which is defined as:
\begin{equation}
\label{eq: regularized non-linear least-square}
    \min_{w\in \R^d} \frac{1}{n} \sum_{i=1}^n (y_i - \sigma(w^\T x_i))^2 + \sum_{i=1}^d \frac{\lambda w_i^2}{1 + \alpha w_i^2},
\end{equation}
where $x_i \in \R^d$, $y_i \in \{0, 1\}$, $\sigma(s) = \frac{1}{1+\exp(-s)}$, and the second term is a non-convex regularizer. We use the w1a data (n=2477, d=300) which can be downloaded from the LIBSVM website \cite{CC01a}. We set $\lambda=1, \alpha =1$ in Eq.\eqref{eq: regularized non-linear least-square} and $\epsilon=1e-2, \delta = \sqrt{\rho \epsilon}$ for both deterministic setting and stochastic setting. We report the objective function value v.s. the number of function queries in Figure~\ref{fig: regularized non-linear least-square problem} and can draw similar conclusions to the previous experiment.

\begin{figure}[!htb]
\centering
\subfigure{
\centering
\includegraphics[width=0.4\textwidth]{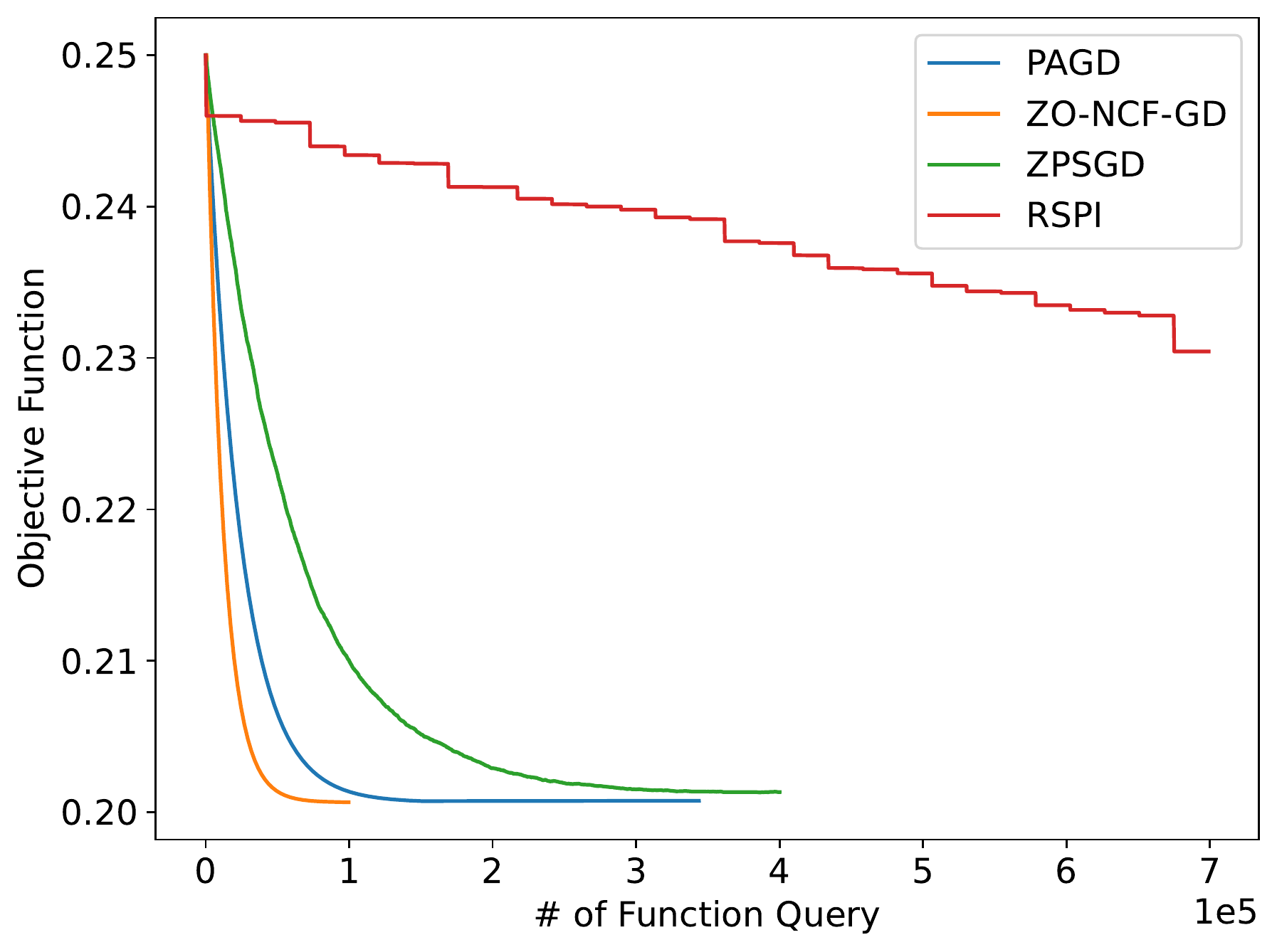}
}
\subfigure{
\centering
\includegraphics[width=0.4\textwidth]{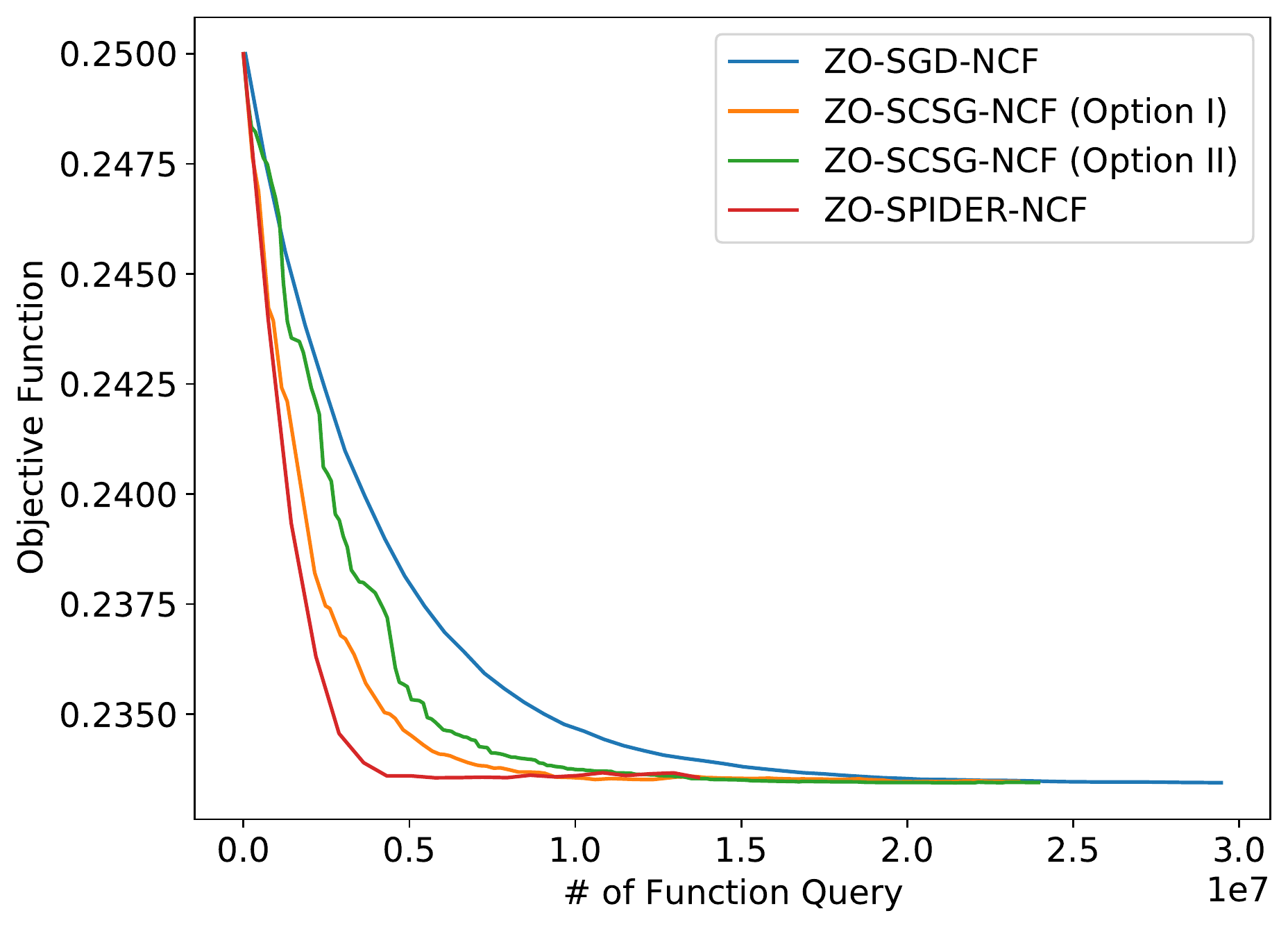}
}

\caption{Comparison of different algorithms for solving regularized non-linear least-square problem in deterministic setting and stochastic setting}
\label{fig: regularized non-linear least-square problem}
\end{figure}

\begin{table}[htb]
\caption{Choices of parameters for the experiment of the regularized non-linear least-square problem.}
\begin{center}
\begin{tabular}{ll}
\hline
\textbf{Algorithm} & \multicolumn{1}{c}{\textbf{Parameters}} \\ 
\hline
\multicolumn{2}{c}{\textbf{Deterministic}}\\ 
\hline
ZPSGD     & $\ell= 10^2, \rho = 1, r=\epsilon, \eta = \frac{1}{2\ell}, m = d$  \\ 
\hline
PAGD      & $\ell= 10^2, \rho = 1, \eta = \frac{1}{4\ell}, r = 10^{-2}, t_{\textmd{thresh}}=1, g_{\textmd{thresh}} = 10^{-2}$       \\ 
\hline
RSPI      & $\ell= 10^2, \rho = 1, \sigma_1 = 0.4, \sigma_2 = 0.4, \rho_{\sigma_1} = 0.98, T_{\sigma_1} = 10, T_{\textmd{DFPI}} = 100$       \\ 
\hline
ZO-GD-NCF & $\ell= 10^2, \rho = 1, \eta = \frac{1}{4\ell}, p=0.01$ \\ 
\hline
\multicolumn{2}{c}{\textbf{Stochastic}}\\ 
\hline
ZO-SGD-NCF & $\ell = 10^2, \rho = 1, \eta = \frac{1}{3\ell}, |S|=128, p = 0.01$  \\
\hline
ZO-SCSG-NCF &\textbf{Option \uppercase\expandafter{\romannumeral1} }: $\ell = 10^2, \rho=1, \eta = \frac{1}{4\ell}, B =128, b=10, p=0.01$  \\
\hline
ZO-SCSG-NCF &\textbf{Option \uppercase\expandafter{\romannumeral2} }: $\ell = 10^2, \rho=1, \eta = \frac{1}{10\ell}, B =128, b=10, p=0.01$  \\
\hline
ZO-SPIDER-NCF & $\ell =10^2, \rho=1, \eta = \frac{1}{15\ell}, |\mathcal{S}_1| = 128, |\mathcal{S}_2| = 10, p=0.01 $ \\
\hline
\end{tabular}
\end{center}
\end{table}

\end{document}